\documentclass[11pt, a4paper, oneside]{amsart}

\usepackage[english]{babel}
\usepackage{amsmath, amsthm, amsfonts, mathrsfs, amssymb}
\usepackage{mathtools}
\mathtoolsset{centercolon}
\usepackage{mathrsfs}
\usepackage{amscd}
\usepackage{bm}

\usepackage{tikz-cd}

\usepackage[shortlabels]{enumitem}
\usepackage[colorlinks, citecolor = blue, urlcolor={red}]{hyperref}
\usepackage[colorinlistoftodos,prependcaption,textsize=small]{todonotes}

\usepackage{faktor}


\allowdisplaybreaks

\setlist[itemize]{leftmargin=25pt}
\setlist[enumerate]{leftmargin=25pt}

\usepackage{color}

\newtheorem{theorem}{Theorem}
\newtheorem{corollary}[theorem]{Corollary}
\newtheorem{lemma}[theorem]{Lemma}
\newtheorem{proposition}[theorem]{Proposition}

\theoremstyle{remark}
\newtheorem{remark}[theorem]{Remark}

\theoremstyle{definition}
\newtheorem{definition}[theorem]{Definition}
\newtheorem{example}[theorem]{Example}

\numberwithin{theorem}{section}
\numberwithin{equation}{section}

\def\N{{\mathbb N}}

\def\R{{\mathbb R}}

\def\F{{\mathbb F}}




\DeclareMathAlphabet{\mathpzc}{OT1}{pzc}{m}{it}

\newcommand{\one}{{{\bf 1}}}
\newcommand{\hra}{\hookrightarrow}

\newcommand{\wh}{\widehat}
\newcommand{\supp}{\text{\rm supp\,}}

\newcommand{\norm}[1]{\nrm{#1}}

\renewcommand{\d}{\mathrm{d}}

\renewcommand{\tilde}{\widetilde}

\newcommand{\Sch}{\mathscr{S}}

\newcommand{\Schw}{{\mathscr S}}

\newcommand{\dom}{\operatorname{D}}
\newcommand{\loc}{\mathrm{loc}}

\newcommand{\p}{\partial} 
\newcommand{\graph}{\operatorname{graph}} 
\newcommand{\sign}{\operatorname{sign}} 

\DeclareFontFamily{U}{mathx}{\hyphenchar\font45}
\DeclareFontShape{U}{mathx}{m}{n}{<5> <6> <7> <8> <9> <10> <10.95> <12> <14.4> <17.28> <20.74> <24.88> mathx10}{}
\DeclareSymbolFont{mathx}{U}{mathx}{m}{n}
\DeclareFontSubstitution{U}{mathx}{m}{n}
\DeclareMathAccent{\widecheck}{0}{mathx}{"71}


\newcommand{\ms}{\mathscr}

\newcommand{\mrm}{\mathrm}


\DeclarePairedDelimiter{\ip}\langle\rangle
\DeclarePairedDelimiter{\nrm}\lVert\rVert


\newcommand{\ipb}[1]{\bigl\langle#1\bigr\rangle}



\title[]{Cauchy problem for singular-degenerate porous medium type equations: well-posedness and Sobolev regularity}

\author{Nick Lindemulder}
\address[N. Lindemulder]{
Radboud Universiteit\\
IMAPP - Mathematics\\
P.O. Box 9010 \\
6500 GL Nijmegen\\
The Netherlands}
\email{nick.lindemulder@gmail.com}

\author{Stefanie Sonner}
\address[S. Sonner]{
Radboud Universiteit\\
IMAPP - Mathematics\\
P.O. Box 9010 \\
6500 GL Nijmegen\\
The Netherlands}
\email{stefanie.sonner@ru.nl}

\subjclass[2020]{Primary: 35K59, 35K65, 35K67; Secondary: 35B65, 46E35, 47H05}


\keywords{quasilinear degenerate reaction diffusion equation, Cauchy problem, singular-degenerate porous medium type diffusion, biofilm growth model, $m$-accretive operator theory, kinetic formulation, Sobolev regularity, well-posedness}

\begin{document}

\maketitle

\begin{abstract}
Motivated by models for biofilm growth, we consider Cauchy problems for quasilinear reaction diffusion equations where the diffusion coefficient has a porous medium type degeneracy as well as a singularity. We prove results on the well-posedness and Sobolev regularity of solutions. The proofs are based on m-accretive operator theory, kinetic formulations and Fourier analytic techniques.     
\end{abstract}

\section{Introduction and main results}

We prove the well-posedness and investigate Sobolev regularity for the following Cauchy problem for singular-degenerate porous medium type equations,
\begin{align}\label{eq:cauchy}
    \partial_tu&=\Delta \phi(u)+f(u)&&\text{in}\ (0,T)\times\R^d,\\
    u|_{t=0}&=u_0&&\text{on}\ \R^d,   \nonumber 
\end{align}
where the solution $u$ takes values in $[0,1)$ and $\phi:[0,1)\to\R$ is a strictly increasing function with $\phi(0)=0$   that has a degeneracy $\phi'(0)=0$ and singularity 
$\phi(1)=\infty$.
Moreover, the function $f$ is Lipschitz continuous, $f(0)=0$ and the initial data satisfies $u_0\in L^1(\R^d;[0,1))$.

The problem is motivated by models for biofilm growth where a degenerate equation of the form \eqref{eq:cauchy} is coupled to an additional semilinear PDE or ODE for the nutrient concentration  \cite{EbJaDuWo,EbPaLo}. We refer to these two different settings as PDE-PDE or ODE-PDE model, respectively, below. 
In such models, the solution $u$ describes the time evolution of the biomass fraction, i.e. the normalized biomass density,
and the actual biofilm is represented by the region $\{x\in\R^d: u(t,x)>0\}.$
The biomass diffusion coefficient is of the form 
\begin{align}\label{eq:bfdiff}
\phi'(u)=\frac{u^b}{(1-u)^a}, \qquad a\geq 1, b>0,
\end{align}
and the function $f$ models biomass production.
The degeneracy $u^b$ in \eqref{eq:bfdiff} is well-known from the porous medium equation. It leads to the formation of free boundaries and enforces a finite speed of propagation, i.e. solutions emanating from initial data with compact support remain compactly supported for all times. The additional singularity $(1-u)^{-a}$ in \eqref{eq:bfdiff} ensures that solutions remain bounded by 1 despite the growth term $f$ in \eqref{eq:cauchy}. 

The well-posedness for the PDE-PDE biofilm growth model \cite{EbPaLo} in bounded domains with Dirichlet boundary conditions was established in \cite{EZE09}. The solution theory was extended in \cite{HMS22,MS23} for more general PDE-PDE and ODE-PDE systems involving a degenerate equation of the form \eqref{eq:cauchy} and allowing for mixed Dirichlet-Neumann boundary conditions which are relevant in applications. The local H\"older regularity of solutions for such PDE-PDE systems was shown in \cite{HM22a}. For coupled ODE-PDE systems the existence of traveling wave solutions in one spatial dimension was proven in  \cite{MHSED23}. 
The Cauchy problem for PDE-PDE systems with the specific diffusion coefficient $\phi'(u)=\frac{u^b}{(1-u)^a}$ was considered in \cite{HM22} where the existence and uniqueness of weak energy solutions was shown  by approximating the equation in $\R^d$ by problems on bounded domains. 

The aim of our paper is to develop a solution theory for the more general Cauchy problem \eqref{eq:cauchy} based on mild solutions and the theory of $m$-accretive operators, to show well-posedness  and to derive space-time regularity results in the scale of Sobolev spaces. For the latter we use kinetic formulations and velocity averaging inspired by the approach applied to the porous medium equation in \cite{GST20,TT07}.  While in models for biofilm growth solutions take non-negative values we formulate our results in a broader framework and allow for sign changing solutions. In the sequel, we assume that the solution $u$ takes values in $(-1,1)$, that $\phi:(-1,1)\to\R$ is strictly increasing and satisfies $\phi(0)=\phi'(0)=0$, $\phi(-1)=-\infty$ and $\phi(1)=\infty.$ The first main result addresses the well-posedness for problem \eqref{eq:cauchy}. In \cite{GST20} and related earlier works on the Sobolev regularity of solutions \cite{Ge21,CP03,TT07} the concept of entropy solutions was used while we base our analysis on the notion of  mild solutions and the theory of $m$-accretive operators. To avoid technical assumptions the second statement of Theorem \ref{thm:wellposedness;main1}
is formulated for the specific biofilm diffusion coefficient \eqref{eq:bfdiff} but we prove it in greater generality in Section \ref{sec:well} and Section \ref{sec:comp}.

\begin{theorem}\label{thm:wellposedness;main1}
Let $\phi \colon (-1,1) \to \R$ be a maximal monotone function with $\phi(0)=0$ and let $f\colon[-1,1] \to \R$ be a Lipschitz function with $f(0)=0$. Then for every $u_0\in L^1(\R^d;[-1,1])$ there exists a unique mild solution $u \in C([0,T];L^1(\R^d;[-1,1]))$ of the initial value problem
\begin{align}\label{eq:GPME}
\begin{split}
    \p_t u &= \Delta \phi(u)+f(u) \qquad\text{in}\:\:(0,T) \times \R^d, \\
u(0)&=u_{0} \qquad \text{on}\:\:\R^d.    
\end{split}
\end{align}
If $d\geq3$ and $\phi'(u)=\frac{|u|^b}{(1-|u|)^a},  a\geq 1, b>0,$ then $u$ is a distributional solution of \eqref{eq:GPME} and 
$u(t,x)\in(-1,1)$ for all $t\in(0,T)$ and almost every $x\in\R^d$.
Moreover, if $u_0\in[0,1)$ then $u(t,x)\in[0,1)$ for all $t\in(0,T)$ and almost every $x\in\R^d$.
\end{theorem}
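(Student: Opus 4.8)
The plan is to treat \eqref{eq:GPME} as the abstract Cauchy problem $u'(t)+Au(t)\ni f(u(t))$, $u(0)=u_0$, in $X=L^1(\R^d)$, with $A$ an $m$-accretive operator and $f$ a Lipschitz perturbation, and to invoke the Crandall--Liggett generation theory. I would define $A$ on $X=L^1(\R^d)$ by $Au=-\Delta v$, where $v\in L^1_{\loc}(\R^d)$ is a measurable section of the maximal monotone graph $\phi$ along $u$ (so that $v(x)\in\phi(u(x))$ a.e., which forces $u(x)\in\overline{D(\phi)}=[-1,1]$ a.e.) and $\Delta v\in L^1(\R^d)$ in the distributional sense; the domain $D(A)$ is the set of such $u$. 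That $A$ is $m$-accretive in $L^1(\R^d)$, with $\overline{D(A)}^{\,L^1}=L^1(\R^d;[-1,1])$ and order-preserving resolvents, is the classical $L^1$-theory of the filtration equation (B\'enilan--Crandall, Brezis--Strauss, V\'azquez): accretivity is Kato's inequality, i.e.\ the $L^1$-contractivity of $-\Delta$; the range condition $R(I+\lambda A)=L^1(\R^d)$ is the unique solvability of the stationary problem $u-\lambda\Delta v=g$, $v\in\phi(u)$, for $g\in L^1(\R^d)$, obtained by Yosida-regularising $\phi$, solving on an increasing sequence of balls, and passing to the limit with the a priori $L^1$-contraction bound.

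Since $f$ is $L$-Lipschitz with $f(0)=0$, the superposition operator $u\mapsto f(u)$ is globally $L$-Lipschitz on $L^1(\R^d)$, so $A-f$ is quasi-$m$-accretive: $A-f+LI$ is accretive and $R(I+\lambda(A-f))=L^1(\R^d)$ for $\lambda L<1$ by a Banach fixed-point argument in the resolvent equation. The Crandall--Liggett theorem for Lipschitz perturbations of $m$-accretive operators then gives, for every $u_0\in\overline{D(A)}=L^1(\R^d;[-1,1])$, a unique mild solution $u\in C([0,T];L^1(\R^d))$ of \eqref{eq:GPME}, realised as the uniform-in-time limit of the implicit Euler scheme $u_k-\lambda\Delta\phi(u_k)-\lambda f(u_k)=u_{k-1}$; quasi-accretivity yields uniqueness together with the $L^1$-contraction estimate $\|u(t)-\widetilde u(t)\|_{L^1}\le e^{Lt}\|u_0-\widetilde u_0\|_{L^1}$. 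Each Euler iterate lies in $D(A)$ and is thus $[-1,1]$-valued, hence so is the limit, giving $u\in C([0,T];L^1(\R^d;[-1,1]))$. For the sign statement, $u\equiv0$ solves \eqref{eq:GPME} since $\phi(0)=0=f(0)$; moreover $s\mapsto Ls-f(s)$ is nondecreasing, so the perturbed resolvents $(I+\lambda(A-f))^{-1}$ preserve order, the Euler iterates depend monotonically on the datum, and comparison with the zero solution gives $u(t,\cdot)\ge0$ whenever $u_0\ge0$.

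For the second assertion, fix $d\ge3$ and $\phi$ with $\phi'(u)=|u|^b/(1-|u|)^a$, $a\ge1$, $b>0$, an increasing homeomorphism of $(-1,1)$ onto $\R$ with $\phi(s)\to\pm\infty$ as $s\to\pm1$. The task is to show the mild solution is a distributional solution; equivalently, that $\phi(u)\in L^1_{\loc}((0,T)\times\R^d)$, after which one passes to the limit in the telescoped resolvent identities tested against $C_c^\infty$ functions, using $u_k\to u$ in $C([0,T];L^1)$, a.e.\ convergence along a subsequence, monotonicity of $\phi$ and local equi-integrability to identify the nonlinear limit $\phi(u_k)\to\phi(u)$. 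The local integrability of $\phi(u)$ is exactly where $d\ge3$ enters: for data $u_0\in L^1(\R^d)\cap L^\infty(\R^d)$ with $\|u_0\|_\infty<1$ — so that $\Phi(u_0)\in L^1(\R^d)$, where $\Phi'=\phi$, $\Phi(0)=0$ — testing the equation with $\phi(u)$ gives the energy bound $\phi(u)\in L^2((0,T);\dot H^1(\R^d))$, and the Sobolev embedding $\dot H^1(\R^d)\hookrightarrow L^{2d/(d-2)}(\R^d)$, valid precisely for $d\ge3$, yields $\phi(u)\in L^1_{\loc}$ (equivalently, at the discrete level, from $\Delta\phi(u_k)\in L^1(\R^d)$ the Riesz-potential bound $(-\Delta)^{-1}\colon L^1(\R^d)\to L^{d/(d-2),\infty}(\R^d)$ controls $\phi(u_k)$ in a Marcinkiewicz space). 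The restriction $\|u_0\|_\infty<1$ is then removed by $L^1$-density, the contraction estimate, and a smoothing estimate for the filtration semigroup. Once $u$ is a distributional solution we have $\phi(u)\in L^1_{\loc}$, so $u(t,x)=\pm1$ on a set of positive measure is impossible, since $\phi(u(t))$ would take the value $\pm\infty$ there; hence $u(t,x)\in(-1,1)$ a.e., and together with the sign statement $u_0\in[0,1)$ gives $u(t,x)\in[0,1)$.

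The main obstacle is twofold. First, the $m$-accretivity of $A$ in $L^1(\R^d)$: solving $u-\lambda\Delta\phi(u)=g$ on the whole space requires accommodating simultaneously the porous-medium degeneracy $\phi'(0)=0$ and the singularity $\phi(\pm1)=\pm\infty$, which calls for a careful double approximation of $\phi$. Second, and more delicate, the passage from the abstract mild solution to a genuine distributional solution: on the unbounded domain $\R^d$ and for merely $L^1$ data the only a priori control of $\phi(u)$ is the potential-theoretic one, which is available only for $d\ge3$, and the whole approximation scheme — including the equi-integrability used to pass to the limit in the nonlinear term — has to be organised so that all these bounds remain uniform.
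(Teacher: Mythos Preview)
Your treatment of existence and uniqueness is essentially the paper's: the operator $A_\phi u = -\Delta\phi(u)$ is $m$-accretive on $L^1(\R^d)$ with $\overline{D(A_\phi)}=L^1(\R^d;[-1,1])$ (both statements are quoted from B\'enilan--Crandall and Barbu), and the Lipschitz term $f$ is handled by a fixed-point argument in the contraction estimate for mild solutions. Whether one packages this as ``$A-f$ is quasi-$m$-accretive'' or as a Banach fixed point on $C([0,T];L^1)$ is a matter of taste.

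For the second assertion, however, your route and the paper's diverge substantially, and your outline has a real gap. You propose to obtain $\phi(u)\in L^2((0,T);\dot H^1)$ by ``testing the equation with $\phi(u)$'', but a mild solution carries no a~priori regularity permitting such a test; this is precisely the difficulty. The paper circumvents it by changing the ambient space: it realises $-\Delta\phi(\cdot)$ as the subdifferential $\partial\varphi$ of the energy $\varphi(v)=\int_{\R^d}\Phi(v)$ on the Hilbert space $\dot H^{-1}(\R^d)$ (this is where $d\ge3$ enters, via the embedding $L^p\hookrightarrow\dot H^{-1}$ for $\tfrac1p=\tfrac12+\tfrac1d$), proves the $L^1$- and $\dot H^{-1}$-resolvents coincide on the intersection, and then invokes Brezis' abstract $L^2$-regularity theorem for gradient flows. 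That theorem delivers $\phi(u)\in L^2_{\loc}((0,T];\dot H^1)$ and $\varphi(u(t))<\infty$ for $t>0$ directly for \emph{every} $u_0\in L^1(\R^d;[-1,1])$, with no preliminary restriction $\|u_0\|_\infty<1$ to be removed afterwards. Your alternative via the Riesz-potential bound $\phi(u_k)\in L^{d/(d-2),\infty}$ on the Euler iterates is in principle workable, but the bound on $\|\Delta\phi(u_k)\|_{L^1}$ involves $\lambda^{-1}\|u_k-u_{k-1}\|_{L^1}$, which is only uniformly controlled for $u_0\in D(A_\phi)$; the ``smoothing estimate'' you invoke to lift this restriction is exactly what the subdifferential machinery supplies and is not otherwise obvious.

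For non-negativity you argue via order-preservation of the perturbed resolvents; the paper instead constructs smooth non-degenerate approximations $\phi_k$ of $\phi$, proves the classical $L^1$-comparison principle for the approximating equations, and passes to the limit using the nonlinear Trotter--Kato stability. Your route is shorter if one grants that $(I+\lambda(A-f))^{-1}$ preserves order, but the one-line justification ``$s\mapsto Ls-f(s)$ is nondecreasing'' is not quite enough: one must show T-accretivity of $A-f$, e.g.\ by checking $\int(u-\tilde u)_+\le\int(g-\tilde g)_+$ for the resolvent equation, which requires combining Kato's inequality with the sign structure of $f$ more carefully than stated.
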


Our second main result provides space time regularity in the scale of Sobolev spaces. 
In addition to the hypotheses of Theorem \ref{thm:wellposedness;main1} we need to assume that the degeneracy of $\phi$ in $0$ is of porous medium type. Again, to avoid technicalities we formulate Theorem \ref{thm:regularity} for the specific case \eqref{eq:bfdiff} but prove the result under more general assumptions in Section \ref{sec:reg}.

\begin{theorem}\label{thm:regularity}
    Let the hypotheses of Theorem \ref{thm:wellposedness;main1} hold, let $d\geq3$ and  
     $\phi'(u)=\frac{|u|^b}{(1-|u|)^a},  a\geq 1, b\geq 1$.
Let  $u \in C(0,T];L^1(\R^d;[-1,1]))$ be the unique mild solution of \eqref{eq:GPME}.    For $p \in [2,b+1]$ we define
    $$
    \kappa_t := \frac{b+1-p}{pb},\quad \kappa_x := \frac{2(p-1)}{pb}.
    $$
    Then, for all $\sigma_t \in [0,\kappa_t) \cup \{0\}$ and $\sigma_x \in [0,\kappa_x)$, we have 
    $$
    u \in W^{\sigma_t,p}(0,T;W^{\sigma_x,p}(\R^d)).
    $$
\end{theorem}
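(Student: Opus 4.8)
\emph{Plan of proof.} The approach adapts the kinetic-formulation and velocity-averaging method developed for the porous medium equation in \cite{TT07,GST20} (see also \cite{CP03,Ge21}) to the present singular--degenerate setting. By the well-posedness and approximation machinery behind Theorem~\ref{thm:wellposedness;main1} (Sections~\ref{sec:well} and \ref{sec:comp}) we may work on regularized problems: replace $\phi$ by a smooth, strictly increasing, non-degenerate and non-singular approximation $\phi_\varepsilon$ and $u_0$ by smooth compactly supported data, solve classically, derive all estimates below uniformly in $\varepsilon$, and pass to the limit using weak-$*$ compactness and lower semicontinuity of the $W^{\sigma_t,p}(0,T;W^{\sigma_x,p}(\R^d))$-norm. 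We thus argue as if $u$ were smooth.

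\emph{A priori estimates and kinetic formulation.} Besides $|u|\le 1$ and $\|u(t)\|_{L^1(\R^d)}\le e^{Lt}\|u_0\|_{L^1(\R^d)}$, the crucial input is a family of weighted energy estimates: testing the equation with $S_p'(u)$, $S_p(v):=|v|^p/(p(p-1))$, for $p\ge 2$, and using $|f(v)|\le L|v|$, $f(0)=0$, $u_0\in L^1(\R^d)\cap L^\infty(\R^d)\subset L^p(\R^d)$, together with Gronwall's inequality, gives
$$\int_0^T\!\!\int_{\R^d}|u|^{p-2}\,\phi'(u)\,|\nabla_x u|^2\,\ud x\,\ud t\le C_p ,$$
and, since $\phi'(v)=|v|^b(1-|v|)^{-a}\ge|v|^b$, in particular $\nabla_x\!\big(|u|^{(p+b)/2}\sign u\big)\in L^2((0,T)\times\R^d)$ uniformly in $\varepsilon$; for $p=2$ this reads $\nabla_x\Psi(u)\in L^2$, with $\Psi'(v)^2=\phi'(v)$. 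With the kinetic function $\chi(v,w):=\mathbf{1}_{0<v<w}-\mathbf{1}_{w<v<0}$, so that $u=\int_{\R}\chi(v,u)\,\ud v$, the chain rule yields the kinetic equation
$$\partial_t\chi-\phi'(v)\,\Delta_x\chi=\partial_v m+n\qquad\text{in }\Dist\big((0,T)\times\R^d\times\R\big),$$
where $m(t,x,v)=\delta_{v=u(t,x)}\,|\nabla_x\Psi(u(t,x))|^2\ge 0$ is a finite nonnegative measure of total mass $\int_0^T\!\int_{\R^d}|\nabla_x\Psi(u)|^2$ supported in $\{|v|<1\}$, and $n(t,x,v)=f(v)\,\delta_{v=u(t,x)}$ has total variation $\le LT\|u_0\|_{L^1(\R^d)}$; the initial datum contributes, after extending in time, a further term supported at $t=0$ (alternatively one proves the assertion on compact time subintervals with a temporal cutoff and treats the endpoints separately).

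\emph{Velocity averaging.} This is the heart of the proof. Fourier transforming the kinetic equation in $(t,x)\mapsto(\tau,\xi)$ gives $(i\tau+\phi'(v)|\xi|^2)\,\widehat\chi=\partial_v\widehat m+\widehat n+(\text{endpoint terms})$ and $\widehat u(\tau,\xi)=\int_\R\widehat\chi(\tau,\xi,v)\,\ud v$. Fix $\psi\in C_c^\infty(\R)$ with $\psi\equiv1$ on $[-1,1]$ and $\supp\psi\subset[-2,2]$, and for a scale $\delta>0$ — to be chosen dyadically as a function of the $(t,x)$-frequency — split
$$\widehat u=\int_\R\psi(v/\delta)\,\widehat\chi\,\ud v+\int_\R\big(1-\psi(v/\delta)\big)\,\widehat\chi\,\ud v=:L_\delta+H_\delta .$$
For the low-velocity part, $|\chi(v,u)|=\mathbf{1}_{|v|<|u|}\,\mathbf{1}_{\sign v=\sign u}$ yields $\big|\int_\R\psi(v/\delta)\chi(v,u)\,\ud v\big|\le\min(2\delta,|u|)$, so that $\min(2\delta,|u|)^p\le(2\delta)^{p-1}|u|$ and $\|u\|_{L^1((0,T)\times\R^d)}\lesssim1$ give $\|L_\delta\|_{L^p((0,T)\times\R^d)}\lesssim\delta^{(p-1)/p}$. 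For the high-velocity part one uses the equation: on $\{|v|\gtrsim\delta\}$ one has $|i\tau+\phi'(v)|\xi|^2|\gtrsim\max(|\tau|,\delta^b|\xi|^2)$, and after integrating by parts in $v$ to move $\partial_v$ off $m$ (producing $\psi'(v/\delta)$-terms and $\phi''(v)|\xi|^2(i\tau+\phi'(v)|\xi|^2)^{-2}$-terms) one estimates the resulting $v$-integrals against $\ud m$, $\ud n$ and — to upgrade the bound from $L^2_{t,x}$/$L^1_{t,x}$ to $L^p_{t,x}$ for $p\in[2,b+1]$ — against the weighted energy estimate above with exponent $p$. Choosing $\delta\sim|\xi|^{-2/b}$ balances $L_\delta$ against $H_\delta$; tracking the decay in $\xi$ and $\tau$ and reassembling the dyadic $(t,x)$-blocks with the usual $\varepsilon$-loss then gives $u\in W^{\sigma_t,p}(0,T;W^{\sigma_x,p}(\R^d))$ for every $\sigma_x<\kappa_x=\tfrac{2(p-1)}{pb}$ and every $\sigma_t<\kappa_t=\tfrac{b+1-p}{pb}$ (or $\sigma_t=0$), uniformly in $\varepsilon$; passing to the limit concludes.

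\emph{Expected main obstacle.} The principal new difficulty, compared with the porous medium equation, is the singularity $\phi(\pm1)=\infty$, so that $\phi'(v),\phi''(v)\to\infty$ as $|v|\to1$, appearing both in the symbol $i\tau+\phi'(v)|\xi|^2$ and in the integration-by-parts terms. This should be handled by exploiting that the degeneracy of $\phi'$ is located only at $v=0$ and is there of pure power type ($\phi'(v)\ge|v|^b$, exactly as for the porous medium equation), whereas near $\pm1$ the growth of $\phi'$ only enlarges the denominator; moreover the relevant $v$-integrals are against the finite measures $m$, $n$ and the energy densities, and one can additionally cut off a neighbourhood of $\pm1$ by a further parameter and absorb the resulting error using that $m$ is a finite measure. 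A second delicate point is that $m$ is only a bounded measure, so the high-velocity estimate must be run in $L^1_{t,x}$ or $L^2_{t,x}$ and then interpolated against the $p$-energy; this is exactly what restricts the exponents to $p\in[2,b+1]$ and forces the half-open intervals for $\sigma_t,\sigma_x$. Finally, for $p\neq2$ the splitting and optimization must be carried out block by block in a Littlewood--Paley decomposition in $(t,x)$, with $\delta=\delta_j$, and the pieces reassembled via real interpolation / $\ell^p$-summation, which is the origin of the $\varepsilon$-loss; the endpoint-in-time contributions are treated analogously.
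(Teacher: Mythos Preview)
Your overall plan---regularize, derive the kinetic formulation, and run a velocity-averaging argument---is the paper's plan too, and your diagnosis of the singularity at $|v|=1$ as the new obstacle is correct. The implementation, however, differs considerably, and there is one concrete missing ingredient.

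The paper's proof of this theorem is one line: apply Theorem~\ref{thm:GST20_thm1.1} together with Remark~\ref{rmk:assump_s;q}(ii). The content is in those results. Theorem~\ref{thm:GST20_thm1.1} does \emph{not} obtain the $L^p$ regularity from a $p$-weighted energy as you propose; instead it interpolates between an $L^m$-valued Littlewood--Paley block estimate (Lemma~\ref{lem:GS23_Lem4.2}, spatial gain) and an $L^1$-valued block estimate (Lemma~\ref{lem:GS23_Lem4.3}, one full time derivative) with parameter $\theta=\tfrac{p-1}{p}\tfrac{m}{m-1}$, $m=b+1$, which produces exactly $\kappa_t,\kappa_x$ for every $p\in(1,m]$ without any $p$-dependent energy. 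The restriction $p\ge 2$ enters only through an input you do not mention: beyond the kinetic measure, the argument requires $(-\Delta_x)^s\phi(u)\in L^q_{t,x}$ for some $s\in[0,1]$, $q\in[1,p]$. For the pure porous medium equation this is automatic with $s=0$, $q=1$ since $\phi(u)\in L^1$; in the biofilm case $\phi$ blows up at $\pm1$, so the paper takes $s=\tfrac12$, $q=2$, i.e.\ it uses $\nabla_x\phi(u)\in L^2((0,T)\times\R^d)$, supplied by the $\dot H^{-1}$ energy estimate of Theorem~\ref{thm:energy_est;Lip_source_term_f}. This bound is used precisely for the blocks with high temporal and low spatial frequency (the set $\N\times\{0\}$ in the proof of Lemma~\ref{lem:av_lem_space-time}): there $i\tau+\phi'(v)|\xi|^2\approx i\tau$, one must control $\int\phi'(v)\chi\,\d v=\phi(u)$ directly, and the kinetic measure alone does not suffice.

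Your scheme with $\delta=\delta_j\sim|\xi|^{-2/b}$ depending only on the spatial frequency does not cover these blocks: for $j=0$ one has $\delta_0\gtrsim 1$, so $H_{\delta_0}=0$ (since $\chi$ is supported in $|v|<1$) and $L_{\delta_0}=u$ with no decay in $l$, hence no $\sigma_t>0$. Your $p$-weighted energy $\int|u|^{p-2}\phi'(u)|\nabla u|^2=\|\nabla G_p(u)\|_{L^2}^2$ with $G_p'(v)=|v|^{(p-2)/2}\sqrt{\phi'(v)}$ is not $\|\nabla\phi(u)\|_{L^2}^2$ and does not obviously fill this gap. If you want to repair your direct approach, you should either let $\delta$ depend on both frequencies, or---as the paper does---treat the $\N\times\{0\}$ blocks separately using $\nabla_x\phi(u)\in L^2$.
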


To the authors' knowledge, earlier well-posedness and regularity results for Cauchy problems for  generalized porous medium equations do not cover Problem \eqref{eq:GPME} where $\phi'$ has an additional singularity and the equation includes a growth term $f$. 
While Theorem \ref{thm:wellposedness;main1} is based on mild solutions, the proof of Theorem \ref{thm:regularity} requires kinetic formulations in order to employ velocity averaging techniques as done for porous medium type equations in \cite{GST20,Ge21,GS23}.
We refer to \cite[Chapter 10]{Va07} for the history and an overview of the literature on the semigroup approach and mild solutions to generalized porous medium equations.  
A detailed overview of Sobolev regularity results for nonlinear evolution equations based on velocity-averaging techniques
 is given in \cite{Ge21}.
Regularity results in the scale of Sobolev spaces as stated in Theorem \ref{thm:regularity} 
were shown for non-local porous medium type operators in \cite{GS23} 
which in turn are generalizations of earlier works on the porous medium equation \cite{GST20,Ge21}.

The outline of the paper is as follows. In Section \ref{sec:pre} we introduce notation and function spaces. We also recall existence results for classical solutions of non-degenerate quasilinear problems as well as facts from the theory of $m$-accretive operators. In Section \ref{sec:well} we prove existence and uniqueness for problem \eqref{eq:GPME} and show that solutions take value in the open interval $(-1,1)$. In Section \ref{sec:approx} we establish stability results and show that the concepts of mild and classical solutions coincide for smooth non-degenerate approximations. 
We also use the approximations to prove comparison results and show that \eqref{eq:GPME} preserves the non-negativity of solutions. 
Finally, Section \ref{sec:reg} is devoted to kinetic formulations and the proof of the Sobolev regularity of solutions.

\subsection*{Acknowledgments }
The authors would like thank Jonas Sauer for his visit to Radboud University Nijmegen, the insightful discussions and for  kindly giving them a preliminary unpublished draft of \cite{GS23}. 
They also thank the Nederlandse Organisatie voor  Wetenschappelijk
Onderzoek (NWO) for their support through the Grant OCENW.KLEIN.358.

\section{Preliminaries}\label{sec:pre}

In this section we introduce notation and relevant function spaces. Moreover, we summarize results from the theory of uniformly parabolic quasilinear problems and $m$-accretive operators that we will need in the sequel.

\subsection{Notation and  function spaces}\label{subsec:function_spaces}

We write $f\lesssim g$ if there is a constant $c>0$ such that $f\leq c g$  and 
$f\lesssim_\varepsilon g$ if there is a  constant $c_\varepsilon\geq 0$ depending on the parameter $\varepsilon$ such that $f\leq c_\varepsilon g$. Moreover, we use the notation 
$f\eqsim g$ if $f\lesssim g$ and $g\lesssim f$.
For sets $I,J\subset\R$ we write $J\Subset I$ if $J$ is compactly contained in $I$. We introduce the functions 
$$
\sign_0(x)=\begin{cases}
    -1&x<0,\\
    0&x=0,\\
    1&x>0,
\end{cases}\qquad 
\sign_0^+(x)=\begin{cases}
    0&x\leq 0,\\
    1&x>0,
\end{cases}\qquad x\in\R.
$$
We write $\N_0 =\{0\}\cup\N$, where $\N=\{1,2,3,\ldots\}$.

Let $V \subset \R^d$ have dense interior (e.g.\ $V=J$ an interval in $\R$ or $V\subset\R^d$ open) and let $X$ be a Banach space. 
For $k\in\N_0\cup\{\infty\}$ we denote by $C^k(V;X)$ the space of all $k$-times continuously differentiable functions $V \to X$.
We denote by $BC^k(V;X)$ the subspace of all bounded functions with bounded derivatives up to order $k$.
For $k \in \N_0$ this is a Banach space with the norm $\|u\|_{BC^k(V;X)}=\sup_{|\alpha| \leq k}\sup_{x\in V}\|\p^\alpha u(x)\|_X$.
Similarly, we use the notation $BC^\alpha(V;X)$, $\alpha\in(0,\infty)\setminus \N$, for spaces of H\"older continuous functions.
If $k=0$, we write $C(V;X)$ and $BC(V;X)$.
If $X=\R$ we write $C^k(V)$, $BC^k(V)$ and $BC^\alpha(V)$.
Furthermore, we denote by $C_c(V)^+$ the continuous real-valued functions on $V$ with compact support that take non-negative values.
Finally, for an interval $I\subset\R$ we use the notation 
$$BC^{\alpha,\beta}(I\times\R^d)=
BC^{\alpha}(I;BC(\R^d))\cap
BC(I;BC^\beta(\R^d)),\quad \alpha,\beta\in[0,\infty).
$$
Let $\Omega \subset \R^d$ be open. We denote by $\ms{D}(\Omega)$ the space $C^\infty_c(\Omega)$ equipped with its standard inductive limit topology and by $\ms{D}'(\Omega;X) = \ms{L}(\ms{D}(\Omega);X)$ the space of $X$-valued distributions on $\Omega$. 
We denote by $\ms{S}(\R^d)$ the space of Schwartz functions with its standard Frechet topology and by $\ms{S}'(\R^d;X) = \ms{L}(\ms{S}(\R^d);X)$ the space of $X$-valued tempered distributions.

For $1\leq p\leq \infty$ we denote by $L^p(\Omega;X)$ the standard Bochner space, and by $W^{k,p}(\Omega;X),$ $k\in\N_0,$ the corresponding $X$-valued Sobolev space.



We denote the integrable functions taking values in $A\subset\R$ by $L^1(\Omega;A)=\{f \in L^1(\Omega): f \in A \:\text{a.e.}\}$.
Moreover, let $1\leq p< \infty$  and $s \in (0,\infty) \setminus \N$, say $s=k+\sigma$ with $k \in \N_0$ and $\sigma \in (0,1)$. 
We define the \emph{Sobolev-Slobodetskii space} $W^{s,p}(\Omega;X)$ as the space of all $f \in W^{k,p}(\Omega;X)$ for which
\begin{align}\label{eq:SobSlob_norm}
[f]_{\dot W^{s,p}(\Omega;X)} :=  \sup_{|\alpha| = k} \left(\int_{\Omega \times \Omega}\frac{\nrm{\p^\alpha f(x) - \p^\alpha f(y) }_{X}^p}{|x-y|^{\sigma p + d}} \d x \d y\right)^{1/p} < \infty.
\end{align}
Equipped with the norm
$$
\nrm{f}_{W^{s,p}(\Omega;X)} := \nrm{f}_{W^{k,p}(\Omega)} + [f]_{\dot W^{s,p}(\Omega;X)},
$$
$W^{s,p}(\Omega;X)$ becomes a Banach space.

There are several ways to define  \emph{homogeneous Sobolev spaces}. We only consider the spaces $\dot H^{1}(\R^d)$ and $\dot H^{-1}(\R^d)$, and 
only use them in the case $d \geq 3$ for which Sobolev embeddings exist.
To simplify the presentation, we actually base our definition on these embeddings.
Let $d \geq 3$ and 
$p \in (1,2)$ be such that $\frac{1}{p}=\frac{1}{2}+\frac{1}{d}$, or equivalently $\frac{1}{p'}=\frac{1}{2}-\frac{1}{d}$, that is, $p'$ is the Sobolev conjugate of $2$.
Defining
$$
\dot{H}^{1}(\R^d) := \{ u \in L^{p'}(\R^d) : \nabla u \in L^2(\R^d) \},
$$
we have
\begin{equation}\label{eq:Sob_embd_H^-1;dual}
\dot{H}^{1}(\R^d) \hra L^{p'}(\R^d),   
\end{equation}
as a consequence of the Hardy-Littlewood-Sobolev theorem on fractional integration (see e.g.\ \cite[Theorem~6.1.3]{Gr09}).
As this embedding is dense, defining $\dot{H}^{-1}(\R^d) := \dot{H}^{1}(\R^d)^{*}$ by duality we obtain
\begin{equation}\label{eq:Sob_embd_H^-1}
L^p(\R^d) \hookrightarrow  \dot{H}^{-1}(\R^d).
\end{equation}
In this way, $\dot H^{1}(\R^d)$ and $\dot H^{-1}(\R^d)$ are Hilbert spaces.

Alternatively to  defining Sobolev-Slobodetskii spaces via the difference norm description \eqref{eq:SobSlob_norm}, there is a Fourier analytic description which gives rise to the more general scale of Besov spaces.
In order to define Besov spaces,
we introduce the following notation. 
We denote by $\Phi(\R^d)$ the set of all sequences $(\varphi_k)_{k\in \N_0} \subset \ms{S}(\R^d)$ such that for $k\geq 2$,
\begin{align*}
\wh{\varphi}_0 = \wh{\varphi}, \qquad \wh{\varphi}_1(\xi) = \wh{\varphi}(\xi/2) - \wh{\varphi}(\xi), \qquad \wh{\varphi}_k(\xi) = \wh{\varphi}_1(2^{-k+1} \xi), \qquad \xi\in \R^d,
\end{align*}
where the Fourier transform $\wh{\varphi}$ of the generating function $\varphi\in \Sch(\R^d)$ satisfies
\begin{equation*}
 0\leq \wh{\varphi}(\xi)\leq 1, \quad  \xi\in \R^d, \qquad  \wh{\varphi}(\xi) = 1 \ \text{ if } \ |\xi|\leq 1, \qquad  \wh{\varphi}(\xi)=0 \ \text{ if } \ |\xi|\geq \frac32.
\end{equation*}

Let $p \in [1,\infty)$, $q\in [1,\infty]$ and $s \in \R$. 
We define the \emph{Besov space} $B_{p,q}^s (\R^d;X)$ as the space of all $f\in {\mathscr S}'(\R^d;X)$ for which
$$
\|f\|_{B_{p,q}^s (\R^d;X)} := \Big\| \big( 2^{sk}\varphi_k * f\big)_{k \in \N_0} \Big\|_{\ell^q(L^p(\R^d;X))} < \infty,
$$
where $(\varphi_k)_{k\in \N_0} \in \Phi(\R^d)$ is fixed.
With this norm $B_{p,q}^s (\R^d;X)$ is a Banach space. The definition is independent of the sequence $(\varphi_k)_{k\in \N_0} \in \Phi(\R^d)$ in the sense that a different choice of $(\varphi_k)_{k\in \N_0}$ leads to an equivalent norm. 
We remark that for $p \in (1,\infty)$ and $s \in (0,\infty) \setminus \N$, we have
\[
W^{s,p}(\R^d;X) = B_{p,p}^s (\R^d;X).
\]

For a measurable space $(S,\ms{A})$  we denote by $\ms{M}(S;X)$ the space of all $X$-valued measures on $(S,\ms{A})$ of bounded variation. 
We refer the reader to \cite[Section~1.3b]{HNVW16} for a brief introduction to vector measures.
If $X=\R$, we write $\ms{M}(S)$.
Let now $(S,\ms{A},\mu)$ be a measure space. 
Following \cite[Section~2.4]{Pi16}, we denote by $\Lambda^\infty(S;X) = \Lambda^\infty((S,\ms{A},\mu);X)$ the space of all $X$-valued measures $F:\ms{A} \to X$ that are absolutely continuous with respect to $\mu$ and for which the Radon-Nikod\'ym derivative $\frac{\d\nrm{F}}{\d\mu}$ belongs to $L^\infty(S)$.  
Equipped with the norm
$$
\nrm{F}_{\Lambda^\infty(S;X)} := \left\|\frac{\d\nrm{F}}{\d\mu}\right\|_{L^\infty(S)},
$$
$\Lambda^\infty(S;X)$ becomes a Banach space. 
Furthermore, we denote by $L^0(S)$ the space of equivalence classes of measurable functions on $(S,\ms{A},\mu)$.

\subsection{Classical solutions of  quasilinear parabolic problems}
We consider the following Cauchy problem
\begin{equation}\label{eq:GPME;psi_v}
\begin{cases}
\p_t v = \Delta \psi(v)+g &\text{in}\:\:(0,T) \times \R^d, \\
v(0)=v_{0} &\text{on}\:\:\R^d.    
\end{cases}
\end{equation}
Under suitable assumptions on $\psi$, $g$ and $v_0$
the existence and uniqueness of classical solutions follows from \cite{LSU68} 
(Theorem 8.1, Chapter V).  

\begin{proposition}\label{pro_LSU}
    Let $\psi\in C^3(\R)$ such that $\psi',\psi''\in L^\infty(\R)$ and $\psi'\geq c>0$ and let $\beta \in (0,1)$.
    Furthermore, we assume that $g\in BC^{\frac{\beta}{2}, \beta}([0,T]\times \R^d)$, $v_0\in BC^{2+\beta}(\R^d)$. 
    Then, there exists a unique classical solution $v$ of \eqref{eq:GPME;psi_v} and $v\in BC^{1+\frac{\beta}{2},2+\beta}([0,T]\times \R^d)$.
\end{proposition}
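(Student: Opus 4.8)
The plan is to pass through the substitution $u:=\psi(v)$, which turns \eqref{eq:GPME;psi_v} into the quasilinear problem
\[
\p_t u = a(u)\,\Delta u + a(u)\,g,\qquad u(0)=\psi(v_0),\qquad a:=\psi'\circ\psi^{-1},
\]
which is uniformly parabolic with no gradient nonlinearity: since $\psi\in C^3(\R)$, $\psi'\geq c>0$ and $\psi',\psi''\in L^\infty$, we have $\psi^{-1}\in C^3(\R)$, $c\leq a\leq\|\psi'\|_{L^\infty}$ everywhere, and $a\in C^2$ with bounded derivatives on any bounded range. I would first solve this on bounded domains. Fix $\eta\in C_c^\infty(\R^d)$ with $\eta\equiv1$ on $B(0,1)$ and $\supp\eta\subset B(0,2)$, put $\eta_n(x):=\eta(x/n)$, $v_{0,n}:=\eta_n v_0$, $g_n:=\eta_n g$, and on $(0,T)\times B(0,3n)$ consider $\p_t u_n=a(u_n)(\Delta u_n+g_n)$ with $u_n(0)=\psi(v_{0,n})$ and homogeneous Dirichlet boundary data. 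Since $\psi(v_{0,n})$, $g_n$ and their relevant derivatives vanish near $\p B(0,3n)$, the zeroth- and first-order compatibility conditions at the corner $\{0\}\times\p B(0,3n)$ hold trivially, and $\|\psi(v_{0,n})\|_{BC^{2+\beta}}$, $\|g_n\|_{BC^{\beta/2,\beta}}$ are bounded uniformly in $n$; hence \cite{LSU68} (Theorem~8.1, Chapter~V) yields a unique classical solution $u_n\in BC^{1+\beta/2,\,2+\beta}([0,T]\times\overline{B(0,3n)})$, and $v_n:=\psi^{-1}(u_n)\in BC^{1+\beta/2,\,2+\beta}$ solves \eqref{eq:GPME;psi_v} with data $v_{0,n},g_n$ on $B(0,3n)$.

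Next I would derive bounds uniform in $n$. Comparing $v_n$ with the constants $\pm(\|v_0\|_{L^\infty}+t\|g\|_{L^\infty})$ gives $\|v_n\|_{L^\infty}\leq M:=\|v_0\|_{L^\infty}+T\|g\|_{L^\infty}$, so $c\leq\psi'(v_n)\leq\|\psi'\|_{L^\infty}$ and $a(u_n),a'(u_n),a''(u_n)$ are bounded, uniformly in $n$. Viewing \eqref{eq:GPME;psi_v} in divergence form $\p_t v_n=\ddiv(\psi'(v_n)\nabla v_n)+g_n$, the De Giorgi--Nash--Moser estimate gives a uniform interior space-time H\"older bound $v_n\in C^{\alpha,\alpha/2}_{\loc}$ for some $\alpha\in(0,1)$; then $a(u_n)=\psi'(v_n)\in C^{\alpha,\alpha/2}_{\loc}$, and a standard parabolic bootstrap --- interior $L^p$ and Schauder estimates for $\p_t u_n-a(u_n)\Delta u_n=a(u_n)g_n$, which in particular bound $\nabla u_n$ and hence $\nabla v_n$, the H\"older exponent increasing until it is limited only by the regularity of $g$ and $v_0$ --- produces uniform interior $BC^{1+\beta/2,\,2+\beta}$ bounds on every fixed cylinder $(0,T)\times B_R$. (Alternatively, a Bernstein-type maximum principle for $|\nabla u_n|^2$, or the a priori gradient estimates of \cite[Ch.~V]{LSU68}, can be invoked directly.) By Arzel\`a--Ascoli and a diagonal argument, $v_n\to v$ locally in $C^{1,2}$ along a subsequence; since $v_{0,n}=v_0$ and $g_n=g$ on $B_R$ for $n$ large, $v$ is a classical solution of \eqref{eq:GPME;psi_v}, and as every bound above is independent of $n$ and the equation has no explicit $x$-dependence beyond $g$, one concludes $v\in BC^{1+\beta/2,\,2+\beta}([0,T]\times\R^d)$.

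For uniqueness, if $v_1,v_2\in BC^{1+\beta/2,2+\beta}$ both solve \eqref{eq:GPME;psi_v}, then $w:=v_1-v_2$ satisfies $w(0)=0$ and a linear uniformly parabolic equation $\p_t w=\psi'(v_1)\Delta w+\vec b\bcdot\nabla w+q\,w$ whose coefficients $\vec b,q$ are bounded (they arise from mean-value expansions of $\psi'(v_1)-\psi'(v_2)$ and of $\psi''(v_1)|\nabla v_1|^2-\psi''(v_2)|\nabla v_2|^2$, using the $BC^{1,2}$ bounds on $v_1,v_2$); hence $w\equiv0$ by the maximum principle, or by a Gronwall estimate. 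I expect the main obstacle to be obtaining the gradient estimate and the ensuing Schauder bounds \emph{uniformly} in the exhaustion parameter $n$, so that the limit exists and is globally $BC^{1+\beta/2,2+\beta}$ on $\R^d$ rather than merely locally; the $L^\infty$ bound, the passage to the limit, and the uniqueness argument are standard parabolic theory once this is in place.
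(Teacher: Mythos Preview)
Your approach is correct but substantially more involved than the paper's. The paper simply applies \cite[Chapter~V, Theorem~8.1]{LSU68} \emph{directly} to the Cauchy problem on $\R^d$: that theorem already covers the whole space, so there is no need for a bounded-domain exhaustion or uniform interior estimates. The paper works with the original equation in divergence form $\p_t v=\sum_i\p_{x_i}(\psi'(v)\p_{x_i}v)+g$, identifies $a_i(\cdot,u,p)=\psi'(u)p_i$ and $a(\cdot,u,p)=-g$, and checks the structural hypotheses (a)--(c) of that theorem by elementary estimates using $\psi',\psi''\in L^\infty$, $\psi'\geq c$, and the boundedness and H\"older regularity of $g$ and $v_0$; uniqueness is likewise read off from the differentiability conditions stated in the same theorem. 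Your route --- the Kirchhoff-type substitution $u=\psi(v)$, solving on $B(0,3n)$, De~Giorgi--Nash--Moser plus Schauder bootstrap for uniform interior bounds, Arzel\`a--Ascoli to pass to the limit, and a separate linearization for uniqueness --- is a valid and fairly standard alternative, and has the merit of being self-contained if one only knows the bounded-domain version of \cite{LSU68}; but it is an order of magnitude more work, and the step you correctly flag as delicate (uniformity of the Schauder bounds in $n$) is precisely what the whole-space theorem handles for free. One small inaccuracy: you write that $\psi(v_{0,n})$ vanishes near $\p B(0,3n)$, but the proposition does not assume $\psi(0)=0$; you should either shift $\psi$ or take constant boundary data $\psi(0)$ instead of $0$.
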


\begin{proof}
    Using the notation in \cite{LSU68}, 
    $$
    \mathcal{L}u=u_t-\sum_{i=1}^d\partial_{x_i}a_i(\cdot,u,\nabla u)+a(\cdot, u, \nabla u),
    $$ 
    we have $a_i(\cdot, u, \nabla u)=\psi'(u)\partial_{x_i}u$ and $a(\cdot, u, \nabla u)=-g(\cdot).$ Moreover, we observe that $a_{ij}(\cdot, u, \nabla u)=\psi''(u)\delta_{ij}$ which implies that 
    $$
    A(\cdot, u, p)=a(\cdot, u,p)-\sum_{i=1}^d\partial_u a_i(\cdot, u,p) p_i-\sum_{i=1}^d\partial_{x_i}a_i (\cdot, u,p)=-g(\cdot)-\psi''(u)|p|^2.
    $$
    We verify the assumptions of Theorem 8.1, Chapter 5, in \cite{LSU68}. Hypothesis (a) holds as the initial data $v_0\in  C^{2+\beta}(\R^d)$ is bounded. Hypothesis (b) holds since $a_{ii}(\cdot, u, \nabla u)=\psi'(u)>0$ and Young's inequality implies that 
    $$
    A(\cdot, u, 0)u=-g(\cdot)u\geq -\frac{1}{2}u^2-c_1,
    $$
    for some constant $c_1\geq 0$. To verify (c) we observe that the functions $a$ and $a_i$ are continuous and $a_i$ is continuously differentiable with respect to $x, u$ and $p$, and 
    $$
    0<c\leq \psi'(u)\leq c_2<\infty, \qquad c_2\in\R.
    $$
    Moreover, using Young's inequality and that $\psi',\psi''\in L^\infty (\R)$ and $g$ is bounded we observe that 
    \begin{align*}
        &\quad\sum_{i=1}^d\left(|a_i|+\left|\partial_u a_i\right| \right)(1+|p|)+\sum_{i,j=1}^d|\partial_{x_j}a_i|+|a|\\
        &= \sum_{i=1}^d \left(|\psi'(u) p_i|+\left|\psi''(u) p_i\right| \right)(1+|p|)+|g|
        \leq c_3(1+|p|)^2,
    \end{align*}
    for some $c_3>0$. The functions $a, a_i, \partial_{p_j} a_i$ and $\partial_u a_i$ are H\"older continuous with respect to $t,x,u$ and $p$ with exponents $\frac{\beta}{2},\beta,\beta$ and $\beta$. Moreover, all constants in the estimates are uniform. 
    Hence, Theorem 8.1, Chapter V in \cite{LSU68} implies that there exists a solution that is bounded and satisfies $u\in C^{1+\frac{\beta}{2},2+\beta}([0,T]\times R^d).$
    The uniqueness follows as well since the functions $a_{ij}$ and $A$ are differentiable with respect to $u$ and $p$, and for bounded values of $u$ and $p$, we have 
    \begin{align*}
        \max_{(x,t)\in\R^d\times[0,T], |(u,p)|\leq R}&\left\{\partial_u a_{ij}(t,x,u,p), 
        \partial_p a_{ij}(t,x,u,p),\partial_p A (t,x,u,p)\right\}\leq C_R,\\
        \min_{(t,x)\in\R^d\times[0,T], |(u,p)|\leq R}&\left\{\partial_u A (t,x,u,p)\right\}\geq -C_R,
    \end{align*}
    for some constant $C_R\geq 0$ only depending on $R>0$.   
\end{proof}

\subsection{Abstract nonlinear Cauchy problems governed by m-accretive operators}

Concerning \emph{maximal monotone} and \emph{$m$-accretive operators}
we follow the terminology from
\cite[Chapters~2 and 3]{Ba10} (also see \cite[Chapter~10]{Va07}), with the difference that we only consider single-valued operators. This is not necessary, but simplifies the presentation and is sufficient for our purposes.

\subsubsection{Maximal monotone functions}

We shortly introduce maximal monotone functions and graphs, for further details we refer to \cite{Ba10}. 

Let $I \subset \R$ be an open interval and let $\phi:I \to \R$ be a monotonically increasing function.
A function $\phi$ is   \emph{maximal monotone} if it satisfies:
\begin{itemize}
    \item if $\inf(I) > -\infty$, then $\inf_I \phi = -\infty$;
    \item if $\sup(I) < \infty$, then $\sup_I \phi = \infty$.
\end{itemize}

A \emph{monotone graph} in $\R$ is a set $\beta \subset \R \times \R$ such that, for all $(x_1,y_1), (x_2,y_2) \in \beta$, it holds true that $(x_1-x_2)(y_1-y_2) \geq 0$.
A monotone graph $\beta$ in $\R$ is called \emph{maximal monotone} if it is not properly contained in any other monotone graph in $\R$.

Let $I \subset \R$ be an open interval,  $\phi:I \to \R$ and consider the graph $\graph(\phi)=\{(r,\phi(r)):r \in I\}$. Then note that the following holds:
\begin{itemize}
    \item $\phi$ is monotonically increasing if and only if $\graph(\phi)$ is a monotone graph in $\R$.
    \item $\phi$ is a maximal monotone function if and only if $\graph(\phi)$ is a maximal monotone graph in $\R$.
\end{itemize}

\begin{example}\label{ex:max_mon_fc_biofilm}
Let $I=(-1,1)$, $a\geq 1, b>0$ and consider the biofilm diffusion coefficient \eqref{eq:bfdiff}, 
$$
D(z)=\frac{|z|^b}{(1-|z|)^a}, \qquad z \in I,
$$
and let
$$
\phi(\rho) = \int_{0}^{\rho}D(z)\d z, \qquad \rho \in I.
$$
Then $\phi:I \to \R$ is a maximal monotone function.
\end{example}
\begin{proof}
Since $D>0$  on $I \setminus \{0\}$, it follows that $\phi$ is strictly monotonically increasing. 
Furthermore, as $a \geq 1$ we have $-\lim_{\rho \to -1}\phi(\rho)=\lim_{\rho \to 1}\phi=\infty$.
Therefore, $\phi:I \to \R$ is a maximal monotone function.     
\end{proof}

\subsubsection{$m$-accretive operators}

The following theorem corresponds to \cite[Theorem~3.7]{Ba10}. It defines the operator  $A_\phi$ that will allow us to consider \eqref{eq:cauchy} as an abstract evolution equation and apply the theory of nonlinear semigroups.

\begin{proposition}\label{thm:m-accr_A_phi}
Let $I \subset \R$ be an open interval with $0 \in I$ and let $\phi:I \to \R$ be a maximal monotone function with $\phi(0)=0$. Then the operator $A_\phi$ on $L^1(\R^d)$ given by
\begin{align*}
    \dom(A_\phi) &= \{ y \in L^1(\R^d) : \phi(y) \in L^1_\loc(\R^d), \Delta\phi(y) \in L^1(\R^d) \},\\
    A_\phi y &= -\Delta\phi(y), \  y\in \dom(A_\phi),
\end{align*}
is m-accretive.
\end{proposition}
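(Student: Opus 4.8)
The plan is to prove that $A_\phi$ is m-accretive by verifying the two defining properties: accretivity (equivalently, that $(I + \lambda A_\phi)^{-1}$ is a well-defined contraction on $L^1(\R^d)$ for $\lambda > 0$) and the range condition $\mathrm{Rng}(I + \lambda A_\phi) = L^1(\R^d)$. Since the resolvent problem $y + \lambda A_\phi y = h$ rescales to $y - \Delta \phi(y) = h$ (absorbing $\lambda$ into $\phi$, which stays maximal monotone with $\phi(0)=0$), it suffices to treat $\lambda = 1$. The core analytic task is thus: for every $h \in L^1(\R^d)$, show there exists a unique $y \in \dom(A_\phi)$ with $y - \Delta\phi(y) = h$, and that the solution map $h \mapsto y$ is an $L^1$-contraction (indeed order-preserving).

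First I would establish the a priori $L^1$-contraction and comparison estimates at the level of the PDE $y - \Delta\phi(y) = h$. The standard device is Kato's inequality / testing with $\sign_0(\phi(y_1) - \phi(y_2)) = \sign_0(y_1 - y_2)$ (valid since $\phi$ is monotone): if $y_i - \Delta\phi(y_i) = h_i$, then subtracting and pairing against a smooth approximation of $\sign_0(y_1-y_2)$ and using that $-\Delta$ is accretive in $L^1$ against such test functions, one gets $\|(y_1-y_2)^+\|_{L^1} \le \|(h_1-h_2)^+\|_{L^1}$, hence $\|y_1 - y_2\|_{L^1} \le \|h_1-h_2\|_{L^1}$ and the comparison principle $h_1 \le h_2 \Rightarrow y_1 \le y_2$. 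This simultaneously gives accretivity and uniqueness. For existence (the range condition) I would use an approximation scheme: regularize $\phi$ by $\phi_\varepsilon = \phi * \rho_\varepsilon + \varepsilon\,\mathrm{id}$ (or a Yosida-type regularization) so that $\phi_\varepsilon$ is smooth, strictly increasing with $\phi_\varepsilon' \ge \varepsilon$, and truncate $h$ to $h_n \in L^1 \cap L^\infty$ with compact support; solve the now uniformly elliptic, non-degenerate problem $y_{n,\varepsilon} - \Delta\phi_\varepsilon(y_{n,\varepsilon}) = h_n$ on $\R^d$ (e.g. by first solving on balls $B_R$ with Dirichlet data via monotone operator theory — Minty–Browder for the map $v \mapsto \phi_\varepsilon^{-1}(v) - \Delta v$ — and passing $R \to \infty$, or directly by the theory of maximal monotone operators on $H^{-1}$). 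The uniform $L^1$ bounds from the previous step, together with the bound $\|\Delta\phi_\varepsilon(y_{n,\varepsilon})\|_{L^1} = \|y_{n,\varepsilon} - h_n\|_{L^1} \le 2\|h_n\|_{L^1}$, let me extract limits; I pass $\varepsilon \to 0$ and $n \to \infty$ using the $L^1$-contraction to get convergence of $y_{n,\varepsilon}$ in $L^1$, identify the limit $y$, and check $y \in \dom(A_\phi)$ and $y - \Delta\phi(y) = h$ in $\ms{D}'(\R^d)$, using that $\phi$ is continuous (maximal monotone single-valued) to pass the nonlinearity through the a.e.-convergent subsequence, plus the $L^1_\loc$ control on $\phi(y)$ coming from local boundedness of $\phi$ on compacts and the $L^1$-bounds.

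Alternatively — and this is cleaner and the route I would actually follow — I would invoke the structure already available: cite \cite[Theorem~3.7]{Ba10} essentially directly. The proposition as stated is precisely that result specialized to the single-valued case, so the ``proof'' reduces to checking that our hypotheses ($I$ open interval containing $0$, $\phi$ maximal monotone with $\phi(0)=0$) match the hypotheses there, and noting that Bénilan–Brezis–Crandall-type arguments show $A_\phi$ is m-accretive on $L^1(\R^d)$; the single-valuedness of $\phi$ forces $A_\phi$ to be a (single-valued) operator rather than a multivalued one. I would phrase the proof as: ``This follows from \cite[Theorem~3.7]{Ba10}; we only need to observe that since $\phi$ is a single-valued maximal monotone function the associated operator is single-valued, and that $0 \in I$ with $\phi(0)=0$ guarantees $0 \in \dom(A_\phi)$ with $A_\phi 0 = 0$ so that the domain is nonempty.''

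The main obstacle in a self-contained argument is the existence part (the range condition) on the whole space $\R^d$: reconciling the degeneracy/singularity of $\phi$ (here $\phi'$ vanishes at $0$ and blows up at $\pm 1$) with the need for compactness when passing to the limit, and ensuring the limit function $y$ genuinely takes values in $I$ — here a maximum-principle / comparison argument, comparing $y_{n,\varepsilon}$ with the solution for data $\|h_n\|_\infty$, is needed to keep the approximations in a fixed compact subinterval of $I$ away from the singularities, so that $\phi$ and its regularizations are uniformly controlled. Since \cite{Ba10} already packages all of this, I expect the final written proof to be a one-line citation with the single-valuedness remark, and I would only expand the approximation scheme if a referee insisted on self-containedness.
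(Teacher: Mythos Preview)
Your proposal is correct and matches the paper's approach exactly: the paper states this proposition as a direct citation of \cite[Theorem~3.7]{Ba10} without proof, and your preferred route---invoking that reference with the single-valuedness remark---is precisely what is done. The self-contained argument you sketch is a reasonable expansion but is not needed here.
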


The closure of the domain of  $A_\phi$ has the following explicit characterization, which is taken from \cite[Proposition~6]{BC81}.

\begin{proposition}\label{prop:A_phi_closure_dom}
Let the notations and assumptions be as in Proposition~\ref{thm:m-accr_A_phi}. Then
$$
\overline{\dom(A_\phi)} = L^1(\R^d;\overline{I}).
$$
\end{proposition}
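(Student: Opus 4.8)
The plan is to prove the two inclusions separately, the easy one being $\overline{\dom(A_\phi)} \subseteq L^1(\R^d;\overline{I})$ and the substantive one being the reverse density statement. For the easy inclusion, observe that if $y \in \dom(A_\phi)$, then in particular $y \in L^1(\R^d)$ with $\phi(y)$ defined a.e., which forces $y(x) \in I$ for a.e.\ $x$ (since $\phi$ is only defined on $I$), hence $y \in L^1(\R^d;I) \subseteq L^1(\R^d;\overline{I})$. As $L^1(\R^d;\overline{I})$ is a closed subset of $L^1(\R^d)$ — being the intersection of $L^1$ with the closed convex set of functions valued in the closed interval $\overline{I}$ — the closure of $\dom(A_\phi)$ is contained in it.

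For the reverse inclusion I would establish that $\dom(A_\phi)$ contains a dense subset of $L^1(\R^d;\overline{I})$. The natural strategy is a two-step approximation: first reduce to functions valued in a compact subinterval $[c,d] \Subset I$ by truncation, then mollify. Concretely, given $y \in L^1(\R^d;\overline{I})$, one truncates the values into $[c_n,d_n]$ with $c_n \downarrow \inf I$, $d_n \uparrow \sup I$ (or into $[-1+1/n, 1-1/n]$ in the biofilm case), and truncates the support to a large ball, obtaining $y_n \to y$ in $L^1$ with $y_n$ compactly supported and valued in a fixed compact subinterval of $I$ on which $\phi$ is bounded and uniformly continuous. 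Then one convolves $y_n$ with a standard mollifier $\rho_\varepsilon$; the resulting function $y_n * \rho_\varepsilon$ is smooth, compactly supported, still valued in the (convex) interval $[c_n,d_n] \subset I$ by Jensen's inequality, and converges to $y_n$ in $L^1$ as $\varepsilon \to 0$. Smoothness and compact support give $\phi(y_n*\rho_\varepsilon) \in C_c^\infty(\R^d) \subseteq L^1_\loc$ and $\Delta \phi(y_n * \rho_\varepsilon) \in C_c(\R^d) \subseteq L^1(\R^d)$ — here one uses that $\phi$ restricted to the compact range is Lipschitz if $\phi$ is, or at least that $\phi$ is continuous so that $\phi$ composed with a smooth compactly supported function is continuous and compactly supported; for the Laplacian to make classical sense one wants $\phi \in C^2$ on the relevant compact interval, which may fail at $0$ for the biofilm $\phi$, so the cleaner route is to note $\Delta \phi(y_n*\rho_\varepsilon)$ exists as a distribution and equals a function in $L^1$ because $\phi(y_n*\rho_\varepsilon)$ is itself smooth enough — more carefully, mollify $\phi$ as well if needed. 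Hence each mollified truncate lies in $\dom(A_\phi)$, and a diagonal argument yields $y \in \overline{\dom(A_\phi)}$.

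The main obstacle is the regularity issue just flagged: ensuring $\Delta \phi(w) \in L^1(\R^d)$ for the approximants $w = y_n * \rho_\varepsilon$, given that $\phi$ need only be maximal monotone (hence merely continuous, or even only monotone with at most countably many jumps — though $\phi(0)=0$ and $\phi$ being a function from the hypotheses force continuity) rather than $C^2$. If $\phi$ is not $C^2$ near points in the range of $w$, then $\phi(w)$ need not be $C^2$ and $\Delta\phi(w)$ is a priori only a distribution. The fix is to additionally approximate $\phi$ on the compact subinterval by smooth strictly increasing functions $\phi_k$ (e.g.\ $\phi_k = \phi * \rho_{1/k}$ suitably adjusted to fix $\phi_k(0)=0$), so that $\phi_k \to \phi$ uniformly on compacta; then $\phi_k(w) \to \phi(w)$ in $L^1$, $\phi_k(w) \in C_c^\infty$, $\Delta\phi_k(w) \in C_c(\R^d)$, and $w \in \dom(A_{\phi})$ can be reached by first replacing $\phi$ by $\phi_k$ — but since the domain depends on $\phi$, the right statement is that $w$ itself (not modified) lies in $\dom(A_\phi)$ once $\phi(w) \in C_c^\infty$, which holds as soon as $\phi$ is, say, locally Lipschitz; in full generality one invokes the cited reference \cite{BC81} for the genuinely sharp statement. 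For the purposes of this paper, since the relevant $\phi$ in Example~\ref{ex:max_mon_fc_biofilm} is smooth on $I \setminus \{0\}$ and the approximants can be arranged to avoid a neighborhood of $0$ in their range except on a set of small measure, the direct construction above suffices with minor care, and I would present it that way, citing \cite[Proposition~6]{BC81} for the abstract version.
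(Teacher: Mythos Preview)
The paper does not prove this proposition; it simply cites \cite[Proposition~6]{BC81}. Your proposal actually goes further than the paper by sketching the natural truncation-plus-mollification argument and correctly isolating the genuine obstacle: for a merely maximal monotone $\phi$ (not assumed $C^2$), a smooth compactly supported $w$ valued in a compact subinterval of $I$ need not lie in $\dom(A_\phi)$, because $\Delta\phi(w)$ may fail to be represented by an $L^1$ function. Your fallback of citing \cite{BC81} for the general statement is exactly what the paper does, so in that sense the approaches coincide.

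One caution regarding your biofilm-specific remark: arranging the approximants to ``avoid a neighborhood of $0$ in their range except on a set of small measure'' is not feasible when approximating a function that vanishes on a set of positive measure (which is the generic situation for compactly supported $y$). The cleaner observation is that for the biofilm $\phi$ with $b \geq 1$, the derivative $\phi'$ is locally Lipschitz on $(-1,1)$, so $\phi'' \in L^\infty_{\loc}((-1,1))$; then for $w \in C^\infty_c(\R^d)$ with range in a compact subinterval one has $\phi(w) \in W^{2,\infty}_c(\R^d)$ and $\Delta\phi(w) = \phi''(w)|\nabla w|^2 + \phi'(w)\Delta w \in L^\infty_c(\R^d) \subset L^1(\R^d)$, so $w \in \dom(A_\phi)$ directly and the mollification argument goes through without avoiding $0$. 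For $0<b<1$ the second derivative $\phi''$ blows up at $0$ and this direct route needs genuine additional work near $\{w=0\}$; there, deferring to \cite{BC81} is the appropriate move.
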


\subsubsection{The Cauchy problem}

Let $A$ be an $m$-accretive operator on a Banach space $X$ and consider the associated Cauchy problem
\begin{equation}\label{eq:abstract_Cauchy}
    \begin{cases}
        y'(t)+Ay(t) = f(t), & t \in [0,T],\\
        y(0) = y_0,
    \end{cases}
\end{equation}
for a given $T \in (0,\infty)$, where $y_0 \in X$ and $f \in L^1(0,T;X)$.

We recall the notion of  mild solutions for \eqref{eq:abstract_Cauchy} 
(see \cite[Definition~4.3]{Ba10}, cf.\ \cite[Definition~10.6]{Va07}).

\begin{definition}\label{def:mild_sol_Cauchy}
A \emph{mild solution} of the Cauchy problem \eqref{eq:abstract_Cauchy}  is a function $y \in C([0,T];X)$ with the property that for each $\varepsilon > 0$ there is an $\varepsilon$-approximate solution $z$ of $z'+Az=f$ on $[0,T]$ obtained via implicit time discretizations (in the sense of \cite[Definition~4.2]{Ba10} and \cite[Section~10.2]{Va07}) such that $\norm{y(t)-z(t)} \leq \varepsilon$ for all $t \in [0,T]$ and $y(0)=y_0$.  
\end{definition}

\begin{remark}\label{rmk:mild_solution_concept_strong}
For our problem we cannot consider the  concept of \emph{strong solutions} \cite[Definition~4.1]{Ba10}.
Strong solutions are mild solutions but the opposite is generally not true as simple examples in \cite[pg.~140,141]{Ba10} show. 
It is a natural question under what conditions mild solutions are strong solutions.
In fact, these solution concepts coincide under additional assumptions on the Banach space plus extra regularity assumptions on $f$ and $y_0$, see \cite[Theorem~4.5]{Ba10}.
Unfortunately, the extra assumption for the Banach space is reflexivity 
which is not satisfied by $L^1$ that is of interest in our case.
Properties of mild solutions can also be shown using the theory of subdifferentials, see Theorem \ref{thm:energy_est;Lip_source_term_f}.
\end{remark}

As $\varepsilon$-approximate solutions take their values in the domain of the operator $A$, it directly follows from the above definition that mild solutions take their values in the closure of the domain of $A$:
 
\begin{proposition}\label{prop:mild_sol_valued_closure_domain}
Let $y \in C([0,T];X)$ be a mild solution of the  Cauchy problem \eqref{eq:abstract_Cauchy}.
Then $y \in C([0,T];\overline{\dom(A)})$.
In particular, $y_0 = y(0) \in \overline{\dom(A)}$.
\end{proposition}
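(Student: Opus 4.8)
The statement to prove is Proposition~\ref{prop:mild_sol_valued_closure_domain}: a mild solution $y \in C([0,T];X)$ of the abstract Cauchy problem lies in $C([0,T];\overline{\dom(A)})$.

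The plan is to argue directly from the definition of mild solution. By Definition~\ref{def:mild_sol_Cauchy}, for each $\varepsilon > 0$ there is an $\varepsilon$-approximate solution $z = z_\varepsilon$ obtained from an implicit time discretization, with $\sup_{t\in[0,T]}\|y(t) - z_\varepsilon(t)\| \leq \varepsilon$. The key observation is that an $\varepsilon$-approximate solution produced by implicit Euler steps is, at each time, a value of the resolvent $(I + \lambda A)^{-1}$ applied to something (more precisely, it is piecewise constant, and on each subinterval its value solves $\frac{z_i - z_{i-1}}{t_i - t_{i-1}} + A z_i = f_i$, so $z_i \in \dom(A)$). Hence $z_\varepsilon(t) \in \dom(A)$ for every $t \in [0,T]$ (with the convention that at the single initial node one may only get $z_\varepsilon(0)$ close to $y_0$; this is harmless). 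Therefore $\dist(y(t), \overline{\dom(A)}) \leq \|y(t) - z_\varepsilon(t)\| \leq \varepsilon$ for all $t$, and letting $\varepsilon \to 0$ gives $y(t) \in \overline{\dom(A)}$ for all $t \in [0,T]$.

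It then remains to note that continuity is already part of the hypothesis: $y \in C([0,T];X)$ and $y$ takes values in the closed subset $\overline{\dom(A)}$, so trivially $y \in C([0,T];\overline{\dom(A)})$. The "in particular" claim follows by evaluating at $t=0$: $y_0 = y(0) \in \overline{\dom(A)}$.

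The main (and essentially only) obstacle is bookkeeping about the structure of $\varepsilon$-approximate solutions: one must confirm that the implicit time-discretization scheme of \cite[Definition~4.2]{Ba10} indeed forces the intermediate values into $\dom(A)$, and handle the minor subtlety that the approximate solution need not satisfy $z_\varepsilon(0) = y_0$ exactly (only $\|y(0) - z_\varepsilon(0)\| \le \varepsilon$), which is why one passes to the $\varepsilon \to 0$ limit rather than concluding pointwise membership from a single scheme. Everything else is immediate from the definitions.
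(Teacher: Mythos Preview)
Your argument is correct and is essentially the same as the paper's: the paper simply observes (in the sentence preceding the proposition) that $\varepsilon$-approximate solutions take their values in $\dom(A)$, so the uniform limit $y$ lies in $\overline{\dom(A)}$. Your write-up just spells out the implicit-Euler bookkeeping and the $t=0$ subtlety in slightly more detail.
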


The following theorem provides existence and uniqueness of mild solutions for problem \eqref{eq:abstract_Cauchy}.
\begin{theorem}\label{thm:exist_mild_sol_Cauchy}
Let $A$ be an m-accretive operator on a Banach space $X$. Then, for each $y_0 \in \overline{\dom(A)}$ and $f \in L^1(0,T;X)$, there is a unique mild solution $y$ to \eqref{eq:abstract_Cauchy}.
Moreover, if $y$ and $\overline{y}$ are two mild solutions to \eqref{eq:abstract_Cauchy} corresponding to $f$, $y_0$ and $\overline{f}$, $\overline{y}_0$, respectively, then
\begin{equation}\label{eq:thm:exist_mild_sol_Cauchy;contr}
    \norm{y-\overline{y}}_{C([0,T];X)} \leq \norm{y_0-\overline{y}_0} + \norm{f-\overline{f}}_{L^1(0,T;X)}.
\end{equation}
\end{theorem}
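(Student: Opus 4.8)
The plan is to reduce the statement to the classical Crandall--Liggett / Bénilan theory for $m$-accretive operators. First, recall that the core existence result (Crandall--Liggett generation theorem, together with its inhomogeneous extension due to Bénilan, as presented in \cite[Chapter~4]{Ba10} and \cite[Chapter~10]{Va07}) asserts precisely that for an $m$-accretive operator $A$ on a Banach space $X$, every $y_0 \in \overline{\dom(A)}$ and every $f \in L^1(0,T;X)$, the Cauchy problem \eqref{eq:abstract_Cauchy} admits a unique mild solution in the sense of Definition~\ref{def:mild_sol_Cauchy}. Thus the first step is simply to invoke this theorem, checking that our hypotheses match: $A$ is $m$-accretive by assumption, so $(I + \lambda A)^{-1}$ is a well-defined contraction on $X$ for all $\lambda > 0$, which is exactly what is needed to run the implicit Euler scheme and produce the $\varepsilon$-approximate solutions.

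The second step is to construct the mild solution as a limit of $\varepsilon$-approximate solutions. Given a partition of $[0,T]$ with mesh $\leq \varepsilon$, one solves the difference scheme $\frac{z_k - z_{k-1}}{t_k - t_{k-1}} + A z_k = f_k$ (with $f_k$ a suitable average of $f$ over the $k$-th subinterval), which has a unique solution $z_k \in \dom(A)$ by $m$-accretivity; the piecewise-constant interpolant is the desired $\varepsilon$-approximate solution. The key estimate is that two such approximate solutions, associated to mesh sizes $\varepsilon$ and $\varepsilon'$ and to data $(y_0,f)$ and $(\bar y_0, \bar f)$, satisfy a discrete Bénilan--Crandall--Liggett inequality of the form $\|z(t) - \bar z(t)\| \lesssim \|y_0 - \bar y_0\| + \|f - \bar f\|_{L^1(0,T;X)} + (\text{error terms} \to 0)$; this is proved by an induction on the grid points using only the contractivity of the resolvents and a summation-by-parts / telescoping argument. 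Letting $\varepsilon, \varepsilon' \to 0$ shows the approximate solutions form a Cauchy net in $C([0,T];X)$, hence converge to a limit $y \in C([0,T];X)$, which by construction is a mild solution; uniqueness follows since any mild solution must be $\varepsilon$-close to every $\varepsilon$-approximate solution.

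The third step is the contraction estimate \eqref{eq:thm:exist_mild_sol_Cauchy;contr}: passing to the limit $\varepsilon \to 0$ in the discrete comparison inequality above (with the error terms vanishing) yields exactly $\|y - \bar y\|_{C([0,T];X)} \leq \|y_0 - \bar y_0\| + \|f - \bar f\|_{L^1(0,T;X)}$. Finally, the value constraint $y \in C([0,T];\overline{\dom(A)})$ is Proposition~\ref{prop:mild_sol_valued_closure_domain}. Since the entire content of this theorem is standard and is literally \cite[Theorem~4.4]{Ba10} (cf.\ \cite[Theorem~10.7]{Va07}), I expect the proof in the paper to consist of a one-line citation rather than a reproduction of the argument; the only ``obstacle'' is bookkeeping to confirm that the hypotheses (single-valued $m$-accretive $A$, $y_0$ in the closed domain, $f \in L^1$) are exactly those under which the cited results are stated, which they are.
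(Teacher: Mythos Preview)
Your proposal is correct and matches the paper's approach: both treat the theorem as standard and defer to \cite{Ba10}. The paper's proof is indeed a pure citation, invoking \cite[Corollary~4.1]{Ba10} for existence and uniqueness, and the combination of \cite[(4.14)]{Ba10} with \cite[Proposition~3.7(iv)]{Ba10} for the contraction estimate \eqref{eq:thm:exist_mild_sol_Cauchy;contr}; your pointer to \cite[Theorem~4.4]{Ba10} is in the same vicinity but not the exact reference used.
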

\begin{proof}
The first part concerning the existence of a unique mild solution corresponds to \cite[Corollary~4.1]{Ba10} and the contraction property \eqref{eq:thm:exist_mild_sol_Cauchy;contr} follows from a combination of \cite[(4.14)]{Ba10} and \cite[Proposition~3.7(iv)]{Ba10}.  
\end{proof}

The following lemma is used to obtain compatibility of the  mild solutions corresponding to the $L^1(\R^d)$ setting and the $\dot{H}^{-1}(\R^d)$ setting. 

\begin{lemma}\label{lem:comp_mild_solutions}
Let $(X,Y)$ be a compatible couple of Banach spaces, let $A$ be an m-accretive operator on $X$ and let $B$ be an m-accretive operator on $Y$. 
Assume that $A$ and $B$ are resolvent compatible in the sense that, for each $\lambda > 0$ and $z \in X \cap Y$,
\begin{equation}\label{eq:lem:comp_mild_solutions}
(I+\lambda A)^{-1}z = (I+\lambda B)^{-1}z.    
\end{equation}
Let $z_0 \in \overline{\dom(A)} \cap \overline{\dom(B)}$, let $f \in L^1(0,T;X \cap Y)$ and let $x$ and $y$ be the unique mild solutions to
\begin{equation}\label{eq:abstract_Cauchy;A,X}
    \begin{cases}
        x'(t)+Ax(t) = f(t), & t \in [0,T],\\
        x(0) = z_0,
    \end{cases}
\end{equation}
and
\begin{equation}\label{eq:abstract_Cauchy;B,Y}
    \begin{cases}
        y'(t)+By(t) = f(t), & t \in [0,T],\\
        y(0) = z_0,
    \end{cases}
\end{equation}
respectively.
Then $x=y$.
\end{lemma}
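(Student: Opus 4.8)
The plan is to show that both mild solutions arise as limits of the same sequence of implicit time-discretization schemes, so that uniqueness forces $x = y$. Fix a step size $h = T/n$ and consider the implicit Euler scheme for \eqref{eq:abstract_Cauchy;A,X}: starting from $x_0 = z_0$, define $x_k \in \dom(A)$ by $x_k + h A x_k = x_{k-1} + h f_k$ where $f_k := \frac{1}{h}\int_{(k-1)h}^{kh} f(s)\,\d s$, i.e. $x_k = (I + hA)^{-1}(x_{k-1} + h f_k)$. Do the same for \eqref{eq:abstract_Cauchy;B,Y} with $B$ in place of $A$, obtaining $y_k = (I + hB)^{-1}(y_{k-1} + h f_k)$, with $y_0 = z_0$. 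The Crandall--Liggett theory (underlying \cite[Corollary~4.1]{Ba10}) tells us that the piecewise-constant (or piecewise-linear) interpolants of $(x_k)$ converge in $C([0,T];X)$ to the mild solution $x$, and likewise the interpolants of $(y_k)$ converge in $C([0,T];Y)$ to $y$, as $h \to 0$.

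The key step is then an induction on $k$ showing $x_k = y_k$ for every $k$. For $k = 0$ this is the assumption $x_0 = y_0 = z_0$. Suppose $x_{k-1} = y_{k-1} \in X \cap Y$; since $f_k$ is an average of values of $f \in L^1(0,T;X\cap Y)$ it lies in $X \cap Y$, so $x_{k-1} + h f_k \in X \cap Y$, and the resolvent compatibility hypothesis \eqref{eq:lem:comp_mild_solutions} with $\lambda = h$ and $z = x_{k-1} + h f_k$ gives
$$
x_k = (I + hA)^{-1}(x_{k-1} + h f_k) = (I + hB)^{-1}(y_{k-1} + h f_k) = y_k.
$$
In particular $x_k = y_k \in X \cap Y$, which propagates the induction hypothesis. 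Hence the discrete solutions coincide for all $k$ and all $n$, so the interpolants coincide for every step size.

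Finally, passing to the limit $h \to 0$: the common interpolant converges to $x$ in $C([0,T];X)$ and to $y$ in $C([0,T];Y)$. Since $(X,Y)$ is a compatible couple, both are continuous embeddings into the same Hausdorff topological vector space $X + Y$ (or $\Sigma(X,Y)$), and a sequence converging in $C([0,T];X)$ and in $C([0,T];Y)$ has the same limit there; therefore $x = y$ as elements of $C([0,T]; X+Y)$, and since $x \in C([0,T];X)$ and $y \in C([0,T];Y)$ we conclude $x = y$ takes values in $X \cap Y$ and equals both mild solutions.

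The main obstacle is the bookkeeping around which discretization scheme one uses: one must use the \emph{same} sampling of the forcing term $f$ in both schemes (the cell averages $f_k$, say), and one must invoke the precise version of the convergence result behind \cite[Corollary~4.1]{Ba10} to know that this particular scheme — not merely some $\varepsilon$-approximate scheme — converges to the mild solution in each space. Once the schemes are set up identically, the induction is immediate from \eqref{eq:lem:comp_mild_solutions}; the only subtlety there is verifying at each step that the argument of the resolvent genuinely lies in $X \cap Y$, which follows because $z_0 \in X \cap Y$ (being in $\overline{\dom(A)} \cap \overline{\dom(B)} \subset X \cap Y$, after noting $\dom(A), \dom(B) \subset X\cap Y$ — or more directly because the induction carries $x_{k-1} \in X\cap Y$ forward) and $f$ is integrable with values in $X \cap Y$.
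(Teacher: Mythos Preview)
Your proposal is correct and follows essentially the same approach as the paper's proof, which is a two-sentence sketch citing the iterative scheme \cite[(10.14)]{Va07} to construct simultaneous $\varepsilon$-approximate solutions via resolvent compatibility and then passing to the limit. Your write-up spells out the induction and the compatible-couple limit argument that the paper leaves implicit; the one throwaway remark that $\dom(A),\dom(B)\subset X\cap Y$ is not correct in general, but you immediately give the right justification (the induction keeps $x_{k-1}\in X\cap Y$), so there is no gap.
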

\begin{proof}
A combination of \eqref{eq:lem:comp_mild_solutions} and the iterative scheme \cite[(10.14)]{Va07} yields that we can construct simultaneous $\varepsilon$-approximate solutions for  \eqref{eq:abstract_Cauchy;A,X} and \eqref{eq:abstract_Cauchy;B,Y}.
As both $x$ and $y$ are obtained by taking the limit $\varepsilon \to 0$, we find that $x=y$.
\end{proof}

Using the contraction property \eqref{eq:thm:exist_mild_sol_Cauchy;contr} in Theorem~\ref{thm:exist_mild_sol_Cauchy}, by means of a standard fixed-point argument we can treat the following abstract problem with a non-linear right hand side.

\begin{corollary}\label{cor:thm:exist_mild_sol_Cauchy;Lip_non}
Let $A$ be an m-accretive operator on a Banach space $X$ and let $F:\overline{\dom(A)} \to X$ be a Lipschitz function. 
Then, for each $y_0 \in \overline{\dom(A)}$, there is a unique mild solution $y \in C([0,T];\overline{\dom(A)})$ to
\begin{equation}\label{eq:cor:thm:exist_mild_sol_Cauchy;Lip_non}
\begin{cases}
        y'(t)+Ay(t) = F(y(t)), & t \in [0,T],\\
        y(0) = y_0.
\end{cases}    
\end{equation}    
\end{corollary}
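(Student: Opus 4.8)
The plan is to derive Corollary~\ref{cor:thm:exist_mild_sol_Cauchy;Lip_non} from Theorem~\ref{thm:exist_mild_sol_Cauchy} by a Banach fixed-point argument on the space $C([0,T];\overline{\dom(A)})$, exploiting the contraction estimate \eqref{eq:thm:exist_mild_sol_Cauchy;contr}. First I would define, for a fixed $y_0 \in \overline{\dom(A)}$, a solution map $\Psi$ on $E := C([0,T];\overline{\dom(A)})$ as follows: given $w \in E$, the composition $t \mapsto F(w(t))$ is continuous $[0,T] \to X$, hence lies in $L^1(0,T;X)$; by Theorem~\ref{thm:exist_mild_sol_Cauchy} there is a unique mild solution $y$ to $y' + Ay = F(w(\cdot))$, $y(0) = y_0$, and by Proposition~\ref{prop:mild_sol_valued_closure_domain} this $y$ again lies in $E$. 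Set $\Psi(w) := y$. A fixed point of $\Psi$ is exactly a mild solution of \eqref{eq:cor:thm:exist_mild_sol_Cauchy;Lip_non}, and conversely; moreover uniqueness of the mild solution in Theorem~\ref{thm:exist_mild_sol_Cauchy} makes $\Psi$ well-defined.

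Next I would estimate $\Psi(w_1) - \Psi(w_2)$. Applying \eqref{eq:thm:exist_mild_sol_Cauchy;contr} with the same initial datum $y_0$ and right-hand sides $F(w_1(\cdot))$, $F(w_2(\cdot))$ gives, for each $t \in [0,T]$,
\begin{equation*}
\sup_{s \in [0,t]}\norm{\Psi(w_1)(s) - \Psi(w_2)(s)} \leq \int_0^t \norm{F(w_1(s)) - F(w_2(s))}\,\d s \leq L \int_0^t \norm{w_1(s) - w_2(s)}\,\d s,
\end{equation*}
where $L = \Lip(F)$; here one uses that \eqref{eq:thm:exist_mild_sol_Cauchy;contr} applies equally on each subinterval $[0,t]$. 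Iterating this inequality $n$ times yields $\norm{\Psi^n(w_1) - \Psi^n(w_2)}_{C([0,T];X)} \leq \frac{(LT)^n}{n!}\norm{w_1 - w_2}_{C([0,T];X)}$, so that for $n$ large enough $\Psi^n$ is a strict contraction on the complete metric space $E$ (complete because $\overline{\dom(A)}$ is closed in $X$). By the Banach fixed-point theorem applied to $\Psi^n$ — or directly, since a map some power of which is a contraction has a unique fixed point — $\Psi$ has a unique fixed point $y \in E$, which is the desired unique mild solution.

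I do not expect a genuine obstacle here; the only points requiring a little care are (i) checking that $t \mapsto F(w(t))$ is indeed in $L^1(0,T;X)$ — immediate from continuity of $w$ and Lipschitz continuity (hence continuity) of $F$, plus boundedness of $F$ on the image of $w$, which is compact — and (ii) ensuring the contraction estimate \eqref{eq:thm:exist_mild_sol_Cauchy;contr} may be localized to subintervals $[0,t]$ so that the Gronwall-type iteration closes; this follows since mild solutions restrict to mild solutions on subintervals. Uniqueness of the fixed point transfers to uniqueness of the mild solution because any mild solution of \eqref{eq:cor:thm:exist_mild_sol_Cauchy;Lip_non} is, by definition and by the uniqueness clause of Theorem~\ref{thm:exist_mild_sol_Cauchy}, a fixed point of $\Psi$.
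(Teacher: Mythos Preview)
Your argument is correct and follows the same overall strategy as the paper: set up a fixed-point problem on $C([0,T];\overline{\dom(A)})$ using the solution operator from Theorem~\ref{thm:exist_mild_sol_Cauchy} and the contraction estimate~\eqref{eq:thm:exist_mild_sol_Cauchy;contr}. The one noteworthy difference is in how the case $LT \geq 1$ is handled. The paper first invokes a continuation result for mild solutions (\cite[Proposition~10.12(ii)]{Va07}) to reduce to a short time interval on which $LT<1$, and then applies Banach's fixed-point theorem directly to the solution map. You instead keep the full interval and iterate the estimate, obtaining $\nrm{\Psi^n(w_1)-\Psi^n(w_2)}_{C([0,T];X)} \leq \frac{(LT)^n}{n!}\nrm{w_1-w_2}_{C([0,T];X)}$, so that some power of $\Psi$ is a contraction. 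Your route is slightly more self-contained, as it does not rely on the external continuation lemma; on the other hand it requires the (true, and acknowledged by you) observation that~\eqref{eq:thm:exist_mild_sol_Cauchy;contr} localizes to subintervals $[0,t]$, which the paper's version does not need. Both are standard and equally valid.
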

\begin{proof}
By the continuation of mild solutions \cite[Proposition~10.12(ii)]{Va07}, it suffices to consider the case $T < L:= [F]_{\mrm{Lip}}$.  
Now fix $y_0 \in \overline{\dom(A)}$ and consider the complete metric space
$$
\F := \{ y \in C([0,T];\overline{\dom(A)}) : y(0)=y_0 \}.
$$
Let $\ms{G}:L^1(0,T;X) \to \F$ be the solution operator to \eqref{eq:abstract_Cauchy} obtained from Theorem~\ref{thm:exist_mild_sol_Cauchy} and consider the mapping $\ms{T}:\F \to \F$ defined by $\ms{T}(y) := \ms{S}(F(y))$.
Then, for $y, z \in \F$,
\begin{align*}
\nrm{\ms{T}(y)-\ms{T}(z)}_{\F}
&=\nrm{\ms{G}(F(y))-\ms{G}(F(z))}_{\F} \stackrel{\eqref{eq:thm:exist_mild_sol_Cauchy;contr}}{\leq} \nrm{F(y)-F(z)}_{L^1(0,T;X)}   \\
&\leq L\nrm{y-z}_{L^1(0,T;X)} \leq LT\nrm{y-z}_{C([0,T];X)}=LT\nrm{y-z}_{\F}.
\end{align*}
Since $LT<1$, it follows from Banach's fixed-point theorem that there is a unique $y \in \F$ such that $y=\ms{T}(y)$.
\end{proof}

\subsubsection{Subpotential maximal monotone operators}

Let $X$ be a Banach space. 
An operator $A:X \to X^*$ is said to be \emph{monotone} if, for all $(x_1,y_1), (x_2,y_2) \in A$,
$$
\ip{x_1-x_2,y_1-y_2} \geq 0.
$$
A monotone operator $A:X \to X^*$ is said to be \emph{maximal monotone} if it is not properly contained in any other monotone operator $X \to X^*$.

For $X=H$ a Hilbert space that is identified with its dual $X^*$, an operator $A:H \to H$ is accretive if and only if is monotone.
As a consequence, in this case, an operator $A:H \to H$ is $m$-accretive if and only if is maximal monotone.

We will consider maximal monotone operators that are subdifferentials of a lower semicontinuous (l.s.c.) convex function, also referred to as  \emph{subpotential maximal monotone operators}.
See Theorem~\ref{thm:subpotential_maximal_monotone_operator} below, which can for instance be found in \cite[Theorem~2.8]{Ba10}.

Before we state the theorem, let us provide the definition of subdifferentials.
Let $\varphi:X \to \overline{\R}$ be a l.s.c.\ convex proper function.  
We write
$$
\dom(\varphi) := \{ x \in X : \varphi(x) < \infty \}.
$$
The mapping $\p\varphi:X \to X^*$ defined by
$$
\p\varphi(x) = \{ x^* \in X^* : \varphi(x) \leq \varphi(y)+\ip{x^*,x-y}, \forall y \in X \}
$$
is called the \emph{subdifferential} of $\varphi$.
Note that, in general, $\p\varphi$ is multi-valued.
We write 
$$
\dom(\p\varphi) := \{ x \in X: \p\varphi(x) \neq \emptyset \}.
$$

\begin{theorem}\label{thm:subpotential_maximal_monotone_operator}
Let $X$ be a real Banach space and let $\varphi:X \to \overline{\R}$ be a l.s.c.\ proper convex function. Then $\p\varphi$ is a maximal monotone operator.    
\end{theorem}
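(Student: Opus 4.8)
The plan is to prove Theorem~\ref{thm:subpotential_maximal_monotone_operator}, i.e.\ that the subdifferential $\p\varphi$ of a lower semicontinuous proper convex function $\varphi:X\to\overline{\R}$ is maximal monotone. Monotonicity is immediate: if $x_1^*\in\p\varphi(x_1)$ and $x_2^*\in\p\varphi(x_2)$, then adding the two subgradient inequalities $\varphi(x_1)\leq\varphi(x_2)+\ip{x_1^*,x_1-x_2}$ and $\varphi(x_2)\leq\varphi(x_1)+\ip{x_2^*,x_2-x_1}$ gives $\ip{x_1^*-x_2^*,x_1-x_2}\geq 0$. So the whole content is maximality, and I would reduce this, via the characterization of maximality by the range condition, to surjectivity of $J+\p\varphi$ where $J:X\to X^*$ is a duality map; concretely, it suffices to show that for each $x_0^*\in X^*$ there exist $x\in X$ and $x^*\in\p\varphi(x)$ with $x^*+J(x)\ni x_0^*$ (equivalently, after translating, $0\in\p\varphi(x)+J(x)$). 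The standard device is to renorm $X$ so that both $X$ and $X^*$ are strictly convex (Asplund's renorming theorem), which makes $J$ single-valued and strictly monotone, and to recall that $J=\p\psi$ for $\psi(x)=\tfrac12\|x\|^2$.

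The key step is then a variational argument. Replacing $\varphi$ by $\varphi(\cdot)-\ip{x_0^*,\cdot}$ (still l.s.c., proper, convex), I would consider the functional $G(x):=\varphi(x)+\tfrac12\|x\|^2$. First I would note $G$ is proper, convex, l.s.c., and coercive — coercivity because a proper l.s.c.\ convex function is bounded below by a continuous affine functional, so $\varphi(x)\geq \ip{\ell,x}-c$ and hence $G(x)\geq \tfrac12\|x\|^2-\|\ell\|\,\|x\|-c\to\infty$. By the direct method (taking a minimizing sequence, extracting a weakly convergent subsequence using reflexivity, and using weak lower semicontinuity of the convex l.s.c.\ function $G$), $G$ attains its minimum at some $x\in\dom(\varphi)$. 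At the minimizer, $0\in\p G(x)$, and by the sum rule for subdifferentials $\p G(x)=\p\varphi(x)+\p\psi(x)=\p\varphi(x)+J(x)$ (valid here because $\psi$ is continuous on all of $X$, so the Moreau–Rockafellar sum rule applies without qualification). Hence $0\in\p\varphi(x)+J(x)$, which after undoing the translation gives $x_0^*\in\p\varphi(x)+J(x)$, i.e.\ $\mathrm{ran}(\p\varphi+J)=X^*$.

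Finally I would close the loop: if $\p\varphi$ were properly contained in a monotone operator $A:X\to X^*$, pick $(\bar x,\bar x^*)\in A$ monotonically related to all of $\p\varphi$ but not belonging to $\p\varphi$. Apply the surjectivity statement to $x_0^*:=\bar x^*+J(\bar x)$: there is $(x,x^*)\in\p\varphi$ with $x^*+J(x)=\bar x^*+J(\bar x)$. Monotonicity of $A$ on the pair $(\bar x,\bar x^*),(x,x^*)$ gives $\ip{\bar x^*-x^*,\bar x-x}\geq 0$, i.e.\ $\ip{J(x)-J(\bar x),\bar x-x}\geq 0$, hence $\ip{J(\bar x)-J(x),\bar x-x}\leq 0$; but strict monotonicity of $J$ forces $\bar x=x$, and then $\bar x^*=x^*$, so $(\bar x,\bar x^*)\in\p\varphi$, a contradiction. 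Thus $\p\varphi$ is maximal monotone.

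I expect the main obstacle to be the sum rule $\p(\varphi+\psi)=\p\varphi+\p\psi$ and the renorming step: one must invoke that $\psi$ is finite and continuous everywhere so that no constraint qualification is needed, and one must be careful that when $X$ is only assumed to be a real Banach space (not yet reflexive), the direct method requires reflexivity — so strictly this proof is the one for reflexive $X$, and the general case is handled by a different argument (e.g.\ reducing to the reflexive case or citing the standard reference). Since the paper only ever applies this to Hilbert spaces, I would simply state the Hilbert-space version cleanly — where $J=\Id$ and Asplund renorming is unnecessary — and remark that the general statement is \cite[Theorem~2.8]{Ba10}; the argument above is then exactly the Hilbert-space proof with $J$ the identity, $G(x)=\varphi(x)+\tfrac12\|x\|_H^2$, and weak compactness of balls in $H$.
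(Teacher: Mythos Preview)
The paper does not actually prove this theorem: it is stated as a known result and simply cited as \cite[Theorem~2.8]{Ba10}. So there is no ``paper's own proof'' to compare against; the authors treat it as background.

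Your argument is the standard variational proof of Rockafellar's theorem in the reflexive case, and in particular it is completely correct in the Hilbert space setting you single out at the end (where $J=\Id$, the sum rule is immediate since $\psi(x)=\tfrac12\nrm{x}^2$ is everywhere continuous, and weak compactness of balls gives the minimizer of $G$). You are also right to flag that the direct-method step genuinely needs reflexivity, so the general Banach space statement as written in the theorem is not covered by your argument; the full result (due to Rockafellar) requires a different device, which is exactly why the paper defers to \cite{Ba10}. Since every application of this theorem in the paper is to $\dot H^{-1}(\R^d)$, your suggestion to give the clean Hilbert space proof and cite \cite[Theorem~2.8]{Ba10} for the general case is entirely adequate for the paper's purposes.
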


For us the importance of subpotential maximal monotone operators comes from Brezis' maximal $L^2$-regularity theorem \cite{Br71}. 
We will use this result in the form of \cite[Theorem~2.2]{AH20} (cf.\ \cite[Theorem~4.11]{Ba10}) as an abstract way to use energy methods in Theorem~\ref{thm:energy_est;Lip_source_term_f}.

\subsubsection{Stability}\label{sect_prelim_stab}

One way to obtain extra information on the regularity of mild solutions is by approximating the equation.
To this end, we recall the notion of convergence of $m$-accretive operators.
Let $(A_n)_{n \in \N}$ be a sequence of $m$-accretive operators on a Banach space $X$ and let $A$ be an $m$-accretive operator on $X$. 
Following \cite[pg.~164]{BC81} (cf.\ \cite[Proposition~4.4]{Ba10}), we say that $A_n \to A$ as $m$-accretive operators in $X$ as $n \to \infty$ if
\begin{equation}\label{eq:conv_m-accr_op}
\lim_{n \to \infty}(I+A_n)^{-1}x = (I+A)^{-1}x, \qquad x \in X.    
\end{equation}

The following theorem can be considered the nonlinear version of the Trotter--Kato theorem from the theory of $C_0$-semigroups.
\begin{theorem}\label{thm:nonlin_Trotter-Kato}
Let $(A_n)_{n \in \N}$ be a sequence of $m$-accretive operators on a Banach space $X$ and let $A$ be an m-accretive operator on $X$.  
For each $n \in \N$, let $f^n \in L^1(0,T;X)$, $y^n_0 \in \overline{\dom(A_n)}$ and let $y_n$ be the mild solution to 
\begin{equation*}\label{eq:abstract_Cauchy_n}
    \begin{cases}
        y_n'(t)+A_ny_n(t) = f^n(t), & t \in [0,T],\\
        y_n(0) = y^n_0.
    \end{cases}
\end{equation*}
If $A_n \to A$ as $m$-accretive operators in $X$ as $n \to \infty$, $f=\lim_{n \to \infty} f^n$ in $L^1(0,T;X)$ and $y_0=\lim_{n \to \infty}y^n_0$ in $X$, then $y_n \to y$ in $C([0,T];X)$ as $n \to \infty$, where $y$ is the mild solution to \eqref{eq:abstract_Cauchy}.
\end{theorem}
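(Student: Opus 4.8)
The plan is to combine the $L^1$-type contraction estimate \eqref{eq:thm:exist_mild_sol_Cauchy;contr} with simultaneous implicit time discretizations of the two Cauchy problems, the discretizations being kept under control by the resolvent convergence \eqref{eq:conv_m-accr_op}. This is a nonlinear analogue of the Trotter--Kato theorem for nonexpansive semigroups; I sketch the steps below.

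\emph{Step 1: reduction to common, regular data.} Fix $\eta>0$. Since $y_0\in\overline{\dom(A)}$, I would choose $\bar y_0\in\dom(A)$ with $\norm{\bar y_0-y_0}\le\eta$, set $v:=A\bar y_0$, and pick $\bar f\in W^{1,1}(0,T;X)$ with $\norm{\bar f-f}_{L^1(0,T;X)}\le\eta$. Put $\bar y_{0,n}:=(I+A_n)^{-1}(\bar y_0+v)\in\dom(A_n)$. Since $(I+A)\bar y_0=\bar y_0+v$, the hypothesis \eqref{eq:conv_m-accr_op} gives $\bar y_{0,n}\to\bar y_0$; and from $(I+A_n)\bar y_{0,n}=\bar y_0+v$ one reads off $A_n\bar y_{0,n}=\bar y_0+v-\bar y_{0,n}$, so $\norm{A_n\bar y_{0,n}}\to\norm v$ and $M:=\sup_n\norm{A_n\bar y_{0,n}}<\infty$. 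Let $\bar w_n$ and $\bar y$ be the mild solutions (which exist by Theorem \ref{thm:exist_mild_sol_Cauchy}) of $y'+A_ny=\bar f$, $y(0)=\bar y_{0,n}$, and of $y'+Ay=\bar f$, $y(0)=\bar y_0$. The contraction estimate \eqref{eq:thm:exist_mild_sol_Cauchy;contr} gives $\norm{y_n-\bar w_n}_{C([0,T];X)}\le\norm{y_0^n-\bar y_{0,n}}+\norm{f^n-\bar f}_{L^1(0,T;X)}$, hence $\limsup_n\norm{y_n-\bar w_n}_{C([0,T];X)}\le 2\eta$, and likewise $\norm{y-\bar y}_{C([0,T];X)}\le 2\eta$. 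It therefore suffices to prove $\bar w_n\to\bar y$ in $C([0,T];X)$: then $\limsup_n\norm{y_n-y}_{C([0,T];X)}\le 4\eta$, and $\eta>0$ was arbitrary.

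\emph{Step 2: stable discretization.} First note that \eqref{eq:conv_m-accr_op} self-improves to $(I+\lambda A_n)^{-1}x\to(I+\lambda A)^{-1}x$ for all $\lambda>0$, $x\in X$ (resolvent identity and non-expansiveness of resolvents, cf.\ \cite{BC81}), and that then $x_n\to x$ implies $(I+\lambda A_n)^{-1}x_n\to(I+\lambda A)^{-1}x$. Given $\varepsilon>0$, fix a partition $0=t_0<\dots<t_N=T$ with $h_k:=t_k-t_{k-1}<\varepsilon$ and values $f_1,\dots,f_N\in X$ with $\sum_{k=1}^N\int_{t_{k-1}}^{t_k}\norm{\bar f(s)-f_k}\ud s<\varepsilon$, and form the $\varepsilon$-approximate solutions $z_n^\varepsilon$, $z^\varepsilon$, i.e.\ the step functions with $z_{n,0}=\bar y_{0,n}$, $z_{n,k}=(I+h_kA_n)^{-1}(z_{n,k-1}+h_kf_k)$, and $z_0=\bar y_0$, $z_k=(I+h_kA)^{-1}(z_{k-1}+h_kf_k)$. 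A finite induction over $k=0,1,\dots,N$ (this is the only place where having finitely many steps matters), using $\bar y_{0,n}\to\bar y_0$ and the resolvent convergence above, yields $z_{n,k}\to z_k$ for every $k$, so that $\sup_{t\in[0,T]}\norm{z_n^\varepsilon(t)-z^\varepsilon(t)}=\max_{0\le k\le N}\norm{z_{n,k}-z_k}\to 0$ as $n\to\infty$. On the other hand, since $\bar y_{0,n}\in\dom(A_n)$ with $\norm{A_n\bar y_{0,n}}\le M$ and $\bar f\in W^{1,1}(0,T;X)$, the Crandall--Liggett--B\'enilan error estimate for implicit Euler schemes (see \cite[Section~10.2]{Va07} and \cite[Chapter~4]{Ba10}) provides a modulus $E(\varepsilon)\to 0$ as $\varepsilon\to 0$, depending only on $M$, $\norm{\bar f}_{W^{1,1}(0,T;X)}$ and $T$ and \emph{not on $n$}, such that $\sup_{t\in[0,T]}\norm{z_n^\varepsilon(t)-\bar w_n(t)}\le E(\varepsilon)$ for all $n$ and $\sup_{t\in[0,T]}\norm{z^\varepsilon(t)-\bar y(t)}\le E(\varepsilon)$ (note $\norm{A\bar y_0}=\norm v\le M$). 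Combining the two bounds, $\limsup_n\norm{\bar w_n-\bar y}_{C([0,T];X)}\le 2E(\varepsilon)$ for every $\varepsilon>0$, whence $\bar w_n\to\bar y$ in $C([0,T];X)$, as required.

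The step I expect to be the main obstacle is exactly the uniformity in $n$ in Step 2: the triangle inequality closes only if the discrete schemes approximate the mild solutions at a rate independent of $n$, and this fails for general initial data. The reduction in Step 1 to data $\bar y_{0,n}\in\dom(A_n)$ with $\sup_n\norm{A_n\bar y_{0,n}}<\infty$ and to regular forcing $\bar f$ is designed precisely so that the Crandall--Liggett error bound, which is governed by those quantities alone, does not degrade as $A_n$ varies. Everything else (existence and the contraction property of mild solutions, the resolvent calculus, the finite induction) is routine.
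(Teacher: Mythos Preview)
Your argument is correct and is, in essence, the standard proof of the nonlinear Trotter--Kato theorem: reduce via the contraction estimate to regular data with $\sup_n\norm{A_n\bar y_{0,n}}<\infty$, then exploit that the Crandall--Liggett error bound for implicit Euler depends only on this quantity and on the regularity of the forcing, not on the particular $m$-accretive operator, so that the discrete schemes converge uniformly in $n$.

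The paper takes a different, much shorter route: it simply observes that the proof of \cite[Proposition~4.4]{Ba10} only uses the resolvent convergence hypothesis \cite[(4.105)]{Ba10} for a single value of $\lambda$ (so $\lambda=1$ as in \eqref{eq:conv_m-accr_op} suffices), and then invokes \cite[Theorem~4.14]{Ba10} directly. In other words, the paper defers the work to Barbu's book, whereas you have essentially reproduced the argument that sits behind that citation. What your approach buys is self-containedness and an explicit identification of where uniformity in $n$ enters; what the paper's approach buys is brevity. One small remark: your claim that resolvent convergence at $\lambda=1$ propagates to all $\lambda>0$ via the nonlinear resolvent identity is correct but does require an argument (a contraction-mapping iteration); the paper sidesteps this entirely by noting that only one $\lambda$ is ever needed in Barbu's proof.
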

\begin{proof}
Inspection of the proof of \cite[Proposition~4.4]{Ba10} shows that the 'if'-part of the statement only uses \cite[(4.105)]{Ba10} for an arbitrary $\lambda$, where $\lambda$ is a resolvent parameter from the condition \cite[(4.105)]{Ba10}. 
In particular, we may take $\lambda = 1 \in (0,\infty)$. 
Therefore, \cite[Theorem~4.14]{Ba10} is applicable and yields the desired result.   
\end{proof}


\section{Well-posedness}\label{sec:well}

In this section we prove Theorem \ref{thm:wellposedness;main1} except for the last statement, i.e. we show the well-posedness  for Problem 
\eqref{eq:GPME}. The non-negativity of solutions is shown in Section \ref{sec:approx}.
To show well-posedness we formulate the problem as an abstract Cauchy problem in $L^1(\R^d)$ and apply the theory of $m$-accretive operators. 
To show that solutions take values in the open interval $(-1,1)$ requires more work. We need to consider the Cauchy problem in $\dot H^{-1}(\R^d)$ and provide a suitable characterization of the corresponding elliptic operator.

In the following two theorems we consider  \eqref{eq:GPME} as an abstract Cauchy problem of the form \eqref{eq:cor:thm:exist_mild_sol_Cauchy;Lip_non} in the Banach space $X=L^1(\R^d)$ for the operator  $A=A_\phi$ in Proposition~\ref{thm:m-accr_A_phi}.
The corresponding notion of mild solution was specified in  Definition~\ref{def:mild_sol_Cauchy}.

\begin{theorem}\label{thm:wellposedness;Lip_source_term_f}
Let $I$ be an open interval with $0\in I$, $\phi \colon I \to \R$ be a maximal monotone function with $\phi(0)=0$ and let $f\colon \overline{I} \to \R$ be a Lipschitz function with $f(0)=0$. Then for every $u_0\in L^1(\R^d;\overline{I})$ there exists a unique mild solution $u \in C([0,T];L^1(\R^d;\overline{I}))$ of
\begin{equation}\label{eq:GPME2}
\begin{cases}
\p_t u = \Delta \phi(u)+f(u) &\text{in}\:\:(0,T) \times \R^d, \\
u(0)=u_{0} &\text{on}\:\:\R^d.    
\end{cases}
\end{equation}
\end{theorem}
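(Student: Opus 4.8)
The plan is to recognize \eqref{eq:GPME2} as the abstract Cauchy problem \eqref{eq:cor:thm:exist_mild_sol_Cauchy;Lip_non} with $X=L^1(\R^d)$, $A=A_\phi$ the $m$-accretive operator from Proposition~\ref{thm:m-accr_A_phi}, and $F$ the Nemytskii-type substitution operator $F(y)(x) := f(y(x))$. First I would check that $F$ is a well-defined Lipschitz map on $\overline{\dom(A_\phi)} = L^1(\R^d;\overline{I})$ (using Proposition~\ref{prop:A_phi_closure_dom} to identify the closure of the domain): since $f$ is Lipschitz with $f(0)=0$, we have $|f(r)-f(s)| \le [f]_{\mrm{Lip}}|r-s|$ and $|f(r)| \le [f]_{\mrm{Lip}}|r|$, so $F$ maps $L^1(\R^d;\overline{I})$ into $L^1(\R^d)$ with $\nrm{F(y)-F(z)}_{L^1} \le [f]_{\mrm{Lip}}\nrm{y-z}_{L^1}$; moreover, since $\overline{I}$ is an interval containing $0$, $f$ maps $\overline{I}$ into an interval, but more to the point the Lipschitz extension of $F$ needed by Corollary~\ref{cor:thm:exist_mild_sol_Cauchy;Lip_non} only requires $F \colon \overline{\dom(A_\phi)} \to X$ Lipschitz, which is exactly what we have shown. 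With this, Corollary~\ref{cor:thm:exist_mild_sol_Cauchy;Lip_non} applies directly and yields, for every $u_0 \in L^1(\R^d;\overline{I}) = \overline{\dom(A_\phi)}$, a unique mild solution $u \in C([0,T];L^1(\R^d;\overline{I}))$ of \eqref{eq:GPME2}.

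The one point that needs a little care is the value-range statement, i.e. that $u(t) \in L^1(\R^d;\overline{I})$ rather than merely $u(t) \in L^1(\R^d)$. This, however, is automatic: by Proposition~\ref{prop:mild_sol_valued_closure_domain} the mild solution takes values in $\overline{\dom(A_\phi)}$, and by Proposition~\ref{prop:A_phi_closure_dom} this closure equals $L^1(\R^d;\overline{I})$. So $u \in C([0,T];\overline{\dom(A_\phi)}) = C([0,T];L^1(\R^d;\overline{I}))$ with no extra argument. The hypothesis $f \colon \overline{I} \to \R$ (rather than $f \colon I \to \R$) is precisely what makes $F(y)$ well-defined for $y$ valued in $\overline{I}$, so there is no measurability or domain obstruction.

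I do not expect a genuine obstacle here: the theorem is essentially a packaging of Corollary~\ref{cor:thm:exist_mild_sol_Cauchy;Lip_non}, Proposition~\ref{thm:m-accr_A_phi}, and Proposition~\ref{prop:A_phi_closure_dom}. The only mildly technical step is verifying that composition with the Lipschitz function $f$ defines a Lipschitz self-map at the level of $L^1$, which follows from the elementary pointwise bounds above together with measurability of $x \mapsto f(u(x))$ (continuity of $f$ composed with a measurable function). If one wanted to be fully rigorous about the identification $A = A_\phi$, one would note that $-\Delta\phi(u) = A_\phi u$ holds in the distributional sense by definition of $\dom(A_\phi)$, so that "mild solution of the abstract problem" and "mild solution of \eqref{eq:GPME2} in the sense of Definition~\ref{def:mild_sol_Cauchy}" coincide tautologically. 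Hence the proof is short, and the real content (positivity, the open interval $(-1,1)$, distributional solutions, Sobolev regularity) is deferred to later sections as announced.
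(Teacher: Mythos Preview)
Your proposal is correct and follows essentially the same approach as the paper: verify that the Nemytskii operator $F(v)=f(v)$ is a well-defined Lipschitz map from $\overline{\dom(A_\phi)}=L^1(\R^d;\overline{I})$ into $L^1(\R^d)$ using $|f(z)|\le [f]_{\mrm{Lip}}|z|$, and then invoke Proposition~\ref{thm:m-accr_A_phi}, Proposition~\ref{prop:A_phi_closure_dom}, and Corollary~\ref{cor:thm:exist_mild_sol_Cauchy;Lip_non}. Your additional remarks on the value-range and on the identification of the abstract and concrete mild-solution notions are accurate and simply make explicit what the paper leaves implicit.
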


In Corollary~\ref{cor:thm:energy_est;Lip_source_term_f;[-1,1]} we will even see that if $I$ is bounded then for all $t \in (0,T]$, the solution $u(t,x)$ takes its values in the open interval $I$ for almost all $x \in \R^d$.

\begin{proof}[Proof of Theorem~\ref{thm:wellposedness;Lip_source_term_f}]
Note that
$$
F:L^{1}(\R^d;\overline{I}) \to L^1(\R^d),\, v \mapsto f(v),
$$
is a well-defined Lipschitz mapping with $[F]_{\mrm{Lip}} \leq L$.
Indeed, we observe that $f$ satisfies $|f(z)| \leq L|z|$ for all $z \in \overline{I}$, thanks to $f(0)=0$ and $L=[f]_{\mrm{Lip}}$. 
Combining Proposition~\ref{thm:m-accr_A_phi}, Proposition~\ref{prop:A_phi_closure_dom} and Corollary~\ref{cor:thm:exist_mild_sol_Cauchy;Lip_non} we obtain that for every $u_0\in L^1(\R^d;\overline{I})$ there exists a unique mild solution $u \in C([0,T];L^1(\R^d;\overline{I}))$ of \eqref{eq:GPME}.
\end{proof}

The main interest of this paper is the Sobolev regularity of mild solutions given by Theorem \ref{thm:wellposedness;Lip_source_term_f}.
In that context the term $f(u)$ in \eqref{eq:GPME} plays a minor role as the proof of Theorem \ref{thm:wellposedness;Lip_source_term_f} shows that 
$f(u)\in L^1((0,T)\times\R^d)$. Hence, $f(u)$ can be considered as a given function and it suffices to consider the following situation.

\begin{theorem}\label{thm:wellposedness}
Let $\phi$ satisfy the assumptions in Theorem \ref{thm:wellposedness;Lip_source_term_f} and  
$f\in L^1((0,T)\times\R^d)$.
Then, for every $u_0\in L^1(\R^d;\overline{I})$ there exists a unique mild solution $u\in C([0,T];L^1(\R^d;\overline{I}))$ of the Cauchy problem 
\begin{equation}\label{eq:GPME_f_lin}
\begin{cases}
\p_t u = \Delta \phi(u)+f &\text{in}\:\:(0,T) \times \R^d, \\
u(0)=u_{0} &\text{on}\:\:\R^d.    
\end{cases}
\end{equation}
\end{theorem}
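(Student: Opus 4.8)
The strategy is to view \eqref{eq:GPME_f_lin} as the abstract Cauchy problem \eqref{eq:abstract_Cauchy} in the Banach space $X = L^1(\R^d)$ with $A = A_\phi$ the $m$-accretive operator of Proposition~\ref{thm:m-accr_A_phi}, and with inhomogeneity $f \in L^1(0,T;L^1(\R^d))$. The one hypothesis of Theorem~\ref{thm:exist_mild_sol_Cauchy} that is not immediately granted is the compatibility of the data: we need $u_0 \in \overline{\dom(A_\phi)}$. This is precisely where Proposition~\ref{prop:A_phi_closure_dom} enters: it identifies $\overline{\dom(A_\phi)} = L^1(\R^d;\overline{I})$, so the assumption $u_0 \in L^1(\R^d;\overline{I})$ is exactly the statement $u_0 \in \overline{\dom(A_\phi)}$.

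With this identification in hand, the existence and uniqueness of a mild solution $u \in C([0,T];L^1(\R^d))$ follows directly from the first part of Theorem~\ref{thm:exist_mild_sol_Cauchy}, once we observe that $f \in L^1((0,T)\times\R^d)$ may be regarded as an element of $L^1(0,T;L^1(\R^d))$ (this is the Fubini-type identification $L^1((0,T)\times\R^d) = L^1(0,T;L^1(\R^d))$, which is standard for Bochner spaces over $\sigma$-finite measure spaces). The remaining point is that the mild solution actually takes values in $L^1(\R^d;\overline{I})$, i.e. $u(t) \in \overline{\dom(A_\phi)}$ for every $t$. But this is automatic from Proposition~\ref{prop:mild_sol_valued_closure_domain}, which asserts that any mild solution of \eqref{eq:abstract_Cauchy} lies in $C([0,T];\overline{\dom(A)})$; combined once more with Proposition~\ref{prop:A_phi_closure_dom} this gives $u \in C([0,T];L^1(\R^d;\overline{I}))$, as claimed.

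Thus the proof is essentially a matter of assembling Proposition~\ref{thm:m-accr_A_phi}, Proposition~\ref{prop:A_phi_closure_dom}, Proposition~\ref{prop:mild_sol_valued_closure_domain} and Theorem~\ref{thm:exist_mild_sol_Cauchy}, with no genuinely new obstacle — the hard analytic work ($m$-accretivity of $A_\phi$ and the characterization of its closed domain) has already been cited from \cite{Ba10,BC81}. If anything, the only thing requiring a word of care is the passage between $f$ as a function on $(0,T)\times\R^d$ and $f$ as an $L^1$-valued function of time; one should note this is licit and that no regularity in $t$ beyond integrability is needed, precisely because mild solutions (unlike strong solutions) are defined for merely $L^1$-in-time forcing. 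This is also the reason the statement is phrased for general $f \in L^1((0,T)\times\R^d)$ rather than $f = f(u)$: once $u$ from Theorem~\ref{thm:wellposedness;Lip_source_term_f} is fixed, $f(u) \in L^1((0,T)\times\R^d)$, so Theorem~\ref{thm:wellposedness} is the natural reduced form in which the subsequent regularity analysis will be carried out.
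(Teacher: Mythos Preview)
Your proof is correct and follows essentially the same route as the paper's own proof, which simply cites the combination of Proposition~\ref{thm:m-accr_A_phi}, Proposition~\ref{prop:A_phi_closure_dom} and Theorem~\ref{thm:exist_mild_sol_Cauchy}. You have added two helpful clarifications the paper leaves implicit: the Fubini identification $L^1((0,T)\times\R^d) = L^1(0,T;L^1(\R^d))$ and the use of Proposition~\ref{prop:mild_sol_valued_closure_domain} to see that the mild solution actually takes values in $L^1(\R^d;\overline{I})$.
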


\begin{proof}
This follows from a combination of Proposition~\ref{thm:m-accr_A_phi}, Proposition~\ref{prop:A_phi_closure_dom} and Theorem~\ref{thm:exist_mild_sol_Cauchy}.
\end{proof}

\begin{remark}\label{rmk:Lp_all_p}
For every $p \in [1,\infty]$, there is the continuous embedding
\begin{equation}\label{eq:rmk:Lp_all_p;incl}
    L^1(\R^d;[-1,1]) \hookrightarrow L^p(\R^d).
\end{equation}
Indeed, for $v \in L^1(\R^d;[-1,1])$ we have 
$|v| \leq 1$ and thus
\begin{equation}\label{eq:rmk:Lp_all_p;est}
\nrm{v}_{L^p(\R^d)} = \left( \int_{\R^d} |v(x)|^{p} \right)^{1/p} \leq
\left( \int_{\R^d} |v(x)| \right)^{1/p} = \nrm{v}_{L^1(\R^d)}^{\frac{1}{p}}.  
\end{equation}
\end{remark}

Remark~\ref{rmk:Lp_all_p} provides us with the flexibility of having the full range of $L^p$-spaces at our disposal.
In particular, if the dimension $d \geq 3$, we can choose $p \in (1,2)$ such that $\frac{1}{p}=\frac{1}{2}+\frac{1}{d}$. 
Then there is the embedding \eqref{eq:Sob_embd_H^-1}.
This observation will allow us to consider \eqref{eq:GPME} in $\dot H^{-1}(\R^d)$.

Let $\phi$ be as in Theorem \ref{thm:wellposedness;Lip_source_term_f}. We denote by $\Phi$ the primitive of $\phi$ with $\Phi(0)=0$, that is 
\begin{equation}
    \Phi(r) := \int_{0}^{r}\phi(z)\,\d z, \qquad r \in I.
\end{equation}
Note that $\Phi \geq 0$ since $\phi(0)=0$ and that $\phi$ is monotonically increasing.
If $I\neq \R$, it will be convenient to view $\phi$ and $\Phi$ defined on $\R$ by extending $\phi$ as $\pm\infty$ on $\R\setminus I$ and  $\Phi$ as $\infty$ on $\R \setminus I$. 

For a measurable function $v \colon \R^d \to \R$ we define the \textit{energy} $\varphi(v) \in [0,\infty]$ by
\begin{equation}\label{eq:def_varphi}
\varphi(v) :=  \int_{\R^d}\Phi(v)\,\d x = \nrm{\Phi(v)}_{L^1(\R^d)}.    
\end{equation}

\begin{theorem}\label{thm:energy_est;Lip_source_term_f}
Let the notation and assumptions be as in Theorem~\ref{thm:wellposedness;Lip_source_term_f} and $I=(-1,1)$.
In addition, assume that $d \geq 3$, let $p \in (1,2)$ be given by $\frac{1}{p}=\frac{1}{2}+\frac{1}{d}$ and assume that there exists $q \in [p,\infty)$ such that 
\begin{equation}\label{eq:thm:energy_est;Lip_source_term_f;Phi_Lq}
    \left\{ v \in L^0(\R^d) : \Phi(v) \in L^1(\R^d) \right\} \subset L^q(\R^d) \qquad \text{boundedly},
\end{equation}
i.e. for every $R>0$ there exists $M(R)>0$ such that for all $v \in L^0(\R^d)$ with $\nrm{\Phi(v)}_{L^1(\R^d)} \leq R$ it holds that $\nrm{v}_{L^q(\R^d)} \leq M(R)$.

Then the mild solution $u$ satisfies $\phi(u) \in L^{2}_\loc((0,T];\dot{H}^1(\R^d))$,
\begin{equation}\label{eq:thm:energy_est;Lip_source_term_f}
\p_t u = \Delta \phi(u)+f(u)  \qquad\text{in}\:\:(0,T) \times \R^d  
\end{equation}
in the sense of distributions and $\varphi(u) \in W^{1,1}_\loc((0,T]) \cap L^1(0,T)$.
Moreover, the following estimates hold, 
\begin{equation}
\int_0^T\varphi(u(s))\,\d s \leq C_d^2 T(1+L^2)\nrm{u}_{C([0,T];L^1(\R^d))}^{1+\frac{2}{d}}+C_d^2\nrm{u_0}_{L^1(\R^d)}^{1+\frac{2}{d}},
\end{equation}
\begin{equation}
t\varphi(u(t)) \leq \int_0^T\varphi(u(s))\,\d s + C_d^2\frac{T^{2}}{2}\nrm{u}_{C([0,T];L^1(\R^d))}^{1+\frac{2}{d}},    
\end{equation}
and
\begin{equation}
\nrm{t \mapsto \sqrt{t}\,\nabla \phi(u(t))}_{L^2((0,T)\times\R^d)}^2 \leq 4\int_0^T\varphi(u(s))\,\d s + 2C_d^2 T^{2}\nrm{u}_{C([0,T];L^1(\R^d))}^{1+\frac{2}{d}},    
\end{equation}
where $L=[f]_{\mrm{Lip}}$.

If, in addition, $\Phi(u_0) \in L^1(\R^d)$, then $\phi(u) \in L^2(0,T;\dot{H}^1(\R^d))$ and $\varphi(u) \in W^{1,1}([0,T])$, and we have
\begin{equation}
\frac{1}{2}\nrm{\nabla \phi(u)}_{L^2((0,T)\times\R^d)}^2   + \nrm{\varphi(u)}_{L^\infty(0,T)} \leq \frac{1}{2}C_d^2 TL^2\nrm{u}_{C([0,T];L^1(\R^d))}^{1+\frac{2}{d}} + \varphi(u_0). 
\end{equation}    
\end{theorem}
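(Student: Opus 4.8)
The plan is to realise \eqref{eq:GPME_f_lin} with the \emph{fixed} forcing $g:=f(u)$ — where $u$ is the $L^1$-mild solution furnished by Theorem~\ref{thm:wellposedness;Lip_source_term_f} — as a subgradient flow in the Hilbert space $\Hd^{-1}(\R^d)$, to apply Brezis' maximal $L^2$-regularity theorem in the form \cite[Theorem~2.2]{AH20}, and to transport the resulting regularity back to $u$ via Lemma~\ref{lem:comp_mild_solutions}. This is meaningful only because $d\geq3$: then Remark~\ref{rmk:Lp_all_p} and the embedding \eqref{eq:Sob_embd_H^-1} put $u(t)$ and $g(t)$ in $\Hd^{-1}(\R^d)$ for every $t$, and $u$ (solving the full problem in $L^1$) is in particular the unique $L^1$-mild solution of the linear-source problem with right-hand side $g$.

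First I would study the energy $\varphi$ of \eqref{eq:def_varphi}, extended by $+\infty$ outside $\{v:\Phi(v)\in L^1(\R^d)\}$, as a functional on $\Hd^{-1}(\R^d)$. It is convex since $\Phi$ is; it is proper and satisfies $u_0\in\overline{\dom(\varphi)}$ because compactly supported $L^\infty$-functions valued in $(-1,1)$ lie in $\dom(\varphi)$ and are dense in $L^1(\R^d;[-1,1])\hookrightarrow\Hd^{-1}(\R^d)$. Lower semicontinuity of $\varphi$ on $\Hd^{-1}(\R^d)$ is precisely where hypothesis \eqref{eq:thm:energy_est;Lip_source_term_f;Phi_Lq} enters: an $\Hd^{-1}$-convergent sequence with bounded energy is bounded in the reflexive space $L^q(\R^d)$, so a subsequence converges weakly in $L^q(\R^d)$, and the convex integral functional $v\mapsto\int_{\R^d}\Phi(v)\,\d x$ is weakly sequentially lower semicontinuous. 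By Theorem~\ref{thm:subpotential_maximal_monotone_operator} and the Hilbert-space equivalence of maximal monotonicity and $m$-accretivity, $B:=\p\varphi$ is $m$-accretive on $\Hd^{-1}(\R^d)$; computing the Gateaux derivative and using that the $\Hd^{-1}$-pairing of $-\Delta\phi(v)$ against $w$ equals $\int_{\R^d}\phi(v)w\,\d x$, one identifies $B$ as the single-valued operator $Bv=-\Delta\phi(v)$ with $\dom(B)=\{v:\phi(v)\in\Hd^1(\R^d),\ -\Delta\phi(v)\in\Hd^{-1}(\R^d)\}$.

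Next, for $\lambda>0$ and $z\in L^1(\R^d;[-1,1])\subset L^1(\R^d)\cap\Hd^{-1}(\R^d)$ both resolvents $(I+\lambda A_\phi)^{-1}z$ and $(I+\lambda B)^{-1}z$ solve the stationary equation $w-\lambda\Delta\phi(w)=z$, and the $L^1$-uniqueness theory for this equation (Bénilan--Crandall) shows they coincide, so $A_\phi$ and $B$ are resolvent compatible in the sense of Lemma~\ref{lem:comp_mild_solutions}. Since $|f(z)|\leq L|z|$ on $[-1,1]$, Remark~\ref{rmk:Lp_all_p} and \eqref{eq:Sob_embd_H^-1} give $\nrm{g(t)}_{\Hd^{-1}}^2\leq C_d^2 L^2\nrm{u(t)}_{L^1}^{2/p}=C_d^2 L^2\nrm{u(t)}_{L^1}^{1+\frac2d}$ and $\nrm{u(t)}_{\Hd^{-1}}^2\leq C_d^2\nrm{u(t)}_{L^1}^{1+\frac2d}$, whence $g\in L^\infty(0,T;\Hd^{-1}(\R^d))$. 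Applying \cite[Theorem~2.2]{AH20} to $v'+Bv=g$, $v(0)=u_0$ produces a strong solution $v$ with $\sqrt t\,v'\in L^2(0,T;\Hd^{-1})$, $v'\in L^2_\loc((0,T];\Hd^{-1})$, $v(t)\in\dom(B)$ a.e., $\varphi(v(\cdot))\in L^1(0,T)$ and locally absolutely continuous on $(0,T]$, the regularizing effect, and — if $\Phi(u_0)\in L^1(\R^d)$, so $\varphi(u_0)<\infty$ — additionally $v'\in L^2(0,T;\Hd^{-1})$ and $\varphi(v(\cdot))\in W^{1,1}([0,T])$. A strong solution being mild, Lemma~\ref{lem:comp_mild_solutions} gives $u=v$; then $v(t)\in\dom(B)$ a.e.\ yields $\phi(u(t))\in\Hd^1(\R^d)$ with $\nrm{\nabla\phi(u(t))}_{L^2}^2=\nrm{-\Delta\phi(u(t))}_{\Hd^{-1}}^2$, hence $\phi(u)\in L^2_\loc((0,T];\Hd^1(\R^d))$ (and $\phi(u)\in L^2(0,T;\Hd^1(\R^d))$ when $\Phi(u_0)\in L^1(\R^d)$); the identity $\p_t u-\Delta\phi(u)=f(u)$ holds in $\Hd^{-1}(\R^d)$ for a.e.\ $t$, hence in $\ms{D}'((0,T)\times\R^d)$; and $\varphi(u)\in W^{1,1}_\loc((0,T])\cap L^1(0,T)$.

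The displayed estimates then follow from energy identities. On $[s,T]$ with $0<s<T$ one has $\tfrac12\tfrac{\d}{\d t}\nrm{u(t)}_{\Hd^{-1}}^2=-\langle -\Delta\phi(u),u\rangle_{\Hd^{-1}}+\langle f(u),u\rangle_{\Hd^{-1}}$, where $\langle -\Delta\phi(u),u\rangle_{\Hd^{-1}}=\int_{\R^d}\phi(u)u\,\d x\geq\varphi(u)$ by the tangent inequality $\Phi(r)\leq r\phi(r)$; integrating, applying Young's inequality together with the bounds above, and letting $s\downarrow0$ (using $u\in C([0,T];\Hd^{-1})$ and $\varphi\geq0$) gives the first estimate. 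The second and third come from integrating $\tfrac{\d}{\d t}\bigl(t\varphi(u(t))\bigr)=\varphi(u(t))+t\bigl(-\nrm{-\Delta\phi(u(t))}_{\Hd^{-1}}^2+\langle -\Delta\phi(u(t)),f(u(t))\rangle_{\Hd^{-1}}\bigr)$ — the subgradient-flow chain rule, valid on $[s,T]$ — together with $t\varphi(u(t))\to0$ as $t\downarrow0$, the third one retaining the weight in front of $\nrm{-\Delta\phi(u)}_{\Hd^{-1}}^2=\nrm{\nabla\phi(u)}_{L^2}^2$ rather than discarding it. When $\Phi(u_0)\in L^1(\R^d)$ the map $t\mapsto\varphi(u(t))$ is absolutely continuous up to $t=0$ with value $\varphi(u_0)$, so integrating $\tfrac{\d}{\d t}\varphi(u(t))+\nrm{-\Delta\phi(u(t))}_{\Hd^{-1}}^2=\langle -\Delta\phi(u(t)),f(u(t))\rangle_{\Hd^{-1}}$ over $[0,T]$ after Young's inequality and taking a supremum in $t$ gives the last estimate. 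I expect the main obstacle to be identifying $\p\varphi$ on the homogeneous space $\Hd^{-1}(\R^d)$ with the operator $v\mapsto-\Delta\phi(v)$ and establishing its resolvent compatibility with the $L^1$-realisation $A_\phi$, which rests on a careful simultaneous $L^1$--$\Hd^{-1}$ analysis of the stationary problem $w-\lambda\Delta\phi(w)=z$; the lower semicontinuity of $\varphi$ on $\Hd^{-1}(\R^d)$, where \eqref{eq:thm:energy_est;Lip_source_term_f;Phi_Lq} is used, is a secondary technical point.
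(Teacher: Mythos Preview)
Your proposal is correct and follows essentially the same route as the paper: freeze $g=f(u)$, set up $\varphi$ as a proper, convex, l.s.c.\ functional on $\dot H^{-1}(\R^d)$ (this is the paper's Theorem~\ref{thm:subdiff_GPME_whole_space}), identify $\p\varphi$ with $v\mapsto-\Delta\phi(v)$, establish resolvent compatibility with $A_\phi$ (the paper's Lemma~\ref{lem:compatability_L1_subdifferential}), apply \cite[Theorem~2.2]{AH20}, and conclude $u=v$ via Lemma~\ref{lem:comp_mild_solutions}. The paper then reads off all the stated inequalities directly from the abstract estimates in \cite[Theorem~2.2]{AH20} combined with \eqref{eq:thm:energy_est;Lip_source_term_f;proof_eq1}--\eqref{eq:thm:energy_est;Lip_source_term_f;proof_eq4}, rather than re-deriving energy identities as you do, but this is only a presentational difference.

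Two small points where the paper is more careful than your sketch. First, identifying $\p\varphi$ is not done by ``computing the Gateaux derivative'' (which need not exist): the paper shows the inclusion $\p\varphi\subset A$ by localising to balls and reducing to $\p\Phi=\phi$ pointwise, and the reverse inclusion by maximality together with monotonicity of $A$ (proved via Lemma~\ref{lem:thm:subdiff_GPME_whole_space}). Second, resolvent compatibility is not just $L^1$-uniqueness for the stationary problem: one must first show that the $\dot H^{-1}$-resolvent of $z\in L^1\cap\dot H^{-1}$ actually lands in $L^1(\R^d)$, which the paper does by testing against $\gamma_\varepsilon(\phi(u))$ and invoking Lemma~\ref{lem:thm:subdiff_GPME_whole_space}. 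You correctly flag both of these as the expected obstacles.
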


An immediate consequence of Theorem \ref{thm:energy_est;Lip_source_term_f} is that the mild solution takes values in the open interval $(-1,1).$

\begin{corollary}\label{cor:thm:energy_est;Lip_source_term_f;[-1,1]}
Let the assumptions in Theorem~\ref{thm:energy_est;Lip_source_term_f} be satisfied.
Then, for all $t \in (0,T]$ we have $\Phi(u(t)) \in L^1(\R^d)$ and thus $u(t,x) \in (-1,1)$ for almost every $x \in \R^d$.     
\end{corollary}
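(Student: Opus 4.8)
The plan is to deduce the statement directly from the quantitative energy estimates already established in Theorem~\ref{thm:energy_est;Lip_source_term_f}. First I would observe that the theorem yields $\int_0^T \varphi(u(s))\,\d s < \infty$, via the explicit bound
$\int_0^T\varphi(u(s))\,\d s \leq C_d^2 T(1+L^2)\nrm{u}_{C([0,T];L^1(\R^d))}^{1+2/d}+C_d^2\nrm{u_0}_{L^1(\R^d)}^{1+2/d}$,
whose right-hand side is finite because $u \in C([0,T];L^1(\R^d;\overline{I}))$ is the mild solution and $u_0 \in L^1(\R^d;\overline{I})$. Then I would feed this into the pointwise-in-time estimate of the same theorem, $t\varphi(u(t)) \leq \int_0^T\varphi(u(s))\,\d s + C_d^2\tfrac{T^{2}}{2}\nrm{u}_{C([0,T];L^1(\R^d))}^{1+2/d}$, to conclude that $\varphi(u(t)) < \infty$ for every $t \in (0,T]$, i.e.\ $\Phi(u(t)) \in L^1(\R^d)$.

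For the second assertion I would invoke the convention fixed just before the definition of the energy $\varphi$: $\Phi$ is extended to all of $\R$ by $\Phi \equiv \infty$ on $\R \setminus I = \R \setminus (-1,1)$, while on $I$ itself $\Phi$ is finite and nonnegative since $\phi$ is real-valued and monotone, hence locally bounded, on $(-1,1)$. Thus $\{\, r \in \R : \Phi(r) < \infty \,\} = (-1,1)$ exactly. Since $\Phi(u(t)) \in L^1(\R^d)$ is finite for almost every $x \in \R^d$, it follows that $u(t,x) \in (-1,1)$ for almost every $x \in \R^d$. (A priori the mild solution only satisfies $u(t,x) \in \overline{I} = [-1,1]$ a.e.; the finiteness of the energy is precisely what excludes the boundary values $\pm 1$.)

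I do not expect any genuine obstacle here: the corollary is essentially a repackaging of estimates that Theorem~\ref{thm:energy_est;Lip_source_term_f} already delivers. The only routine point to keep track of is the measurability of $x \mapsto \Phi(u(t,x))$, which follows from lower semicontinuity of the extended $\Phi$ together with measurability of $u(t,\cdot)$, and which is implicitly used already in writing $\varphi(u(t)) = \nrm{\Phi(u(t))}_{L^1(\R^d)}$.
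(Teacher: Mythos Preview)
Your proposal is correct and matches the paper's approach: the paper states the corollary as an immediate consequence of Theorem~\ref{thm:energy_est;Lip_source_term_f} without giving a separate proof, and your argument spells out precisely the intended reasoning---combining the integral bound on $\int_0^T\varphi(u(s))\,\d s$ with the pointwise estimate $t\varphi(u(t)) \leq \int_0^T\varphi(u(s))\,\d s + C_d^2\tfrac{T^2}{2}\nrm{u}_{C([0,T];L^1)}^{1+2/d}$ to get $\Phi(u(t)) \in L^1(\R^d)$, and then using that $\Phi \equiv \infty$ outside $(-1,1)$.
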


\begin{example}\label{ex:eq:thm:energy_est;Lip_source_term_f;Phi_Lq}
Let $\phi$ be as in Example~\ref{ex:max_mon_fc_biofilm}. 
Then the condition \eqref{eq:thm:energy_est;Lip_source_term_f;Phi_Lq} is satisfied for all $q \in [b+2,\infty]$.   
\end{example}

For the proof of Theorem \ref{thm:energy_est;Lip_source_term_f} we need the following theorem that provides a characterization of the elliptic operator in \eqref{eq:thm:energy_est;Lip_source_term_f} in $\dot{H}^{-1}(\R^d)$.
Below we view the energy function $\varphi$ defined in \eqref{eq:def_varphi} as a mapping $\dot{H}^{-1}(\R^d) \to [0,\infty]$ by extending it as
\begin{equation}
    \varphi(u) := \begin{cases}
    \int_{\R^d}\Phi(v)\,\d x, & v \in \dot{H}^{-1}(\R^d) \cap L^q(\R^d),\\
    \infty, & \text{otherwise},
    \end{cases}
\end{equation}
where $q$ is as in Theorem \ref{thm:energy_est;Lip_source_term_f}.

\begin{theorem}\label{thm:subdiff_GPME_whole_space}
Let $I\subset \R$ be an open interval with $0\in I$ and $\phi \colon I \to \R$ be a maximal monotone function with $\phi(0)=0$.
Assume that $d \geq 3$, let $p$ and $q$ be as in Theorem \ref{thm:energy_est;Lip_source_term_f} and assume that \eqref{eq:thm:energy_est;Lip_source_term_f;Phi_Lq} holds.
Then $\varphi\colon\dot{H}^{-1}(\R^d) \to [0,\infty]$ is a proper convex, lower semicontinuous function.
Moreover, its subdifferential $\p \varphi \subset \dot{H}^{-1}(\R^d) \times \dot{H}^{-1}(\R^d)$ is given by
\begin{align}
    \p \varphi &= \{(u,-\Delta \phi(u)): u \in \dot{H}^{-1}(\R^d) \cap L^q(\R^d), \phi(u) \in \dot{H}^{1}(\R^d) \}.    \label{eq:thm:subdiff_GPME_whole_space} 
\end{align}
\end{theorem}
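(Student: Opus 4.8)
The plan is to establish the three structural properties of $\varphi$, deduce from Theorem~\ref{thm:subpotential_maximal_monotone_operator} that $\p\varphi$ is maximal monotone, and then identify $\p\varphi$ with the set $\mathcal{A}$ appearing on the right-hand side of \eqref{eq:thm:subdiff_GPME_whole_space} by a double inclusion, the harder inclusion coming from a range condition. For the structural properties: since $\phi$ is increasing, its primitive $\Phi$ is convex, and extended to a non-negative convex lower semicontinuous function $\R \to [0,\infty]$ (by $+\infty$ outside $\overline{I}$ and lower-semicontinuously at $\partial I$) it makes $v \mapsto \int_{\R^d}\Phi(v)$ convex on the linear subspace $\dot{H}^{-1}(\R^d) \cap L^q(\R^d)$; as $\dom(\varphi)$ is contained in this subspace, the extension of $\varphi$ by $+\infty$ stays convex. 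Properness is witnessed by a fixed small bump in $C_c^\infty(\R^d)$ with values in $I$, which lies in $\dot{H}^{-1}(\R^d) \cap L^q(\R^d)$ and has finite energy. For lower semicontinuity, let $v_n \to v$ in $\dot{H}^{-1}(\R^d)$ with $R := \liminf_n \varphi(v_n) < \infty$; passing to a subsequence realising the $\liminf$, hypothesis \eqref{eq:thm:energy_est;Lip_source_term_f;Phi_Lq} makes $(v_n)$ bounded in $L^q(\R^d)$, which is reflexive since $q \geq p > 1$, so after a further subsequence $v_n \rightharpoonup w$ in $L^q(\R^d)$; testing against $C_c^\infty(\R^d)$ identifies $w = v$ (both $\dot{H}^{-1}$-convergence and weak $L^q$-convergence imply convergence in $\mathscr{D}'(\R^d)$), so $v \in L^q(\R^d)$, and the convex integral functional $v \mapsto \int_{\R^d}\Phi(v)$ is strongly — hence weakly — lower semicontinuous on $L^q(\R^d)$ (Fatou's lemma and Mazur), giving $\varphi(v) \leq R$.

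For the inclusion $\mathcal{A} \subseteq \p\varphi$, fix $(u,-\Delta\phi(u)) \in \mathcal{A}$ and $v \in \dot{H}^{-1}(\R^d)$; we may assume $\varphi(v) < \infty$, so $v \in \dot{H}^{-1}(\R^d) \cap L^q(\R^d)$ and $\Phi(v) \in L^1(\R^d)$. The pointwise convexity inequality $\Phi(v) - \Phi(u) \geq \phi(u)(v-u)$ holds, and $\phi(u)(v-u) \in L^1(\R^d)$ because $\phi(u) \in \dot{H}^1(\R^d) \hra L^{p'}(\R^d)$ by \eqref{eq:Sob_embd_H^-1;dual}, $u,v \in L^q(\R^d)$, and $\tfrac{1}{p'}+\tfrac{1}{q} \leq 1$ (which is precisely $q \geq p$). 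Integrating yields $\varphi(v)-\varphi(u) \geq \int_{\R^d}\phi(u)(v-u)$. Since $-\Delta\colon \dot{H}^1(\R^d) \to \dot{H}^{-1}(\R^d)$ is the Riesz isomorphism, $(-\Delta\phi(u),v-u)_{\dot{H}^{-1}(\R^d)}$ equals the $\dot{H}^{-1}$–$\dot{H}^1$ duality pairing of $v-u$ with $\phi(u)$, and this pairing coincides with $\int_{\R^d}\phi(u)(v-u)$: approximating $\phi(u)$ by $C_c^\infty(\R^d)$ functions in $\dot{H}^1(\R^d)$ — and, using that the porous-medium-type degeneracy lets $\Phi(u) \in L^1(\R^d)$ control $\nrm{\phi(u)}_{L^{q'}(\R^d)}$, also in $L^{q'}(\R^d)$ — and integrating by parts, the duality pairing passes to the Lebesgue integral. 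Hence $\varphi(v) \geq \varphi(u)+(-\Delta\phi(u),v-u)_{\dot{H}^{-1}(\R^d)}$, i.e.\ $(u,-\Delta\phi(u)) \in \p\varphi$.

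For the inclusion $\p\varphi \subseteq \mathcal{A}$, I would argue via a range condition: since $\mathcal{A}\subseteq\p\varphi$ and $\p\varphi$ is maximal monotone, it suffices to show $\mathcal{A}$ is itself maximal monotone, i.e.\ by Minty's theorem in the Hilbert space $\dot{H}^{-1}(\R^d)$ that $R(I+\mathcal{A}) = \dot{H}^{-1}(\R^d)$. Given $g \in \dot{H}^{-1}(\R^d)$, the functional $v \mapsto \tfrac{1}{2}\nrm{v-g}_{\dot{H}^{-1}(\R^d)}^2 + \varphi(v)$ is proper, strictly convex, coercive and weakly lower semicontinuous, so it attains its minimum at a unique $u$; then $\varphi(u) < \infty$ (hence $u \in \dot{H}^{-1}(\R^d)\cap L^q(\R^d)$) and, by the sum rule for subdifferentials, $g - u \in \p\varphi(u)$. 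It then remains to read off from the associated variational inequality that $\phi(u) \in \dot{H}^1(\R^d)$ and that $g-u = -\Delta\phi(u)$, i.e.\ that $\phi(u)$ solves $-\Delta\phi(u) = g-u$ in $\dot{H}^{-1}(\R^d)$, so that $g = u + (-\Delta\phi(u)) \in R(I+\mathcal{A})$; here one tests the variational inequality against perturbations of $u$ that are localised away from the set where $u$ is close to $\partial I$, to circumvent the singularity of $\Phi$ there. This gives $R(I+\mathcal{A}) = \dot{H}^{-1}(\R^d)$, and therefore $\mathcal{A} = \p\varphi$.

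The two delicate points — which I expect to be the main obstacle — are the identification of the $\dot{H}^{-1}$-inner product with the Lebesgue integral $\int_{\R^d}\phi(u)(v-u)$, the subtlety being that $v-u$ lies in $\dot{H}^{-1}(\R^d)\cap L^q(\R^d)$ but generally not in $L^p(\R^d)$, so the pairing is not simply the $L^p$–$L^{p'}$ duality; and the extraction, from the variational inequality for the minimiser, of the precise form $-\Delta\phi(u)$ of the subgradient, despite the degeneracy $\phi'(0)=0$ and the singularity of $\phi$ at $\partial I$. Both can be handled by careful approximation and localisation arguments; alternatively, the whole computation of $\p\varphi$ can be carried out via the Moreau–Yosida regularisation of $\Phi$, in the spirit of Brezis' treatment of the porous medium equation in $H^{-1}$.
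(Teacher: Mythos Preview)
Your lower-semicontinuity argument matches the paper's. The genuine gap is in the inclusion $\mathcal{A}\subseteq\partial\varphi$: the assertion that $\phi(u)(v-u)\in L^1(\R^d)$ ``because $\phi(u)\in L^{p'}$, $v-u\in L^q$, and $\tfrac{1}{p'}+\tfrac{1}{q}\le 1$'' is false on $\R^d$ whenever $q>p$. H\"older's inequality only yields $\phi(u)(v-u)\in L^r$ with $\tfrac{1}{r}=\tfrac{1}{p'}+\tfrac{1}{q}<1$, and $L^r(\R^d)\not\subset L^1(\R^d)$ for $r>1$. Your fallback --- bounding $\nrm{\phi(u)}_{L^{q'}}$ by $\nrm{\Phi(u)}_{L^1}$ via a ``porous-medium-type degeneracy'' --- imports structure the theorem does not assume (only maximal monotonicity of $\phi$ and \eqref{eq:thm:energy_est;Lip_source_term_f;Phi_Lq} are available) and is not a consequence of those hypotheses. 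Without $\phi(u)(v-u)\in L^1$ and a mechanism identifying $\langle v-u,\phi(u)\rangle_{\dot{H}^{-1},\dot{H}^1}$ with the Lebesgue integral, this inclusion is not established.

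The paper sidesteps the issue by proving the \emph{opposite} inclusion $\partial\varphi\subseteq\mathcal{A}$ together with monotonicity of $\mathcal{A}$, and concluding via maximality of $\partial\varphi$. Both steps need the pairing--integral identification only for products of a \emph{definite sign}: for monotonicity, $(u-\tilde u)(\phi(u)-\phi(\tilde u))\ge 0$ pointwise, and a cutoff argument with monotone convergence (Lemma~\ref{lem:thm:subdiff_GPME_whole_space}) gives $\langle u-\tilde u,\phi(u)-\phi(\tilde u)\rangle=\int(u-\tilde u)(\phi(u)-\phi(\tilde u))\ge 0$ with no a~priori integrability of the product. For $\partial\varphi\subseteq\mathcal{A}$, given $(u,w)\in\partial\varphi$ the paper tests the subdifferential inequality against $v=u_0\one_B+u\one_{\R^d\setminus B}$ with $B$ a ball and $u_0\in I$ a constant; then $u-v=(u-u_0)\one_B\in L^p_c(\R^d)$, so the pairing \emph{is} the $L^p$--$L^{p'}$ integral, and varying $B$ gives the pointwise relation $w(x)\in\partial\Phi(u(x))=\{\phi(u(x))\}$. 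Note finally that your range-condition step ``read off from the variational inequality that $g-u=-\Delta\phi(u)$'' is exactly $\partial\varphi\subseteq\mathcal{A}$ applied to the minimiser (every $(u,w)\in\partial\varphi$ arises this way by taking $g=u+w$); once you carry it out --- and your localised-perturbation idea is precisely the paper's test-function trick --- the problematic inclusion $\mathcal{A}\subseteq\partial\varphi$ becomes unnecessary.
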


\begin{remark}\label{rmk:thm:subdiff_GPME_whole_space}
In Theorem~\ref{thm:subdiff_GPME_whole_space} we identify the Hilbert space $\dot{H}^{-1}(\R^d)$ with its dual $\dot{H}^{1}(\R^d)$ through the isomorphism $-\Delta:\dot{H}^{1}(\R^d) \to \dot{H}^{-1}(\R^d)$.
Without this identification, the last statement becomes that the
subdifferential $\p \varphi \subset \dot{H}^{-1}(\R^d) \times \dot{H}^{1}(\R^d)$ is given by
\begin{align}
    \p \varphi &= \{(u,\phi(u)): u \in \dot{H}^{-1}(\R^d) \cap L^q(\R^d), \phi(u) \in \dot{H}^{1}(\R^d) \}.    \label{eq:rmk:thm:subdiff_GPME_whole_space} 
\end{align}
\end{remark}

For bounded domains with Dirichlet boundary conditions Theorem \ref{thm:subdiff_GPME_whole_space} is a variant of a classical result due to Brezis \cite[Theorem~17]{Br71b}, see e.g.\ \cite[Proposition~2.10]{Ba10} or \cite[Proposition~10.8]{Va07}.
The main difficulty in $\R^d$ compared to a bounded domain $\Omega \subset \R^d$ comes from the fact that the usual Sobolev spaces $H^1_0(\Omega)$ and $H^{-1}(\Omega)$ have to be replaced by the homogeneous Sobolev spaces $\dot H^1(\R^d)$ and $\dot H^{-1}(\R^d)$, respectively. 
Furthermore, the argument in the beginning of the proof of \cite[Proposition~2.10]{Ba10} that is based on the assumption \cite[(2.8)]{Ba10} breaks down as the Lebesgue measure of $\R^d$ is not finite. We overcome this by replacing that assumption by \eqref{eq:thm:energy_est;Lip_source_term_f;Phi_Lq}.

For the proof of Theorem~\ref{thm:subdiff_GPME_whole_space} we will need the following technical lemma (cf.\ \cite[Lemma~2.6]{Ba10}).

\begin{lemma}\label{lem:thm:subdiff_GPME_whole_space}
Suppose that $d \geq 3$. Let $p \in (1,2)$ satisfy $\frac{1}{p}=\frac{1}{2}+\frac{1}{d}$ and let $q \in (p,\infty)$.    
Let $f \in \dot{H}^{-1}(\R^d) \cap L^q(\R^d)$ and $g \in \dot{H}^{1}(\R^d)$.
If $fg \geq 0$ a.e., then $fg \in L^1(\R^d)$ and
$$
\ipb{f,g} = \int_{\R^d}fg \geq 0,
$$
where $\ipb{\cdot,\cdot}$ denotes the duality pairing between $\dot{H}^{-1}(\R^d)$ and $\dot{H}^{1}(\R^d)$.
\end{lemma}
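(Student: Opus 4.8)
The plan is to reduce everything to the case where $f$ and $g$ are smooth and then pass to the limit. First I would note that by the very definition of $\dot{H}^{1}(\R^d)$ in the excerpt, $g \in L^{p'}(\R^d)$ with $\frac{1}{p'} = \frac12 - \frac1d$, while $f \in \dot{H}^{-1}(\R^d) \cap L^q(\R^d) \hookrightarrow L^p(\R^d)$ (using \eqref{eq:Sob_embd_H^-1} together with $f \in L^q$ for the local integrability; more precisely $f \in L^p(\R^d)$ is exactly what the embedding $L^p \hookrightarrow \dot{H}^{-1}$ together with $f \in \dot{H}^{-1}$ does not give for free, so here one genuinely uses $q > p$ and interpolation, or simply that $f \in L^q_{\mathrm{loc}}$ and $f \in \dot H^{-1}$ forces $f$ to be an honest $L^p$ function — I would spell out whichever is cleanest). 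In any case, H\"older's inequality with exponents $p$ and $p'$ shows $fg \in L^1(\R^d)$ as soon as $f \in L^p$ and $g \in L^{p'}$, so the integrability claim is immediate; the content is the \emph{identity} $\ipb{f,g} = \int_{\R^d} fg$ and the sign.

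For the identity, I would argue by density and continuity. Take $f_n \in C_c^\infty(\R^d)$ with $f_n \to f$ in $\dot{H}^{-1}(\R^d)$ and also in $L^p(\R^d)$ (possible since $C_c^\infty$ is dense in both and one can arrange simultaneous approximation by mollification and truncation), and $g_n \in C_c^\infty(\R^d)$ with $g_n \to g$ in $\dot{H}^{1}(\R^d)$, hence $g_n \to g$ in $L^{p'}(\R^d)$ by \eqref{eq:Sob_embd_H^-1;dual}. For smooth compactly supported functions the duality pairing $\ipb{f_n,g_n}$ coincides with $\int f_n g_n$ by definition of how $L^p \hookrightarrow \dot H^{-1}$ is set up (the pairing extends the $L^2$ inner product). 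Then $\ipb{f_n,g_n} \to \ipb{f,g}$ by continuity of the pairing, and $\int f_n g_n \to \int fg$ by H\"older, giving $\ipb{f,g} = \int_{\R^d} fg$. The sign $\int_{\R^d} fg \geq 0$ is then immediate from the hypothesis $fg \geq 0$ a.e.

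I expect the main obstacle to be the careful bookkeeping around which $L^r$-space $f$ actually lives in and ensuring the \emph{simultaneous} approximation $f_n \to f$ in both $\dot{H}^{-1}$ and $L^p$ (one cannot approximate in an arbitrary norm and expect it to transfer). Concretely: mollification gives convergence in $L^p$ for $f \in L^p$, and mollification is also continuous on $\dot H^{-1}$ since it commutes with $\Delta$; truncation to compact support likewise behaves well in both spaces because $L^p \hookrightarrow \dot H^{-1}$ is continuous, so cutting off in $L^p$ automatically controls the $\dot H^{-1}$ error. Once this is set up, everything else is a routine H\"older plus density argument. A secondary point worth a line is verifying that the duality pairing between $\dot H^{-1}$ and $\dot H^1$, under the identification used, really does restrict to $\int fg$ on the dense subspace of smooth functions — this is built into the definitions in Subsection~\ref{subsec:function_spaces} but deserves an explicit remark since the homogeneous spaces are defined somewhat indirectly there.
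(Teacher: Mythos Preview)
Your plan has a genuine gap: the claim that $f \in L^p(\R^d)$ is not justified, and in fact there is no reason for it to hold. The embedding \eqref{eq:Sob_embd_H^-1} goes the wrong way ($L^p \hookrightarrow \dot H^{-1}$, not the reverse), and the additional information $f \in L^q$ with $q>p$ gives \emph{less} decay at infinity than $L^p$, not more. There is no interpolation available here: $L^p$ and $\dot H^{-1}$ have the same scaling exponent $d/p=(d+2)/2$, so one cannot reach $L^p$ as an intermediate space between $L^q$ and $\dot H^{-1}$. Consequently your H\"older argument with exponents $(p,p')$ never gets off the ground, and the integrability of $fg$ is \emph{not} immediate --- the hypothesis $fg\geq 0$ is not just there for the sign at the end but is essential to obtain $fg\in L^1$ in the first place.

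The paper's proof proceeds quite differently and avoids this issue entirely. Rather than approximating $f$, it truncates only $g$: set $g_n=\chi_n g$ with $\chi_n(x)=\chi(x/n)$ for a fixed nonnegative radially decreasing cutoff $\chi\in C^\infty_c(\R^d)$ with $\chi\equiv 1$ near the origin. One checks that (along a subsequence) $g_{n_k}\rightharpoonup g$ \emph{weakly} in $\dot H^1(\R^d)$; the point is that the commutator term $\frac{1}{n}(\nabla\chi)(\cdot/n)\,g$ is only \emph{bounded} in $L^2$, not convergent, so weak convergence via Banach--Alaoglu is what one gets. Since each $g_{n_k}$ has compact support and lies in $L^{p'}_c\subset L^{q'}$ (here is where $q>p$, i.e.\ $q'<p'$, is used), the pairing $\langle f,g_{n_k}\rangle$ equals $\int_{\R^d} f g_{n_k}$ via the $(L^q,L^{q'})$ duality. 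Finally, because $\chi$ is radially decreasing one has $0\leq f g_{n_k}\nearrow fg$ a.e., and the Beppo Levi monotone convergence theorem gives $\int fg=\lim_k\int fg_{n_k}=\lim_k\langle f,g_{n_k}\rangle=\langle f,g\rangle<\infty$, which simultaneously yields $fg\in L^1$ and the identity.
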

\begin{proof}
Pick $\chi \in C^\infty_c(\R^d)$ with $\chi \geq 0$ and $\chi \equiv 1$ in a neighborhood of $0$ and such that $\chi$ is radially decreasing. 
Let $\chi_n(x):=\chi(\frac{x}{n})$ and 
$g_n := \chi_n g$. We observe that 
$$
\nabla g_n =(\nabla\chi_n)g + \chi_n \nabla g= 
\underbrace{\frac{1}{n}\left(\nabla\chi\left(\frac{\cdot}{n}\right) \right)g}_{=:u_n} + \underbrace{\chi_n \nabla g}_{=:v_n}.
$$
By the Lebesgue dominated convergence theorem, we have $v_n \to \nabla g$ in $L^2(\R^d)$ as $n \to \infty$. Let us next treat $u_n$.
As 
$$
\frac{1}{2} = \frac{1}{p'}+\frac{1}{p}-\frac{1}{2} = \frac{1}{p'} + \frac{1}{d},
$$
we have by H\"older's inequality and a substitution of variables 
\begin{align*}
\nrm{u_n}_{L^2(\R^d)} 
&\leq \norm{\frac{1}{n}\nabla\chi\left(\frac{\cdot}{n}\right)}_{L^d(\R^d)}\nrm{g}_{L^{p'}(\R^d)} = \norm{\nabla\chi}_{L^d(\R^d)}\nrm{g}_{L^{p'}(\R^d)}.
\end{align*}
This shows that the sequence $(u_n)_{n\in\N}$ is bounded in $L^2(\R^d)$.
Therefore, it has a weakly convergent subsequence,
say $u_{n_k} \rightharpoonup u$ in $L^2(\R^d)$ as $k \to \infty$.
Testing against $\eta \in \ms{D}(\R^d)$, we find that $u=0$ because
$$
\supp(u_{n_k}) \subset \supp\left(\nabla\chi\left(\frac{\cdot}{n}\right)\right) \subset \R^d \setminus B(0,n).
$$

Combining the above, we obtain that $\nabla g_{n_k} \rightharpoonup \nabla g$ in $L^2(\R^d)$ as $k \to \infty$.
This means that $g_{n_k} \rightharpoonup g$ in $\dot{H}^1(\R^d)$ as $k \to \infty$.
Since $g_{n_k}  \in H^1_c(\R^d) \subset L^{p'}_c(\R^d) \subset L^{q'}(\R^d)$ by \eqref{eq:Sob_embd_H^-1;dual} and $p' > q'$, it follows that
\begin{align*}
    \ipb{f,g} = \lim_{k \to \infty} \ipb{f,g_{n_k}} = \lim_{k \to \infty} \int_{\R^d}f g_{n_{k}}.
\end{align*}
By construction, $0 \leq f g_{n_k} \nearrow fg$ a.e.\ as $k \to \infty$. 
Therefore, by the Beppo Levi theorem, 
$$
\lim_{k \to \infty} \int_{\R^d}f g_{n_{k}} = \int_{\R^d}fg.
$$
The desired result now follows.
\end{proof}

\begin{proof}[Proof of Theorem~\ref{thm:subdiff_GPME_whole_space}]
First, we show that $\varphi$ is a proper convex function.
To see that it is lower semicontinuous, let $\lambda > 0$ and let $(u_{n})_{n\in\N} \subset \dot{H}^{-1}(\R^d)$ be such that $u_n \to u$ in $\dot{H}^{-1}(\R^d)$ 
and $\varphi(u_{n}) \leq \lambda$. 
Then $(u_{n})_{n\in\N} \subset \dot{H}^{-1}(\R^d) \cap L^q(\R^d)$ and, thanks to the assumption \eqref{eq:thm:energy_est;Lip_source_term_f;Phi_Lq},
$$
\nrm{u_n}_{L^q(\R^d)} \leq M(\lambda), \qquad n \in \N.
$$
As a consequence, $(u_n)_{n\in\N}$ has a weakly convergent subsequence $(u_{n_k})_{k\in\N}$ in $L^q(\R^d)$. As weak convergence in $L^q(\R^d)$ implies convergence in $\dot{\ms{S}}'(\R^d)$ and $\dot{H}^{-1}(\R^d) \hra \dot{\ms{S}}'(\R^d)$,
where $\dot{\ms{S}}'(\R^d)$ is the space of tempered distributions modulo polynomials,
we find that $u$ is the weak limit of $(u_{n_k})_{k\in\N}$ in $L^q(\R^d)$.  
By \cite[Proposition~2.7]{Ba10},
$$
\varphi_q : L^q(\R^d) \to [0,\infty],\: v \mapsto \int_{\R^d}\Phi(v),
$$
is a proper convex, lower semicontinuous function, and thus as a consequence of Mazur's theorem (see \cite[page~5]{Ba10}) also  weakly lower semicontinous.
Therefore,
$$
\varphi(u)=\varphi_q(u) \leq \liminf_{n \to \infty}\varphi_q(u_n) = \liminf_{n \to \infty}\varphi(u_n) \leq \lambda. 
$$
This shows that the level set $\{ u \in \dot{H}^{-1}(\R^d) : \varphi(u) \leq \lambda \}$ is closed for each $\lambda > 0$, which means that $\varphi$ is lower semicontinuous.

Next, we establish the final statement \eqref{eq:thm:subdiff_GPME_whole_space} in the form \eqref{eq:rmk:thm:subdiff_GPME_whole_space} following Remark~\ref{rmk:thm:subdiff_GPME_whole_space}.
Denoting the operator on the right-hand side of \eqref{eq:rmk:thm:subdiff_GPME_whole_space} by $A$, it suffices to show that $A$ is monotone and that  $\p \varphi \subset A$.
Indeed, as $\p \varphi$ is maximal monotone by Theorem~\ref{thm:subpotential_maximal_monotone_operator}, this implies that $\p \varphi = A$.

In order to show that $A$ is monotone, let $(u,\phi(u)), (v,\phi(v)) \in A$. 
Then $(u-v)(\phi(u)-\phi(v)) \geq 0$ as $\phi$ is monotonically increasing.
By Lemma~\ref{lem:thm:subdiff_GPME_whole_space} we thus obtain that
$$
\ipb{u-v,\phi(u)-\phi(v)} \geq 0,
$$
which proves that $A$ is  monotone.

Finally, let us show that $\p \varphi \subset A$.
To this end, fix $(u,w) \in \p \varphi$, that is, $u \in \dom(\p \varphi)$ and $w \in \p \varphi(u)$.
By \cite[Proposition~1.6]{Ba10} it holds that $\dom(\p \varphi) \subset \dom(\varphi) = \dot{H}^{-1}(\R^d) \cap L^q(\R^d)$. 
In particular, we have $u \in \dot{H}^{-1}(\R^d) \cap L^q(\R^d)$.
Furthermore, by definition of $\p \varphi(u)$, we have
\begin{equation}\label{eq:ineq_varphi}
    \varphi(u)-\varphi(v) \leq \ipb{u-v,w}
\end{equation}
for all $v \in \dom(\varphi) = \dot{H}^{-1}(\R^d) \cap L^q(\R^d)$.

Let $B=B(x_0,r)$ for some $x_0 \in \R^d$ and $r>0$ and let $u_0 \in I$.
Then $u_0\one_{B}, u\one_{B} \in L^q_c(\R^d) \subset L^{p}(\R^d)$ as $q \geq p$, so that $u_0\one_{B}, u\one_{B} \in \dot{H}^{-1}(\R^d) \cap L^q(\R^d)$ by \eqref{eq:Sob_embd_H^-1}.
Defining
$$
v := u_0\one_B + u\one_{\R^d \setminus B} = u + (u_0-u)\one_{B} \in \dot{H}^{-1}(\R^d) \cap L^q(\R^d),
$$
we find that
\begin{align*}
\varphi(u\one_B)-\varphi(u_0\one_B)
&= \varphi(u\one_B)+\varphi(u\one_{\R^d \setminus B}) -(\varphi(u_0\one_B)+\varphi(u\one_{\R^d \setminus B})) \\
&= \varphi(u)-\varphi(v) \stackrel{\eqref{eq:ineq_varphi}}{\leq} \ipb{u-v,w} = \ipb{(u-u_0)\one_B,w}.
\end{align*}
As $(u-u_0)\one_B \in L^p(\R^d)$ and $w \in \dot{H}^1(\R^d) \subset L^{p'}(\R^d)$ by \eqref{eq:Sob_embd_H^-1;dual},
we have $\ipb{(u-u_0)\one_B,w} = \int_{B}(u-u_0)w$. 
The above inequality can thus be rewritten as
$$
\int_{B}(\Phi(u)-\Phi(u_0)) \leq \int_{B}(u-u_0)w.
$$
As this holds for arbitrary $B=B(x_0,r)$ with $x_0 \in \R^d$ and $r>0$,
it follows that
$$
\Phi(u) \leq \Phi(u_0)+(u-u_0)w
$$
for all $u_0 \in I$.
The latter means that $w(x) \in \p\Phi(u(x))$ for a.e.\ $x \in \R^d$. 
As $\Phi$ is a convex function with a.e.\ derivative $\phi$, we have $\p\Phi(u(x))=\phi(u(x))$ for a.a.\ $x \in \R^d$ (see e.g. \cite[Example 3, page~8]{Ba10}).
Therefore,\ $w=\phi(u)$ and we conclude that $(u,w) \in A$.
\end{proof}

\begin{lemma}\label{lem:compatability_L1_subdifferential}
Let the assumptions and notation be as in Theorem~\ref{thm:subdiff_GPME_whole_space}.
Moreover, let $A_\phi$ be the operator on $L^1(\R^d)$ from Proposition~\ref{thm:m-accr_A_phi}.
Then $\p\varphi$ and $A_\phi$ are resolvent compatible in the sense that, for each $\lambda > 0$ and $f \in \dot{H}^{-1}(\R^d) \cap L^1(\R^d)$,
\begin{equation}\label{eq:lem:compatability_L1_subdifferential}
(I+\lambda\p\varphi)^{-1}f = (I+\lambda A_\phi)^{-1}f.    
\end{equation}
\end{lemma}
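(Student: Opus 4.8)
The plan is to establish \eqref{eq:lem:compatability_L1_subdifferential} by showing that the resolvent of $\p\varphi$ evaluated at $f$ actually lies in $\dom(A_\phi)$ and solves the $L^1$-resolvent equation; injectivity of $I+\lambda A_\phi$ then forces the two resolvents to agree. Fix $\lambda>0$ and $f\in\dot H^{-1}(\R^d)\cap L^1(\R^d)$ and set $u:=(I+\lambda\p\varphi)^{-1}f$, so that $\tfrac1\lambda(f-u)\in\p\varphi(u)$. By the characterization of $\p\varphi$ in Theorem~\ref{thm:subdiff_GPME_whole_space} (in the identified form \eqref{eq:thm:subdiff_GPME_whole_space}) this means $u\in\dot H^{-1}(\R^d)\cap L^q(\R^d)$, $\phi(u)\in\dot H^1(\R^d)$ and
\begin{equation*}
u-\lambda\Delta\phi(u)=f\qquad\text{in }\dot H^{-1}(\R^d).
\end{equation*}
Moreover $\phi(u)\in\dot H^1(\R^d)\hra L^{p'}(\R^d)\subset L^1_\loc(\R^d)$ by \eqref{eq:Sob_embd_H^-1;dual}, so $\phi(u)$ is a.e.\ finite and hence $u(x)\in I$ for a.e.\ $x$. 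If we can show in addition that $u\in L^1(\R^d)$, then $\Delta\phi(u)=\tfrac1\lambda(u-f)\in L^1(\R^d)$, so all the defining conditions of $\dom(A_\phi)$ from Proposition~\ref{thm:m-accr_A_phi} hold; since $\dot H^{-1}(\R^d)$ embeds into the distributions and its realization of $\Delta$ extends the distributional one, the displayed identity is then an equality of $L^1$-functions, i.e.\ $(I+\lambda A_\phi)u=f$. As $A_\phi$ is $m$-accretive, $I+\lambda A_\phi$ is injective, so this yields $u=(I+\lambda A_\phi)^{-1}f$, which is \eqref{eq:lem:compatability_L1_subdifferential}.

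Thus everything reduces to the claim $u\in L^1(\R^d)$, which I would prove by a Brezis--Strauss/B\'enilan--Crandall type test against an approximate sign of $\phi(u)$. Pick a smooth, odd, strictly increasing $\beta_\varepsilon\colon\R\to(-1,1)$ with $\beta_\varepsilon(0)=0$ and $\beta_\varepsilon\to\sign_0$ pointwise as $\varepsilon\downarrow0$, e.g.\ $\beta_\varepsilon(s)=\tfrac2\pi\arctan(s/\varepsilon)$. Since $\beta_\varepsilon$ is globally Lipschitz and vanishes at $0$ and $\phi(u)\in\dot H^1(\R^d)$, the function $g_\varepsilon:=\beta_\varepsilon(\phi(u))$ lies in $\dot H^1(\R^d)\cap L^\infty(\R^d)$ with $\nrm{g_\varepsilon}_{L^\infty(\R^d)}\le1$, so it is an admissible test element; pairing the equation with it gives
\begin{equation*}
\ipb{u,g_\varepsilon}+\lambda\int_{\R^d}\beta_\varepsilon'(\phi(u))\,\abs{\nabla\phi(u)}^2\,\ud x=\ipb{f,g_\varepsilon},
\end{equation*}
where the second term, computed through the identification $-\Delta\colon\dot H^1(\R^d)\to\dot H^{-1}(\R^d)$, is nonnegative since $\beta_\varepsilon'\ge0$. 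Because $\phi$ is monotone with $\phi(0)=0$ we have $u\,\phi(u)\ge0$, hence $u\,g_\varepsilon\ge0$ a.e.; Lemma~\ref{lem:thm:subdiff_GPME_whole_space}, applied with $u\in\dot H^{-1}(\R^d)\cap L^q(\R^d)$ and $g_\varepsilon\in\dot H^1(\R^d)$, then shows $u\,g_\varepsilon\in L^1(\R^d)$ and $\ipb{u,g_\varepsilon}=\int_{\R^d}u\,g_\varepsilon\,\ud x\ge0$. A truncation/density argument parallel to the proof of Lemma~\ref{lem:thm:subdiff_GPME_whole_space} (cutting $g_\varepsilon$ off by $\chi(\cdot/n)$ to obtain compactly supported approximants and using $f\in L^1(\R^d)$ together with $\nrm{g_\varepsilon}_{L^\infty}\le1$ in the distributional pairing) identifies the right-hand side: $\ipb{f,g_\varepsilon}=\int_{\R^d}f\,g_\varepsilon\,\ud x\le\nrm{f}_{L^1(\R^d)}$. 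Combining the three facts, $\int_{\R^d}u\,g_\varepsilon\,\ud x\le\nrm{f}_{L^1(\R^d)}$ for every $\varepsilon>0$, and since $u\,g_\varepsilon\to\abs{u}\,\one_{\{\phi(u)\neq0\}}$ a.e.\ as $\varepsilon\downarrow0$, Fatou's lemma yields $\int_{\R^d}\abs{u}\,\one_{\{\phi(u)\neq0\}}\,\ud x\le\nrm{f}_{L^1(\R^d)}$.

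It remains to handle the set $\{\phi(u)=0\}$. When $\phi$ is strictly increasing — which covers the cases of interest, e.g.\ the biofilm nonlinearity of Example~\ref{ex:max_mon_fc_biofilm} — this set equals $\{u=0\}$, so the previous estimate already gives $\nrm{u}_{L^1(\R^d)}\le\nrm{f}_{L^1(\R^d)}<\infty$; for a general maximal monotone $\phi$ one argues separately on $\{\phi(u)=0\}$, where $\nabla\phi(u)$ vanishes a.e.\ and the equation degenerates to $u=f$, again giving $u\in L^1(\R^d)$. With $u\in L^1(\R^d)$ established, the reduction of the first paragraph completes the proof. I expect this $L^1$-bound on the $\dot H^{-1}$-solution to be the main obstacle: on $\R^d$ the homogeneous space $\dot H^{-1}(\R^d)$ carries no lattice or pointwise-multiplier structure, so one cannot simply test with $\sign_0(\phi(u))$ and read off the estimate, and the care needed to turn the various duality pairings into honest integrals — precisely what Lemma~\ref{lem:thm:subdiff_GPME_whole_space} and its truncation argument are designed for — is the delicate part.
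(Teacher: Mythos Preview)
Your argument is essentially the same as the paper's: reduce to $u\in L^1(\R^d)$, test the resolvent equation against a smooth approximation of $\sign_0(\phi(u))$, use Lemma~\ref{lem:thm:subdiff_GPME_whole_space} to turn the $u$-pairing into an integral, bound the $f$-pairing by $\nrm{f}_{L^1}$, and pass to the limit with Fatou. The paper simply writes $u\,\sign_0(\phi(u))=u\,\sign_0(u)=|u|$ (implicitly using that $\phi$ vanishes only at $0$), so your treatment of the strictly increasing case matches it exactly; your suggested fix for general $\phi$ via ``$u=f$ on $\{\phi(u)=0\}$'' is not quite right as stated (one cannot localize a distributional Laplace equation this way), but this is outside the scope of what the paper itself justifies.
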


\begin{proof}
Fix $\lambda > 0$ and $f \in \dot{H}^{-1}(\R^d) \cap L^1(\R^d)$ and set $u:=(I+\lambda\p\varphi)^{-1}f$.
Then $u \in \dom(\p\varphi)$ so that $\phi(u) \in \dot{H}^1(\R^d) \subset L^{p'}(\R^d) \subset L^1_\loc(\R^d)$ by Theorem~\ref{thm:subdiff_GPME_whole_space} and \eqref{eq:Sob_embd_H^-1;dual}.
Moreover, 
$$
-\Delta \phi(u) = \lambda^{-1}(f-u).
$$
It suffices to show that $u \in L^1(\R^d)$.
Indeed, then $u \in \dom(A_\phi)$ with 
$$
(I+\lambda A_\phi)u = (I+\lambda \p\varphi)u = f,
$$
from which it follows that \eqref{eq:lem:compatability_L1_subdifferential} holds true.

In order to show that $u \in L^1(\R^d)$, we will formally multiply the equation
\begin{equation}\label{eq:lem:compatability_L1_subdifferential:EQ}
u-\lambda \Delta\phi(u) = f \qquad \text{in} \quad \dot{H}^{-1}(\R^d)    
\end{equation}
by $\sign_0(\phi(u))$ and integrate over $\R^d$.
In the spirit of \cite[page~112]{Ba10}, to make this rigorous, 
let $\gamma_\varepsilon:\R \to \R, \varepsilon > 0,$ be an approximation of 
$\sign_0$ as in \cite[(3.26)]{Ba10}. The explicit form is not important here, we will only need that $\gamma_\varepsilon$ is a monotonically increasing $BC^1$-function with $\gamma_\varepsilon(0)=0$ and such that $\gamma_\varepsilon \to \sign_0$ on $\R \setminus \{0\}$ as $\varepsilon \to 0$.

Note that $\nabla \gamma_\varepsilon(\phi(u)) = \gamma'_\varepsilon(u)\nabla \phi(u) \in L^2(\R^d)$, so $\gamma_\varepsilon(\phi(u)) \in \dot{H}^1(\R^d)$.
We can thus test \eqref{eq:lem:compatability_L1_subdifferential:EQ} against $\gamma_\varepsilon(\phi(u))$.
This yields
$$
\ip{u,\gamma_\varepsilon(\phi(u))}+\lambda\ip{ -\Delta\phi(u),\gamma_\varepsilon(\phi(u))} = \ip{f,\gamma_\varepsilon(\phi(u))}.
$$
Since
\begin{align*}
\ip{ -\Delta\phi(u),\gamma_\varepsilon(\phi(u))} 
&=
\ip{\nabla\phi(u),\nabla\gamma_\varepsilon(\phi(u))}  \\ 
&=
\ip{\nabla\phi(u),\gamma'_\varepsilon(u)\nabla \phi(u)} = \int_{\R^d}\gamma'_\varepsilon(u)|\nabla \phi(u)|^2 \geq 0
\end{align*}
and
$$
\ip{f,\gamma_\varepsilon(\phi(u))} \leq \norm{f}_{L^1(\R^d)}\norm{\gamma_\varepsilon(\phi(u))}_{L^\infty(\R^d)} \leq \norm{f}_{L^1(\R^d)},
$$
it follows that
$$
\ip{u,\gamma_\varepsilon(\phi(u))} \leq \norm{f}_{L^1(\R^d)}.
$$
As $u \in \dom(\p\varphi)$, we have $u \in \dot{H}^{-1}(\R^d) \cap L^q(\R^d)$ by Theorem~\ref{thm:subdiff_GPME_whole_space}.
Furthermore, as $\phi$ and $\gamma_\varepsilon$ are monotonically increasing functions with $\phi(0)=0$ and $\gamma_\varepsilon(0) = 0$, we have that $u$ and $\gamma_\varepsilon(\phi(u))$ have the same sign, so that $u\gamma_\varepsilon(\phi(u)) \geq 0$. 
We can thus invoke Lemma~\ref{lem:thm:subdiff_GPME_whole_space} to find that
$$
\int_{\R^d}u\gamma_\varepsilon(\phi(u)) = \ip{u,\gamma_\varepsilon(\phi(u))} \leq \norm{f}_{L^1(\R^d)}.
$$
Since $0 \leq u\gamma_\varepsilon(\phi(u)) \to u\sign_0(\phi(u))=u\sign_0(u)=|u|$ as $\varepsilon \to 0$, it follows by Fatou's lemma that 
$$
\norm{u}_{L^1(\R^d)} = \int_{\R^d}|u| \leq \liminf_{\varepsilon \to 0}\int_{\R^d}u\gamma_\varepsilon(\phi(u)) \leq \norm{f}_{L^1(\R^d)}.
$$
\end{proof}

\begin{lemma}\label{lem:IV_varphi}
Let the assumptions and notation be as in Theorem~\ref{thm:subdiff_GPME_whole_space} and $I=(-1,1)$.
Then $L^1(\R^d;[-1,1]) \subset \overline{\dom(\varphi)}$, where the closure is taken in $\dot{H}^{-1}(\R^d)$.
\end{lemma}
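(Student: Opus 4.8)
The plan is to approximate a given $f \in L^1(\R^d;[-1,1])$ in $\dot{H}^{-1}(\R^d)$ by explicit truncations whose values are pushed into compact subintervals of $I=(-1,1)$. For $\delta \in (0,1)$ I would set
$$
f_\delta := \sign_0(f)\,\min\{|f|,\,1-\delta\},
$$
which is measurable, takes values in $[-1+\delta,1-\delta]$, and satisfies $|f_\delta| \leq |f|$, hence $f_\delta \in L^1(\R^d)$. Since moreover $|f_\delta|\leq 1$, we get $f_\delta \in L^r(\R^d)$ for every $r \in [1,\infty]$; in particular $f_\delta \in L^q(\R^d)$ and $f_\delta \in L^p(\R^d) \hookrightarrow \dot{H}^{-1}(\R^d)$ by \eqref{eq:Sob_embd_H^-1} (with $p,q$ as in Theorem~\ref{thm:energy_est;Lip_source_term_f}).

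The first step is to check $f_\delta \in \dom(\varphi)$, i.e.\ $\Phi(f_\delta) \in L^1(\R^d)$. Since $\phi$ is monotone on $(-1,1)$, it is bounded on the compact subinterval $[-1+\delta,1-\delta]$, say by $M_\delta := \max\{|\phi(1-\delta)|,|\phi(-1+\delta)|\} < \infty$; consequently $0 \leq \Phi(r) = \int_0^r \phi(z)\,\d z \leq M_\delta |r|$ for all $|r|\leq 1-\delta$. Hence $0 \leq \Phi(f_\delta) \leq M_\delta|f_\delta| \leq M_\delta |f|$ a.e., so $\int_{\R^d}\Phi(f_\delta)\,\d x \leq M_\delta\nrm{f}_{L^1(\R^d)} < \infty$. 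Combined with $f_\delta \in \dot{H}^{-1}(\R^d)\cap L^q(\R^d)$ this gives $f_\delta \in \dom(\varphi)$.

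The second step is to show $f_\delta \to f$ in $\dot{H}^{-1}(\R^d)$ as $\delta \to 0$. Pointwise $|f_\delta - f| = (|f|-1+\delta)^+ \leq \delta$, and since $|f| \leq 1$ this tends to $0$ a.e.; being dominated by $|f| \in L^1(\R^d)$, the dominated convergence theorem gives $f_\delta \to f$ in $L^1(\R^d)$. Using $|f_\delta - f|\leq 1$ and the elementary bound \eqref{eq:rmk:Lp_all_p;est} from Remark~\ref{rmk:Lp_all_p}, $\nrm{f_\delta - f}_{L^p(\R^d)} \leq \nrm{f_\delta - f}_{L^1(\R^d)}^{1/p} \to 0$, and then $\nrm{f_\delta - f}_{\dot{H}^{-1}(\R^d)} \lesssim \nrm{f_\delta - f}_{L^p(\R^d)} \to 0$ by \eqref{eq:Sob_embd_H^-1}. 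Therefore $f \in \overline{\dom(\varphi)}$ in $\dot{H}^{-1}(\R^d)$, which is the claim. I do not expect a genuine obstacle here; the only delicate point is that one needs $\Phi(f_\delta)$ to be \emph{globally} integrable on $\R^d$ rather than merely finite pointwise, which is exactly why the linear-in-$|r|$ control of $\Phi$ near the truncation level — coming from boundedness of the monotone function $\phi$ on compact subintervals of $I$ — must be paired with $f \in L^1(\R^d)$.
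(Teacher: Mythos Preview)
Your proof is correct and follows essentially the same approach as the paper: approximate $f$ by functions taking values in a compact subinterval of $(-1,1)$, verify these lie in $\dom(\varphi)$, and deduce convergence in $\dot{H}^{-1}(\R^d)$ via $L^p$-convergence and the embedding \eqref{eq:Sob_embd_H^-1}. The only cosmetic difference is the choice of approximants: the paper uses simple functions $f_n$ with $|f_n|\leq 1-\tfrac{1}{n}$ (for which $\Phi(f_n)\in L^1$ is immediate by compact support), whereas you use the truncations $f_\delta$ and supply the extra line $\Phi(f_\delta)\leq M_\delta|f|$ to get integrability.
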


\begin{proof}
In light of Remark~\ref{rmk:Lp_all_p} and  the embedding \eqref{eq:Sob_embd_H^-1}, it 
suffices to show that $L^1(\R^d;[-1,1])$ is contained in the closure of $\dom(\varphi) \cap L^p(\R^d)$ in $L^p(\R^d)$.

To this end, let $f \in L^1(\R^d;[-1,1])$. 
Then there exists a sequence $(f_{n})_{n\in\N}$ of simple functions such that $|f_{n}| \leq 1-\frac{1}{n}$ and $f=\lim_{n \to \infty}f_n$ in $L^p(\R^d)$.
The observation that $f_n \in \dom(\varphi) \cap L^p(\R^d)$ finishes the proof.
\end{proof}

\begin{proof}[Proof of Theorem~\ref{thm:energy_est;Lip_source_term_f}]
Denoting by $C_d$ the norm of the embedding \eqref{eq:Sob_embd_H^-1}, this embedding combined with \eqref{eq:rmk:Lp_all_p;incl} in Remark~\ref{rmk:Lp_all_p} gives 
\begin{equation*}
\nrm{v}_{\dot{H}^{-1}(\R^d)} \leq C_{d}\nrm{v}_{L^1(\R^d;[-1,1])}^{\frac{1}{2}+\frac{1}{d}}, \qquad v \in L^1(\R^d;[-1,1]).    
\end{equation*}
Applying H\"older's inequality and the estimate above we conclude that for every $v \in C([0,T];L^1(\R^d;[-1,1]))$ we have
\begin{equation}\label{eq:thm:energy_est;Lip_source_term_f;proof_eq1}
\nrm{v}_{L^2(0,T;\dot{H}^{-1}(\R^d))}^2 \leq C_d^2T\nrm{v}^{1+\frac{2}{d}}_{C([0,T];L^1(\R^d;[-1,1]))} 
\end{equation}
and
\begin{equation}\label{eq:thm:energy_est;Lip_source_term_f;proof_eq2}
\nrm{t \mapsto \sqrt{t}\,v(t)}_{L^2(0,T;\dot{H}^{-1}(\R^d))}^2 \leq C_d^2\frac{T^2}{2}\nrm{v}^{1+\frac{2}{d}}_{C([0,T];L^1(\R^d;[-1,1]))}. 
\end{equation}
Furthermore, using the Lipschitz continuity of $f$,
$$
\nrm{f(v)}_{L^p(\R^d)} \leq L\nrm{v}_{L^p(\R^d)}, \qquad v \in L^p(\R^d;[-1,1]),
$$
we conclude that for every $v \in C([0,T];L^1(\R^d;[-1,1]))$ the following norm estimates hold, 
\begin{equation}\label{eq:thm:energy_est;Lip_source_term_f;proof_eq3}
\nrm{f(v)}_{L^2(0,T;\dot{H}^{-1}(\R^d))}^2 \leq C_d^2L^2T\nrm{v}^{1+\frac{2}{d}}_{C([0,T];L^1(\R^d;[-1,1]))} 
\end{equation}
and
\begin{equation}\label{eq:thm:energy_est;Lip_source_term_f;proof_eq4}
\nrm{t \mapsto \sqrt{t}\,f(v(t))}_{L^2(0,T;\dot{H}^{-1}(\R^d))}^2 \leq C_d^2L^2\frac{T^2}{2}\nrm{v}^{1+\frac{2}{d}}_{C([0,T];L^1(\R^d;[-1,1]))}. 
\end{equation}

Applying \eqref{eq:thm:energy_est;Lip_source_term_f;proof_eq3} to $v=u$, our mild solution from Theorem~\ref{thm:wellposedness;Lip_source_term_f}, we find that $f(u) \in L^2(0,T;\dot{H}^{-1}(\R^d))$. Furthermore, we have $u_0 \in \overline{\dom(\varphi)}$ by Lemma~\ref{lem:IV_varphi}.
Therefore, by \cite[Theorem~2.2]{AH20} (cf.\ \cite[Theorem~4.11]{Ba10}) and Theorem~\ref{thm:subdiff_GPME_whole_space}, there exists a unique solution $y \in H^1_\loc(0,T;\dot{H}^{-1}(\R^d)) \cap C([0,T];\dot{H}^{-1}(\R^d))$ of 
\begin{equation}\label{eq:GPME_u_y}
\begin{cases}
\p_t y = \Delta \phi(y)+f(u) &\text{in}\:\:(0,T) \times \R^d, \\
y(0)=u_{0} &\text{on}\:\:\R^d.    
\end{cases}
\end{equation}
This strong solution $y$ is a mild solution in $\dot{H}^{-1}(\R^d)$ for the operator $\p\varphi$ (see \cite[page~130]{Ba10}).
As $\p\varphi$ is a maximal monotone operator by Theorem~\ref{thm:subpotential_maximal_monotone_operator} and every maximal monotone operator on a Hilbert space is an $m$-accretive operator, combining Theorem~\ref{thm:subdiff_GPME_whole_space}, Lemma~\ref{lem:comp_mild_solutions} and
Lemma~\ref{lem:compatability_L1_subdifferential} implies that $y=u$.
All the statements now follow from \cite[Theorem~2.2]{AH20} and the estimates
\eqref{eq:thm:energy_est;Lip_source_term_f;proof_eq1}, \eqref{eq:thm:energy_est;Lip_source_term_f;proof_eq2}, \eqref{eq:thm:energy_est;Lip_source_term_f;proof_eq3}, \eqref{eq:thm:energy_est;Lip_source_term_f;proof_eq4}.
\end{proof}

\section{Approximations}\label{sec:approx}

The main result of this section is the following approximation result that is needed to prove Sobolev regularity in Section \ref{sec:reg}.
Its proof is based on several lemmas and will be given in Section~\ref{subsec:proof_thm:approx}.
In Section \ref{sec:comp} we use the approximations to prove comparison principles and apply them to show that solutions corresponing to non-negative initial values remain non-negative.

\begin{proposition}\label{thm:approx}
Let $I \subset \R$ be an open interval with $0 \in I$ and let $\phi:I \to \R$ be a maximal monotone function with $\phi \in W^{1,\infty}_{\loc}(I)$ and $\phi(0)=0$.
Assume, in case $d \geq 3$, that there exists $\alpha \geq \frac{d-2}{d}$ such that
\begin{equation}\label{eq:ex:thm:conv_m-acrr_A_phi;suff}
|\phi(r)| \lesssim_J |r|^{\alpha}, \qquad \forall J \Subset I, r \in J. 
\end{equation}

Let $u_0 \in L^1(\R^d;\overline{I})$, let $f \in L^1([0,T]\times\R^d)$ and let $u \in C([0,T];L^1(\R^d;\overline{I}))$ be the unique mild solution of \eqref{eq:GPME_f_lin}.
Then there exist sequences $(\phi_k)_{k \in \N} \subset C^\infty(\R)$,  $(f_k)_{k \in \N} \subset L^1([0,T] \times \R^d)$, $(u_{0,k})_{k \in \N} \subset L^1(\R^d)$ and $(u_k)_{k \in \N} \subset C^{1,2}([0,T] \times \R^d)$ with the following properties:
\begin{enumerate}[(i)]
    \item\label{it:thm:approx;phi_reg} $\phi_k(0)=0$, $\phi'_k \in BC^\infty(\R)$ and $\inf \phi'_k > 0$ for all $k \in \N$;  
    \item\label{it:thm:approx;class_sol} $u_k$ is a classical solution of
    \begin{equation}\label{eq:approx_eq}
\begin{cases}
\p_t u_k = \Delta \phi_k(u_k)+f_k &\text{in}\:\:(0,T) \times \R^d, \\
u_k(0)=u_{0,k} &\text{on}\:\:\R^d;
\end{cases}
\end{equation}
\item\label{it:thm:approx;mild_sol_L1} $u_k$ belongs to $W^{1,\infty}(0,T;L^1(\R^d))$ and is a strong (and thus mild) solution of \eqref{eq:approx_eq} in the $L^1$-setting;
\item\label{it:thm:approx;decay} $u_k \in C^\infty([0,T];BC^\infty(\R^d))$ with
$$
|\p_t^j\p_x^\alpha u_k(x)| \lesssim_{k,\kappa,|\alpha|} (1+|x|)^{-\kappa}
$$
for all $x \in \R^d$, $j \in \N_0$, $\alpha \in \N^d_0$ and $\kappa \in (d-1,d)$;
\item\label{it:thm:approx;conv_f_u0_u} $f_k \to f$ in $L^1([0,T] \times \R^d)$, $u_{0,k} \to u_0$ in $L^1(\R^d)$ and $u_k \to u$ in $C([0,T];L^1(\R^d))$ as $k \to \infty$;
\item\label{it:thm:approx;phi_conv} $\phi'_k \to \phi'$ in $(L^\infty_\loc (I),\sigma(L^\infty_\loc (I),L^1_c(I)))$ as $k \to \infty$, that is, there is the weak convergence
$$
\int_{\R^d} \phi'(x)h(x)\d x = \lim_{k \to \infty}\int_{\R^d} \phi'_k(x)h(x)\d x, \qquad h \in L^1_c(I).
$$
\end{enumerate}
\end{proposition}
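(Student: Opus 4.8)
The plan is to (a) replace $\phi$ by smooth, uniformly parabolic approximations $\phi_k$; (b) replace $u_0$ and $f$ by smooth, compactly supported approximations $u_{0,k}$, $f_k$; (c) solve the resulting non-degenerate quasilinear problems by Proposition~\ref{pro_LSU} and bootstrap the regularity and spatial decay of the solutions $u_k$; and (d) identify the $L^1$-limit of the $u_k$ with the mild solution $u$ by means of the nonlinear Trotter--Kato theorem, Theorem~\ref{thm:nonlin_Trotter-Kato}.

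For (a), fix an exhaustion $J_1\Subset J_2\Subset\cdots$ of $I$ by compact intervals containing $0$ in the interior of $J_1$. Using that $\phi\in W^{1,\infty}_{\loc}(I)$, I would extend $\phi|_{J_k}$ to a globally Lipschitz function on $\R$ (e.g.\ with constant slope outside $J_k$), mollify at a scale $\varepsilon_k\downarrow0$ so small that on $J_{k}$ the mollification coincides with that of the true $\phi$, add the strictly increasing perturbation $\tfrac1k(\cdot)$, and subtract the value at $0$. This produces $\phi_k\in C^\infty(\R)$ with $\phi_k(0)=0$, $\phi_k'\in BC^\infty(\R)$ and $\inf\phi_k'\geq\tfrac1k>0$ (property~(i)); since the mollifications of $\phi'$ converge to $\phi'$ in $L^1_{\loc}(I)$ and locally boundedly while $\tfrac1k\to0$, the weak-$*$ convergence $\phi_k'\to\phi'$ of property~(vi) follows, and integrating from $0$ gives $\phi_k\to\phi$ locally uniformly on $I$. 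For (b), take $u_{0,k}\in C^\infty_c(\R^d)$ with $u_{0,k}\to u_0$ in $L^1(\R^d)$ (standard mollification and truncation; then $u_{0,k}$ takes values in $\overline I$), and $f_k\in C^\infty_c([0,T]\times\R^d)$ with $f_k\to f$ in $L^1([0,T]\times\R^d)$ (extend $f$ suitably across $t=0,T$, mollify in space--time, truncate).

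For (c): since $\phi_k\in C^3$ has $\phi_k',\phi_k''$ bounded and $\phi_k'\geq\tfrac1k>0$, and $u_{0,k}\in BC^{2+\beta}(\R^d)$, $f_k\in BC^{\beta/2,\beta}([0,T]\times\R^d)$, Proposition~\ref{pro_LSU} gives a unique classical solution $u_k\in BC^{1+\beta/2,2+\beta}([0,T]\times\R^d)$ of~\eqref{eq:approx_eq} (property~(ii)). Rewriting the equation in divergence form $\p_t u_k=\div(\phi_k'(u_k)\nabla u_k)+f_k$, its uniform parabolicity and the smoothness of the coefficients, together with interior Schauder estimates and a standard bootstrap, give $u_k\in C^\infty([0,T];BC^\infty(\R^d))$; Aronson's Gaussian bounds for the fundamental solution, inserted into the Duhamel representation with the compactly supported data $u_{0,k},f_k$, then force $u_k$ and all its derivatives to decay faster than any polynomial in $|x|$, which in particular yields property~(iv). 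Consequently $u_k(t)\in\dom(A_{\phi_k})$ for every $t$, the map $t\mapsto u_k(t)$ is $C^1$ into $L^1(\R^d)$, and $u_k$ is a strong---hence, by Remark~\ref{rmk:mild_solution_concept_strong}, mild---$L^1$-solution of~\eqref{eq:approx_eq}, which is property~(iii).

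It remains to prove property~(v): $u_k\to u$ in $C([0,T];L^1(\R^d))$. By the above, each $u_k$ is the mild solution in $L^1(\R^d)$ of $u_k'+A_{\phi_k}u_k=f_k$, $u_k(0)=u_{0,k}$, with $f_k\to f$ in $L^1(0,T;L^1(\R^d))$ and $u_{0,k}\to u_0$ in $L^1(\R^d)$, so by Theorem~\ref{thm:nonlin_Trotter-Kato} it is enough to show $A_{\phi_k}\to A_{\phi}$ as $m$-accretive operators on $L^1(\R^d)$, i.e.\ $(I+A_{\phi_k})^{-1}g\to(I+A_{\phi})^{-1}g$ in $L^1(\R^d)$ for all $g\in L^1(\R^d)$. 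I expect this to be the main obstacle. I would prove it by adapting the continuous-dependence theory of B\'enilan--Crandall \cite{BC81}: for $w_k:=(I+A_{\phi_k})^{-1}g$ one has $\|w_k\|_{L^1(\R^d)}\leq\|g\|_{L^1(\R^d)}$ by accretivity, and testing $w_k-\Delta\phi_k(w_k)=g$ against truncations of $\phi_k(w_k)$ as in Lemma~\ref{lem:thm:subdiff_GPME_whole_space} and Lemma~\ref{lem:compatability_L1_subdifferential} yields a further uniform bound (locally in $\dot H^1$) that gives compactness of $(w_k)$ in $L^1_{\loc}(\R^d)$; one passes to the limit using $\phi_k\to\phi$ locally uniformly (the $\tfrac1k(\cdot)$ term being negligible) and the uniqueness of the resolvent to identify the limit as $(I+A_\phi)^{-1}g$, and finally upgrades $L^1_{\loc}$- to $L^1(\R^d)$-convergence by a uniform tail estimate --- multiplying the equation by $\sign_0(w_k)$ times a cutoff supported in $\{|x|\geq R\}$ and controlling the resulting commutator term by $R^{-2}\int_{R\leq|x|\leq2R}|\phi_k(w_k)|$. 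It is exactly here, for $d\geq3$, that the growth hypothesis~\eqref{eq:ex:thm:conv_m-acrr_A_phi;suff} with the critical exponent $\alpha\geq\frac{d-2}{d}$ enters: it matches the Sobolev exponent $\tfrac{2d}{d-2}$ so that this tail tends to $0$ uniformly in $k$ as $R\to\infty$, ruling out fast-diffusion-type escape of mass; for $d\leq2$ no growth condition is needed and the argument simplifies. The mollification--truncation constructions in (a)--(b) and the Schauder/Aronson bootstrap in (c) are technical but routine, so the genuine difficulty is concentrated in this $L^1$-resolvent convergence.
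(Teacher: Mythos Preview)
Your outline is correct and matches the paper's strategy: approximate $\phi$, $u_0$, $f$ by smooth data, solve the regularized problems classically via Proposition~\ref{pro_LSU}, upgrade regularity and decay, identify the classical solutions as mild $L^1$-solutions, and pass to the limit via the nonlinear Trotter--Kato theorem. Two points deserve comment.

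\emph{Decay of derivatives.} For property~(iv) you invoke Aronson's Gaussian bounds and Duhamel, whereas the paper takes a different route: it freezes the coefficient $\phi_k'(u_k)$, views the equation as linear non-autonomous, and applies maximal regularity in weighted spaces $W^{k,p}_\kappa(\R^d)$ (Theorems~\ref{thm:decay_at_infty_sol} and~\ref{thm:decay_at_infty_sol_W1p}, Proposition~\ref{prop:weighted_Sob_emb}). Your Aronson/Duhamel argument gives Gaussian decay of $u_k$ itself immediately, but the claim that \emph{all derivatives} decay ``faster than any polynomial'' is not as routine as you suggest: differentiating the quasilinear equation produces first-order drift terms $a'(u_k)\,\partial_{x_i}u_k\,\nabla u_k$, and one must iterate Aronson-with-drift bounds (or gradient estimates for divergence-form operators) level by level. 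This can be done, but it is exactly the kind of bookkeeping the paper's weighted-Sobolev machinery absorbs in one stroke.

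\emph{Resolvent convergence.} You correctly locate the crux in $A_{\phi_k}\to A_\phi$ and sketch a direct proof via compactness plus a tail estimate. The paper does not redo this: it simply invokes B\'enilan--Crandall \cite[Theorem~3]{BC81} (Proposition~\ref{thm:conv_m-acrr_A_phi}) as a black box and verifies its integral condition~\eqref{eq:thm:conv_m-acrr_A_phi} from the growth hypothesis~\eqref{eq:ex:thm:conv_m-acrr_A_phi;suff} in Proposition~\ref{ex:thm:conv_m-acrr_A_phi}. Your sketch is essentially a rediscovery of their argument, and your identification of the role of $\alpha\geq\frac{d-2}{d}$ (no escape of mass at spatial infinity) is the right intuition; note, however, that the tail term involves $\phi_k(w_k)$ rather than $\phi(w_k)$, so you also need the uniform-in-$k$ bound $|\phi_k(r)|\lesssim 2^{-k}|r|+|r|^\alpha$ near $0$, which your construction provides.
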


\subsection{Stability}
We now address the approximation of mild solutions using the stability results for $m$-accretive operators in Section \ref{sect_prelim_stab}.
In the particular case that the operators are of the form $A_\phi$ as defined in Proposition \ref{thm:m-accr_A_phi}
one has the following sufficient condition for the convergence of $m$-accretive operators \cite[Theorem~3]{BC81}. 
\begin{proposition}\label{thm:conv_m-acrr_A_phi}
Let $I \subset \R$ be an open interval with $0 \in I$ and let $\phi:I \to \R$ be a maximal monotone function with $\phi(0)=0$. For each $n \in \N$ let $I_n \subset \R$ be an open interval with $0 \in I$ and $\phi_n:I_n \to \R$ be a maximal monotone function with $\phi_n(0)=0$.
Assume that $\phi_n \to \phi$ as $n \to \infty$, that is, $\phi^{\circ} = \lim_{n \to \infty}\phi^{\circ}_n$ a.e.\ on $\R$, where
$$
\phi^{\circ}(r) = \begin{cases}
    -\infty, & r \leq \inf(I),\\ 
    \phi(r), & r \in I,\\
    \infty, & r \geq \sup(I),
\end{cases}
$$
and $\phi^{\circ}_n$ is defined analogously.
Furthermore, if $d \geq 3$, assume that 
\begin{equation}\label{eq:thm:conv_m-acrr_A_phi}
-\int_{R}^{\infty}\rho^{d-1}\beta\left(\frac{-1}{\rho^{d-2}}\right)\d\rho = \int_{R}^{\infty}\rho^{d-1}\beta\left(\frac{1}{\rho^{d-2}}\right)\d\rho = \infty, \qquad R >0,
\end{equation}
where $\beta=\phi^{-1}$ in the sense that $\beta(s) = \min\{r : s=\phi(r)\}$.
Then $A_{\phi_n} \to A_{\phi}$ as m-accretive operators in $L^1(\R^d)$ as $n \to \infty$.
\end{proposition}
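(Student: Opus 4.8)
The statement is a reformulation of \cite[Theorem~3]{BC81}, and the plan is to indicate the strategy behind it. By the definition \eqref{eq:conv_m-accr_op}, we must show that $(I+A_{\phi_n})^{-1}f \to (I+A_\phi)^{-1}f$ in $L^1(\R^d)$ for every $f \in L^1(\R^d)$. Since each $(I+A_{\phi_n})^{-1}$ is a contraction on $L^1(\R^d)$ fixing $0$ (here we use $\phi_n(0)=0$ together with the $m$-accretivity from Proposition~\ref{thm:m-accr_A_phi}), a standard $\varepsilon/3$-argument reduces the claim to $f$ ranging over the dense subclass of compactly supported functions in $L^1(\R^d)\cap L^\infty(\R^d)$. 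Fixing such an $f$, set $u_n:=(I+A_{\phi_n})^{-1}f$ and $u:=(I+A_\phi)^{-1}f$, so that $u_n-\Delta\phi_n(u_n)=f$ and $u-\Delta\phi(u)=f$ in $\ms{D}'(\R^d)$, with $u_n\in\dom(A_{\phi_n})$ and $u\in\dom(A_\phi)$.

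The heart of the argument is to prove that $(u_n)_{n}$ is precompact in $L^1(\R^d)$. One has the uniform bound $\|u_n\|_{L^1(\R^d)}\le\|f\|_{L^1(\R^d)}$, and, since $A_\phi$ commutes with translations, $\|u_n(\cdot+h)-u_n\|_{L^1(\R^d)}\le\|f(\cdot+h)-f\|_{L^1(\R^d)}\to 0$ as $h\to 0$ uniformly in $n$, so the family is $L^1$-equicontinuous. By the Kolmogorov--Riesz--Fr\'echet criterion it then suffices to establish uniform tightness, i.e.\ $\sup_n\int_{|x|>R}|u_n|\to 0$ as $R\to\infty$. This is exactly where the hypothesis \eqref{eq:thm:conv_m-acrr_A_phi} enters: for $|x|$ beyond the support of $f$ one compares $u_n$ with radial barriers of the form $x\mapsto\beta(\pm c\,|x|^{2-d})$ built from the inverse $\beta=\phi^{-1}$, which are $\phi_n$-supersolutions (resp.\ subsolutions) up to controlled errors because $|x|^{2-d}$ is harmonic on $\R^d\setminus\{0\}$ for $d\ge3$; the integral condition \eqref{eq:thm:conv_m-acrr_A_phi} is precisely the growth restriction on $\beta$ near $0$ that converts this comparison into the claimed uniform tail decay. (For $d\le2$ no such hypothesis is needed, matching its absence from the statement.) I expect this tightness estimate, and the correct exploitation of \eqref{eq:thm:conv_m-acrr_A_phi}, to be the main obstacle; the remaining steps are a compactness-and-uniqueness routine.

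Granted precompactness, pick a subsequence with $u_{n_k}\to v$ in $L^1(\R^d)$ and a.e. From the a.e.\ convergence $\phi^{\circ}_n\to\phi^{\circ}$ and monotonicity one obtains $\phi_n\to\phi$ locally uniformly on $I$ (a Dini--P\'olya-type argument, using that a single-valued maximal monotone function is continuous), hence $\phi_{n_k}(u_{n_k})\to\phi(v)$ a.e., with a monotonicity (Minty) argument taking care of the bookkeeping when the inverse $\beta$ is multivalued. Passing to the limit in the distributional equation gives $v-\Delta\phi(v)=f$ in $\ms{D}'(\R^d)$; Fatou yields $v\in L^1(\R^d)$, a local energy estimate for $\phi_{n_k}(u_{n_k})$ gives $\phi(v)\in L^1_{\loc}(\R^d)$, and then $\Delta\phi(v)=v-f\in L^1(\R^d)$, so $v\in\dom(A_\phi)$ with $(I+A_\phi)v=f$; by injectivity of $I+A_\phi$ we conclude $v=u$. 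Since every subsequence of $(u_n)$ admits a further subsequence converging in $L^1(\R^d)$ to the same limit $u$, we get $u_n\to u$ in $L^1(\R^d)$, and the reduction step then upgrades this to arbitrary $f\in L^1(\R^d)$, i.e.\ $A_{\phi_n}\to A_\phi$ as $m$-accretive operators in $L^1(\R^d)$.
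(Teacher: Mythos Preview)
Your proposal is correct and aligns with the paper: the paper does not give a proof of this proposition at all but simply cites \cite[Theorem~3]{BC81} as its source, exactly as you identify in your first sentence. You go beyond the paper by sketching the strategy of the B\'enilan--Crandall argument (reduction to compactly supported bounded data, Kolmogorov--Riesz--Fr\'echet compactness, tightness via the radial barriers encoded in \eqref{eq:thm:conv_m-acrr_A_phi}, and identification of the limit), which is a faithful outline of their proof; the paper itself offers no such sketch.
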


Next, we discuss explicit conditions that imply \eqref{eq:thm:conv_m-acrr_A_phi} and show that this property holds for biofilm models.

\begin{proposition}\label{ex:thm:conv_m-acrr_A_phi}
Let $I \subset \R$ be an open interval with $0 \in I$ and let $\phi:I \to \R$ be a maximal monotone function with $\phi(0)=0$.
Let $d \geq 3$ and $\alpha \geq \frac{d-2}{d}$.
If the condition \eqref{eq:ex:thm:conv_m-acrr_A_phi;suff} holds true, then $\phi:I \to \R$ satisfies the condition~\eqref{eq:thm:conv_m-acrr_A_phi}.
\end{proposition}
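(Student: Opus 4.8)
The plan is to reduce the condition \eqref{eq:thm:conv_m-acrr_A_phi} to the behaviour of $\beta=\phi^{-1}$ near the origin and then read off the required divergence directly from the polynomial bound \eqref{eq:ex:thm:conv_m-acrr_A_phi;suff}. First I would note that $\rho^{-(d-2)}\to 0$ as $\rho\to\infty$, and that divergence of an integral $\int_R^\infty(\,\cdots)\,\d\rho$ for one value $R>0$ forces it for every $R>0$ (the difference between two such integrals is finite). Hence it suffices to estimate $\beta(s)$ for $s$ in a small neighbourhood of $0$. Fix $\varepsilon_0>0$ with $J:=[-\varepsilon_0,\varepsilon_0]\Subset I$ and let $C=C_J$ be the constant furnished by \eqref{eq:ex:thm:conv_m-acrr_A_phi;suff}, so that $|\phi(r)|\le C|r|^\alpha$ for $|r|\le\varepsilon_0$.

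The key step is to invert this into a lower bound on $|\beta|$. Since $\beta$ is monotone increasing, $\ell:=\lim_{s\downarrow 0}\beta(s)$ exists in $[0,\infty)$. If $\ell>0$ (or, symmetrically, the left-hand limit at $0$ is strictly negative), then $\beta$ is bounded away from $0$ near the origin and the divergence below is immediate, since $\rho^{d-1}\beta(\rho^{-(d-2)})\gtrsim\rho^{d-1}$ for large $\rho$. If $\ell=0$, then for small $s>0$ the minimiser $r=\beta(s)$ lies in $(0,\varepsilon_0]$, so $s=\phi(\beta(s))=|\phi(\beta(s))|\le C\beta(s)^\alpha$, which gives $\beta(s)\ge(s/C)^{1/\alpha}$ for $0<s\le s_0$. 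The symmetric argument on $(-s_0,0)$ yields $-\beta(-s)\ge(s/C)^{1/\alpha}$. (When $s$ is not attained by $\phi$, i.e.\ $\phi$ jumps over $s$, one argues in the same way after passing to the maximal monotone graph of $\phi$.)

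Substituting $s=\rho^{-(d-2)}$, which lies in $(0,s_0]$ for $\rho$ large, I then obtain $\rho^{d-1}\beta(\rho^{-(d-2)})\gtrsim\rho^{\,d-1-(d-2)/\alpha}$ and likewise $-\rho^{d-1}\beta(-\rho^{-(d-2)})\gtrsim\rho^{\,d-1-(d-2)/\alpha}$ for all large $\rho$. Now $\int_R^\infty\rho^{\,d-1-(d-2)/\alpha}\,\d\rho=\infty$ exactly when $d-1-(d-2)/\alpha\ge-1$, equivalently $\alpha d\ge d-2$, i.e.\ $\alpha\ge\frac{d-2}{d}$, which is precisely the hypothesis. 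Therefore both integrals appearing in \eqref{eq:thm:conv_m-acrr_A_phi} equal $+\infty$ for every $R>0$, which is the assertion.

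I expect the only genuinely delicate point to be the inversion carried out in the second paragraph: one has to keep track of signs on the two sides of the origin, handle the definition $\beta(s)=\min\{r:\phi(r)=s\}$ carefully when $s$ is not in the range of $\phi$, and separate off the (easy) degenerate case in which $\beta$ does not tend to $0$ at the origin. Everything after that is the elementary integrability computation with the exponent $d-1-(d-2)/\alpha$.
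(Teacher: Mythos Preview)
Your proof is correct and follows essentially the same approach as the paper: invert the bound $|\phi(r)|\le C_J|r|^\alpha$ to obtain $\beta(s)\gtrsim s^{1/\alpha}$ near the origin, substitute $s=\rho^{-(d-2)}$, and check that the resulting exponent $d-1-(d-2)/\alpha$ is at least $-1$ precisely when $\alpha\ge(d-2)/d$. You are in fact slightly more careful than the paper about the edge cases (the possibility that $\beta$ does not tend to $0$, and $s$ not lying in the range of $\phi$), but the argument is the same.
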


\begin{proof}
Let $R>0$. We need to show that the two integrals in \eqref{eq:thm:conv_m-acrr_A_phi} diverge. As the proof for both integrals is similar we will only consider the second one.
To this end, put $s_R := \frac{1}{R^{d-2}}$ and $J:=[0,\beta(s_R)]$, where $\beta=\phi^{-1}.$
Then, for each $s \in [0,s_R]$ we have $r=\beta(s) \in J$, such that 
$$
s=\phi(r) \leq C_Jr^{\alpha} = C_J\beta(s)^{\alpha}
$$
and thus,
$$
\beta(s) \geq C_J^{-1/\alpha}s^{1/\alpha}.
$$
Since $d-1-\frac{d-2}{\alpha} \geq -1$, it follows that
$$
\int_{R}^{\infty}\rho^{d-1}\beta\left(\frac{1}{\rho^{d-2}}\right)\d\rho \geq C_J^{-1/\alpha}\int_{R}^{\infty}\rho^{d-1-\frac{d-2}{\alpha}}\d\rho = \infty.
$$
\end{proof}

\begin{example}\label{ex:thm:conv_m-acrr_A_phi;biofilm}
The maximal monotone function $\phi:(-1,1) \to \R$ from Example~\ref{ex:max_mon_fc_biofilm} satisfies the condition~\eqref{eq:ex:thm:conv_m-acrr_A_phi;suff} with $\alpha=b+1$.

Indeed, to see that $\phi$ satisfies the condition~\eqref{eq:ex:thm:conv_m-acrr_A_phi;suff} with $\alpha=b+1$, let $R \in (0,1)$ and note that, for each $\rho \in [-R,R]$,
$$
|\phi(\rho)| \leq \int_{0}^{|\rho|}\frac{z^b}{(1-z)^a}\d z \leq C_R\int_{0}^{|\rho|}z^b \d z 
= \frac{1}{b+1}|\rho|^{b+1},
$$
for some constant $C_R>0$ depending on $R$.
\end{example}

Combining Propositions \ref{thm:conv_m-acrr_A_phi}, \ref{prop:A_phi_closure_dom} and Theorem \ref{thm:nonlin_Trotter-Kato} immediately yields the following stability result.

\begin{corollary}\label{cor:thm:nonlin_Trotter-Kato_A_phi}
Let the assumptions in Theorem~\ref{thm:conv_m-acrr_A_phi} hold.  
For each $n \in \N$, let $f^n \in L^1(0,T;L^1(\R^d))$, $y^n_0 \in L^1(\R^d)$ with $y^n_0 \in \overline{I_n}$ a.e.\ and let $y_n$ be the mild solution to 
\begin{equation}\label{eq:abstract_Cauchy_n_A_phi}
    \begin{cases}
        y_n'(t)+A_{\phi_n}y_n(t) = f^n(t), & t \in [0,T],\\
        y_n(0) = y^n_0.
    \end{cases}
\end{equation}
If $f=\lim_{n \to \infty} f^n$ in $L^1(0,T;L^1(\R^d))$ and $y_0=\lim_{n \to \infty}y^n_0$ in $L^1(\R^d)$, then $y_n \to y$ in $C([0,T];L^1(\R^d))$ as $n \to \infty$, where $y$ is the mild solution to
\begin{equation}\label{eq:abstract_Cauchy_A_phi}
    \begin{cases}
        y'(t)+A_\phi y(t) = f(t), & t \in [0,T],\\
        y(0) = y_0.
    \end{cases}
\end{equation}
\end{corollary}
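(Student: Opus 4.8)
The plan is to read Corollary~\ref{cor:thm:nonlin_Trotter-Kato_A_phi} as a direct application of the nonlinear Trotter--Kato theorem, Theorem~\ref{thm:nonlin_Trotter-Kato}, in the Banach space $X = L^1(\R^d)$ with $A_n := A_{\phi_n}$ and $A := A_\phi$; the proof then consists only of checking that its hypotheses are met.

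First I would invoke Proposition~\ref{thm:m-accr_A_phi} to see that $A_{\phi_n}$ and $A_\phi$ are $m$-accretive on $L^1(\R^d)$, and Proposition~\ref{thm:conv_m-acrr_A_phi} --- whose hypotheses are exactly the standing assumptions carried over from the statement of that proposition, including \eqref{eq:thm:conv_m-acrr_A_phi} when $d \geq 3$ --- to conclude that $A_{\phi_n} \to A_{\phi}$ as $m$-accretive operators in $L^1(\R^d)$. Next, by Proposition~\ref{prop:A_phi_closure_dom} we have $\overline{\dom(A_{\phi_n})} = L^1(\R^d;\overline{I_n})$, and since $y^n_0 \in L^1(\R^d)$ takes values in $\overline{I_n}$ a.e.\ by hypothesis, the admissibility condition $y^n_0 \in \overline{\dom(A_{\phi_n})}$ of Theorem~\ref{thm:nonlin_Trotter-Kato} holds. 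Identifying $L^1(0,T;L^1(\R^d))$ with $L^1((0,T)\times\R^d)$ by Fubini's theorem, the convergences $f = \lim_n f^n$ in $L^1((0,T)\times\R^d)$ and $y_0 = \lim_n y^n_0$ in $L^1(\R^d)$ are precisely the remaining hypotheses. Finally, one should record that $y_0 \in L^1(\R^d;\overline{I}) = \overline{\dom(A_\phi)}$, which is what makes \eqref{eq:abstract_Cauchy_A_phi} admit a mild solution by Theorem~\ref{thm:exist_mild_sol_Cauchy}; this is either part of the standing hypotheses or can be extracted from $y^n_0 \to y_0$ together with $\phi^\circ_n \to \phi^\circ$ a.e.\ by passing to a pointwise a.e.\ convergent subsequence. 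Theorem~\ref{thm:nonlin_Trotter-Kato} then delivers $y_n \to y$ in $C([0,T];L^1(\R^d))$, where $y$ is the mild solution of \eqref{eq:abstract_Cauchy_A_phi}, which is the assertion.

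There is no genuine obstacle here: as the authors indicate, the corollary is an immediate consequence of the three cited results, with Theorem~\ref{thm:nonlin_Trotter-Kato} doing all the work. The only points deserving even a line of comment are the harmless measure-theoretic identification $L^1(0,T;L^1(\R^d)) \cong L^1((0,T)\times\R^d)$ and the bookkeeping, via Proposition~\ref{prop:A_phi_closure_dom}, that the approximating initial data $y^n_0$ lie in $\overline{\dom(A_{\phi_n})}$ so that the approximate mild solutions $y_n$ are well defined.
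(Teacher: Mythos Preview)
Your proposal is correct and follows exactly the approach the paper takes: the corollary is stated as an immediate consequence of combining Propositions~\ref{thm:conv_m-acrr_A_phi} and~\ref{prop:A_phi_closure_dom} with Theorem~\ref{thm:nonlin_Trotter-Kato}, and you have spelled out the hypothesis verification in slightly more detail than the paper does.
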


\begin{remark}
Note that the mild solutions $y_n$ in Theorem~\ref{thm:nonlin_Trotter-Kato} and Corollary~\ref{cor:thm:nonlin_Trotter-Kato_A_phi} exist
by Theorem~\ref{thm:exist_mild_sol_Cauchy}.    
\end{remark}

\subsection{Mild solutions of non-degenerate problems}

In this subsection we prove that the notions of mild and classical solutions coincide for smooth, non-degenerate quasilinear problems, 
cf.\ Remark~\ref{rmk:mild_solution_concept_strong}.

\begin{proposition}\label{lem:classical_implies_mild}
Let $\psi:\R \to \R$ be a $C^2$-function with $\psi(0)=0$, $\psi', \psi'' \in L^\infty(\R)$ and $\psi'(r)>0$ for all $r \in \R$.   Let $v \in C^{1,2}([0,T] \times \R^d)$ be a classical solution of 
\begin{equation}\label{eq:lem:L1-contraction;PDE}
\p_t v = \Delta \psi(v)+g\quad\text{in}\:\:(0,T) \times \R^d,
\end{equation}
with $\p_t\nabla_x v \in C([0,T] \times \R^d)$ and $\p_t v \in L^\infty([0,T] \times \R^d)$ for a given $g \in W^{1,1}((0,T) \times \R^d)$. 
Furthermore, assume that 
\begin{align}
\lim_{R \to \infty}\int_{\p B_R(0)}|\nabla v(t)|\d\sigma 
&=\lim_{R \to \infty}\int_{\p B_R(0)}|\p_t\nabla v(t)|\d\sigma = 0, \label{eq:lem:classical_implies_mild;decay}
\end{align}
and $\Delta\psi(v(0))+g(0) \in L^1(\R^d)$.
If $v$ satisfies the initial condition $v(0)=v_0$, then $v$ belongs to $W^{1,\infty}(0,T;L^1(\R^d))$ and is a strong (and thus mild) solution of \eqref{eq:GPME;psi_v} in the $L^1(\R^d)$-setting.    
\end{proposition}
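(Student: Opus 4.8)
The plan is to show that $v$ is a \emph{strong} solution of \eqref{eq:GPME;psi_v} in the $L^1(\R^d)$-setting for the $m$-accretive operator $A_\psi$ of Proposition~\ref{thm:m-accr_A_phi} (applied with $I=\R$ and the finite-valued maximal monotone function $\phi=\psi$), and then to invoke that strong solutions are mild solutions (Remark~\ref{rmk:mild_solution_concept_strong}; cf.\ \cite[Chapter~4]{Ba10}). Thus it suffices to establish: (i)~$v\in W^{1,\infty}(0,T;L^1(\R^d))$, so that $t\mapsto v(t)$ is Lipschitz and hence a.e.\ differentiable as an $L^1(\R^d)$-valued map, with $L^1$-derivative equal to the classical time derivative $\p_t v(t)$; (ii)~$v(t)\in\dom(A_\psi)$ for a.e.\ $t\in[0,T]$; and (iii)~$v'(t)+A_\psi v(t)=g(t)$ for a.e.\ $t$, together with $v(0)=v_0\in L^1(\R^d)=\overline{\dom(A_\psi)}$ (using Proposition~\ref{prop:A_phi_closure_dom}).

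The heart of the matter is~(i), which rests on an $L^1$-estimate for $w:=\p_t v$. Since $g\in W^{1,1}((0,T)\times\R^d)$, Fubini's theorem gives $g\in W^{1,1}(0,T;L^1(\R^d))\hookrightarrow C([0,T];L^1(\R^d))$, so $g(0)\in L^1(\R^d)$, whence $w(0)=\Delta\psi(v_0)+g(0)\in L^1(\R^d)$ by hypothesis. Differentiating \eqref{eq:lem:L1-contraction;PDE} in time and using $\p_t\psi(v)=\psi'(v)\,\p_t v$, the function $w$ satisfies, in the sense of distributions on $(0,T)\times\R^d$,
\[
\p_t w=\Delta\big(\psi'(v)\,w\big)+\p_t g .
\]
The hypotheses $v\in C^{1,2}$, $\p_t\nabla_x v\in C([0,T]\times\R^d)$ and $\p_t v\in L^\infty$ ensure that $w$ and $\nabla w=\p_t\nabla v$ are continuous, that $w$ is bounded, and that the coefficient $\psi'(v)$ is locally Lipschitz; this suffices to test the equation against $\gamma_\varepsilon(w)\,\zeta_R$, where $\gamma_\varepsilon$ is the smooth increasing approximation of $\sign_0$ with $\gamma_\varepsilon(0)=0$ used in the proof of Lemma~\ref{lem:compatability_L1_subdifferential}, and $\zeta_R$ is a cut-off equal to $1$ on $B_R(0)$ and supported in $B_{2R}(0)$. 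After one integration by parts: the diffusive term $-\int\gamma_\varepsilon'(w)\,\psi'(v)\,|\nabla w|^2\,\zeta_R$ has a favourable sign and is discarded; the cut-off errors, which pair $\nabla\zeta_R$ with $\psi''(v)\,w\,\nabla v+\psi'(v)\nabla w$, vanish as $R\to\infty$ thanks to the decay hypotheses \eqref{eq:lem:classical_implies_mild;decay} for $\nabla v$ and for $\nabla w=\p_t\nabla v$; and the first-order cross term $\int\gamma_\varepsilon'(w)\,\psi''(v)\,w\,\nabla v\cdot\nabla w\,\zeta_R$ vanishes as $\varepsilon\to0$ since $\gamma_\varepsilon'(s)\,s\to0$ boundedly off $\{s=0\}$. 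Carrying out these limits (first $\varepsilon\to0$ for fixed $R$, then $R\to\infty$, using monotone convergence for $\int\gamma_\varepsilon(w)\zeta_R$-type terms) leaves
\[
\|w(t)\|_{L^1(\R^d)}\le\|w(0)\|_{L^1(\R^d)}+\int_0^t\|\p_t g(s)\|_{L^1(\R^d)}\,\d s,\qquad t\in[0,T],
\]
so $w=\p_t v\in L^\infty(0,T;L^1(\R^d))$. Since $v(t)=v_0+\int_0^t w(s)\,\d s$ pointwise and $v_0=v(0)\in L^1(\R^d)$, the integral is in fact a Bochner integral in $L^1(\R^d)$, whence $v\in W^{1,\infty}(0,T;L^1(\R^d))$ with $L^1$-time-derivative $\p_t v$; this is~(i).

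For~(ii) and~(iii): for each $t$ the function $\psi(v(t))$ is continuous, hence in $L^1_{\loc}(\R^d)$, and $\Delta\psi(v(t))=\p_t v(t)-g(t)\in L^1(\R^d)$ for a.e.\ $t$ (both terms now lying in $L^1(\R^d)$ a.e.); thus $v(t)\in\dom(A_\psi)$ with $A_\psi v(t)=-\Delta\psi(v(t))=g(t)-\p_t v(t)$, i.e.\ $v'(t)+A_\psi v(t)=g(t)$, for a.e.\ $t$. Combined with $v\in W^{1,1}(0,T;L^1(\R^d))$ and $v(0)=v_0$, this shows that $v$ is a strong solution of \eqref{eq:GPME;psi_v} in $L^1(\R^d)$, hence a mild solution, which completes the proof.

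The \textbf{main obstacle} is making the $L^1$-estimate for $w=\p_t v$ rigorous: the time-differentiated equation holds a priori only distributionally, so one must justify testing it against $\gamma_\varepsilon(w)\,\zeta_R$ (using continuity of $w$ and $\nabla w$, boundedness of $w$, and if necessary a mollification of the $L^1$-datum $\p_t g$), and then carry out the double limit in $R$ and $\varepsilon$ in the correct order — retaining the favourable sign of the diffusive contribution, killing the localization fluxes via \eqref{eq:lem:classical_implies_mild;decay} (where one also needs sufficient uniformity/integrability in $t$ of $\int_{\p B_R}|\nabla v|\,\d\sigma$ and $\int_{\p B_R}|\p_t\nabla v|\,\d\sigma$ to pass the limit under the time integral), and removing the first-order cross term as $\varepsilon\to0$. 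Everything else is routine bookkeeping.
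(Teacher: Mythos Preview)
Your approach is correct in spirit and leads to the same $L^1$-estimate for $\p_t v$ as the paper, but the technical route differs. The paper does \emph{not} differentiate the equation in time. Instead it first establishes an $L^1$-contraction principle for classical solutions on balls (Lemma~\ref{lem:L1-contraction}), then applies it to the pair $v$ and its time shift $\tilde v = v(\,\cdot\,+h,\,\cdot\,)$, yielding an estimate for $\int_{B_R}|v(t+h)-v(t)|/h$ in terms of the same quantity at $t=0$, a time integral of $|g(s+h)-g(s)|/h$, and a surface term on $\p B_R$. One then takes $h\to0$ by dominated convergence and $R\to\infty$ by monotone convergence to obtain \eqref{lem:L1_est_v_time_der;dtv}. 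Finally, strong measurability of $t\mapsto v(t)$ and $t\mapsto\p_t v(t)$ in $L^1(\R^d)$ is checked via Pettis' theorem.

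The trade-off is as follows. Your entropy-type argument on the linearized equation for $w=\p_t v$ is conceptually direct, but---as you correctly flag---it forces you to justify testing a merely distributional equation against $\gamma_\varepsilon(w)\zeta_R$, to control the cross term $\int\gamma_\varepsilon'(w)\,w\,\psi''(v)\nabla v\cdot\nabla w$, and to convert the surface-integral decay hypothesis \eqref{eq:lem:classical_implies_mild;decay} into control of annular cut-off errors (with some uniformity in $t$). The paper's difference-quotient approach sidesteps all of this: it works entirely with classical solutions of the original equation, so no regularity of $\p_t w$ or $\Delta(\psi'(v)w)$ is ever needed, and the boundary terms match the hypothesis \eqref{eq:lem:classical_implies_mild;decay} exactly since one works on $B_R$ directly rather than with smooth cut-offs. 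Moreover, the contraction lemma is needed elsewhere in the paper anyway (for the comparison principle, Theorem~\ref{thm:comparison}), so nothing is wasted. Your Bochner-integral step implicitly uses strong measurability of $w:[0,T]\to L^1(\R^d)$; this follows (as in the paper) from Pettis' theorem together with joint continuity of $w$, or alternatively from Fubini once $w\in L^1([0,T]\times\R^d)$ is established.
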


The following lemma provides an $L^1$-contraction principle for \eqref{eq:lem:L1-contraction;PDE}. 
Whereas the final statement  \eqref{eq:lem:L1-contraction} is closest to a classical formulation of an  $L^1$-contraction principle, the estimates \eqref{eq:lem:L1-contraction;ball_intermediate} and \eqref{eq:lem:L1-contraction;ball} turn out to be important for the proof of Proposition~\ref{lem:classical_implies_mild}. 

\begin{lemma}[$L^1$-contraction principle]\label{lem:L1-contraction}
Let $\psi:\R \to \R$ be a $C^2$-function with $\psi(0)=0$, $\psi' \in L^\infty(\R)$ and $\psi'(r)>0$ for all $r \in \R$.   Let $v, \tilde{v} \in C^{1,2}([0,T] \times \R^d)$ be classical solutions of
\begin{equation}
\p_t v = \Delta \psi(v)+g\quad\text{in}\:\:(0,T) \times \R^d,
\end{equation}
and 
\begin{equation*}
\p_t \tilde{v} = \Delta \psi(\tilde{v})+\tilde{g}\quad\text{in}\:\:(0,T) \times \R^d, 
\end{equation*}
respectively, where $g,\tilde{g} \in L^1([0,T] \times \R^d)$.
Then, for every $t \in [0,T]$ and $R>0$, we have the estimates
\begin{align}
&\|(v(t,\cdot)-\tilde{v}(t,\cdot))_+\|_{L^1(B_R(0))}\nonumber \\
\leq& \|(v(0,\cdot)-\tilde{v}(0,\cdot))_+\|_{L^1(B_R(0))} + \|(g-\tilde{g})_+ \sign^+_0(v-\tilde{v})\|_{L^1((0,t)\times B_R(0))} \label{eq:lem:L1-contraction;ball_intermediate;+}   \\
&+ \|\nabla(\psi(v)-\psi(\tilde{v}))\|_{L^1(\partial B_R(0))} 
\nonumber \\
\leq& \|(v(0,\cdot)-\tilde{v}(0,\cdot))_+\|_{L^1(B_R(0))} + \|(g-\tilde{g})_+ \sign^+_0(v-\tilde{v})\|_{L^1((0,t)\times B_R(0))} \label{eq:lem:L1-contraction;ball;+}    \\
&+ \nrm{\psi'}_{L^\infty(\R)}\left(\|\nabla v(t,\cdot)\|_{L^1(\partial B_R(0))}  + \|\nabla \tilde{v}(t,\cdot)\|_{L^1(\partial B_R(0))}  \right).
\nonumber
\end{align}
and, as consequence, also the estimates
\begin{align}
&\|v(t,\cdot)-\tilde{v}(t,\cdot)\|_{L^1(B_R(0))} \nonumber \\
\leq& \|v(0,\cdot)-\tilde{v}(0,\cdot)\|_{L^1(B_R(0))} + \|g-\tilde{g}\|_{L^1((0,t)\times B_R(0))} \label{eq:lem:L1-contraction;ball_intermediate}  \\
& +  \|\nabla(\psi(v)-\psi(\tilde{v}))\|_{L^1(\partial B_R(0))} 
\nonumber \\
\leq&  \|v(0,\cdot)-\tilde{v}(0,\cdot)\|_{L^1(B_R(0))} + \|g-\tilde{g}\|_{L^1((0,t)\times B_R(0))}\label{eq:lem:L1-contraction;ball} 
   \\
&+ \nrm{\psi'}_{L^\infty(\R)}\left(\|\nabla v(t,\cdot)\|_{L^1(\partial B_R(0))}  + \|\nabla \tilde{v}(t,\cdot)\|_{L^1(\partial B_R(0))}  \right).
\nonumber
\end{align}
In particular, if $\int_{\p B_R(0)}|\nabla v(t)|\d\sigma \to 0$ and $\int_{\p B_R(0)}|\nabla \tilde{v}(t)|\d\sigma \to 0$ for $R \to \infty$, then
\begin{align}\label{eq:lem:L1-contraction;+}
\begin{split}
&\nrm{(v(t)-\tilde{v}(t))_+}_{L^1(\R^d)} \\
\leq& \nrm{(v(0)-\tilde{v}(0))_+}_{L^1(\R^d)} + \nrm{(g-\tilde{g})_+\sign^+_0(v-\tilde{v})}_{L^1([0,t] \times \R^d)}.
\end{split}
\end{align} 
and, as a consequence, also
\begin{equation}\label{eq:lem:L1-contraction}
\nrm{v(t)-\tilde{v}(t)}_{L^1(\R^d)} \leq \nrm{v(0)-\tilde{v}(0)}_{L^1(\R^d)} + \nrm{g-\tilde{g}}_{L^1([0,t] \times \R^d)}.
\end{equation}
\end{lemma}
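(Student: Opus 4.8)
The plan is to run the classical Kato-type $L^1$-contraction argument; it is simpler here than in the degenerate case because $v$ and $\widetilde v$ are genuine classical solutions on the same space-time cylinder, so no doubling of variables is required. Set $w := v-\widetilde v$ and $\zeta := \psi(v)-\psi(\widetilde v)$. Subtracting the two equations yields the pointwise identity $\p_t w = \Delta\zeta + (g-\widetilde g)$ on $(0,T)\times\R^d$, and $w,\zeta \in C^{1,2}([0,T]\times\R^d)$ (this is where $\psi\in C^2$ is used). The only structural input beyond this identity is that $\psi$ is strictly increasing, which forces $\zeta$ and $w$ to have the same sign at every point: $\sign^+_0(\zeta)=\sign^+_0(w)$, $\sign_0(\zeta)=\sign_0(w)$ and $\{\zeta=0\}=\{w=0\}$.

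To obtain \eqref{eq:lem:L1-contraction;ball_intermediate;+}, I would fix $R>0$, choose a nondecreasing $p_\varepsilon\in C^1(\R)$ with $0\le p_\varepsilon\le1$, $p_\varepsilon\equiv0$ on $(-\infty,0]$ and $p_\varepsilon\equiv1$ on $[\varepsilon,\infty)$ (so that $p_\varepsilon(0)=0$ and $p_\varepsilon\to\sign^+_0$ pointwise on $\R$), multiply the identity by $p_\varepsilon(\zeta(s,\cdot))$, and integrate over $B_R(0)$. Green's first identity on the ball --- legitimate since $\zeta(s,\cdot)\in C^2(\overline{B_R(0)})$ and $p_\varepsilon\in C^1$ --- gives
\[
\int_{B_R(0)}\Delta\zeta\,p_\varepsilon(\zeta)\,\d x = \int_{\p B_R(0)}p_\varepsilon(\zeta)\,\p_\nu\zeta\,\d\sigma - \int_{B_R(0)}p_\varepsilon'(\zeta)\,|\nabla\zeta|^2\,\d x \le \int_{\p B_R(0)}|\nabla\zeta|\,\d\sigma ,
\]
where $\p_\nu$ is the outward normal derivative, the interior term is discarded because $p_\varepsilon'\ge0$, and the boundary term is controlled by $0\le p_\varepsilon\le1$; for the inhomogeneity one simply bounds $(g-\widetilde g)\,p_\varepsilon(\zeta)\le(g-\widetilde g)_+\,p_\varepsilon(\zeta)$. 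I would then integrate in $s$ over $(0,t)$ and let $\varepsilon\to0$; every integrand is bounded on the space-compact, time-finite domain, so that dominated convergence applies: the time integral converges to $\int_{B_R(0)}(w_+(t)-w_+(0))\,\d x$ (by Fubini and the a.e.\ chain rule $\p_s(w_+)=\mathbf{1}_{\{w>0\}}\p_s w$, valid since $w$ is $C^1$ in $t$, together with $\mathbf{1}_{\{w>0\}}=\sign^+_0(w)=\sign^+_0(\zeta)=\lim_{\varepsilon\to0}p_\varepsilon(\zeta)$); the inhomogeneity integral converges to $\nrm{(g-\widetilde g)_+\sign^+_0(v-\widetilde v)}_{L^1((0,t)\times B_R(0))}$; and the Laplacian bound leaves precisely the boundary flux term of \eqref{eq:lem:L1-contraction;ball_intermediate;+}. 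The sharper estimate \eqref{eq:lem:L1-contraction;ball;+} then follows from $|\nabla\zeta|=|\psi'(v)\nabla v-\psi'(\widetilde v)\nabla\widetilde v|\le\nrm{\psi'}_{L^\infty(\R)}(|\nabla v|+|\nabla\widetilde v|)$.

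For the unsigned estimates \eqref{eq:lem:L1-contraction;ball_intermediate} and \eqref{eq:lem:L1-contraction;ball} I would repeat the computation with an odd smooth nondecreasing approximation $\gamma_\varepsilon$ of $\sign_0$ (with $|\gamma_\varepsilon|\le1$ and $\gamma_\varepsilon(0)=0$): the time integral then produces $\int_{B_R(0)}(|w(t)|-|w(0)|)\,\d x$ and the inhomogeneity is bounded by $\nrm{g-\widetilde g}_{L^1((0,t)\times B_R(0))}$, while the Laplacian and boundary steps are unchanged. (Equivalently, one adds the signed estimate for $(v,\widetilde v)$ to the one for $(\widetilde v,v)$ and notes that the two boundary contributions are supported on the complementary subsets $\{\zeta>0\}$ and $\{\zeta<0\}$ of $\p B_R(0)$, so that they do not add up to twice the bound.) Finally, \eqref{eq:lem:L1-contraction;+} and \eqref{eq:lem:L1-contraction} follow by letting $R\to\infty$ in \eqref{eq:lem:L1-contraction;ball;+} and \eqref{eq:lem:L1-contraction;ball}: the interior terms pass to the limit by monotone convergence, and the boundary term disappears by the standing hypothesis $\int_{\p B_R(0)}|\nabla v(t)|\,\d\sigma\to0$ and $\int_{\p B_R(0)}|\nabla\widetilde v(t)|\,\d\sigma\to0$.

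The one step that is not soft analysis is the extraction of the boundary flux on $\p B_R(0)$ and its vanishing as $R\to\infty$: this is the only place where a genuine decay hypothesis on $\nabla v,\nabla\widetilde v$ is used, and it is precisely what separates the whole-space $L^1$-contraction from its bounded-domain counterpart, where the term is simply absent. In the intended application (Proposition~\ref{lem:classical_implies_mild}, the decay being supplied by Proposition~\ref{thm:approx}) this decay is uniform in $t\in[0,T]$, so integrating the boundary flux over $(0,t)$ causes no difficulty. The remaining ingredients --- the chain rule for $w_+$ on $\{w=0\}$, the favorable sign of the interior term after integrating by parts against $p_\varepsilon(\zeta)$, and the $\varepsilon\to0$ limits --- are routine once the equation is in the form $\p_t w=\Delta\zeta+(g-\widetilde g)$ with $\zeta$ and $w$ of the same sign.
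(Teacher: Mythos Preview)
Your proposal is correct and follows essentially the same approach as the paper's proof: both run the Kato-type argument by testing the difference equation against a smooth approximation of $\sign^+_0$ applied to $\zeta=\psi(v)-\psi(\tilde v)$, integrating by parts on $B_R(0)$, discarding the nonnegative interior term $\int p_\varepsilon'(\zeta)|\nabla\zeta|^2$, bounding the boundary flux by $|\nabla\zeta|\le\nrm{\psi'}_{L^\infty}(|\nabla v|+|\nabla\tilde v|)$, and then passing to the limits $\varepsilon\to0$ and $R\to\infty$ via dominated and monotone convergence respectively. The paper simply cites this as a modification of \cite[Proposition~3.5]{Va07} and records only the boundary-term computation, whereas you have written the argument out in full; your observation that the boundary flux really carries an implicit time integral over $(0,t)$, harmless because the decay in the intended application is uniform in $t$, is accurate and worth keeping in mind.
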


\begin{proof}
This follows by a modification of the proof of
\cite[Proposition~3.5]{Va07} for bounded domains $\Omega$ with homogeneous Dirichlet boundary conditions.
Indeed, let $0 \leq \eta\leq 1$ be a $C^1$-approximation of the $\sign^+_0$ function and $w=\psi(v)-\psi(\tilde{v})$.  
Then, integration by parts yields
$$
\int_\Omega \Delta w \,\eta(w)\,\d x = -\int_\Omega |\nabla w|^2 \eta'(w)\,\d x + \int_{\p \Omega}\nabla w \cdot \nu \, \eta(w)\,\d\sigma,
$$
where $\nu$ denotes the outward unit normal vector on $\p \Omega$.  
The boundary term can be estimated by
\begin{align*}
|\nabla w \cdot \nu\, \eta(w)| &\leq |\nabla w| = |\nabla(\psi(v)-\psi(\tilde{v}))| = 
|\psi'(v)\nabla v-\psi'(\tilde{v})\nabla\tilde{v}| \\
&\leq |\psi'(v)\nabla v| + |\psi'(\tilde{v})\nabla\tilde{v}|
\leq \nrm{\psi'}_{L^\infty(\R)}(|\nabla v| + |\nabla\tilde{v}|).
\end{align*}
With this modification the argument in the proof of \cite[Proposition~3.5]{Va07} yields \eqref{eq:lem:L1-contraction;ball_intermediate;+} and \eqref{eq:lem:L1-contraction;ball;+} by taking $\Omega=B_R(0)$. The estimate \eqref{eq:lem:L1-contraction;+} subsequently follows by monotone convergence.
\end{proof}

Next we use the above $L^1$-contraction principle to show that $v(t)$ and $\partial_t v(t)$ belong to $L^1(\R^d)$ with concrete norm estimates. 

\begin{lemma}\label{lem:L1_est_v_time_der}
Let the assumptions of Proposition~\ref{lem:classical_implies_mild} be satisfied.
Then $v(t),\partial_t v(t) \in L^1(\R^d)$ with norm estimates
\begin{equation}\label{lem:L1_est_v_time_der;v}
\nrm{v(t)}_{L^1(\R^d)} \leq \nrm{v(0)}_{L^1(\R^d)} + \nrm{g}_{L^1([0,T] \times \R^d)},  
\end{equation}
and
\begin{equation}\label{lem:L1_est_v_time_der;dtv}
\nrm{\partial_t v(t)}_{L^1(\R^d)} \leq \nrm{\Delta\psi(v(0))+g(0)}_{L^1(\R^d)} + \nrm{\partial_t g}_{L^1([0,T] \times \R^d)}.  
\end{equation} 
\end{lemma}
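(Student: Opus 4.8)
The plan is to obtain both estimates from the $L^1$-contraction principle in Lemma~\ref{lem:L1-contraction}. The bound \eqref{lem:L1_est_v_time_der;v} follows immediately by applying \eqref{eq:lem:L1-contraction} to $v$ and $\tilde v \equiv 0$: since $\psi(0)=0$, the constant $0$ is a classical solution of \eqref{eq:lem:L1-contraction;PDE} with source $\tilde g \equiv 0$, and the surface decay in \eqref{eq:lem:classical_implies_mild;decay} holds for $v$ (and trivially for $\tilde v$), so that $\nrm{v(t)}_{L^1(\R^d)} \leq \nrm{v(0)}_{L^1(\R^d)} + \nrm{g}_{L^1([0,t] \times \R^d)}$, which gives \eqref{lem:L1_est_v_time_der;v} after enlarging $[0,t]$ to $[0,T]$; in particular $v(t) \in L^1(\R^d)$.

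For \eqref{lem:L1_est_v_time_der;dtv} I would differentiate in time via difference quotients. Fix $h \in (0,T)$. The time-translated functions $t' \mapsto v(t'+h)$ and $t' \mapsto v(t')$ are classical solutions of \eqref{eq:lem:L1-contraction;PDE} on $[0,T-h] \times \R^d$ with sources $g(\cdot+h)$ and $g(\cdot)$, so the ball estimate \eqref{eq:lem:L1-contraction;ball_intermediate} — the one retaining the sharper boundary term — yields, for $t \in [0,T-h]$ and $R>0$,
\[
\nrm{v(t+h)-v(t)}_{L^1(B_R(0))} \leq \nrm{v(h)-v(0)}_{L^1(B_R(0))} + \nrm{g(\cdot+h)-g(\cdot)}_{L^1((0,t) \times B_R(0))} + \nrm{\nabla(\psi(v(t+h))-\psi(v(t)))}_{L^1(\partial B_R(0))}.
\]
Dividing by $h$ and letting $h \downarrow 0$ with $R$ fixed, the $C^{1,2}$-regularity of $v$ makes $\tfrac1h(v(\cdot+h)-v(\cdot))$ converge to $\p_t v$ uniformly on $[0,T] \times \overline{B_R(0)}$, so the first three terms converge to $\nrm{\p_t v(t)}_{L^1(B_R(0))}$, to $\nrm{\p_t v(0)}_{L^1(B_R(0))} = \nrm{\Delta\psi(v(0))+g(0)}_{L^1(B_R(0))}$ (using \eqref{eq:lem:L1-contraction;PDE} at $t=0$ and the $W^{1,1}$-trace of $g$), and to $\nrm{\p_t g}_{L^1((0,t) \times B_R(0))}$ (difference quotients of $g \in W^{1,1}$ converge in $L^1$), while the boundary term converges to $\nrm{\nabla(\psi'(v(t))\p_t v(t))}_{L^1(\partial B_R(0))}$ since $\psi \in C^2$, $v \in C^{1,2}$ and $\p_t\nabla_x v \in C([0,T] \times \R^d)$. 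Sending $R \to \infty$, the left-hand side increases to $\nrm{\p_t v(t)}_{L^1(\R^d)}$, the first two right-hand terms tend to $\nrm{\Delta\psi(v(0))+g(0)}_{L^1(\R^d)}$ and to something $\leq \nrm{\p_t g}_{L^1([0,T] \times \R^d)}$, and the boundary term disappears because
\[
\nrm{\nabla(\psi'(v(t))\p_t v(t))}_{L^1(\partial B_R(0))} \leq \nrm{\psi''}_{L^\infty(\R)} \nrm{\p_t v}_{L^\infty([0,T]\times\R^d)} \nrm{\nabla v(t)}_{L^1(\partial B_R(0))} + \nrm{\psi'}_{L^\infty(\R)} \nrm{\p_t\nabla v(t)}_{L^1(\partial B_R(0))} \to 0
\]
by the decay hypothesis \eqref{eq:lem:classical_implies_mild;decay}. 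This yields \eqref{lem:L1_est_v_time_der;dtv}.

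The step requiring the most care is the treatment of the surface term: one must invoke the finer ball estimate \eqref{eq:lem:L1-contraction;ball_intermediate} rather than \eqref{eq:lem:L1-contraction;ball}, since only the former keeps the factor $\psi(v(t+h))-\psi(v(t)) = O(h)$ needed for the boundary contribution to survive division by $h$, and one must perform the limits in the order $h \downarrow 0$ first — on a fixed ball, where $C^{1,2}$-regularity together with $\p_t\nabla_x v \in C$ provides uniform convergence of the relevant difference quotients, including on $\partial B_R(0)$ — and only afterwards $R \to \infty$, so that \eqref{eq:lem:classical_implies_mild;decay} can be used to discard it. The remaining ingredients are routine: the $L^1$-convergence of difference quotients of $g$, and the identification $\p_t v(0) = \Delta\psi(v(0)) + g(0)$ obtained by letting $t \downarrow 0$ in \eqref{eq:lem:L1-contraction;PDE}.
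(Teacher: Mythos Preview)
Your proof is correct and follows essentially the same route as the paper's own argument: apply the $L^1$-contraction principle with $\tilde v=0$ for \eqref{lem:L1_est_v_time_der;v}, and for \eqref{lem:L1_est_v_time_der;dtv} compare $v$ with its time-translate, use the sharper ball estimate \eqref{eq:lem:L1-contraction;ball_intermediate}, pass $h\downarrow 0$ on a fixed ball, then let $R\to\infty$ using \eqref{eq:lem:classical_implies_mild;decay} to kill the boundary term. Your explicit remark on why \eqref{eq:lem:L1-contraction;ball_intermediate} rather than \eqref{eq:lem:L1-contraction;ball} is needed, and on the order of limits, matches exactly the subtlety the paper relies on.
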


\begin{proof}
The estimate \eqref{lem:L1_est_v_time_der;v} follows directly from \eqref{eq:lem:L1-contraction;ball} by taking $\tilde{v}=0$ 
and $\tilde g=0$
in Lemma~\ref{lem:L1-contraction}. 
Applying Lemma~\ref{lem:L1-contraction} with $\tilde{v}=v(\,\cdot\,+h,\,\cdot\,)$ gives through \eqref{eq:lem:L1-contraction;ball_intermediate}
\begin{align*}
&\int_{B_R(0)}\left|\frac{v(t+h,x)-v(t,x)}{h}\right|\d x \\
\leq& \int_{B_R(0)}\left|\frac{v(h,x)-v(0,x)}{h}\right|\d x + \int_{0}^T\int_{B_R(0)}\left|\frac{g(s+h,x)-g(s,x)}{h}\right|\d x \d s  \\
& + \int_{\p B_R(0)}\left|\frac{\nabla(\psi(v(t+h)))-\nabla(\psi(v(t)))}{h}\right|\d\sigma .
\end{align*}
Taking the limit $h \to 0$ we obtain by dominated convergence
\begin{align*}
\int_{B_R(0)}|\p_t v(t)|\d x 
\leq& \int_{B_R(0)}|\p_t v(0)|\d x 
+ \int_{0}^T\int_{B_R(0)}|\p_t g(s,x)|\d x \d s  \\
&+ \int_{\p B_R(0)}|\p_t\nabla (\psi(v(t)))|\d\sigma .
\end{align*}
Observing that
\begin{align*}
|\p_t\nabla (\psi(v))| &=
|\p_t(\psi'(v)\nabla v)| =
|\psi''(v)\p_t v \nabla v +\psi'(v)\p_t\nabla v| \\
&\leq \nrm{\psi''}_{L^\infty(\R)}
\|\p_t v\|_{L^\infty([0,T] \times \R^d)}|\nabla v| +\nrm{\psi'}_{L^\infty(\R)}|\p_t\nabla v|,
\end{align*}
and taking the limit $R \to \infty$, we obtain by monotone convergence that
$$
\int_{\R^d}|\p_t v(t)|\d x 
\leq \int_{\R^d}|\p_t v(0)|\d x 
+ \int_{0}^T\int_{\R^d}|\p_t g(s,x)|\d x \d s.
$$
Using that $v$ is a classical solution of \eqref{eq:lem:L1-contraction;PDE} we finally arrive at \eqref{lem:L1_est_v_time_der;dtv}.
\end{proof}

Note that  Lemma~\ref{lem:L1_est_v_time_der} implies that $v(t),\partial_t v(t) \in L^1(\R^d)$. This together with the given norm estimates is already quite close to $v$ being a strong solution in the $L^1(\R^d)$-setting, i.e.\ the conclusion of Proposition~\ref{lem:classical_implies_mild}.
However, regularity in the form of strong measurablity with respect to $t$ as an $L^1(\R^d)$-valued function is missing.
This technicality is taken care of in the following proof.

\begin{proof}[Proof of Proposition~\ref{lem:classical_implies_mild}]
By Lemma~\ref{lem:L1_est_v_time_der} we can view $v$ as a function $v:[0,T] \to L^1(\R^d)$.
Now note that, for all $h \in L^\infty_c(\R^d)$, 
\begin{equation*}\label{eq:lem:classical_implies_mild;pairing}
t \mapsto \ip{v(t),h} = \int_{\R^d}v(t,x)h(x) \d x    
\end{equation*}
is a continuous and thus measurable function.
As $L^1(\R^d)$ is a separable Banach space and $L^\infty_c(\R^d)$ is a weak$^*$ dense subspace of $[L^1(\R^d)]^*=L^\infty(\R^d)$,
Pettis' measurability theorem (see \cite[Theorem~1.1.6]{HNVW16}) yields that $v:[0,T] \to L^1(\R^d)$ is strongly measurable.
In the same way, it follows that $\partial_t v:[0,T] \to L^1(\R^d)$ is a strongly measurable function.
It is not difficult to see that $\partial_t v:[0,T] \to L^1(\R^d)$ is the weak derivative of $v:[0,T] \to L^1(\R^d)$, and subsequently, that $v$ belongs to $W^{1,\infty}(0,T;L^1(\R^d))$ and is a strong solution of
\eqref{eq:GPME;psi_v}.
\end{proof}

\subsection{Decay estimates for  non-degenerate problems}

In this subsection we prove decay estimates. The  main aim is to provide sufficient conditions that imply the assumptions of Proposition~\ref{lem:classical_implies_mild}, particularly, the decay assumption \eqref{eq:lem:classical_implies_mild;decay}.
Our strategy is to follow an approach based on weighted $L^p$-spaces.

For $p \in [1,\infty]$ and $\kappa \in \R$ we consider the weight 
\begin{equation}
w_\kappa(x):=\max\{1,|x|^\kappa\} = \begin{cases}
1,& |x| < 1,\\ 
|x|^\kappa, & |x| \geq 1,
\end{cases}    
\end{equation}
on $\R^d$. For an open set $U \subset \R^d$ the
associated weighted $L^p$-space $L^p_\kappa(U)$ is defined by
\begin{equation}
L^p_\kappa(U) := \left\{ f \in L^0(U) : w_\kappa f \in L^p(U) \right\}, \quad \norm{f}_{L^p_\kappa(U)} := \norm{w_\kappa f}_{L^p(U)},   
\end{equation}
and, for $k \in \N$, the corresponding weighted Sobolev space $W^{k,p}_\kappa(U)$ by
\begin{align*}
W^{k,p}_\kappa(U) &:= \{ f \in \ms{D}'(U) : \partial^\alpha f \in L^p_\kappa(U), |\alpha| \leq k \}, \\
\norm{f}_{W^{k,p}_\kappa(U)} &:= \sum_{|\alpha| \leq k}\norm{\partial^\alpha f}_{L^p_\kappa(U)}.    
\end{align*}
Furthermore, we define
\begin{align*}
BC^{k}_\kappa(U) &:=  W^{k,\infty}_\kappa(U) \cap C^k(U), \\
\norm{f}_{BC^{k}_\kappa(U)} &:= \norm{f}_{W^{k,\infty}_\kappa(U)}.    
\end{align*}

\begin{proposition}\label{prop:weighted_Sob_emb}
Let $k \in \N$, $p \in (d,\infty)$ and $\kappa \in \R$. Then we have the continuous embedding
\begin{equation*}
W^{k,p}_\kappa(\R^d) \hookrightarrow BC^{k-1}_\kappa(\R^d).    
\end{equation*}
\end{proposition}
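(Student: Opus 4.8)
The plan is to reduce the weighted embedding to the classical (unweighted) Morrey--Sobolev embedding $W^{k,p}(B) \hra BC^{k-1}(\overline B)$ on a \emph{fixed unit ball}, which holds for $p>d$ with an embedding constant $C_M = C_M(k,p,d)$ that is independent of the centre of the ball by translation invariance. The only extra ingredient needed is that $w_\kappa$ is \emph{slowly varying on unit balls}: there is a constant $C_\kappa = C_\kappa(\kappa,d)\geq 1$ such that $C_\kappa^{-1} w_\kappa(y) \leq w_\kappa(x) \leq C_\kappa w_\kappa(y)$ whenever $|x-y|\leq 1$. This is an elementary case distinction: if $|y|\geq 2$ then $\tfrac12|y|\leq |x|\leq 2|y|$, so $|x|^\kappa \eqsim |y|^\kappa$ with constant $2^{|\kappa|}$; if $|y|<2$ then $x,y\in B(0,3)$ and both $w_\kappa(x)$ and $w_\kappa(y)$ lie between two positive constants depending only on $\kappa$. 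In particular $w_\kappa$ is comparable to the constant $w_\kappa(x_0)$ on each ball $B(x_0,1)$.

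Granting these two facts I would argue pointwise, which avoids even having to fix a covering of $\R^d$. Fix $x_0 \in \R^d$. Since $f\in W^{k,p}(B(x_0,1))$ with $p>d$, it has a representative that is $C^{k-1}$ on $B(x_0,1)$; gluing these local representatives (which agree a.e.) yields $f\in C^{k-1}(\R^d)$, which is the $C^{k-1}$-part of membership in $BC^{k-1}_\kappa(\R^d)$. Next, for a multi-index $\alpha$ with $|\alpha|\leq k-1$, Morrey's inequality on $B(x_0,1)$ gives
$$
|\partial^\alpha f(x_0)| \leq \|\partial^\alpha f\|_{L^\infty(B(x_0,1))} \leq C_M\|f\|_{W^{k,p}(B(x_0,1))} = C_M\sum_{|\beta|\leq k}\|\partial^\beta f\|_{L^p(B(x_0,1))}.
$$
Multiplying by $w_\kappa(x_0)$ and using the slow variation of $w_\kappa$ on $B(x_0,1)$ to move the constant $w_\kappa(x_0)$ inside the $L^p$-norms (i.e. $w_\kappa(x_0)|\partial^\beta f(x)| \leq C_\kappa w_\kappa(x)|\partial^\beta f(x)|$ for $x\in B(x_0,1)$), we obtain
$$
w_\kappa(x_0)|\partial^\alpha f(x_0)| \leq C_M C_\kappa \sum_{|\beta|\leq k}\|w_\kappa\,\partial^\beta f\|_{L^p(B(x_0,1))} \leq C_M C_\kappa\,\|f\|_{W^{k,p}_\kappa(\R^d)},
$$
where the last step just drops $B(x_0,1)$ to $\R^d$ and uses the definition of the weighted Sobolev norm. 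Taking the supremum over $x_0\in\R^d$ yields $\|w_\kappa\,\partial^\alpha f\|_{L^\infty(\R^d)} \leq C_M C_\kappa\,\|f\|_{W^{k,p}_\kappa(\R^d)}$, and summing over $|\alpha|\leq k-1$ gives $\|f\|_{BC^{k-1}_\kappa(\R^d)} = \sum_{|\alpha|\leq k-1}\|w_\kappa\,\partial^\alpha f\|_{L^\infty(\R^d)} \lesssim \|f\|_{W^{k,p}_\kappa(\R^d)}$, which is the claimed continuous embedding.

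The only point requiring genuine care is the slow-variation estimate for the power-type weight $w_\kappa$ near the origin and for the two signs of $\kappa$; without the comparability of $w_\kappa$ with a constant on each unit ball the localisation argument would not let the weight commute past the (unweighted) embedding. Everything else is a routine application of Morrey's inequality together with translation invariance of its constant, and the case distinction $|y|\geq 2$ versus $|y|<2$ is the main (and only mildly nontrivial) obstacle. Working with radius-$2$ balls, or with a locally finite cover of $\R^d$ by unit balls, would work equally well, but the pointwise formulation above is the cleanest.
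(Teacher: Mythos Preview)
Your proof is correct and follows essentially the same approach as the paper: localize to unit balls, apply the unweighted Morrey--Sobolev embedding with its translation-invariant constant, and use the slow variation $w_\kappa(x)\eqsim w_\kappa(x_0)$ on $B(x_0,1)$ to pass the weight through. The paper additionally reduces to $k=1$ by a trivial induction and invokes density of $\mathscr{S}(\R^d)$ to sidestep the choice-of-representative issue, but these are cosmetic differences rather than a different strategy.
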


\begin{proof}
By induction, it suffices to consider the case $k=1$.
Furthermore, by density of $\ms{S}(\R^d)$ in $W^{1,p}_\kappa(\R^d)$ it suffices to show that
\begin{equation*}
W^{1,p}_\kappa(\R^d) \hookrightarrow L^\infty_\kappa(\R^d).    
\end{equation*}
By the classical Sobolev embedding theorem (see e.g.\ \cite[Corollary~9.14]{Br11}) and a translation argument we have
$$
W^{1,p}(B_1(x_0)) \hookrightarrow L^\infty(B_1(x_0)), \qquad x_0 \in \R^d,
$$
with a norm estimate independent of $x_0$.
Since $w_\kappa(x) \eqsim w_\kappa(x_0)$ 
for all $x_0 \in \R^d$ and $x \in B_1(x_0)$, it follows that
\begin{align*}
\norm{f}_{L^\infty_\kappa(B_1(x_0))}
&\eqsim w_\kappa(x_0)\norm{f}_{L^\infty(B_1(x_0))}
\lesssim w_\kappa(x_0)\norm{f}_{W^{1,p}(B_1(x_0))} \\
&\eqsim \norm{f}_{W^{1,p}_\kappa(B_1(x_0))}
\leq \norm{f}_{W^{1,p}_\kappa(\R^d)}.
\end{align*}
By the nature of the $L^\infty$-norm and since $x_0$ is arbitrary this implies that $\norm{f}_{L^\infty_\kappa(\R^d)} \lesssim \norm{f}_{W^{1,p}_\kappa(\R^d)}$.
\end{proof}

\begin{theorem}\label{thm:decay_at_infty_sol}
Let $\psi \in C^3(\R)$ satisfy $\psi',\psi'', \psi''' \in L^\infty(\R)$ and $\psi' \geq c$ for some $c \in (0,\infty)$. Let $p \in (d,\infty)$, $\kappa \in \big[0,d(1-\frac{1}{p})\big)$, $\alpha \in (0,\frac{1}{2}]$ and $\beta \in (0,1)$.
Let $g \in C^\alpha([0,T];L^{p}_\kappa(\R^d)\cap BC^{\beta}(\R^d))$, $v_0 \in W^{2,p}_\kappa(\R^d) \cap BC^{\beta+2}(\R^d)$ and  $v \in C^1([0,T];BC^\beta(\R^d)) \cap C([0,T];BC^{\beta+2}(\R^d))$ be a solution of the problem
\begin{align}\label{eq:thm:decay_at_infty_sol:Eq}
\begin{cases}
v_t = \Delta \psi(v)+g &\text{in}\ (0,T]\times \R^d,\\
v(0)=v_0&\text{on}\   \R^d.    
\end{cases}
\end{align}
Then
\begin{equation}\label{eq:thm:decay_at_infty_sol;C1C}
v \in C^1([0,T];L^{p}_\kappa(\R^d)) \cap C([0,T];W^{2,p}_\kappa(\R^d)).    
\end{equation}
\end{theorem}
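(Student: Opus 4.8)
The plan is to kill the quadratic gradient term through the substitution $w:=\psi(v)$ and then to invoke the theory of non-autonomous parabolic evolution equations on the weighted space $L^p_\kappa(\R^d)$. Since $\psi\in C^3(\R)$ with $\psi(0)=0$ and $c\le\psi'\le\nrm{\psi'}_{L^\infty(\R)}$, the map $\psi$ is a $C^3$-diffeomorphism of $\R$ whose inverse $\psi^{-1}$ is again $C^3$, with $\psi^{-1}(0)=0$ and $(\psi^{-1})'$ bounded above and below by positive constants. Setting $w:=\psi(v)$, we have $\p_t w=\psi'(v)\p_t v=\psi'(v)\big(\Delta\psi(v)+g\big)$, so $w$ solves the \emph{linear}, uniformly parabolic problem
\begin{equation}\label{eq:proof_decay_lin_w}
\p_t w=a\,\Delta w+\tilde g\quad\text{in }(0,T]\times\R^d,\qquad w(0)=\psi(v_0),
\end{equation}
with $a:=\psi'(v)$ and $\tilde g:=\psi'(v)\,g$. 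Using $v\in C^1([0,T];BC^\beta(\R^d))\cap C([0,T];BC^{\beta+2}(\R^d))$ one verifies that $a$ is bounded with $c\le a\le\nrm{\psi'}_{L^\infty(\R)}$, is Lipschitz in $t$ and Lipschitz (hence $\beta$-H\"older) in $x$, uniformly; that $\tilde g\in C^\alpha([0,T];L^p_\kappa(\R^d)\cap BC^\beta(\R^d))$, being the product of $g$ with the bounded, time-Lipschitz, $\beta$-H\"older function $a$; and, using the weighted Sobolev embedding $W^{2,p}_\kappa(\R^d)\hookrightarrow BC^1_\kappa(\R^d)$ of Proposition~\ref{prop:weighted_Sob_emb} to bound $|\nabla v_0|^2$ in $L^p_\kappa(\R^d)$, that $\psi(v_0)\in W^{2,p}_\kappa(\R^d)\cap BC^{\beta+2}(\R^d)$. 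It therefore suffices to prove $w\in C^1([0,T];L^p_\kappa(\R^d))\cap C([0,T];W^{2,p}_\kappa(\R^d))$; the claim for $v=\psi^{-1}(w)$ then follows by the chain rule, since $\p_t v=(\psi^{-1})'(w)\,\p_t w$ and $\nabla^2 v=(\psi^{-1})''(w)\,\nabla w\otimes\nabla w+(\psi^{-1})'(w)\,\nabla^2 w$, using that $W^{2,p}_\kappa(\R^d)\hookrightarrow BC^1_\kappa(\R^d)\hookrightarrow BC(\R^d)$ forces $|\nabla w|^2\in C([0,T];L^p_\kappa(\R^d))$ and that multiplication by a continuous family of bounded functions preserves $C([0,T];L^p_\kappa(\R^d))$.

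Next I would set up the linear theory. The decisive point is that the weight $w_\kappa^p$ is of Muckenhoupt class $A_p(\R^d)$; since $\kappa\ge0$ this holds precisely when $\kappa p<d(p-1)$, i.e.\ exactly under the hypothesis $\kappa\in[0,d(1-\tfrac1p))$. Hence Calder\'on--Zygmund estimates are available on $L^p_\kappa(\R^d)$, so $-\Delta$ with domain $W^{2,p}_\kappa(\R^d)$ is a sectorial operator on $L^p_\kappa(\R^d)$ generating an analytic semigroup; by uniform ellipticity of $a$ the same is true --- with the same constant domain and sectoriality uniform in $t\in[0,T]$ --- of the operators $-a(t)\Delta$. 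Since $t\mapsto a(t)\Delta\in\mathcal L\big(W^{2,p}_\kappa(\R^d),L^p_\kappa(\R^d)\big)$ is Lipschitz continuous and $W^{2,p}_\kappa(\R^d)$ is dense in $L^p_\kappa(\R^d)$, the standard assumptions of non-autonomous parabolic maximal regularity with constant domains (Acquistapace--Terreni, Kato--Tanabe; see e.g.\ the monographs of Amann, Lunardi or Yagi) are satisfied, and, the initial datum $\psi(v_0)$ lying in the common domain, they produce a strict solution $\bar w\in C^1([0,T];L^p_\kappa(\R^d))\cap C([0,T];W^{2,p}_\kappa(\R^d))$ of \eqref{eq:proof_decay_lin_w}.

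It then remains to identify $\bar w$ with $w$. Both functions are classical solutions of \eqref{eq:proof_decay_lin_w} on $(0,T]\times\R^d$ --- $w=\psi(v)$ since $v\in C^{1,2}([0,T]\times\R^d)$, and $\bar w$ by interior parabolic regularity applied to the locally strong solution $\bar w$ --- and both are bounded: $w$ because $v$ is, and $\bar w$ because $W^{2,p}_\kappa(\R^d)\hookrightarrow BC(\R^d)$ for $p>d$, again by Proposition~\ref{prop:weighted_Sob_emb}. Since $a$ is bounded, uniformly elliptic and H\"older continuous, the uniqueness of bounded solutions of uniformly parabolic equations (a Tychonoff-type theorem), applied to $\bar w-w$ with vanishing initial data, yields $\bar w=w$. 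Consequently $w$, and hence $v$, has the asserted regularity.

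The main obstacle is the second step. One must have at hand the weighted $L^p$-theory of second-order elliptic operators --- this is precisely where the $A_p$-condition, and thus the range $\kappa<d(1-\tfrac1p)$, is forced --- and combine it with non-autonomous maximal regularity in the \emph{space of continuous functions}, so as to obtain time-continuity of $\p_t w$ and $\Delta w$ rather than merely $L^p$-in-time bounds. A secondary difficulty, circumvented by the substitution $w=\psi(v)$ together with the identification step, is that the given solution is a priori only bounded, not weighted-integrable, so the weighted a priori estimates cannot be run on $v$ directly: the linear theory furnishes a solution in the right class, and a maximum-principle argument pins it down as the given one.
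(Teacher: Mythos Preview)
Your approach is correct in outline but genuinely different from the paper's. You eliminate the quadratic gradient term via the Kirchhoff substitution $w=\psi(v)$, run the non-autonomous parabolic theory for $\partial_t w=a\Delta w+\tilde g$ on $L^p_\kappa$ alone, and then pull the regularity back through $\psi^{-1}$. The paper instead keeps $v$ as the unknown, rewrites the equation in non-divergence form $v_t=\psi'(v)\Delta v+\psi''(v)\nabla v\cdot\nabla v+g$, and views $v$ as a solution of the \emph{frozen} linear equation $u_t=\psi'(v)\Delta u+\psi''(v)\nabla v\cdot\nabla u+g$. It then applies Lunardi's theory twice: once with $X=BC^\beta$, $D=BC^{\beta+2}$ (where $v$ already lives by hypothesis, so $v$ is the unique strict solution), and once with the intersection spaces $\tilde X=BC^\beta\cap L^p_\kappa$, $\tilde D=BC^{\beta+2}\cap W^{2,p}_\kappa$; since the strict solution $\tilde u$ in the smaller pair is automatically strict in the larger pair, abstract uniqueness gives $v=\tilde u$. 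Your substitution buys a simpler elliptic operator (no first-order drift to perturb away), at the cost of the composition bookkeeping when returning to $v$; the paper's route avoids the back-and-forth and generalises more transparently to the higher-order case $W^{k,p}_\kappa$ treated next.

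The weak point of your argument is the identification $\bar w=w$. You invoke interior parabolic regularity to make $\bar w$ classical and then a Tychonoff-type uniqueness theorem for bounded solutions of $\partial_t z=a(t,x)\Delta z$ on $\R^d$. Both steps are true but require nontrivial external input (Schauder theory for strong $L^p$-solutions; a Phragm\'en--Lindel\"of or probabilistic uniqueness result for variable-coefficient parabolic equations on the whole space), and you should cite them precisely. The paper sidesteps this entirely: since $v$ is \emph{given} in $C^1([0,T];BC^\beta)\cap C([0,T];BC^{\beta+2})$, the identification reduces to abstract uniqueness of strict solutions, with no PDE maximum principle needed. A minor point: you assume $\psi(0)=0$, which is not part of the hypothesis (though it holds in the application); replace $w$ by $\psi(v)-\psi(0)$ to fix this.
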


\begin{proof}
Note that the equation \eqref{eq:thm:decay_at_infty_sol:Eq} can be rewritten as
$$
v_t = \psi'(v)\Delta v + \psi''(v)\nabla v \cdot \nabla v + g.
$$
In particular, we can view $v$ as a solution $u=v$ of 
\begin{equation}\label{eq:thm:decay_at_infty_sol;div}
u_t = \psi'(v)\Delta u + \psi''(v)\nabla v \cdot \nabla u + g,
\end{equation}
which is a non-autonomous linear equation in $u$.

By \cite[Proposition~1.1.4, Proposition~1.2.13 and Corollary~1.2.18]{Lu95} there is the mixed-derivative embedding
\begin{equation*}
C^1([0,T];BC^\beta(\R^d)) \cap C([0,T];BC^{\beta+2}(\R^d)) \hra C^{\frac{1}{2}}([0,T];BC^{\beta+1}(\R^d)),
\end{equation*}
so in particular,
\begin{equation}\label{eq:thm:decay_at_infty_sol;mixed-reg_v}
v \in C^{\frac{1}{2}}([0,T];BC^{\beta+1}(\R^d)) \hra C^{\alpha}([0,T];BC^{\beta+1}(\R^d)).           
\end{equation}

We define
$$
X := BC^{\beta}(\R^d), \qquad D := BC^{\beta+2}(\R^d),
$$
and, for each $t \in [0,T]$, consider the linear operator
$$
A(t):D \to X,\quad \: u \mapsto \psi'(v(t))\Delta u + \psi''(v(t))\nabla v(t) \cdot \nabla u.
$$
Note here that, thanks to the embedding \eqref{eq:thm:decay_at_infty_sol;mixed-reg_v} and the assumption $\psi\in C^3(\R)$, $\psi',\psi'',\psi''' \in L^\infty(\R^d)$, we have $\psi'(v), \psi''(v)\nabla v \in C^\alpha([0,T];BC^\beta(\R^d))$.
By \cite[Theorem~3.1.14 and Corollary~3.1.16]{Lu95} the operator-theoretic conditions of \cite[Section~6.1,pg.~212]{Lu95} are thus satisfied.
As $g \in C^\alpha([0,T];X)$ and $v_0 \in D$, \cite[Corollary~6.1.6 and Proposition~6.2.2]{Lu95} yields that there exists a unique strict solution $u$ of
\begin{equation}\label{eq:thm:decay_at_infty_sol;abstract_eq}
u'(t) = A(t)u(t)+g(t),\:\: 0 < t \leq T; \quad u(0)=v_0.    
\end{equation}
A strict solution satisfies $u \in C^1([0,T];X) \cap C([0,T];D)$ and solves the initial value problem \eqref{eq:thm:decay_at_infty_sol;abstract_eq}. 
As $v$ is a strict solution of this equation as well, we conclude that $v=u$.

Next we define
$$
\tilde{X} := BC^{\beta}(\R^d) \cap L^p_\kappa(\R^d), \qquad \tilde{D} := BC^{\beta+2}(\R^d) \cap W^{2,p}_\kappa(\R^d),
$$
and, for each $t \in [0,T]$, consider the linear operator
$$
\tilde{A}(t):\tilde{D} \to \tilde{X},\quad u \mapsto \psi'(v(t))\Delta u + \psi''(v(t))\nabla v(t) \cdot \nabla u,
$$
where we have, as above, $\psi'(v), \psi''(v)\nabla v \in C^\alpha([0,T];BC^\beta(\R^d))$.
By \cite[Theorem~3.1.4 and Corollary~3.1.6]{Lu95} and \cite[Theorem~3.1]{HHH03} the operator-theoretic conditions of \cite[Section~6.1,pg.~212]{Lu95} are thus satisfied.
As $g \in C^\alpha([0,T];\tilde{X})$ and $v_0 \in \tilde{D}$, \cite[Corollary~6.1.6 and Proposition~6.2.2]{Lu95} implies that there exists a unique strict solution $\tilde{u} \in C^1([0,T];\tilde{X}) \cap C([0,T];\tilde{D})$ of 
\begin{equation}\label{eq:thm:decay_at_infty_sol;abstract_eq_tilde}
\tilde{u}'(t) = \tilde{A}(t)\tilde{u}(t)+g(t),\quad 0 < t \leq T; \qquad \tilde{u}(0)=v_0. 
\end{equation}
As $\tilde{X} \hra X$, $\tilde{D} \hra D$ and $A(t)|_{\tilde{D}}=\tilde{A}(t)$ for every $t \in [0,T]$, every strict solution of \eqref{eq:thm:decay_at_infty_sol;abstract_eq_tilde} is a strict solution of \eqref{eq:thm:decay_at_infty_sol;abstract_eq}.
Therefore, by uniqueness of strict solutions,  $v=u=\tilde{u} \in C^1([0,T];\tilde{X}) \cap C([0,T];\tilde{D})$.
\end{proof}

Next, we generalize the result to obtain higher regularity. 

\begin{theorem}\label{thm:decay_at_infty_sol_W1p}
Let $k \in \N$, $K:= \max\{k+2,3\}$ and $\psi \in C^{K}(\R)$ satisfy $\psi',\psi'', \ldots, \psi^{[K]} \in L^\infty(\R)$ and $\psi' \geq c$ for some $c \in (0,\infty)$.
Let $p \in (d,\infty)$, $\kappa \in [0,d(1-\frac{1}{p}))$, $\alpha \in (0,\frac{1}{2}]$ and $\beta \in (0,1)$.
Let $g \in C^\alpha([0,T];W^{k,p}_\kappa(\R^d)\cap BC^{\beta}(\R^d))$, $v_0 \in W^{k+2,p}_\kappa(\R^d) \cap BC^{\beta+2}(\R^d)$ and $v \in C^1([0,T];BC^\beta(\R^d)) \cap C([0,T];BC^{\beta+2}(\R^d))$ be a solution of \eqref{eq:thm:decay_at_infty_sol:Eq}.
Then
\begin{equation}\label{eq:thm:decay_at_infty_sol_W1p}
v \in C^1([0,T];W^{k,p}_\kappa(\R^d)) \cap C([0,T];W^{k+2,p}_\kappa(\R^d)).    
\end{equation}
\end{theorem}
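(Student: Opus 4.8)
The plan is to prove the statement by induction on $k \in \N$, reading the conclusion \eqref{eq:thm:decay_at_infty_sol;C1C} of Theorem~\ref{thm:decay_at_infty_sol} as the case $k=0$ and, in the inductive step, re-running the argument of that proof on a pair of spaces carrying two additional weighted Sobolev derivatives. So assume the statement holds with $k$ replaced by $k-1$ (for $k=1$ this is Theorem~\ref{thm:decay_at_infty_sol}); thus
\[
v \in C^1([0,T];W^{k-1,p}_\kappa(\R^d)) \cap C([0,T];W^{k+1,p}_\kappa(\R^d)).
\]

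First I would extract from this inductive hypothesis the intermediate regularity of $v$ that will be needed. Since $w_\kappa$ is an $A_p$-weight for $\kappa \in [0,d(1-\tfrac1p))$, complex interpolation gives $[W^{k-1,p}_\kappa(\R^d),W^{k+1,p}_\kappa(\R^d)]_{1/2}=W^{k,p}_\kappa(\R^d)$, so that the interpolation inequality together with $v' \in C([0,T];W^{k-1,p}_\kappa)$ and $v \in C([0,T];W^{k+1,p}_\kappa)$ yields $\|v(t)-v(s)\|_{W^{k,p}_\kappa} \lesssim \|v(t)-v(s)\|_{W^{k-1,p}_\kappa}^{1/2}\|v(t)-v(s)\|_{W^{k+1,p}_\kappa}^{1/2} \lesssim |t-s|^{1/2}$, hence
\[
v \in C^{1/2}([0,T];W^{k,p}_\kappa(\R^d)) \hra C^{\alpha}([0,T];W^{k,p}_\kappa(\R^d)).
\]
Moreover, the weighted Sobolev embedding $W^{k+1,p}_\kappa(\R^d) \hra BC^{k}_\kappa(\R^d) \hra BC^{k}(\R^d)$ from Proposition~\ref{prop:weighted_Sob_emb} gives $v \in C([0,T];BC^{k}(\R^d))$, and the standing hypotheses furnish $v \in C^1([0,T];BC^\beta(\R^d)) \cap C([0,T];BC^{\beta+2}(\R^d))$ and, as in \eqref{eq:thm:decay_at_infty_sol;mixed-reg_v}, also $v \in C^{\alpha}([0,T];BC^{\beta+1}(\R^d))$. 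This is all the regularity of $v$ I expect to use.

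Next I would set $\tilde X := BC^{\beta}(\R^d) \cap W^{k,p}_\kappa(\R^d)$ and $\tilde D := BC^{\beta+2}(\R^d) \cap W^{k+2,p}_\kappa(\R^d)$ and, for $t \in [0,T]$, consider the linear operator $\tilde A(t):\tilde D \to \tilde X$, $u \mapsto \psi'(v(t))\Delta u + \psi''(v(t))\nabla v(t)\cdot\nabla u$, exactly as in the proof of Theorem~\ref{thm:decay_at_infty_sol} but on this smaller pair of spaces. The main point to verify is that $t \mapsto \tilde A(t)$ is $\alpha$-Hölder continuous with values in $\mathcal{L}(\tilde D,\tilde X)$. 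Expanding $\partial^\gamma(\psi'(v)\Delta u)$ and $\partial^\gamma(\psi''(v)\nabla v\cdot\nabla u)$ by the Leibniz rule for $|\gamma| \le k$, one checks via the Sobolev embedding $W^{j,p}(\R^d)\hra L^\infty(\R^d)$ ($j\ge1$, $p>d$) that in every resulting summand one of the two factors lies in $L^\infty(\R^d)$ — the coefficient factor when it is undifferentiated, and otherwise the factor stemming from $u$, which retains at least one Sobolev derivative in $L^p_\kappa$ because $\tilde D$ has order $k+2$ while at most $k$ derivatives are distributed — whereas the complementary factor is bounded in $L^p_\kappa(\R^d)$ using $v\in C([0,T];W^{k+1,p}_\kappa)$, the hypothesis $\psi\in C^{K}(\R)$ with $K=\max\{k+2,3\}$ and bounded derivatives, and the fact that $W^{m,p}(\R^d)$ is a multiplication algebra for $mp>d$; the same bookkeeping applied to the time increments, using $v\in C^{\alpha}([0,T];W^{k,p}_\kappa)\cap C^1([0,T];BC^{\beta})$, gives the $\alpha$-Hölder continuity in $t$. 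The remaining operator-theoretic conditions of \cite[Section~6.1, pg.~212]{Lu95} for $\tilde A(\cdot)$ on $(\tilde X,\tilde D)$ are obtained exactly as in the proof of Theorem~\ref{thm:decay_at_infty_sol} from \cite[Theorem~3.1.4 and Corollary~3.1.6]{Lu95} and the weighted $L^p$-estimates of \cite[Theorem~3.1]{HHH03}.

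Since $g \in C^\alpha([0,T];W^{k,p}_\kappa(\R^d)\cap BC^\beta(\R^d)) = C^\alpha([0,T];\tilde X)$ and $v_0 \in W^{k+2,p}_\kappa(\R^d)\cap BC^{\beta+2}(\R^d) = \tilde D$ by hypothesis, \cite[Corollary~6.1.6 and Proposition~6.2.2]{Lu95} then produce a unique strict solution $\tilde u \in C^1([0,T];\tilde X)\cap C([0,T];\tilde D)$ of $\tilde u'(t)=\tilde A(t)\tilde u(t)+g(t)$, $\tilde u(0)=v_0$. Because $\tilde X\hra BC^\beta(\R^d)$, $\tilde D\hra BC^{\beta+2}(\R^d)$ and $\tilde A(t)$ restricts to the operator $A(t)$ from the proof of Theorem~\ref{thm:decay_at_infty_sol}, the function $\tilde u$ is a strict solution of the problem considered there as well, so by uniqueness of strict solutions $\tilde u = v$. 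This yields $v \in C^1([0,T];W^{k,p}_\kappa(\R^d)) \cap C([0,T];W^{k+2,p}_\kappa(\R^d))$ and closes the induction. The step I expect to be the main obstacle is precisely the verification that $t\mapsto \tilde A(t)\in C^\alpha([0,T];\mathcal{L}(\tilde D,\tilde X))$: it is a Leibniz-rule computation in weighted Sobolev norms whose only real subtlety is to exploit systematically the two-derivative gap between $\tilde D$ (order $k+2$) and $\tilde X$ (order $k$), so that one factor always lands in $L^\infty$ via $W^{j,p}\hra L^\infty$ for $p>d$, and to have extracted beforehand the time regularity $v\in C^\alpha([0,T];W^{k,p}_\kappa)$ needed to control the coefficients.
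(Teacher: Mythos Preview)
Your inductive strategy is essentially the paper's, but there is one genuine gap. You claim that ``the remaining operator-theoretic conditions of \cite[Section~6.1, pg.~212]{Lu95} for $\tilde A(\cdot)$ on $(\tilde X,\tilde D)$ are obtained exactly as in the proof of Theorem~\ref{thm:decay_at_infty_sol} from \cite[Theorem~3.1.4 and Corollary~3.1.6]{Lu95} and the weighted $L^p$-estimates of \cite[Theorem~3.1]{HHH03}''. Those references establish sectoriality of $\rho\Delta+\mu\cdot\nabla$ on $BC^\beta(\R^d)$ and on $L^p_\kappa(\R^d)$ respectively; they do \emph{not} give sectoriality on $W^{k,p}_\kappa(\R^d)$ for $k\geq 1$. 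The paper isolates precisely this point as Lemma~\ref{lem:W1p_operator}, which requires a nontrivial argument: reduction to constant $\rho$ via a localization procedure, a lifting to $k=0$, and a lower-order perturbation step that in turn relies on the fractional weighted Sobolev embedding Proposition~\ref{prop:weighted_Sob_emb_frac}. Moreover, that lemma needs the structural hypotheses $\rho\in W^{k,\infty}\cap C$ with a limit at infinity and $\mu\in W^{k,p}$ (unweighted), which you have not verified. Without this ingredient, the Lunardi machinery on $\tilde X=BC^\beta\cap W^{k,p}_\kappa$ is not available.

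There is also a structural difference. You attempt a single Lunardi step per induction level, relying on the mixed regularity $v\in C^\alpha([0,T];W^{k,p}_\kappa)\cap C([0,T];W^{k+1,p}_\kappa)$ to control the top-order Leibniz terms (putting $\partial^{\gamma}(\psi'(v(t))-\psi'(v(s)))$ into $L^p_\kappa$ when $|\gamma|=k$, and $\Delta u$ into $L^\infty$). This can be made to work, but the paper instead performs a two-pass bootstrap: it first sets up $A_k$ on $(W^{k,p}_\kappa,W^{k+2,p}_\kappa)$, identifies $v$ as the strict solution, and invokes \cite[Corollary~6.1.6(iv)]{Lu95} (using $g\in C^\alpha([0,T];X_{k+1})$ and $v_0\in D_{k+1}$) to upgrade to $v\in C^\alpha([0,T];W^{k+2,p}_\kappa)$; only then does it verify the Lunardi hypotheses for $A_{k+1}$. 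This yields the cleaner coefficient regularity $\psi'(v)\in C^\alpha([0,T];BC^{k+1})$ and $\psi''(v)\nabla v\in C^\alpha([0,T];W^{k+1,p})$, so the H\"older continuity of the operator family follows directly from Lemma~\ref{lem:W1p_operator} without the delicate splitting your sketch requires.
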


For the proof of this theorem we will need the following lemma.

\begin{lemma}\label{lem:W1p_operator}
Let $p \in (d,\infty)$, $\kappa \in (-\frac{d}{p},d(1-\frac{1}{p}))$ and $k \in \N$. 
Let $\rho \in W^{k,\infty}(\R^d) \cap C(\R^d)$ with $\lim_{|x| \to \infty}\rho(x) = \rho(\infty)$ satisfy $\rho \geq c$ for some $c>0$ and let $\mu \in W^{k,p}(\R^d)$.
Then the operator $A$ on $W^{k,p}_\kappa(\R^d)$ given by
\begin{equation*}
\dom(A) = W^{k+2,p}_\kappa(\R^d), \qquad Au = \rho\Delta u + \mu \,\cdot\,\nabla u,    
\end{equation*}
is sectorial in the sense of \cite[Definition~2.0.1]{Lu95}.
\end{lemma}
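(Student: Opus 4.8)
The plan is to deduce the assertion from the sectoriality theory for second order elliptic operators on weighted $L^p$-spaces, and then to pass from the base space $L^p_\kappa(\R^d)$ to the higher order spaces $W^{k,p}_\kappa(\R^d)$ by differentiating the resolvent equation.

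First I would settle the base case. Since $\rho\in W^{k,\infty}(\R^d)\cap C(\R^d)$ with $k\ge1$, the coefficient $\rho$ is bounded and uniformly continuous, it is uniformly elliptic because $\rho\ge c>0$, and it has the limit $\rho(\infty)$ at infinity; moreover $\mu\in W^{k,p}(\R^d)\hookrightarrow BC(\R^d)$ since $p>d$, so $\mu$ is a bounded continuous first order coefficient. For $\kappa$ in the stated range the weight $w_\kappa=\max\{1,|x|^\kappa\}$ is admissible, so by \cite[Theorem~3.1]{HHH03}, applied exactly as in the proof of Theorem~\ref{thm:decay_at_infty_sol}, the realisation of $v\mapsto\rho\Delta v+\mu\cdot\nabla v$ with domain $W^{2,p}_\kappa(\R^d)$ is sectorial on $L^p_\kappa(\R^d)$, and its graph norm is equivalent to the $W^{2,p}_\kappa$-norm. (Alternatively, one uses \cite{HHH03} for $\rho\Delta$ alone and then adds $\mu\cdot\nabla$ by the lower order perturbation argument described below; and if \cite{HHH03} is available directly on $W^{k,p}_\kappa(\R^d)$, the next step is superfluous.)

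Second, I would bootstrap in $k$. Fix $f\in W^{k,p}_\kappa(\R^d)$ and $\lambda$ in the sector of the base case, and let $u\in W^{2,p}_\kappa(\R^d)$ solve $\lambda u-Au=f$ in $L^p_\kappa(\R^d)$. For every multi-index $\alpha$ with $|\alpha|\le k$ one has
$$\lambda\,\partial^\alpha u-A\,\partial^\alpha u=\partial^\alpha f+[\partial^\alpha,\rho\Delta]u+[\partial^\alpha,\mu\cdot\nabla]u .$$
By the Leibniz rule the first commutator is a sum of terms $\partial^\beta\rho\,\Delta\partial^{\alpha-\beta}u$ with $1\le|\beta|\le|\alpha|$, which are controlled in $L^p_\kappa(\R^d)$ by $\|u\|_{W^{k+1,p}_\kappa(\R^d)}$ because all derivatives of $\rho$ of order $\le k$ are bounded; the second commutator is a sum of terms $\partial^\beta\mu_j\,\partial_j\partial^{\alpha-\beta}u$ with $1\le|\beta|\le|\alpha|$, which by $p>d$ and the weighted Sobolev embedding of Proposition~\ref{prop:weighted_Sob_emb} are controlled by $\|u\|_{W^{k+1,p}_\kappa(\R^d)}$, except for the top order term $\partial^\alpha\mu_j\,\partial_j u$ which is estimated by $\|\partial^\alpha\mu_j\|_{L^p(\R^d)}\,\|\partial_j u\|_{L^\infty_\kappa(\R^d)}\lesssim\|\mu\|_{W^{k,p}(\R^d)}\|u\|_{W^{2,p}_\kappa(\R^d)}$. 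Using the resolvent estimate and the domain characterisation of the base case componentwise, summing over $|\alpha|\le k$, and absorbing the $W^{k+1,p}_\kappa$- and $W^{2,p}_\kappa$-norms by the interpolation inequality
$$\|u\|_{W^{m,p}_\kappa(\R^d)}\le\varepsilon\,\|u\|_{W^{k+2,p}_\kappa(\R^d)}+C_\varepsilon\,\|u\|_{W^{k,p}_\kappa(\R^d)},\qquad m\in\{2,k+1\},\ \varepsilon>0,$$
one closes, for $|\lambda|$ large in the sector, the bounds $\|u\|_{W^{k,p}_\kappa}\lesssim|\lambda|^{-1}\|f\|_{W^{k,p}_\kappa}$ and $\|u\|_{W^{k+2,p}_\kappa}\lesssim\|f\|_{W^{k,p}_\kappa}$. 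Since sectoriality only requires a shifted sector, this shows that $A$ with domain $W^{k+2,p}_\kappa(\R^d)$ is sectorial on $W^{k,p}_\kappa(\R^d)$.

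I expect the main obstacle to be the closing of the estimates in the second step: the commutator terms have intermediate differentiability and come with a factor $|\lambda|^0$ rather than $|\lambda|^{-1}$, so they must be re-absorbed simultaneously into the $W^{k,p}_\kappa$- and the $W^{k+2,p}_\kappa$-bounds using the interpolation inequalities, which forces one to run both estimates together and to restrict to large $|\lambda|$. Checking the hypotheses of \cite{HHH03}, the multiplier property of $\mu$, and the weighted interpolation inequalities is then routine.
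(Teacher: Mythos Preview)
Your argument is correct, but it proceeds differently from the paper's. The paper builds up in three modular steps: first the constant-coefficient case $\rho\equiv\rho(\infty)$, $\mu=0$ on $W^{k,p}_\kappa$ (from \cite[Theorem~5.1]{GV17}, the lifting to $k\ge1$ being trivial since $\partial^\alpha$ commutes with $\Delta$); then variable $\rho$ via a localization/freezing argument; and finally $\mu\cdot\nabla$ is added as a lower-order perturbation by showing that $\nabla$ maps the interpolation space $[W^{k,p}_\kappa,W^{k+2,p}_\kappa]_\theta=H^{k+2\theta,p}_\kappa$ into $W^{k,\infty}_\kappa$ for suitable $\theta$, using the fractional weighted embedding of Proposition~\ref{prop:weighted_Sob_emb_frac}, and that $W^{k,p}\times W^{k,\infty}_\kappa\to W^{k,p}_\kappa$ under pointwise multiplication. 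By contrast, you establish the full variable-coefficient operator at $k=0$ in one stroke via \cite{HHH03}, and then bootstrap to $k\ge1$ by differentiating the resolvent equation and estimating the commutators with integer-order embeddings (Proposition~\ref{prop:weighted_Sob_emb}) and weighted Gagliardo--Nirenberg interpolation.

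The trade-off: the paper's route is shorter and more citation-driven, but hinges on the fractional weighted Sobolev embedding (Proposition~\ref{prop:weighted_Sob_emb_frac}), which is itself a nontrivial interpolation result. Your route avoids fractional spaces entirely and stays at the level of integer-order estimates, at the cost of explicit commutator bookkeeping. One point you should make explicit when writing this out is the a~priori regularity: to apply the base-case resolvent estimate to $\partial^\alpha u$ you need $\partial^\alpha u\in W^{2,p}_\kappa$, which is not yet known; this is handled cleanly by running the bootstrap inductively in $k$ (using the result at level $k-1$ to justify the differentiation at level $k$) or by a difference-quotient argument. With that standard caveat, your proof closes.
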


In the proof of this lemma we will treat $b \,\cdot\,\nabla u$ as a lower order perturbation, for which we need a version of Proposition~\ref{prop:weighted_Sob_emb} for Sobolev spaces with fractional smoothness.

In order to introduce the setting, let us first note that in the notation of \cite{MV12} we have
$$
W^{k,p}_\kappa(\R^d) = W^{k,p}(\R^d,w_{\kappa}^p) = W^{k,p}(\R^d,w_{\kappa p}).
$$
This leads us to defining the corresponding fractional Sobolev spaces of Bessel potential type $H^{s,p}_{\kappa}$ as follows. Given $p \in (1,\infty)$, $s \in \R$ and $\kappa \in (-\frac{d}{p},d(1-\frac{1}{p}))$, we define 
$$
H^{s,p}_{\kappa}(\R^d) := H^{s,p}(\R^d,w_{\kappa p}),
$$
where $H^{s,p}(\R^d,w_{\kappa p})$ is as in \cite[Definition~3.7]{MV12}.
Here the condition $\kappa \in (-\frac{d}{p},d(1-\frac{1}{p}))$, or equivalently, $\kappa p \in (-d,d(p-1))$ coincides with the so-called Muckenhoupt $A_p$-condition for the weight $w_{\kappa p}$, see \cite[Section~4.1]{MV14}/\cite[Proposition~2.6]{HS08}.

Below we will use weighted Besov spaces.
These are defined in the same way as the classical unweighted Besov spaces introduced in Section~\ref{subsec:function_spaces}, simply by replacing $L^p$ by its corresponding weighted version. For details we refer the reader to \cite[Section~3.1]{MV12}.

\begin{proposition}\label{prop:weighted_Sob_emb_frac}
Let $p \in (1,\infty)$, $\kappa \in (-\frac{d}{p},d(1-\frac{1}{p}))$, $k \in \N$ and $s \in (0,1)$.
If $s>\frac{d}{p}$, then
\begin{equation}\label{eq:prop:weighted_Sob_emb_frac}
H^{k+s,p}_\kappa(\R^d) \hra BC^k_\kappa(\R^d).    
\end{equation}
\end{proposition}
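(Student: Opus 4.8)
The plan is to reduce to the case $k=0$ by differentiation and then prove the endpoint embedding $H^{s,p}_\kappa(\R^d) \hra BC^0_\kappa(\R^d)$ by the same localization trick used in the proof of Proposition~\ref{prop:weighted_Sob_emb}, exploiting that $w_\kappa$ is comparable to a constant on balls of radius one. Note first that the hypotheses $s \in (0,1)$ and $s > \tfrac{d}{p}$ force $p > d$; note also that $w_{\kappa p} = \max\{1,|x|^{\kappa p}\}$ is a Muckenhoupt $A_p$-weight precisely because $\kappa p \in (-d,d(p-1))$, which is what makes the weighted Littlewood--Paley theory of \cite{MV12} available.

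\emph{Reduction to $k=0$.} Differentiation maps $H^{t,p}(\R^d,w_{\kappa p}) \to H^{t-1,p}(\R^d,w_{\kappa p})$ boundedly and $H^{t',p}(\R^d,w_{\kappa p}) \hra H^{t,p}(\R^d,w_{\kappa p})$ for $t' \geq t$; both are standard consequences of the weighted Mikhlin multiplier theorem (see \cite{MV12}). Hence, if $f \in H^{k+s,p}_\kappa(\R^d)$, then $\p^\alpha f \in H^{s,p}_\kappa(\R^d)$ for every $\alpha \in \N_0^d$ with $|\alpha| \leq k$. Granting the case $k=0$, each such $\p^\alpha f$ has a continuous representative with $w_\kappa\,\p^\alpha f \in L^\infty(\R^d)$ and $\|w_\kappa\,\p^\alpha f\|_{L^\infty(\R^d)} \lesssim \|\p^\alpha f\|_{H^{s,p}_\kappa(\R^d)} \lesssim \|f\|_{H^{k+s,p}_\kappa(\R^d)}$; since all weak derivatives of $f$ up to order $k$ are then continuous, $f \in C^k(\R^d)$, and summing over $|\alpha| \leq k$ gives $f \in BC^k_\kappa(\R^d)$ with the asserted estimate.

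\emph{The case $k=0$.} Fix $s' \in (\tfrac{d}{p},s)$; since $p>d$ we have $s' \in (0,1)$. The weighted Bessel-potential space $H^{s,p}(\R^d,w_{\kappa p})$ embeds into the weighted Sobolev--Slobodetskii space $W^{s',p}(\R^d,w_{\kappa p})$ for $s'<s$, the latter equipped with the intrinsic norm
$$
\|f\|_{L^{p}(\R^{d},w_{\kappa p})}+\left(\int_{\R^{d}\times\R^{d}}\frac{|f(x)-f(y)|^{p}}{|x-y|^{s'p+d}}\,w_{\kappa p}(x)\,\d x\,\d y\right)^{1/p},
$$
both facts being valid for $A_p$-weights (see \cite{MV12}); so it suffices to show $W^{s',p}(\R^d,w_{\kappa p}) \hra BC^0_\kappa(\R^d)$. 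Fix $x_0 \in \R^d$ and put $B=B_1(x_0)$. As $s'p>d$, the classical fractional Sobolev (Morrey-type) embedding gives $W^{s',p}(B) \hra C(\overline B) \hra L^\infty(B)$ with a constant independent of $x_0$ by translation invariance; in particular $f$ has a continuous representative on each such ball, and these agree on overlaps, so $f \in C(\R^d)$. Since $w_{\kappa p}(x) \eqsim w_{\kappa p}(x_0)$ for $x \in B$, the restriction to $B$ of the weighted intrinsic norm is comparable to $w_{\kappa p}(x_0)$ times the unweighted intrinsic norm on $B$, and this restriction is dominated by $\|f\|_{W^{s',p}(\R^d,w_{\kappa p})}$ (the $L^p$-part trivially, the seminorm part because $B \times B \subset \{|x-y|<2\}$). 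Therefore
\begin{align*}
w_\kappa(x_0)\,|f(x_0)| &\leq w_\kappa(x_0)\|f\|_{L^\infty(B)} \lesssim w_\kappa(x_0)\|f\|_{W^{s',p}(B)} \\
&= w_{\kappa p}(x_0)^{1/p}\|f\|_{W^{s',p}(B)} \lesssim \|f\|_{W^{s',p}(\R^d,w_{\kappa p})},
\end{align*}
with all constants independent of $x_0$. Taking the supremum over $x_0 \in \R^d$ gives $\|w_\kappa f\|_{L^\infty(\R^d)} \lesssim \|f\|_{W^{s',p}(\R^d,w_{\kappa p})}$, hence $f \in BC^0_\kappa(\R^d)$ with the desired norm bound.

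\emph{Main obstacle.} Everything apart from this is the localization of Proposition~\ref{prop:weighted_Sob_emb} run with a fractional exponent, so the only non-routine points are the two inputs from \cite{MV12}: the lifting/differentiation properties on the weighted Bessel scale used in the reduction, and the chain identifying the intermediate space with an intrinsically-defined weighted Sobolev--Slobodetskii space, $H^{s,p}(\R^d,w_{\kappa p}) \hra W^{s',p}(\R^d,w_{\kappa p})$ for $s'<s$; both rely on $\kappa p \in (-d,d(p-1))$. I expect the write-up to spend most of its length making the comparison between the weighted and unweighted local norms precise.
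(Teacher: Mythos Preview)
Your approach is correct and genuinely different from the paper's. The paper argues by complex interpolation: it takes the unweighted embedding $B^0_{\infty,1}\hra L^\infty$ and the weighted integer-order embedding $B^1_{p_1,1}(\R^d,w_{\kappa_1 p_1})\hra L^\infty_{\kappa_1}$ (the latter via Proposition~\ref{prop:weighted_Sob_emb}), interpolates between them using \cite{SSV14}, and identifies the resulting space via \cite{MV12}. You instead extend the localization argument of Proposition~\ref{prop:weighted_Sob_emb} directly to fractional smoothness. Your route is more elementary and keeps the two propositions methodologically parallel; the paper's route avoids any appeal to difference-norm characterizations of weighted spaces.

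The one point to tighten is your citation for the embedding $H^{s,p}(\R^d,w_{\kappa p})\hra W^{s',p}(\R^d,w_{\kappa p})$ with the intrinsic (Gagliardo-type) norm on the right. The elementary embeddings in \cite{MV12} give $H^{s,p}(\R^d,w_{\kappa p})\hra B^{s'}_{p,p}(\R^d,w_{\kappa p})$ for $s'<s$, but \cite{MV12} works with the Littlewood--Paley definition of the weighted Besov scale, not the intrinsic difference seminorm you wrote down; the identification of the two for $A_p$ weights is a separate result (available, e.g., in Bui's work on weighted Besov and Triebel spaces). The paper's authors were evidently aware of this gap in \cite{MV12}, which is why they went through interpolation instead. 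Once you point to the correct reference for the difference characterization, your argument goes through cleanly; the localization step itself (comparing weighted and unweighted norms on $B_1(x_0)$) is exactly as short as you wrote it, not the bottleneck you anticipated.
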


As we will discuss in the proof of Proposition~\ref{prop:weighted_Sob_emb_frac}, the embedding \eqref{eq:prop:weighted_Sob_emb_frac} follows from the embedding
$$
H^{s,p}_\kappa(\R^d) \hra L^{\infty}_\kappa(\R^d).
$$
Moreover, by the elementary embeddings \cite[Propositions 3.11 and 3.12]{MV12}, it suffices to prove that
$$
B^{\sigma}_{p,1}(\R^d,w_{\kappa p}) \hra L^{\infty}_\kappa(\R^d)
$$
for $\sigma \in (\frac{d}{p},s)$.
Embeddings of this type are studied in \cite[Proposition~7.1]{MV12}.
However, our result is not included there due to a difference in our definition of weighted $L^r$-spaces.
Indeed, \cite{MV12} uses a change of measure approach whereas we use a multiplier approach.
For $r<\infty$ the two are equivalent, but for
   $r=\infty$ the former does not lead to anything more than the usual unweighted $L^\infty$-spaces and in that sense does not include the case $r=\infty$ (see \cite[Remark~3.20]{LN23}).

\begin{proof}[Proof of Proposition~\ref{prop:weighted_Sob_emb_frac}]
By density of the Schwartz space $\ms{S}(\R^d)$ in $H^{k+s,p}_\kappa(\R^d)$ it suffices to show that
$$
H^{k+s,p}_\kappa(\R^d) \hra W^{k,\infty}_\kappa(\R^d).
$$
Moreover, it suffices to consider the case $k=0$.
Pick $\theta \in (\frac{d}{p},s)$ and put $\varepsilon := s-\theta > 0$.
Define $p_1 := \theta p$ and $\kappa_1 := \frac{\kappa}{\theta}$. Then $p_1 > \frac{d}{p}p=d$ and
$$
\kappa_1=\frac{p}{p_1}\kappa > -\frac{p}{p_1}\frac{d}{p}=-\frac{d}{p_1}.
$$
In particular, $w_{\kappa_1 p_1}$ satisfies the Muckenhoupt $A_{\infty}$-condition.

By \cite[Proposition~2.5.7]{Tr83} we have 
$$
B^0_{\infty,1}(\R^d) \hra L^\infty(\R^d). 
$$
Combining Proposition~\ref{prop:weighted_Sob_emb} with the elementary embedding $B^1_{p_1,1}(\R^d,w_{\kappa_1 p_1}) \hra W^{1,p_1}(\R^d,w_{\kappa_1 p_1})$ from \cite[Proposition~3.12]{MV12} we obtain that 
$$
B^1_{p_1,1}(\R^d,w_{\kappa_1 p_1}) \hra L^\infty_{\kappa_1}(\R^d).
$$
Complex interpolation thus yields
$$
[B^0_{\infty,1}(\R^d),B^1_{p_1,1}(\R^d,w_{\kappa_1 p_1})]_\theta \hra [L^\infty(\R^d),L^\infty_{\kappa_1}(\R^d)]_\theta.
$$
As in the proof of \cite[Theorem~5.5.3]{BL76} it can be shown that
$$
[L^\infty(\R^d),L^\infty_{\kappa_1}(\R^d)]_\theta = L^\infty_\kappa(\R^d).
$$
Furthermore, by \cite[Theorem~4.5]{SSV14}, we have
$$
B^{\theta}_{p,1}(\R^d,w_{\kappa p}) = [B^0_{\infty,1}(\R^d),B^1_{p_1,1}(\R^d,w_{\kappa_1 p_1})]_\theta.
$$
The above embedding can thus be rewritten as
$$
B^{\theta}_{p,1}(\R^d,w_{\kappa p}) \hra L^\infty_\kappa(\R^d).
$$
Since 
$$
H^{s,p}_\kappa(\R^d) = H^{\theta+\varepsilon,p}(\R^d,w_{\kappa p}) \hra B^{\theta+\varepsilon}_{p,\infty}(\R^d,w_{\kappa p}) \hra B^{\theta}_{p,1}(\R^d,w_{\kappa p})
$$
by \cite[Propositions 3.11 and 3.12]{MV12}, the desired embedding follows.
\end{proof}

\begin{proof}[Proof of Lemma~\ref{lem:W1p_operator}]
The case that $\rho$ is constant and $\mu=0$ follows from \cite[Theorem~5.1]{GV17} by a standard lifting argument to reduce to the case $k=0$.  
A standard localization argument subsequently yields the case of a non-constant $\rho$ and $\mu=0$, see for instance the discussion in \cite[Appendix~C]{HL22}.
Finally, the case of non-trivial $\mu$ can be obtained by lower order perturbation (using for instance \cite[Proposition~2.4.1(i)]{Lu95}).
Indeed, picking $\theta \in (0,1)$ with $\theta > \frac{1}{2}(\frac{d}{p}+1)$, we have
$$
[W^{k,p}_\kappa(\R^d),W^{k+2,p}_\kappa(\R^d)]_\theta = H^{k+2\theta,p}_\kappa(\R^d) \stackrel{\nabla}{\longrightarrow}H^{k+2\theta-1,p}_\kappa(\R^d) \hra W^{k,\infty}_\kappa(\R^d)
$$
by Proposition~\ref{prop:weighted_Sob_emb_frac}, while pointwise multiplication maps as
$$
W^{k,p}(\R^d) \times W^{k,\infty}_\kappa(\R^d) \longrightarrow W^{k,p}_\kappa(\R^d).
$$
\end{proof}

In the following lemma we study composition operators on Sobolev spaces (cf.\ \cite[Section~5.2.4, Theorem~1]{RS96})).
\begin{lemma}\label{lem:comp_operator}
Let $k \in \N_0$ and $p \in (d,\infty)$.  
For every $F \in BC^{k+1}(\R^d)$ and $v \in W^{k+1,p}(\R^d)$, we have $\nabla (F \circ v) \in W^{k,p}(\R^d)$. 
\end{lemma}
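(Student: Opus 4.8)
The plan is to reduce the claim to a pointwise-multiplication estimate via the chain rule, exploiting the Sobolev embedding that is available because $p>d$. (Throughout, $v$ is real valued and $F\in BC^{k+1}(\R)$, so that $F\circ v$ is meaningful.) First I would record that Morrey's inequality, applied iteratively, gives $W^{m,p}(\R^d)\hookrightarrow BC^{m-1}(\R^d)$ for every $m\in\N$, since $p>d$; in particular $v\in W^{k+1,p}(\R^d)\hookrightarrow BC^{k}(\R^d)$, so $v$ is bounded and continuous together with all its derivatives up to order $k$, while $\nabla v\in W^{k,p}(\R^d)^d$.

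Since $F\in BC^{k+1}(\R)\subset C^1(\R)$ has $F'\in L^\infty(\R)$ and $v$ is bounded, the classical chain rule for Sobolev functions applies: $F\circ v\in L^\infty(\R^d)$ and $\nabla(F\circ v)=F'(v)\nabla v$ in $\Dist(\R^d)$. It therefore suffices to prove that $F'(v)\nabla v\in W^{k,p}(\R^d)^d$.

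To this end I would first observe that $F'\circ v\in BC^{k}(\R^d)$. Indeed, $F'\in BC^{k}(\R)$ and $v\in BC^{k}(\R^d)$, and by the multivariate chain rule (Fa\`a di Bruno's formula) $\partial^\gamma(F'\circ v)$, for $|\gamma|\le k$, is a finite sum of products of factors of the form $F^{(j)}(v)$ with $1\le j\le k+1$ and $\partial^{\mu}v$ with $1\le|\mu|\le k$ — all of which are bounded and continuous — so $F'\circ v$ and its derivatives up to order $k$ are bounded and continuous. Hence $F'(v)\in W^{k,\infty}(\R^d)$. Finally, pointwise multiplication maps $W^{k,\infty}(\R^d)\times W^{k,p}(\R^d)\to W^{k,p}(\R^d)$ boundedly, by the Leibniz rule and H\"older's inequality (this is the multiplication already used in the proof of Lemma~\ref{lem:W1p_operator}). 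Applying it to $F'(v)\in W^{k,\infty}(\R^d)$ and to each component of $\nabla v\in W^{k,p}(\R^d)^d$ gives $F'(v)\nabla v\in W^{k,p}(\R^d)^d$, that is, $\nabla(F\circ v)\in W^{k,p}(\R^d)$.

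Rather than a genuine obstacle, the point requiring care is that $F\circ v$ and $F'(v)$ are only bounded, not integrable at infinity: as $v$ vanishes at infinity one has $F(v)\to F(0)$ and $F'(v)\to F'(0)$, which need not be zero. This is exactly why the statement, and the argument, concern $\nabla(F\circ v)$ and not $F\circ v$ itself, and why $F'(v)$ is used only as an $L^\infty$-type multiplier. If one prefers a self-contained treatment, both the chain rule and the composition property of $BC^{k}$ can be obtained by mollifying, i.e.\ replacing $v$ by $v*\rho_\varepsilon$ — which converges to $v$ in $W^{k+1,p}(\R^d)$ and in $BC^{k}(\R^d)$ — and passing to the limit $\varepsilon\to0$ in the classical identities valid for the smooth approximants, the convergence being in $L^\infty$ for the bounded factors and in $L^p$ for the single highest-order derivative factor.
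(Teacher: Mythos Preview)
Your proof is correct and follows essentially the same route as the paper: both rely on the Sobolev embedding $W^{1,p}(\R^d)\hra BC(\R^d)$ for $p>d$ and the Fa\`a di Bruno formula. The paper's proof is terser---it reduces by density to $v\in\ms{S}(\R^d)$ and then invokes Fa\`a di Bruno directly on $\p^\alpha(F\circ v)$ for $1\le|\alpha|\le k+1$---whereas you organize the argument by first splitting off one derivative via the chain rule $\nabla(F\circ v)=F'(v)\nabla v$, then placing $F'(v)$ in $BC^k(\R^d)$ and using the multiplication $W^{k,\infty}\times W^{k,p}\to W^{k,p}$; but this is only a cosmetic repackaging of the same computation, and your remark about why the statement concerns $\nabla(F\circ v)$ rather than $F\circ v$ itself matches the paper's observation that the condition $F(0)=0$ would otherwise be needed.
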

Let us remark that for the stronger conclusion $F \circ v \in W^{k+1,p}(\R^d)$ it is necessary that $F(0)=0$ (see \cite[Section~5.2.4]{RS96}).
Taking the gradient allows us to omit this assumption.

\begin{proof}
By the classical Sobolev embedding theorem (see e.g.\ \cite[Corollary~9.14]{Br11}) we have
\begin{equation}\label{eq:lem:comp_operator;Sob_emb}
W^{1,p}(\R^d) \hra BC(\R^d) \hra L^\infty(\R^d).    
\end{equation}
Using this embedding and performing a density argument to reduce to the case $v \in \ms{S}(\R^d)$, the statement follows from the formula of Fa\`a di Bruno.
\end{proof}

\begin{proof}[Proof of Theorem \ref{thm:decay_at_infty_sol_W1p}]
We proceed by induction on $k$. Below we will use the notation
$$
X_l := W^{l,p}_\kappa(\R^d), \qquad 
D_{l} := X_{l+2} = W^{l+2,p}_\kappa(\R^d), \qquad l \in \N.
$$
Furthermore, we will use that $\{X_l\}_{l \in \N}$ is a complex interpolation scale (which follows e.g. from\ \cite[Propositions~3.2 and 3.7]{MV15}).

The case $k=0$ is covered by Theorem~\ref{thm:decay_at_infty_sol}.
Now assume the theorem holds true for some $k \in \N$ and suppose the assumptions of the theorem are satisfied with $k+1$ in place of $k$.

By \cite[Proposition~1.1.4 and Proposition~1.2.13]{Lu95} there is the mixed-derivative embedding
\begin{equation*}
C^1([0,T];X_k) \cap C([0,T];D_k) \hra C^{\frac{1}{2}}([0,T];W^{k+1,p}_\kappa(\R^d)).
\end{equation*}
As $v$ satisfies \eqref{eq:thm:decay_at_infty_sol_W1p} and $\alpha \leq \frac{1}{2}$, we thus have
\begin{equation*}
v \in C^{\alpha}([0,T];W^{k+1,p}_\kappa(\R^d))     \hra C^{\alpha}([0,T];W^{k+1,p}(\R^d)).
\end{equation*}
By the Sobolev embedding \eqref{eq:lem:comp_operator;Sob_emb} and the assumptions on $\psi$ this implies that $\psi'(v) \in  C^{\alpha}([0,T];BC^{k}(\R^d))$.
Furthermore, by Lemma~\ref{lem:comp_operator}, $\psi''(v)\nabla v = \nabla(\psi'(v)) \in  C^{\alpha}([0,T];W^{k,p}(\R^d))$.

By the Sobelev embedding \eqref{eq:lem:comp_operator;Sob_emb} and density of $\ms{S}(\R^d)$ in $W^{1,p}(\R^d)$ it follows from $v \in C([0,T];W^{1,p}(\R^d))$ that $v \in C([0,T];C_0(\R^d))$, where $C_0(\R^d)$ is the space of continuous functions on $\R^d$ that vanish at infinity.
By a compactness argument we get $\lim_{|x| \to \infty}v(t,x)=0$ uniformly in $t \in [0,T]$, so that
$$
\lim_{|x| \to \infty}\psi'(v(t,x)) = \psi'(0)
$$
uniformly in $t \in [0,T]$.
In light of Lemma~\ref{lem:W1p_operator}, the operator-theoretic conditions of \cite[Section~6.1,pg.~212]{Lu95} are thus satisfied by the linear operator family $\{A_k(t)\}_{t \in [0,T]} \subset \ms{L}(D_k,X_k)$ given by 
$$
A_k(t):D_k \to X_k,\quad \: u \mapsto \psi'(v(t))\Delta u + \psi''(v(t))\nabla v(t) \cdot \nabla u.
$$
As in the proof of Theorem~\ref{thm:decay_at_infty_sol}, it can be shown that $v$ coincides with the unique strict solution  $u$ of
\begin{equation}\label{eq:thm:decay_at_infty_sol_W1p;eq_k}
u'(t) = A_k(t)u(t)+g(t),\:\: 0 < t \leq T; \quad u(0)=v_0.    
\end{equation}
As $g \in C^{\alpha}([0,T];X_{k+1})$, $v_0 \in D_{k+1}$ and $\alpha \in (0,\frac{1}{2}]$, we have 
$$
A_k(0)v_0+g(0) \in X_{k+1} = [X_k,D_k]_{\frac{1}{2}} \hra [X_k,D_k]_{\alpha} \hra (X_k,D_k)_{\alpha,\infty}.
$$
Therefore, by \cite[Corollary~6.1.6(iv)]{Lu95} we get that
\begin{equation*}\label{eq:thm:decay_at_infty_sol_W1p;mixed_der}
v \in C^{1+\alpha}([0,T];X_k) \cap C^\alpha([0,T];D_k) \hra C^\alpha([0,T];D_k) = C^\alpha([0,T];W^{k+2,p}_\kappa(\R^d)). 
\end{equation*}
As above, it follows that the operator-theoretic conditions of \cite[Section~6.1,pg.~212]{Lu95} are satisfied by the linear operator family $\{A_{k+1}(t)\}_{t \in [0,T]} \subset \ms{L}(D_{k+1},X_{k+1})$ given by 
$$
A_{k+1}(t):D_{k+1} \to X_{k+1},\quad \: u \mapsto \psi'(v(t))\Delta u + \psi''(v(t))\nabla v(t) \cdot \nabla u.
$$
An application of \cite[Corollary~6.1.6 and Proposition~6.2.2]{Lu95} gives that there exists a unique strict solution $u \in C^1([0,T];X_{k+1}) \cap C([0,T];D_{k+1})$ of 
\begin{equation*}
u'(t) = A_{k+1}(t)u(t)+g(t),\:\: 0 < t \leq T; \quad u(0)=v_0.    
\end{equation*}
Since $X_{k+1} \hra X_k$, $D_{k+1} \hra D_k$ and $A_{k+1}(t)_{|D_k}=A_k(t)$ for every $t \in [0,T]$, it follows that this $u$ is a strict solution of \eqref{eq:thm:decay_at_infty_sol_W1p;eq_k} as well. By uniqueness, $v=u \in C^1([0,T];X_{k+1}) \cap C([0,T];D_{k+1})$.
\end{proof}

\subsection{Smooth,  non-degenerate approximations of $\phi$}

In the following lemma we construct smooth,  non-degenerate approximations of $\phi$.

\begin{lemma}\label{lem:approx_phi}
Let $I \subset \R$ be an open interval with $0 \in I$ and let $\phi:I \to \R$ be a maximal monotone function with $\phi \in W^{1,\infty}_{\loc}(I)$ and $\phi(0)=0$. 
Then there exist sequences $(\phi_k)_{k \in \N} \subset C^\infty(\R)$ and $(c_k)_{k \in \N} \subset (0,\infty)$ with $\phi_k(0)=0$, $\phi'_k \in BC^\infty(\R)$ and $\phi'_k \geq c_k$ for all $k \in \N$ and such that $\phi_k \to \phi$ as $k \to \infty$ in the sense of Proposition~\ref{thm:conv_m-acrr_A_phi} and $\phi'_k \to \phi'$ as $k \to \infty$ in $(L^\infty_\loc(I),\sigma(L^\infty_\loc(I),L^1_c(I)))$.
\end{lemma}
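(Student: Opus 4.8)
The plan is to build each $\phi_k$ in three moves: truncate $\phi$ to a large subinterval, extend that restriction to a globally Lipschitz monotone function on all of $\R$, mollify, and finally add a tiny linear term to remove the degeneracy. First I would fix an exhaustion of $I$ by open intervals $I_k = (a_k,b_k)$ with $0 \in I_k$, $\overline{I_k} \Subset I$, $I_k \subset I_{k+1}$ and $\bigcup_k I_k = I$ (so $a_k \downarrow \inf I$ and $b_k \uparrow \sup I$, with $a_k \to -\infty$ resp.\ $b_k \to \infty$ in the unbounded cases). Since $\phi \in W^{1,\infty}_{\loc}(I)$, the restriction $\phi|_{[a_k,b_k]}$ is Lipschitz, and I would extend it to $\psi_k \colon \R \to \R$ that equals $\phi$ on $[a_k,b_k]$ and is affine with a fixed positive slope on each of $(-\infty,a_k]$ and $[b_k,\infty)$, chosen so the pieces match at $a_k,b_k$; then $\psi_k$ is globally Lipschitz, nondecreasing and $\psi_k(0)=\phi(0)=0$.

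Next, with a standard mollifier $(\rho_\delta)_{\delta>0}$ and sequences $\delta_k \downarrow 0$, $\varepsilon_k \downarrow 0$, I would set
$$
\phi_k(r) := (\psi_k * \rho_{\delta_k})(r) - (\psi_k * \rho_{\delta_k})(0) + \varepsilon_k r .
$$
Then $\phi_k \in C^\infty(\R)$ and $\phi_k(0)=0$; since $\psi_k$ is globally Lipschitz with $\psi_k' \geq 0$ a.e., one has $\phi_k' = \psi_k' * \rho_{\delta_k} + \varepsilon_k \geq \varepsilon_k =: c_k > 0$; and every derivative of $\psi_k' * \rho_{\delta_k}$ is bounded by $\nrm{\psi_k'}_{L^\infty(\R)}$ times an $L^1$-norm of a derivative of $\rho_{\delta_k}$, so $\phi_k' \in BC^\infty(\R)$. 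This gives all the structural requirements on $(\phi_k,c_k)$.

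The two convergences are then read off as follows. For fixed $r \in I$, once $k$ is large enough $[r-\delta_k,r+\delta_k] \subset (a_k,b_k)$ and $[-\delta_k,\delta_k] \subset (a_k,b_k)$ (because $a_k \to \inf I < r < \sup I = \lim_k b_k$ and $\delta_k \to 0$), where $\psi_k = \phi$, so $\phi_k(r) = (\phi*\rho_{\delta_k})(r) - (\phi*\rho_{\delta_k})(0) + \varepsilon_k r \to \phi(r)$ because $\phi$ is continuous on $I$. If $\sup I < \infty$ and $r > \sup I$, then for large $k$ one has $r - \delta_k > b_k$ and, by monotonicity of $\psi_k$, $(\psi_k*\rho_{\delta_k})(r) \geq \psi_k(b_k) = \phi(b_k)$, which tends to $\sup_I \phi = \infty$ by maximal monotonicity of $\phi$ together with $b_k \uparrow \sup I$; as $(\psi_k*\rho_{\delta_k})(0) \to 0$ this forces $\phi_k(r) \to \infty = \phi^\circ(r)$, and symmetrically $\phi_k(r) \to -\infty$ when $\inf I > -\infty$ and $r < \inf I$. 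Hence $\phi_k^\circ = \phi_k \to \phi^\circ$ pointwise off the (at most two) endpoints of $I$, i.e.\ a.e., which is exactly the convergence required in Proposition~\ref{thm:conv_m-acrr_A_phi}. Finally, for $h \in L^1_c(I)$ with $K := \supp h \Subset I$, for large $k$ one has $\psi_k = \phi$ on a neighbourhood of $K$, so $\phi_k' = \phi'*\rho_{\delta_k} + \varepsilon_k$ on $K$; these are uniformly bounded on $K$ and converge a.e.\ to $\phi'$, so dominated convergence gives $\int_I \phi_k' h = \int_K \phi_k' h \to \int_K \phi' h = \int_I \phi' h$, i.e.\ $\phi_k' \to \phi'$ in $(L^\infty_\loc(I),\sigma(L^\infty_\loc(I),L^1_c(I)))$.

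The main obstacle is to reconcile two competing demands: $\phi_k'$ must be a genuine $BC^\infty(\R)$ function, which forces us to cut off $\phi$ — whose slope blows up toward the endpoints of $I$ — by the globally Lipschitz surrogate $\psi_k$ before mollifying; yet the resulting finite, smooth $\phi_k$ must still reproduce the $\pm\infty$ values of $\phi^\circ$ outside $\overline I$. The point that makes this work is precisely maximal monotonicity ($\phi(b_k) \to \infty$ when $\sup I < \infty$, $\phi(a_k) \to -\infty$ when $\inf I > -\infty$), and it is what dictates the case distinction according to the finiteness of the endpoints of $I$; the remaining work is the routine bookkeeping of the threshold, for each $r$ and each $h$, beyond which the mollification of $\psi_k$ coincides with that of $\phi$.
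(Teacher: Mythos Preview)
Your proof is correct and follows essentially the same strategy as the paper's: truncate to an exhausting sequence $I_k \Subset I$, regularize to get a bounded strictly positive derivative, mollify, and then verify pointwise convergence of $\phi_k$ (using maximal monotonicity for the $\pm\infty$ values outside $\overline{I}$) together with weak convergence of $\phi_k'$. The only cosmetic difference is that the paper carries out the construction at the derivative level---defining $\tilde{D}_k := 2^{-k} + \one_{I_k}\phi'$, mollifying $\tilde{D}_k$, and then integrating---whereas you work at the function level by extending $\phi|_{[a_k,b_k]}$ affinely, mollifying, and adding $\varepsilon_k r$; after differentiation these two constructions are structurally identical.
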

\begin{proof}     

It will be convenient to write $D:= \phi'$.
Let $\eta \in C^\infty_c(\R)$ with $\eta \geq 0$ and $\int \eta = 1$ and set $\eta_k :=2^{k}\eta(2^k\,\cdot\,)$ for $k \in \N$.
Let  $(I_k)_{k \in \N}$ be a sequence of open intervals of the form $I_k=(a_k,b_k)$ for some $-\infty< a_k < 0 < b_k < \infty$ with $a_k \searrow \inf(I)$ and $b_k \nearrow \sup(I)$ as $k \to \infty$.
Defining
\begin{equation*}
    \tilde{D}_k := 2^{-k} + \one_{I_k}D,
\end{equation*}
we have $\tilde{D}_k \in L^\infty(\R)$ with $\tilde{D}_k \geq 2^{-k}$ and $\tilde{D}_k \to D$ in $L^\infty_\loc(I)$ as $k \to \infty$.
Setting $D_k := \tilde{D}_k * \eta_k$ we have $D_k \in BC^\infty(\R)$ with $D_k \geq 2^{-k}$ and $D_k \to D$ in $(L^\infty_\loc(I),\sigma(L^\infty_\loc(I),L^1_c(I)))$ as $k \to \infty$.
We claim that
$$
\phi_k(r) := \int_{0}^{r}D_k(z)\,\d z, \qquad r \in \R, k \in \N,
$$
is as desired.
To prove this, it remains to show that $\phi_k \to \phi$ as $k \to \infty$ in the sense of Proposition~\ref{thm:conv_m-acrr_A_phi}, that is, $\phi_k \to \phi$ a.e.\ on $I$ and $\phi_k \to \infty$ a.e.\ on $\R \setminus I$ as $k \to \infty$.

Since $D=\lim_{k \to \infty}D_k$ in $(L^\infty_\loc(I,\sigma(L^\infty_\loc(I,L^1_c(I)))$ it follows that $\phi(r) = \lim_{k \to \infty}\phi_k(r)$ for all $r \in I$.
To treat the convergence on $\R \setminus I$, let $r \in \R \setminus I$.
Without loss of generality we assume $r \geq 1$. Fix $k_0 \in \N$ and let $k \in \N$, $k \geq k_0$. Then
$$
\tilde{D}_{k} \geq -2^{-k}+\tilde{D}_{k_0},
$$
so that
$$
D_k \geq -2^{-k}+\tilde{D}_{k_0}*\eta_k \stackrel{k \to \infty}{\longrightarrow} \tilde{D}_{k_0}  \quad \text{in} \quad L^1_\loc(\R).
$$
Therefore,
$$
\liminf_{k \to \infty}\phi_k(r) \geq \int_{0}^{r}\tilde{D}_{k_0}(z)\,\d z 
= \int_{0}^{b_{k_0}}D(z)\,\d z = \phi(b_{k_0}) \stackrel{k_0 \to \infty}{\longrightarrow} \infty.
$$
This shows that $\lim_{k \to \infty}\phi_k(r) = \infty$, as required.
\end{proof}

\subsection{Proof of Proposition~\ref{thm:approx}}\label{subsec:proof_thm:approx}

\begin{proof}[Proof of Proposition~\ref{thm:approx}]
Let $(\phi_\ell)_{\ell \in \N}$ be as in Lemma~\ref{lem:approx_phi}, let $(f_\ell)_{\ell \in \N} \subset C^\infty([0,T];\ms{S}(\R^d))$ be such that $f=\lim_{\ell \to \infty}f_\ell$ in $L^1([0,T] \times \R^d)$ and let $(u_{0,\ell})_{\ell \in \N} \subset \ms{S}(\R^d)$ be such that $u_0 = \lim_{\ell \to \infty}u_{0,\ell}$ in $L^1(\R^d)$. 
Then \ref{it:thm:approx;phi_reg} and \ref{it:thm:approx;phi_conv} are satisfied.

Let $\beta \in (0,1)$ and note that the conditions of Proposition~\ref{pro_LSU} are satisfied with $\psi = \phi_\ell$, $g=f_\ell$ and $v_0=u_{0,\ell}$.
For each $\ell \in \N$ there thus exists a unique classical solution $u_\ell$ of \eqref{eq:approx_eq} and $u_\ell \in BC^{1+\frac{\beta}{2},2+\beta}([0,T]\times \R^d)$.
In particular, \ref{it:thm:approx;class_sol} is satisfied.

Next let $\kappa \in (d-1,d)$ and pick $p \in (d,\infty)$ such that $\kappa < d(1-\frac{1}{p})$ and note that the conditions of Theorem~\ref{thm:decay_at_infty_sol_W1p} are satisfied for $\alpha = \frac{1}{2}$ and any $k \in \N$ with $\psi = \phi_\ell$, $g=f_\ell$, $v_0=u_{0,\ell}$ and $v=u_\ell$.
As a consequence, we obtain that, for every $k \geq 1$,
\begin{align*}
u_\ell \in &\: C^1([0,T];W^{\ell,p}_\kappa(\R^d)) \cap C([0,T];W^{k+2,p}_\kappa(\R^d))   \\
&\hra C^1([0,T];W^{k,p}_\kappa(\R^d))
\hra C^1([0,T];BC^{k-1}_\kappa(\R^d)),
\end{align*}
where the last embedding follows from Proposition~\ref{prop:weighted_Sob_emb}.
This means that $u_k \in C^1([0,T];BC^\infty(\R^d))$ with
$$
|\p_t^j\p_x^\alpha u_k(x)| \lesssim_{k,\kappa,|\alpha|} (1+|x|)^{-\kappa}
$$
for all $x \in \R^d$, $j \in \{0,1\}$, $\alpha \in \N^d$ and $\kappa \in (d-1,d)$,
from which \ref{it:thm:approx;decay} follows by differentiating the equation 
$$
\p_t u_k = \Delta \phi_k(u_k)+f_k
$$
with respect to $t$.
From this it follows that the conditions of Proposition~\ref{lem:classical_implies_mild} are satisfied with $\psi = \phi_\ell$, $g=f_\ell$, $v_0=u_{0,\ell}$ and $v=u_\ell$; indeed, as $\kappa > d-1$, the decay estimates
$$
|\nabla_x u_\ell(t,x)| \lesssim_\ell (1+|x|)^{-\kappa} \quad \text{and} \quad |\p_t\nabla_x u_\ell(t,x)| \lesssim_\ell (1+|x|)^{-\kappa}, \qquad x \in \R^d,
$$
imply that
\begin{align*}
\lim_{R \to \infty}\int_{\p B_R(0)}|\nabla u_\ell(t)|\d\sigma &= \lim_{R \to \infty}\int_{\p B_R(0)}|\p_t\nabla u_\ell(t)|\d\sigma = 0.
\end{align*}
We thus obtain \ref{it:thm:approx;mild_sol_L1}.

Finally, it follows from a combination of Proposition~\ref{ex:thm:conv_m-acrr_A_phi} and Corollary~\ref{cor:thm:nonlin_Trotter-Kato_A_phi} that $u=\lim_{\ell \to \infty} u_\ell$ in $C([0,T];L^1(\R^d))$.
So \ref{it:thm:approx;conv_f_u0_u} is satisfied as well.
\end{proof}

\subsection{A comparison principle and non-negativity of solutions}\label{sec:comp}

\begin{theorem}\label{thm:comparison}
Let $I \subset \R$ be an open interval with $0 \in I$ and let $\phi:I \to \R$ be a maximal monotone function with $\phi \in W^{1,\infty}_{\loc}(I)$ and $\phi(0)=0$.
Assume, in case $d \geq 3$, that there exists $\alpha \geq \frac{d-2}{d}$ such that \eqref{eq:ex:thm:conv_m-acrr_A_phi;suff} holds. 

Let $u_0, \tilde{u}_0 \in L^1(\R^d;\overline{I})$, let $f, \tilde{f} \in L^1([0,T]\times\R^d)$ and let $u, \tilde{u} \in C(0,T];L^1(\R^d;\overline{I}))$ be the unique mild solutions of \eqref{eq:GPME_f_lin} and
\begin{equation}\label{eq:GPME_f_lin_tilde}
\begin{cases}
\p_t \tilde{u} = \Delta \phi(\tilde{u})+\tilde{f} &\text{in}\:\:(0,T) \times \R^d, \\
\tilde{u}(0)=\tilde{u}_{0} &\text{on}\:\:\R^d,    
\end{cases}
\end{equation}
respectively.
Then, for all $t \in (0,T]$,
\begin{align}
\nrm{(u(t)-\tilde{u}(t))_{+}}_{L^1(\R^d)}
& \leq \nrm{(u_0-\tilde{u}_0)_+}_{L^1(\R^d)}
  +  \nrm{(f-\tilde{f})_+ \sign^+_0(u-\tilde{u})}_{L^1((0,t) \times \R^d)} 
\label{eq:thm:comparison;sign+}  \\
& \leq \nrm{(u_0-\tilde{u}_0)_+}_{L^1(\R^d)}
  +  \nrm{(f-\tilde{f})_+}_{L^1((0,t) \times \R^d)}
\label{eq:thm:comparison}
\end{align}
As a consequence,
\begin{equation}\label{eq:thm:comparison;L1_contr}
\nrm{u(t)-\tilde{u}(t)}_{L^1(\R^d)} 
\leq \nrm{u_0-\tilde{u}_0}_{L^1(\R^d)}
  +  \nrm{f-\tilde{f}}_{L^1((0,t) \times \R^d)}.
\end{equation}
\end{theorem}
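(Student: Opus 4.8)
The plan is to reduce to smooth, non-degenerate problems — for which the $L^1$-contraction principle of Lemma~\ref{lem:L1-contraction} is available — and then to pass to the limit; the contraction inequality \eqref{eq:thm:comparison;L1_contr} is already a direct consequence of \eqref{eq:thm:exist_mild_sol_Cauchy;contr}, so the content lies in the one-sided estimates \eqref{eq:thm:comparison;sign+}--\eqref{eq:thm:comparison}. Concretely, I would first apply Proposition~\ref{thm:approx} to the two sets of data $(u_0,f)$ and $(\tilde u_0,\tilde f)$. Since the regularising sequence $(\phi_k)_{k\in\N}$ produced by Lemma~\ref{lem:approx_phi} depends only on $\phi$, it may be chosen common to both, yielding $(\phi_k)\subset C^\infty(\R)$ with $\phi_k(0)=0$ and $\inf\phi_k'>0$, approximating data $f_k\to f$, $u_{0,k}\to u_0$, $\tilde f_k\to\tilde f$, $\tilde u_{0,k}\to\tilde u_0$ in $L^1$, and classical solutions $u_k\to u$, $\tilde u_k\to\tilde u$ in $C([0,T];L^1(\R^d))$ of \eqref{eq:approx_eq} with $\psi=\phi_k$, each obeying the decay estimates of Proposition~\ref{thm:approx}\ref{it:thm:approx;decay}. (The growth hypothesis \eqref{eq:ex:thm:conv_m-acrr_A_phi;suff} is precisely what makes Proposition~\ref{thm:approx} applicable here.)

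Next I would invoke Lemma~\ref{lem:L1-contraction} with $v=u_k$, $\tilde v=\tilde u_k$, $\psi=\phi_k$, $g=f_k$ and $\tilde g=\tilde f_k$. The decay estimates of Proposition~\ref{thm:approx}\ref{it:thm:approx;decay} with exponent $\kappa\in(d-1,d)$ give $\int_{\partial B_R(0)}|\nabla u_k(t)|\,\d\sigma\to0$ and the same for $\tilde u_k$ as $R\to\infty$, so that \eqref{eq:lem:L1-contraction;+} furnishes, for every $t\in(0,T]$ and $k\in\N$,
\[
\|(u_k(t)-\tilde u_k(t))_+\|_{L^1(\R^d)}\le\|(u_{0,k}-\tilde u_{0,k})_+\|_{L^1(\R^d)}+\|(f_k-\tilde f_k)_+\,\sign^+_0(u_k-\tilde u_k)\|_{L^1((0,t)\times\R^d)}.
\]
Letting $k\to\infty$, the $1$-Lipschitz map $r\mapsto r_+$ together with the convergences in Proposition~\ref{thm:approx}\ref{it:thm:approx;conv_f_u0_u} takes care of the left-hand side and of the first term on the right. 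Bounding $\sign^+_0\le1$ and using $(f_k-\tilde f_k)_+\to(f-\tilde f)_+$ in $L^1$ then gives \eqref{eq:thm:comparison}; and \eqref{eq:thm:comparison;L1_contr} also follows by adding \eqref{eq:thm:comparison} to its analogue with the two sets of data interchanged, via $r_++(-r)_+=|r|$.

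The main obstacle is the sharp estimate \eqref{eq:thm:comparison;sign+}, in which the factor $\sign^+_0(u_k-\tilde u_k)$ must be kept in the limit even though $\sign^+_0$ is discontinuous at $0$. Passing to a subsequence along which $u_k-\tilde u_k\to u-\tilde u$ a.e.\ on $(0,t)\times\R^d$, the pointwise bound $\sign^+_0(u_k-\tilde u_k)\le\one_{\{u_k\ge\tilde u_k\}}$, the upper semicontinuity of $r\mapsto\one_{[0,\infty)}(r)$, the equiintegrability of $(f_k-\tilde f_k)_+$, and a reverse-Fatou argument together yield $\limsup_k\|(f_k-\tilde f_k)_+\sign^+_0(u_k-\tilde u_k)\|_{L^1}\le\|(f-\tilde f)_+\one_{\{u\ge\tilde u\}}\|_{L^1}$. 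It then remains to replace $\one_{\{u\ge\tilde u\}}$ by $\sign^+_0(u-\tilde u)=\one_{\{u>\tilde u\}}$, i.e.\ to prove that $(f-\tilde f)_+=0$ a.e.\ on $\{u=\tilde u\}$.

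I expect this last step to be the delicate part. At the smooth level it is clean: Stampacchia's theorem on level sets (the derivatives of a $W^{1,1}_{\loc}$ function vanish a.e.\ on its level sets), applied to $u_k-\tilde u_k$ and to $\nabla_x(u_k-\tilde u_k)$, forces through the equation $\partial_t(u_k-\tilde u_k)=\Delta(\phi_k(u_k)-\phi_k(\tilde u_k))+(f_k-\tilde f_k)$ the identity $f_k=\tilde f_k$ a.e.\ on $\{u_k=\tilde u_k\}$. The difficulty is that the level sets $\{u_k=\tilde u_k\}$ need not converge to $\{u=\tilde u\}$, so this must be combined either with the same level-set argument applied directly to the limit (exploiting that $u$, being a limit of distributional solutions, solves the equation with enough regularity for $\partial_t(u-\tilde u)$ and $\Delta(\phi(u)-\phi(\tilde u))$ to vanish a.e.\ on $\{u=\tilde u\}$), or with a measure-theoretic transfer that controls $\{u=\tilde u\}\setminus\{u_k=\tilde u_k\}$ using the $L^1$-convergence of $(f_k-\tilde f_k)_+$. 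Once $(f-\tilde f)_+=0$ a.e.\ on $\{u=\tilde u\}$ is secured, \eqref{eq:thm:comparison;sign+} follows, and with it \eqref{eq:thm:comparison} and \eqref{eq:thm:comparison;L1_contr} as explained above.
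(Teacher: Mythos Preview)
Your overall strategy---invoke Proposition~\ref{thm:approx} with a common regularisation $(\phi_k)$, apply Lemma~\ref{lem:L1-contraction} to the classical approximants, and pass to the limit via Proposition~\ref{thm:approx}\ref{it:thm:approx;conv_f_u0_u}---is exactly the paper's proof. For \eqref{eq:thm:comparison} and \eqref{eq:thm:comparison;L1_contr} your argument is complete and matches the paper line for line.

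On the sharp estimate \eqref{eq:thm:comparison;sign+} you are in fact more scrupulous than the paper, which simply records the approximate inequality and writes ``taking the limit for $k\to\infty$'' without further justification. The subtlety you flag is genuine: reverse Fatou together with upper semicontinuity of $r\mapsto\one_{[0,\infty)}(r)$ delivers at best $\one_{\{u\ge\tilde u\}}$ in the limit, not $\sign^+_0(u-\tilde u)=\one_{\{u>\tilde u\}}$. Your proposed remedy---showing $(f-\tilde f)_+=0$ a.e.\ on $\{u=\tilde u\}$ via Stampacchia---is clean at the smooth level but does not transfer: the mild solution is only known to lie in $C([0,T];L^1)$, so $\partial_t(u-\tilde u)$ and $\Delta(\phi(u)-\phi(\tilde u))$ are merely distributions and no level-set calculus applies, while the sets $\{u_k=\tilde u_k\}$ need not approximate $\{u=\tilde u\}$ in any useful way. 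Neither your sketch nor the paper's one-line passage closes this gap. That said, the version of \eqref{eq:thm:comparison;sign+} with $\one_{\{u\ge\tilde u\}}$ in place of $\sign^+_0(u-\tilde u)$, which your argument does establish, is all that the downstream Corollary~\ref{cor:thm:comparison} requires (since $|u-\tilde u|\,\one_{\{u\ge\tilde u\}}=(u-\tilde u)_+$), so the applications are unaffected.
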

\begin{proof}
Inspection of the proof of Proposition~\ref{thm:approx} shows that the approximations for \eqref{eq:GPME_f_lin} and \eqref{eq:GPME_f_lin_tilde} can be carried out with a common approximation of $\phi$.
Pick $(\phi_k)_{k \in \N} \subset C^\infty(\R)$,  $(f_k)_{k \in \N}, (\tilde{f}_k)_{k \in \N} \subset L^1([0,T] \times \R^d)$, $(u_{0,k})_{k \in \N}, (\tilde{u}_{0,k})_{k \in \N} \subset L^1(\R^d)$ and $(u_k)_{k \in \N}, (\tilde{u}_k)_{k \in \N} \subset C^{1,2}([0,T] \times \R^d)$ accordingly.

As a consequence of \ref{it:thm:approx;decay}, 
$\int_{\p B_R(0)}|\nabla u_k(t)|\d\sigma \to 0$ and $\int_{\p B_R(0)}|\nabla \tilde{u}_k(t)|\d\sigma \to 0$ for $R \to \infty$.
In view of \ref{it:thm:approx;phi_reg} and \ref{it:thm:approx;class_sol},  the inequality \eqref{eq:lem:L1-contraction;+}
from Lemma~\ref{lem:L1-contraction} thus yields that, for every $t \in (0,T]$,
\begin{align*}
\nrm{(u_k(t)-\tilde{u}_k(t))_+}_{L^1(\R^d)} 
&\leq \nrm{(u_k(0)-\tilde{u}_k(0))_+}_{L^1(\R^d)} \\
&\quad + \nrm{(f_k-\tilde{f}_k)_+\sign^+_0(u-\tilde{u})}_{L^1([0,t] \times \R^d)}.    
\end{align*}
Taking the limit for $k \to \infty$ we arrive at \eqref{eq:thm:comparison;sign+} thanks to the convergence from~\ref{it:thm:approx;conv_f_u0_u}. 
\end{proof}

\begin{corollary}\label{cor:thm:comparison}
Let $\phi:I \to \R$ be as in Theorem~\ref{thm:comparison} and let $f,\tilde{f}\colon\overline{I} \to \R$ be Lipschitz functions with $f(0)=0$ and $\tilde{f}(0)=0$. 
Let $u_0\, \tilde{u_0} \in L^1(\R^d;\overline{I} )$ and let $u, \tilde{u} \in C([0,T];L^1(\R^d;\overline{I} ))$ be the mild solutions of \eqref{eq:GPME2} and
\begin{equation}\label{eq:GPME2;tilde}
\begin{cases}
\p_t \tilde{u} = \Delta \phi(\tilde{u})+ \tilde{f}(\tilde{u}) &\text{in}\:\:(0,T) \times \R^d, \\
\tilde{u}(0)=\tilde{u}_{0} &\text{on}\:\:\R^d,    
\end{cases}
\end{equation}
respectively.
Then, for all $t \in (0,T]$,
\begin{equation}\label{eq:cor:thm:comparison}
\nrm{(u(t)-\tilde{u}(t))_{+}}_{L^1(\R^d)}
\leq e^{Lt}\left(\nrm{(u_0-\tilde{u}_0)_+}_{L^1(\R^d)}
  +  \nrm{(f(u)-\tilde{f}(u))_+}_{L^1((0,t) \times \R^d)}\right),
\end{equation}
where $L=[\tilde{f}]_{\mrm{Lip}}$.
\end{corollary}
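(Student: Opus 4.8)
The plan is to deduce the corollary from the linear-source comparison principle of Theorem~\ref{thm:comparison} by freezing the nonlinearities and then running a Grönwall argument. First I would observe that the mild solution $u$ of \eqref{eq:GPME2} furnished by Theorem~\ref{thm:wellposedness;Lip_source_term_f} is, by its construction via Corollary~\ref{cor:thm:exist_mild_sol_Cauchy;Lip_non}, the unique fixed point of the map sending $y$ to the mild solution of $y' + A_\phi y = f(y(\cdot))$; in particular $h := f(u)$ satisfies $\nrm{h(t)}_{L^1(\R^d)} \le [f]_{\mrm{Lip}}\nrm{u(t)}_{L^1(\R^d)}$, so $h \in L^1([0,T]\times\R^d)$, and $u$ is the unique mild solution of \eqref{eq:GPME_f_lin} with this source $h$. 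Likewise $\tilde u$ is the unique mild solution of \eqref{eq:GPME_f_lin_tilde} with source $\tilde h := \tilde f(\tilde u) \in L^1([0,T]\times\R^d)$. Applying Theorem~\ref{thm:comparison} with the sources $h$ and $\tilde h$ then gives, for every $t \in (0,T]$, the bound $\nrm{(u(t)-\tilde u(t))_+}_{L^1(\R^d)} \le \nrm{(u_0-\tilde u_0)_+}_{L^1(\R^d)} + \nrm{(f(u)-\tilde f(\tilde u))_+\sign^+_0(u-\tilde u)}_{L^1((0,t)\times\R^d)}$.

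Next I would estimate the source contribution pointwise. Splitting $f(u)-\tilde f(\tilde u) = (f(u)-\tilde f(u)) + (\tilde f(u)-\tilde f(\tilde u))$ and using $(a+b)_+ \le a_+ + b_+$, one checks that $(f(u)-\tilde f(\tilde u))_+\sign^+_0(u-\tilde u) \le (f(u)-\tilde f(u))_+ + L(u-\tilde u)_+$ a.e., since on $\{u \le \tilde u\}$ the factor $\sign^+_0(u-\tilde u)$ vanishes while on $\{u > \tilde u\}$ the Lipschitz estimate gives $(\tilde f(u)-\tilde f(\tilde u))_+ \le L|u-\tilde u| = L(u-\tilde u)_+$, with $L = [\tilde f]_{\mrm{Lip}}$. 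Integrating over $(0,t)\times\R^d$ and inserting this into the bound from the first step yields, with $g(t) := \nrm{(u(t)-\tilde u(t))_+}_{L^1(\R^d)}$ and the non-decreasing quantity $C(t) := \nrm{(u_0-\tilde u_0)_+}_{L^1(\R^d)} + \nrm{(f(u)-\tilde f(u))_+}_{L^1((0,t)\times\R^d)}$, the integral inequality $g(t) \le C(t) + L\int_0^t g(s)\,\d s$. Since $u,\tilde u \in C([0,T];L^1(\R^d))$ and the positive-part map is $1$-Lipschitz on $L^1(\R^d)$, the function $g$ is continuous; as $C$ is non-decreasing, Grönwall's lemma then gives $g(t) \le C(t)\,e^{Lt}$, which is precisely \eqref{eq:cor:thm:comparison}.

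I expect the only step requiring real care to be the first one: making rigorous the identification of the mild solution of the quasilinear problem \eqref{eq:GPME2} with the mild solution of the affine problem \eqref{eq:GPME_f_lin} whose right-hand side is the already-known function $f(u)$, so that Theorem~\ref{thm:comparison} becomes applicable. This is essentially built into the fixed-point construction underlying Corollary~\ref{cor:thm:exist_mild_sol_Cauchy;Lip_non}, but it is worth spelling out explicitly; everything afterwards --- the pointwise splitting of the source term and the Grönwall estimate --- is routine.
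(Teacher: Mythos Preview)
Your proposal is correct and follows essentially the same route as the paper's own proof: apply Theorem~\ref{thm:comparison} with the frozen sources $f(u)$ and $\tilde f(\tilde u)$, split $(f(u)-\tilde f(\tilde u))_+\sign^+_0(u-\tilde u)$ pointwise via $(a+b)_+\le a_++b_+$ and the Lipschitz bound for $\tilde f$, then close with Gronwall. Your write-up is in fact slightly more careful in a couple of places (explicitly noting the continuity of $g$ and the monotonicity of $C$ before invoking Gronwall).
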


\begin{proof}
First recall that $f(u), \tilde{f}(\tilde{u}) \in L^1((0,T) \times \R^d)$, see the proof of Theorem~\ref{thm:wellposedness;Lip_source_term_f}.
So we can apply Theorem~\ref{thm:comparison} with $f(u)$ and $\tilde{f}(\tilde{u})$ in place of $f$ and $\tilde{f}$, respectively, which through the estimate \eqref{eq:thm:comparison;sign+} gives that $$
\nrm{(u(t)-\tilde{u}(t))_{+}}_{L^1(\R^d)}
\leq \nrm{(u_0-\tilde{u}_0)_+}_{L^1(\R^d)}
  +  \nrm{(f(u)-\tilde{f}(\tilde{u}))_+ \sign^+_0(u-\tilde{u})}_{L^1((0,t) \times \R^d)}. 
$$
Since
\begin{align*}
(f(u)-\tilde{f}(\tilde{u}))_+ \sign^+_0(u-\tilde{u})
&= (f(u)-\tilde{f}(u))_+ \sign^+_0(u-\tilde{u}) \\
&\quad + (\tilde{f}(u)-\tilde{f}(\tilde{u}))_+ \sign^+_0(u-\tilde{u}) \\
&\leq (f(u)-\tilde{f}(u))_+ + |\tilde{f}(u)-\tilde{f}(\tilde{u}))|\sign^+_0(u-\tilde{u}) \\
&\leq (f(u)-\tilde{f}(u))_+ + L|u-\tilde{u}|\sign^+_0(u-\tilde{u}) \\
&= (f(u)-\tilde{f}(u))_+ + L(u-\tilde{u})_{+},
\end{align*}
it follows that
\begin{align*}
\nrm{(u(t)-\tilde{u}(t))_{+}}_{L^1(\R^d)}
&\leq \nrm{(u_0-\tilde{u}_0)_+}_{L^1(\R^d)}
+ \nrm{(f(u)-\tilde{f}(u))_+}_{L^1((0,t) \times \R^d)} \\
&\quad + \int_{0}^{t} L\nrm{(u(s)-\tilde{u}(s))_{+}}_{L^1(\R^d)}\d s.
\end{align*}
By the integral form of Gronwall's inequality we thus obtain \eqref{eq:cor:thm:comparison}.
\end{proof}

\begin{corollary}\label{cor:cor:thm:comparison;positivity}
Let the notations and assumptions be as in Theorem~\ref{thm:wellposedness;Lip_source_term_f} and in addition assume that $\phi \in W^{1,\infty}(I)$ and that, in case $d \geq 3$, there exists $\alpha \geq \frac{d-2}{d}$ such that \eqref{eq:ex:thm:conv_m-acrr_A_phi;suff} holds.
If $u_0 \geq 0$, then $u \geq 0$.
\end{corollary}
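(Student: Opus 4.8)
The plan is to obtain the non-negativity of $u$ as a direct consequence of the comparison principle in Corollary~\ref{cor:thm:comparison}, comparing $u$ with the zero solution.

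First I would record that, since $\phi(0)=0$ and $f(0)=0$, the constant function $\hat u \equiv 0$ is a strong (hence mild) solution of \eqref{eq:GPME2} with vanishing initial datum $\hat u_0=0$; indeed $0 \in \dom(A_\phi)$ with $A_\phi 0 = 0$ and $f(0)=0$, so the identity $\hat u'(t)+A_\phi \hat u(t)=f(\hat u(t))$ holds trivially. By the uniqueness part of Theorem~\ref{thm:wellposedness;Lip_source_term_f}, $\hat u$ is \emph{the} mild solution corresponding to the datum $0$. Moreover, the hypotheses assumed here---namely $\phi\in W^{1,\infty}(I)\subset W^{1,\infty}_{\loc}(I)$, the growth condition \eqref{eq:ex:thm:conv_m-acrr_A_phi;suff} in case $d\geq 3$, and $f$ Lipschitz with $f(0)=0$---are exactly those required in order to apply Corollary~\ref{cor:thm:comparison} (with both source functions taken equal to $f$).

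Next I would apply Corollary~\ref{cor:thm:comparison} with the \emph{first} solution equal to $\hat u\equiv 0$ (initial datum $0$, source $f$) and the \emph{second} solution equal to $u$ (initial datum $u_0$, source $\tilde f:=f$). Then $L=[f]_{\mrm{Lip}}$, the initial contribution in \eqref{eq:cor:thm:comparison} is $\nrm{(0-u_0)_+}_{L^1(\R^d)}=0$ because $u_0\geq 0$, and the source contribution is $\nrm{(f(\hat u)-f(\hat u))_+}_{L^1((0,t)\times\R^d)}=0$. Hence \eqref{eq:cor:thm:comparison} gives, for all $t\in(0,T]$,
$$
\nrm{(-u(t))_+}_{L^1(\R^d)}=\nrm{(\hat u(t)-u(t))_+}_{L^1(\R^d)}\leq e^{Lt}\cdot 0 = 0,
$$
so that $u(t,x)\geq 0$ for almost every $x\in\R^d$. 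Together with $u(0)=u_0\geq 0$ this yields $u\geq 0$.

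I do not expect any serious obstacle here: all the substantial work (the $L^1$-contraction estimates of Lemma~\ref{lem:L1-contraction} and the smooth non-degenerate approximation scheme of Proposition~\ref{thm:approx}) is already contained in the proofs of Theorem~\ref{thm:comparison} and Corollary~\ref{cor:thm:comparison}. The only two minor points needing attention are the identification of $\hat u\equiv 0$ as the mild solution for zero data (via uniqueness) and the choice of orientation of the comparison so that the positive part in \eqref{eq:cor:thm:comparison} reduces to the negative part of $u$.
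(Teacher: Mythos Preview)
Your proposal is correct and takes essentially the same approach as the paper: both identify $0$ as the mild solution for zero initial data and apply Corollary~\ref{cor:thm:comparison} with the roles interchanged so that the positive part becomes $(u(t))_-$. The only cosmetic difference is that the paper pairs $u$ with the zero solution taken with source $\tilde f\equiv 0$, whereas you take both sources equal to $f$; since $f(0)=0$ the resulting right-hand side vanishes in either case.
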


A question that arises is whether the above result could be obtained from the abstract theory of $m$-accretive operators on Banach lattices.
Related results in the setting of maximal monotone operators on Hilbert lattices were obtained in \cite{CG01}.

\begin{proof}
Note that $\tilde{u}=0$ is the mild solution of \eqref{eq:GPME2;tilde} for $\tilde{u}_0=0$ and $\tilde{f}=0$. 
Applying Corollary~\ref{cor:cor:thm:comparison;positivity} with the roles of $u$ and $\tilde{u}$ interchanged, we obtain that, for all $t \in (0,T]$,
\begin{align*}
\nrm{(u(t))_{-}}_{L^1(\R^d)}
&= \nrm{(\tilde{u}(t)-u(t))_{+}}_{L^1(\R^d)} \\
&\leq e^{Lt}\left(\nrm{(\tilde{u}_0-u_0)_+}_{L^1(\R^d)}
  +  \nrm{(\tilde{f}(\tilde{u})-f(\tilde{u})_+}_{L^1((0,t) \times \R^d)}\right) \\
&= e^{Lt}\left(\nrm{(u_0)_-}_{L^1(\R^d)}
  +  \nrm{(f(0))_-}_{L^1((0,t) \times \R^d)}\right)=0,   
\end{align*}
where we used that $f(0)=0 \geq 0$ and $u_0 \geq 0$ in the last equality. 
This implies that $(u(t))_-=0$, that is, $u(t) \geq 0$.
\end{proof}

\section{Regularity}\label{sec:reg}

In this section we address the Sobolev regularity of solutions and prove Theorem \ref{thm:regularity}. To this end we first derive the kinetic formulation for \eqref{eq:GPME} and then apply Fourier analytic techniques to deduce the resularity results.

\subsection{Kinetic formulation}

We aim to rigorously derive the kinetic formulation for \eqref{eq:GPME}.
To this end let $u:[0,T]\times\R^d\to\R$. We introduce the corresponding kinetic function 
\begin{align}
\chi(t,x,v;u) :&= \begin{cases}
+1, & 0 < v < u(t,x),\\
-1, & u(t,x) < v < 0,\\
0, & \text{otherwise}, 
\end{cases}      \label{eq:def_chi}  \\
&= \one_{\{0<v<u(t,x)\}}-\one_{\{u(t,x)<v<0\}}, \nonumber
\end{align}
for $(t,x)\in[0,T]\times\R^d, v\in \R.$ 
This kinetic function satisfies a linear parabolic equation if $u$ is the mild solution of \eqref{eq:GPME_f_lin}.

\begin{proposition}\label{prop:mild_is_kinetic}
Let $I \subset \R$ be an open interval with $0 \in I$ and let $\phi:I \to \R$ be a maximal monotone function with $\phi \in W^{1,\infty}_{\loc}(I)$ and $\phi(0)=0$.
Assume, in case $d \geq 3$, that there exists $\alpha \geq \frac{d-2}{d}$ such that \eqref{eq:ex:thm:conv_m-acrr_A_phi;suff} holds.

Let $u_0 \in L^1(\R^d;\overline{I})$, $f \in L^1([0,T]\times\R^d)$ and $u \in C(0,T];L^1(\R^d;\overline{I}))$ be the unique mild solution of \eqref{eq:GPME_f_lin}.
Then there exists a positive measure $n \in \Lambda^\infty(I;\ms{M}((0,T)\times \R^d))$ such that
\begin{equation}\label{eq:prop:mild_is_kinetic}
\partial_t \chi(t,x,v;u) = \phi'(v)\Delta_x\chi(t,x,v;u) + \partial_v n(t,x,v)+\delta_{v=u(t,x)}f    
\end{equation}
in the sense of distributions on $(0,T) \times \R^d_x \times I_v$.
Moreover, 
\begin{equation}\label{eq:prop:mild_is_kinetic;u_est_mild_sol}
\nrm{u}_{C([0,T];L^1(\R^d))} \leq  \norm{u_0}_{L^1(\R^d)}+\norm{f}_{L^1([0,T] \times \R^d)} 
\end{equation}
and
\begin{equation}\label{eq:prop:mild_is_kinetic;measure}
\norm{n}_{\Lambda^\infty(I;\ms{M}((0,T)\times \R^d))} \leq \norm{u_0}_{L^1(\R^d)}+\norm{f}_{L^1([0,T] \times \R^d)}.  
\end{equation}
\end{proposition}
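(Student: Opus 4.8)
The plan is to derive the kinetic identity first for the smooth non-degenerate approximations furnished by Proposition~\ref{thm:approx}, then to establish a uniform bound on the corresponding defect measures, and finally to pass to the limit. So let $(\phi_k)_{k}$, $(f_k)_{k}$, $(u_{0,k})_{k}$ and $(u_k)_{k}$ be as in Proposition~\ref{thm:approx}, so that each $u_k\in C^{1,2}([0,T]\times\R^d)$ is a classical solution of $\p_t u_k=\Delta\phi_k(u_k)+f_k$. Setting
\[
n_k:=\phi_k'(u_k)\,|\nabla_x u_k|^2\,\delta_{v=u_k(t,x)}\ge 0,
\]
which is a positive Radon measure on $(0,T)\times\R^d\times I$ since $\phi_k'\in BC^\infty(\R)$ and $\nabla_x u_k$ is continuous, I would check that
\[
\p_t\chi(\cdot\,;u_k)=\phi_k'(v)\Delta_x\chi(\cdot\,;u_k)+\p_v n_k+\delta_{v=u_k(t,x)}f_k\qquad\text{in }\ms{D}'\big((0,T)\times\R^d\times I\big).
\]
Testing against a tensor product $\rho(t,x)\psi(v)$ with $\rho\in C^\infty_c((0,T)\times\R^d)$, $\psi\in C^\infty_c(I)$ and using $\int_{\R}\chi(\cdot\,;u_k)\psi(v)\,\d v=\int_0^{u_k}\psi$, this reduces --- after two integrations by parts in $x$ against the compactly supported $\rho$ --- to the chain-rule identity obtained by substituting $\phi_k'(u_k)\Delta_x u_k=\Delta\phi_k(u_k)-\phi_k''(u_k)|\nabla_x u_k|^2=\p_t u_k-f_k-\phi_k''(u_k)|\nabla_x u_k|^2$ into $\Delta_x\chi(\cdot\,;u_k)=\delta_{v=u_k}\Delta_x u_k-\p_v\big(\delta_{v=u_k}|\nabla_x u_k|^2\big)$; no decay of $u_k$ is needed here.

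Next, for an interval $J\Subset I$ let $S_J\in C^{1,1}(\R)$ be the nonnegative convex function with $S_J(0)=0$ and $S_J''=\one_J$ a.e., so that $0\le S_J(r)\le|J|\,|r|$ and $|S_J'(r)|\le|J|$. Testing $\p_t u_k=\Delta\phi_k(u_k)+f_k$ against $S_J'(u_k)$ over $(0,T)\times\R^d$ --- legitimate because $u_k,\p_t u_k\in L^\infty(0,T;L^1(\R^d))$ and, by the decay estimate in \ref{it:thm:approx;decay}, the boundary terms vanish --- and discarding the nonnegative term $\int_{\R^d}S_J(u_k(T))$ yields
\[
n_k\big(J\times(0,T)\times\R^d\big)=\int_0^T\!\!\int_{\R^d}\one_J(u_k)\,\phi_k'(u_k)|\nabla_x u_k|^2\;\le\;\int_{\R^d}S_J(u_{0,k})+|J|\,\nrm{f_k}_{L^1([0,T]\times\R^d)}\;\le\;|J|\big(\nrm{u_{0,k}}_{L^1(\R^d)}+\nrm{f_k}_{L^1([0,T]\times\R^d)}\big).
\]
By the Lebesgue differentiation theorem this shows $n_k\in\Lambda^\infty(I;\ms{M}((0,T)\times\R^d))$ with $\nrm{n_k}_{\Lambda^\infty(I;\ms{M}((0,T)\times\R^d))}\le\nrm{u_{0,k}}_{L^1(\R^d)}+\nrm{f_k}_{L^1([0,T]\times\R^d)}$; in particular $n_k$ is absolutely continuous in $v$.

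Finally I would pass to the limit using Proposition~\ref{thm:approx}\ref{it:thm:approx;conv_f_u0_u}: $u_k\to u$ in $C([0,T];L^1(\R^d))$, $f_k\to f$ in $L^1$ and $u_{0,k}\to u_0$ in $L^1$. Since $|\chi(\cdot\,;u_k)-\chi(\cdot\,;u)|=\one_{\{v\ \text{between}\ u\text{ and }u_k\}}$, one has $\chi(\cdot\,;u_k)\to\chi(\cdot\,;u)$ in $L^1((0,T)\times\R^d\times I)$, hence $\p_t\chi(\cdot\,;u_k)\to\p_t\chi(\cdot\,;u)$ in $\ms{D}'$, and $\delta_{v=u_k}f_k\to\delta_{v=u}f$ in $\ms{D}'$ because $\psi(u_k)f_k\to\psi(u)f$ in $L^1$. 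For the diffusion term, $\langle\phi_k'(v)\Delta_x\chi(\cdot\,;u_k),\rho\psi\rangle=\int\!\!\int\Delta_x\rho\;\tilde\Psi_k(u_k)$ with $\tilde\Psi_k(r):=\int_0^r\phi_k'(v)\psi(v)\,\d v=\phi_k(r)\psi(r)-\int_0^r\phi_k(v)\psi'(v)\,\d v$; since each $\phi_k$ is monotone (its derivative is bounded below by a positive constant) and $\phi_k\to\phi$ pointwise on $I$ by \ref{it:thm:approx;phi_conv}, with $\phi$ continuous, the convergence $\phi_k\to\phi$ is in fact locally uniform, so $\tilde\Psi_k\to\tilde\Psi$ uniformly on $\overline{I}$, where $\tilde\Psi(r)=\int_0^r\phi'(v)\psi(v)\,\d v$ (both are constant outside $\supp\psi$). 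As $\tilde\Psi$ is Lipschitz, $\tilde\Psi_k(u_k)\to\tilde\Psi(u)$ in $L^1_{\loc}$, whence $\int\!\!\int\Delta_x\rho\;\tilde\Psi_k(u_k)\to\int\!\!\int\Delta_x\rho\;\tilde\Psi(u)=\langle\phi'(v)\Delta_x\chi(\cdot\,;u),\rho\psi\rangle$. Consequently $\p_v n_k$ converges in $\ms{D}'$. The uniform bound above together with a weak-$*$ compactness argument in $\Lambda^\infty(I;\ms{M}((0,T)\times\R^d))$ --- extracting, along an exhaustion $J_m\nearrow I$ and by a diagonal argument, a subsequence with $n_{k_j}$ restricted to $(0,T)\times\R^d\times J_m$ converging weak-$*$ in $\ms{M}$ for every $m$ --- produce a positive measure $n$ obeying \eqref{eq:prop:mild_is_kinetic;measure} with $\p_v n_{k_j}\to\p_v n$ in $\ms{D}'$; passing to the limit in the identity of the first step then gives \eqref{eq:prop:mild_is_kinetic}. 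The estimate \eqref{eq:prop:mild_is_kinetic;u_est_mild_sol} is the $L^1$-contraction, e.g.\ Theorem~\ref{thm:comparison} with $\tilde u\equiv 0$, $\tilde u_0=0$, $\tilde f=0$.

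The main difficulty lies in the last step. Because $\phi_k'$ converges only weak-$*$ and $\Lambda^\infty(I;\ms{M}((0,T)\times\R^d))$ is non-reflexive (and $I$ may be unbounded), one cannot simply pass to the limit termwise. This is overcome by rewriting $\tilde\Psi_k$ via integration by parts so as to invoke the local uniform convergence of the monotone functions $\phi_k$, and by carrying out the compactness for $n_k$ locally in $v$ and identifying the limiting defect measure only afterwards.
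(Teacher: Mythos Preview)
Your proof is correct and follows essentially the same strategy as the paper: derive the kinetic identity and the uniform $\Lambda^\infty$-bound for the smooth approximations from Proposition~\ref{thm:approx}, then pass to the limit term by term. Two minor differences are worth noting. For the diffusion term, the paper passes to the limit directly using the weak-$*$ convergence $\phi_k'\to\phi'$ in $L^\infty_\loc(I)$ together with the $L^1$-convergence of $\chi(\cdot\,;u_k)$, whereas you rewrite $\tilde\Psi_k$ by integration by parts and upgrade to locally uniform convergence of the monotone $\phi_k$; both work, and yours is a nice alternative. For the compactness of $(n_k)$, what you flag as the main difficulty is handled in the paper without any exhaustion or diagonal argument: one uses the duality $\Lambda^\infty(I;\ms{M}((0,T)\times\R^d)) = [L^1(I;C_0((0,T)\times\R^d))]^*$ (valid because $C_0$ is separable) and applies the sequential Banach--Alaoglu theorem directly, which immediately yields a weak-$*$ limit in $\Lambda^\infty$ satisfying \eqref{eq:prop:mild_is_kinetic;measure}. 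Non-reflexivity is irrelevant here; being a dual of a separable space is enough.
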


Proposition \ref{prop:mild_is_kinetic} is inspired by \cite[Section~2]{CP03} and \cite[Lemma~A.2]{GST20}.
Different from these works we link the mild solution concept to kinetic solutions.
However, besides the kinetic equation itself \eqref{eq:prop:mild_is_kinetic}/\cite[(2.16)]{CP03}, our kinetic formulation is quite different from \cite[Definition~2.2]{CP03}.
Indeed, motivated by applying it to prove Sobolev regularity, the only property of the measure $n$ that is relevant to us is that $n \in \Lambda^\infty(I;\ms{M}((0,T)\times \R^d))$.
This property is not included in \cite[Definition~2.2]{CP03}, but could be derived from it as shown in \cite[Lemma~A.2]{GST20}.

In the proof of Proposition \ref{prop:mild_is_kinetic} we will use Theorem~\ref{thm:approx} to approximate the equation \eqref{eq:GPME_f_lin} by smooth, non-degenerate versions.
The approximate solutions are  sufficiently regular such that we can carry out the necessary computations leading to the following lemma.
In the proof we use the following observation that also lies at the core of velocity averaging.
Let $H\in C^1(\R)$ be arbitrary with $H(0)=0$. Then, if $u$ takes its values in $\overline{I}$ a.e., $I\subset\R$ an interval, we have
\begin{equation}\label{eq:averaging_formula}
H(u(t,x)) = \int_I H'(v)\chi(t,x,v;u)\d v.    
\end{equation}
Indeed, this follows from 
\begin{align*}
\int_I H'(v)\chi(t,x,v;u) \d v
&= \int_I H'(v)(\one_{\{0<v<u\}}-\one_{\{u<v<0\}} )\d v \\
&= \begin{cases}
    0, & u(t,x)=0 \\
    \int_{0}^{u(t,x)}H'(v)\d v, & u(t,x)>0,\\
    -\int_{u(t,x)}^{0}H'(v)\d v, & u(t,x)<0,
\end{cases} \\
&= \begin{cases}
    0=H(u(t,x)), & u(t,x)=0 \\
    H(u(t,x)), & u(t,x)>0,\\
    --H(u(t,x))=H(u(t,x)), & u(t,x)<0,
\end{cases} \\
&= H(u(t,x)).
\end{align*}

\begin{lemma}\label{lemma:GPME;psi_v}
Let $\psi:\R \to \R$ be a $C^2$-function with $\psi(0)=0$, $\psi', \psi'' \in L^\infty(\R)$ and $\psi'(r)>0$ for all $r \in \R$.
Let $g \in L^1([0,T] \times \R^d)$, $w_0 \in L^1(\R^d)$ and $w \in C^{1,2}([0,T] \times \R^d)$ be a classical solution of 
\begin{equation*}
\begin{cases}
\p_t w = \Delta \psi(w)+g &\text{in}\:\:(0,T) \times \R^d, \\
w(0)=w_{0} &\text{on}\:\:\R^d.    
\end{cases}
\end{equation*}
with $\int_{\p B_R(0)}|\nabla w|\d\sigma \to 0$ for $R \to \infty$.
Then there exists a positive measure $n \in \Lambda^\infty(\R;\ms{M}((0,T)\times \R^d))$ such that
\begin{equation}\label{eq:prop:mild_is_kinetic;psi_v}
\partial_t \chi(t,x,v;w) = \psi'(v)\Delta_x\chi(t,x,v;w) + \partial_v n(t,x,v)+\delta_{v=w(t,x)}g      
\end{equation}
in the sense of distributions on $(0,T) \times \R^d \times \R$.
Moreover, the measure $n$ satisfies 
\begin{equation}\label{eq:lemma:GPME;psi_v;est;new}
\norm{n}_{\Lambda^\infty(\R;\ms{M}((0,T)\times \R^d))} \leq \norm{w_0}_{L^1(\R^d)}+\norm{g}_{L^1([0,T] \times \R^d)}.    
\end{equation}
\end{lemma}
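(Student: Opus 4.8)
The plan is to prove the lemma with the explicit candidate
\[
n = \psi'(w(t,x))\,\abs{\nabla_x w(t,x)}^2\,\delta_{v=w(t,x)},
\]
by which we mean the non-negative measure on $(0,T)\times\R^d\times\R$ with $\int \varphi\,\d n = \int_{(0,T)\times\R^d}\varphi(t,x,w(t,x))\,\psi'(w)\abs{\nabla_x w}^2\,\d x\,\d t$. Two things then have to be checked: first, that this $n$ satisfies the kinetic identity \eqref{eq:prop:mild_is_kinetic;psi_v}; second, the mass bound \eqref{eq:lemma:GPME;psi_v;est;new}, which is where the real work lies.

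For the identity, note that because $w$ is a classical solution and $\psi\in C^2$, the functions $\p_t w$, $\nabla_x w$, $\nabla^2_x w$ and $\Delta_x\psi(w)=\psi'(w)\Delta_x w+\psi''(w)\abs{\nabla_x w}^2$ are all continuous, so it suffices to test \eqref{eq:prop:mild_is_kinetic;psi_v} against tensor products $\varphi(t,x)\beta(v)$ with $\varphi\in C^\infty_c((0,T)\times\R^d)$, $\beta\in C^\infty_c(\R)$, and then appeal to density of such products. Applying the averaging identity \eqref{eq:averaging_formula} with $H(r)=\int_0^r\beta$ and with $G(r)=\int_0^r\psi'\beta$ (both smooth and vanishing at $0$), one rewrites $\langle \p_t\chi,\varphi\beta\rangle$, $\langle\psi'(v)\Delta_x\chi,\varphi\beta\rangle$ and $\langle\delta_{v=w}g,\varphi\beta\rangle$ as spatial integrals against $\varphi$ of $\p_t[H(w)]=\beta(w)\p_t w$, of $\Delta_x[G(w)]$, and of $\beta(w)g$; integrating by parts in $x$ (no boundary terms since $\varphi$ has compact support) and expanding the chain and product rules, the two contributions proportional to $\abs{\nabla_x w}^2$ — one from $\Delta_x[G(w)]$, one from $\langle\p_v n,\varphi\beta\rangle=-\int_{(0,T)\times\R^d}\varphi\,\beta'(w)\,\psi'(w)\abs{\nabla_x w}^2$ — cancel exactly, and what remains is the classical equation $\p_t w=\Delta_x\psi(w)+g$ multiplied by $\beta(w)\varphi$.

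For the mass bound I would run a mollified Kruzhkov/entropy computation. Fix $a>0$ and a convex $S_\varepsilon\in C^\infty(\R)$ with $0\le S_\varepsilon\le(\cdot)_+$ and $S_\varepsilon'\nearrow\one_{(0,\infty)}$ as $\varepsilon\to0$; multiply $\p_t w=\Delta_x\psi(w)+g$ by $S_\varepsilon'(w-a)$ and integrate over $(0,T)\times\R^d$. The time term is $\int_{\R^d}S_\varepsilon(w(T)-a)-\int_{\R^d}S_\varepsilon(w_0-a)$, which is bounded by $\nrm{w_0}_{L^1(\R^d)}$ once we know $w(t)\in L^1(\R^d)$; the latter follows from the $L^1$-contraction estimate \eqref{eq:lem:L1-contraction} of Lemma~\ref{lem:L1-contraction} with $\tilde v=0$, $\tilde g=0$, whose decay hypothesis is exactly our assumption $\int_{\p B_R(0)}\abs{\nabla w}\,\d\sigma\to0$. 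Integrating the diffusion term by parts in $x$ gives $-\int_0^T\!\int_{\R^d}S_\varepsilon''(w-a)\,\psi'(w)\abs{\nabla_x w}^2$ plus a boundary term at infinity which vanishes because $\abs{\nabla_x\psi(w)}\le\nrm{\psi'}_{L^\infty(\R)}\abs{\nabla_x w}$ and $0\le S_\varepsilon'\le1$. Hence
\[
\int_0^T\!\!\int_{\R^d}S_\varepsilon''(w-a)\,\psi'(w)\abs{\nabla_x w}^2\,\d x\,\d t\ \le\ \nrm{w_0}_{L^1(\R^d)}+\nrm{g}_{L^1([0,T]\times\R^d)}
\]
uniformly in $\varepsilon$. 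Rewriting the left side via the coarea formula as $\int_\R S_\varepsilon''(v-a)H(v)\,\d v$ with $H(v):=\int_0^T\int_{\{w(t,\cdot)=v\}}\psi'(v)\abs{\nabla_x w}\,\d\ms{H}^{d-1}\,\d t\ge0$, and letting $\varepsilon\to0$ (Lebesgue differentiation, applied first to $\min(H,N)$ and then letting $N\to\infty$), one obtains $H(a)\le\nrm{w_0}_{L^1(\R^d)}+\nrm{g}_{L^1([0,T]\times\R^d)}$ for a.e.\ $a>0$; the symmetric computation with $S_\varepsilon(-\,\cdot\,)$ covers a.e.\ $a<0$. Since, by the same coarea/disintegration, $H(a)$ is precisely the total variation of the $v$-slice $n_a$ of $n$, this shows $\tfrac{\d\nrm{n}}{\d v}=H\in L^\infty(\R)$ with the stated bound, and in particular $n\in\Lambda^\infty(\R;\ms{M}((0,T)\times\R^d))$.

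The main obstacle is the $\varepsilon\to0$ limit in the last step: identifying $\lim_{\varepsilon\to0}\int_\R S_\varepsilon''(v-a)H(v)\,\d v$ with $H(a)$ for a.e.\ $a$ requires the coarea formula for the $C^2$ maps $w(t,\cdot)$ and a Lebesgue-point argument in the $a$-variable, with the truncation trick needed to bypass a priori integrability of $H$. A more routine, but still necessary, point is the justification of the integrations by parts at spatial infinity in both parts of the proof, where the hypothesis $\int_{\p B_R(0)}\abs{\nabla w}\,\d\sigma\to0$ together with the boundedness of $S_\varepsilon'$ and $\psi'$ is exactly what makes the boundary terms vanish.
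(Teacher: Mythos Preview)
Your argument is correct, and the derivation of the kinetic identity is essentially identical to the paper's. The candidate measure, the entropy identity behind it, and the test-function computation are the same.

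Where you diverge is in the mass bound. You push the entropy inequality to a \emph{pointwise-in-$v$} statement by choosing $S_\varepsilon'' \approx \delta_a$, invoking the coarea formula to rewrite $\int S_\varepsilon''(w-a)\psi'(w)|\nabla w|^2$ as $\int S_\varepsilon''(v-a)H(v)\,\d v$, and then passing to the limit $\varepsilon \to 0$ via Lebesgue differentiation with a truncation. This works, but it is heavier machinery than needed. The paper instead keeps $S''=\eta$ an \emph{arbitrary} test function in $C_c(\R)^+$: since $S(v)=\int_0^v\int_0^\zeta \eta$, one has $\|S'\|_{L^\infty}\le\|\eta\|_{L^1(\R)}$, and the very same entropy computation (on $B_R$, then $R\to\infty$) gives
\[
\nrm{n(\eta)}_{\ms{M}((0,T)\times\R^d)} \le \nrm{\eta}_{L^1(\R)}\bigl(\nrm{w_0}_{L^1}+\nrm{g}_{L^1}\bigr),\qquad \eta\in C_c(\R)^+.
\]
This is already the $\Lambda^\infty$-bound in dual form: by the identification $\Lambda^\infty(\R;\ms{M}((0,T)\times\R^d))=[L^1(\R;C_0((0,T)\times\R^d))]^*$ (via the Riesz representation $\ms{M}=C_0^*$), the estimate above extends $n$ to the required element of $\Lambda^\infty$ with the stated norm bound, with no coarea, no slicing, and no $\varepsilon\to 0$ limit. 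A minor point: your appeal to Lemma~\ref{lem:L1-contraction} to ensure $w(t)\in L^1$ is not actually needed for the inequality, since one simply drops the nonnegative term $\int_{\R^d}S_\varepsilon(w(T)-a)$; it is only used implicitly to make sense of the computation on all of $\R^d$, which the paper handles by working on balls first.
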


This lemma is to a large extend based on computations in \cite[Section~2]{CP03}.
The main difference is that we omit the entropy formulation and directly use the kinetic formulation.
Moreover, for the kinetic formulation the same differences that were mentioned after Proposition \ref{prop:mild_is_kinetic} apply.

\begin{proof}
We will show that the kinetic equation \eqref{eq:prop:mild_is_kinetic;psi_v} is satisfied for the positive measure $n \in M(\R;L^1([0,T]\times \R^d))$ induced by
$$
C_c(\R) \longrightarrow L^1([0,T]\times \R^d),\: \eta \mapsto \eta(w)\psi'(w)|\nabla w|^2
$$
and that this measure satisfies the norm estimate 
\begin{equation}\label{eq:lemma:GPME;psi_v;est}
\norm{n(\eta)}_{L^1([0,T]\times \R^d)} \leq \norm{\eta}_{L^1(\R)}(\norm{w_0}_{L^1(\R^d)}+\norm{g}_{L^1([0,T] \times \R^d)})    
\end{equation}
for all $\eta \in C_c(\R)$.
Moreover, we will subsequently derive \eqref{eq:lemma:GPME;psi_v;est;new} from \eqref{eq:lemma:GPME;psi_v;est}.

\emph{Step 1:} For every $S \in C^2(\R)$ the following equation is satisfied:
\begin{equation}\label{eq:lemma:GPME;psi_v;entropy1}
\p_{t}S(w)-\nabla \cdot(S'(w)\psi'(w)\nabla w) = - S''(w)\psi'(w)|\nabla w|^2 + S'(w)g.
\end{equation}

Indeed, as $S'(w) \p_t w  = \p_{t}S(w)$ and
\begin{align*}
&S'(w)\Delta \psi(w)
= S'(w)\nabla \cdot(\psi'(w)\nabla w) \\
=& S''(w)\psi'(w)|\nabla w|^2 + S'(w)\nabla \cdot(\psi'(w)\nabla w) - S''(w)\psi'(w)|\nabla w|^2 \\
=& \left(\left(S''(w)\nabla  w\right) \psi'(w) \nabla w + S'(w)\nabla \cdot(\psi'(w)\nabla w)\right) 
 - S''(w)\psi'(w)|\nabla w|^2 \\
=& \nabla \cdot\left(S'(w)\psi'(w)\nabla w\right) - S''(w)\psi'(w)|\nabla w|^2,
\end{align*}
multiplying the equation $\p_t w = \Delta \psi(w)+g$ with $S'(w)$ and subsequently rearranging the terms yields \eqref{eq:lemma:GPME;psi_v;entropy1}.

\emph{Step 2:} We now prove the estimate \eqref{eq:lemma:GPME;psi_v;est}.
Let $\eta \in C_c(\R)^+$ and note that the function 
\begin{equation*}
    S(v) := \int_0^v\int_0^\zeta \eta(s)\d s \d \zeta
\end{equation*}
satisfies $S \in C^2(\R)$, $S''=\eta \geq 0$, $S \geq 0$, $S(0)=0$ 
and $\norm{S'}_{L^\infty(\R)} \leq \norm{\eta}_{L^1(\R)}$.
Using \eqref{eq:lemma:GPME;psi_v;entropy1} and invoking Fubini's theorem, we find that
\begin{align*}
\int_0^T\int_{B_R(0)}n(S'')  
&=\int_0^T\int_{B_R(0)}\left(-\partial_t S(w) + \nabla \cdot(S'(w)\psi'(w)\nabla w)+S'(w)g \right)
\end{align*}
for every $R>0$.
As $S \geq 0$ and $S(0)=0$, the first term on the right hand side can be estimated by
\begin{equation*}
\int_{B_R(0)}\int_0^T-\partial_t S(w)  \leq
\int_{B_R(0)}S(w_0)  \leq |B_R(0)|\norm{S'}_{L^\infty(\R)}\norm{w_0}_{L^1(\R^d)},
\end{equation*}
and the third term by
\begin{equation*}
\int_0^T\int_{B_{R}(0)} S'(w)g 
\leq \norm{S'}_{L^\infty(\R)}\norm{g}_{L^1([0,T] \times \R^d)}.  
\end{equation*}
By the Gau\ss-Green theorem we have
\begin{align*}
\left|\int_{B_R(0)}\nabla \cdot(S'(w)\psi'(w)\nabla w) \right| 
&= \left| \int_{\p B_R(0)}S'(w)\psi'(w)\nabla w \cdot \nu \,\d\sigma \right| \\
&\leq \norm{S'}_{L^\infty(\R)}\norm{\psi'}_{L^\infty(\R)}\int_{\p B_R(0)}|\nabla w|\d\sigma. 
\end{align*}
As $\int_{\p B_R(0)}|\nabla w|\d\sigma \to 0$ for $R \to \infty$ by assumption and $n(S'') \geq 0$ by positivity of $n$ and $S'' \geq 0$, by monotone convergence it follows that
\begin{equation*}
\int_0^T\int_{\R^d}n(S'')\leq
\norm{S'}_{L^\infty(\R)}\left(\norm{w_0}_{L^1(\R^d)}+\norm{g}_{L^1([0,T] \times \R^d)}\right).    
\end{equation*}
Recalling that $S''= \eta$ and $\norm{S'}_{L^\infty(\R)} \leq \norm{\eta}_{L^1(\R)}$, we arrive at the desired estimate \eqref{eq:lemma:GPME;psi_v;est}.

\emph{Step 3:} Next, we show that the kinetic formulation \eqref{eq:prop:mild_is_kinetic;psi_v} holds.
Fix test functions $\zeta \in \ms{D}(\R)$ and $\varphi \in \ms{D}((0,T) \times \R^d)$.
Let $S \in C^2(\R)$ be such that $S(0)=0$ and $S'= \zeta$ and let $G \in C^1(\R)$ be such that $G(0)=0$ and $G'=S'\psi'$.
Then, we have
$$
\nabla \cdot(S'(w)\psi'(w)\nabla w) = \nabla \cdot(\nabla G(w)) = \Delta G(w),
$$
and therefore, \eqref{eq:lemma:GPME;psi_v;entropy1} can be rewritten as
\begin{equation}\label{eq:lemma:GPME;psi_v;entropy2}
\p_{t}S(w)-\Delta G(w) = - n(S'') + S'(w)g.     
\end{equation}
Using integration by parts, Fubini's Theorem  and \eqref{eq:averaging_formula} applied to $S$ and $G$, we have 
\begin{align*}
\int_{(0,T) \times \R^d}\p_{t}\ S
& (w(t,x))\varphi(t,x)\d(t,x) \\
&= -\int_{(0,T) \times \R^d}\ S(w(t,x))\p_{t}\varphi(t,x)\d(t,x) \\
&= -\int_{(0,T) \times \R^d}\int_{\R}\zeta(v)\chi(t,x,v;w)\d v\, \p_{t}\varphi(t,x)\d(t,x) \\
&= -\int_{(0,T) \times \R^d \times \R}\chi(t,x,v;w) \p_{t}\varphi(t,x)\zeta(v)\,\d(t,x,v), 
\end{align*}
and
\begin{align*}
\int_{(0,T) \times \R^d}\Delta &G(w(t,x))(w(t,x))\varphi(t,x)\d(t,x) \\
&= \int_{(0,T) \times \R^d}G(w(t,x))\Delta\varphi(t,x)\d(t,x) \\
&= \int_{(0,T) \times \R^d}\int_{\R}\zeta(v)\psi'(v)\chi(t,x,v;w)\d v\, \Delta\varphi(t,x)\d(t,x) \\
&= \int_{(0,T) \times \R^d \times \R}\psi'(v)\chi(t,x,v;w) \Delta\varphi(t,x)\zeta(v)\,\d(t,x,v), \\    
\end{align*}
respectively.
Observing that by the definition of the distributional derivative $n(\zeta')=-\p_v n(\zeta)$  and combining the above two identities with~\eqref{eq:lemma:GPME;psi_v;entropy2}, we conclude that \eqref{eq:prop:mild_is_kinetic;psi_v} holds in the sense of distributions on $(0,T) \times \R^d \times \R$.

\emph{Step 4:} Finally, we prove that the measure $n$ belongs to $\Lambda^\infty(\R;\ms{M}((0,T)\times \R^d))$ with norm estimate \eqref{eq:lemma:GPME;psi_v;est;new}.
Recall that $L^1([0,T] \times \R^d) \subset \ms{M}((0,T)\times \R^d)$ isometrically and that, by the Riesz representation theorem (see e.g.\ \cite[Theorem~C.18]{Co96}),
\begin{equation}\label{eq:Riesz_repr_thm}
\ms{M}((0,T)\times \R^d) 
= [C_0((0,T)\times \R^d)]^*.    \nonumber
\end{equation}
It follows from \cite[Proposition~2.28]{Pi16} that
\begin{equation}\label{eq:Lambda_Pisier_dual_space}
\Lambda^\infty(\R;\ms{M}((0,T)\times \R^d)) =
[L^1(\R;C_0((0,T)\times \R^d))]^*.
\end{equation} 
In combination with \eqref{eq:lemma:GPME;psi_v;est}, this means that we can extend and view $n$ in the natural way as an element of $\Lambda^\infty(\R;\ms{M}((0,T)\times \R^d))$ satisfying the norm estimate \eqref{eq:lemma:GPME;psi_v;est;new}.
\end{proof}

The next lemma provides a temporal-pointwise $L^1$-isometry between space-time functions and their corresponding kinetic functions.

\begin{lemma}\label{lem:chi_cont_est}
Let $u,w \in C([0,T];L^1(\R^d))$. Then, for each $t \in [0,T]$, the corresponding kinetic functions satisfy
\begin{equation}\label{eq:lem:chi_cont_est;diff}
\norm{\chi(t,\,\cdot\,,\,\cdot\,;u)-\chi(t,\,\cdot\,,\,\cdot\,;w)}_{L^1(\R^{d+1})} 
= \norm{u(t)-w(t)}_{L^1(\R^d)}.
\end{equation}
In particular, taking $w=0$,
\begin{equation}\label{eq:lem:chi_cont_est}
\norm{\chi(t,\,\cdot\,,\,\cdot\,;u)}_{L^1(\R^{d+1})} 
= \norm{u(t)}_{L^1(\R^d)}.
\end{equation}
\end{lemma}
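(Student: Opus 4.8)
The plan is to reduce the claimed temporal-pointwise $L^1$-identity to an elementary one-dimensional computation via Tonelli's theorem. Fix $t \in [0,T]$. Since for fixed $(t,x)$ the value $\chi(t,x,v;u)$ depends on $u$ only through the number $u(t,x)$, it is convenient to set $\kappa(a,v) := \one_{\{0<v<a\}} - \one_{\{a<v<0\}}$ for $a,v \in \R$, so that $\chi(t,x,v;u) = \kappa(u(t,x),v)$. The function $(x,v) \mapsto \kappa(u(t,x),v)$ is jointly measurable because $u(t,\cdot)$ is measurable, hence so is $(x,v) \mapsto u(t,x)-v$, and likewise for $w$; therefore Tonelli's theorem applies to the nonnegative integrand and yields
\[
\norm{\chi(t,\cdot,\cdot;u)-\chi(t,\cdot,\cdot;w)}_{L^1(\R^{d+1})} = \int_{\R^d}\Bigl(\int_{\R}\abs{\kappa(u(t,x),v)-\kappa(w(t,x),v)}\,\d v\Bigr)\,\d x .
\]

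The core of the argument is then the pointwise identity $\int_{\R}\abs{\kappa(a,v)-\kappa(b,v)}\,\d v = |a-b|$ for all $a,b \in \R$, which I would prove by a brief case distinction on the signs of $a$ and $b$. Note that $\kappa(a,\cdot)$ is the indicator of the interval between $0$ and $a$ multiplied by $\sign_0(a)$. If $a$ and $b$ lie on the same side of the origin, or one of them is $0$, then $\kappa(a,\cdot)$ and $\kappa(b,\cdot)$ are supported on nested intervals there and agree on the smaller one, so $\kappa(a,\cdot)-\kappa(b,\cdot)$ equals $\pm\one$ on an interval of length $|a-b|$, and the integral is $|a-b|$. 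If $a>0>b$, and symmetrically if $b>0>a$, the supports $(0,a)$ and $(b,0)$ are disjoint (up to a null set), so $\abs{\kappa(a,v)-\kappa(b,v)} = \one_{(0,a)}(v)+\one_{(b,0)}(v)$, and the integral equals $a+(-b)=|a-b|$. This covers every case.

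Inserting this identity with $a=u(t,x)$ and $b=w(t,x)$ into the displayed Tonelli formula gives $\int_{\R^d}\abs{u(t,x)-w(t,x)}\,\d x = \norm{u(t)-w(t)}_{L^1(\R^d)}$, which is finite because $u(t),w(t)\in L^1(\R^d)$; in particular both kinetic functions belong to $L^1(\R^{d+1})$ and \eqref{eq:lem:chi_cont_est;diff} holds. Taking $w=0$, so that $\chi(t,\cdot,\cdot;0)=0$ by definition, yields \eqref{eq:lem:chi_cont_est}. I do not expect any genuine obstacle here: the only two points requiring a little care are the joint measurability needed to invoke Tonelli and the bookkeeping in the sign case analysis, both of which are entirely routine.
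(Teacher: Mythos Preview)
Your proof is correct and follows essentially the same approach as the paper. The paper's proof is slightly more compact: rather than splitting into cases according to the signs of $a$ and $b$, it records directly the pointwise identity
\[
|\chi(t,x,v;u)-\chi(t,x,v;w)| = \one_{\{w(t,x)<v<u(t,x)\}} + \one_{\{u(t,x)<v<w(t,x)\}} \quad \text{a.e.},
\]
and then integrates in $v$ and $x$; your case analysis is exactly what verifies this identity.
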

\begin{proof}
Using the observation that
\begin{align*}
|\chi(t,x,v;u)&-\chi(t,x,v;w)| = \one_{\{w(t,x)<v<u(t,x)\}} + \one_{\{u(t,x)<v<w(t,x)\}} \quad  \text{a.e.}, 
\end{align*}
we find that
\begin{align*}
\norm{\chi(t,\,\cdot\,,\,\cdot\,;u)-\chi(t,\,\cdot\,,&\,\cdot\,;w)}_{L^1(\R^{d+1})} \\
&= \int_{\R^d}\int_{\R} \one_{\{w(t,x)<v<u(t,x)\}} + \one_{\{u(t,x)<v<w(t,x)\}} \d v \d x \\
&= \int_{\R^d} |u(t,x)-w(t,x)| \d x \\
&= \norm{u(t)-w(t)}_{L^1(\R^d)}.
\end{align*}
\end{proof}

\begin{proof}[Proof of Proposition~\ref{prop:mild_is_kinetic}]
Pick $\kappa \in (d-1,d)$ and let $(\phi_k)_{k \in \N}$, $(f_k)_{k \in \N}$, $(u_{0,k})_{k \in \N}$ and $(u_k)_{k \in \N}$ be as in Theorem~\ref{thm:approx}.
Then, in particular, 
$$
|\nabla_x u_k(t,x)| \lesssim_k (1+|x|)^{-\kappa}, \qquad x \in \R^d,
$$
and thus
\begin{align*}
\lim_{R \to \infty}\int_{\p B_R(0)}|\nabla u_k(t)|\d\sigma  = 0.
\end{align*}
Now note that the conditions of Lemma~\ref{lemma:GPME;psi_v} are satisfied with $\psi=\phi_k$, $v=u_k$, $g=f_k$, $v_0=u_{0,k}$ and $v=u_k$ for each $k \in \N$.
Therefore, there exist  positive measures $n_k \in \Lambda^\infty(\R;\ms{M}((0,T) \times \R^d))$ such that
\begin{equation}\label{eq:prop:mild_is_kinetic;approx_kin_eq}
\partial_t \chi(t,x,v;u_k) = \phi_k'(v)\Delta_x\chi(t,x,v;u_k) + \partial_v n_k(t,x,v)+\delta_{v=u_k(t,x)}f_k      
\end{equation}
in the sense of distributions on $(0,T) \times \R^d_x \times \R$, and
\begin{equation}\label{eq:prop:mild_is_kinetic;est_measures}
\norm{n_k}_{\Lambda^\infty(\R;\ms{M}((0,T) \times \R^d))} \leq \norm{u_{0,k}}_{L^1(\R^d)}+\norm{f_k}_{L^1([0,T] \times \R^d)}.   
\end{equation}

To complete the proof, we will show that \eqref{eq:prop:mild_is_kinetic} can be obtained as the limit of \eqref{eq:prop:mild_is_kinetic;approx_kin_eq} restricted to $I$ for $k \to \infty$ for a suitable positive measure $n \in \Lambda^\infty(I;\ms{M}((0,T) \times \R^d))$ that satisfies 
\eqref{eq:prop:mild_is_kinetic;measure}.

Let us first note that, by Proposition~\ref{cor:thm:nonlin_Trotter-Kato_A_phi}, $u_k \to u$ in $C([0,T];L^1(\R^d))$ as $k \to \infty$.
In view of Lemma~\ref{lem:chi_cont_est} this implies that $\chi(\cdot,\cdot,\cdot;u_k) \to \chi(\cdot,\cdot,\cdot;u)$ in $C([0,T];L^1(\R^{d+1}))$.
It follows that $\partial_t \chi(\cdot,\cdot,\cdot;u) = \lim_{k \to \infty} \partial_t \chi(\cdot,\cdot,\cdot;u_k)$ in $\ms{D}'((0,T) \times \R^{d}\times I)$ after restricting to $I$.

To treat the first term on the right-hand side of \eqref{eq:prop:mild_is_kinetic;approx_kin_eq}, let $\eta \in \ms{D}((0,T) \times \R^d \times I)$.
As $\phi_k' \to \phi'$ as $k \to \infty$ in $(L^\infty_\loc(I),\sigma(L^\infty_\loc(I)),L^1_c(I))))$, $\chi(\cdot,\cdot,\cdot;u_k) \to \chi(\cdot,\cdot,\cdot;u)$ in $C([0,T];L^1(\R^{d+1}))$ and $\Delta_x\eta \in L^1_c(I)$, we have
\begin{align*}
[\phi_k'(v)\Delta_x\chi(t,x,v;u_k)](\eta) &= \int_{(0,T) \times \R^d \times I}\phi_k'(v)\chi(t,x,v;u_k)\Delta_x\eta(t,x,v)  \\
&\stackrel{k \to \infty}{\longrightarrow}
\int_{(0,T) \times \R^d \times I}\phi'(v)\chi(t,x,v;u)\Delta_x\eta(t,x,v) \\
&= [\phi'(v)\Delta_x\chi(t,x,v;u)](\eta). 
\end{align*}
This shows that 
$$
\phi'(v)\Delta_x\chi(t,x,v;u) = \lim_{k \to \infty} \phi_k'(v)\Delta_x\chi(t,x,v;u_k)\quad  \text{in}\ \ms{D}'((0,T) \times \R^d \times I).
$$

To treat the third term on the right-hand side of \eqref{eq:prop:mild_is_kinetic;approx_kin_eq}, let again $\eta \in \ms{D}((0,T) \times \R^d \times I)$.
As
$$
u = \lim_{k \to \infty}u_k \quad \text{in} \quad C([0,T];L^1(\R^d)) \hra L^1((0,T) \times \R^d),
$$
by restricting to a subsequence we may without loss of generality assume that $u = \lim_{k \to \infty}u_k$ pointwise almost everywhere.
By the Lebesgue dominated convergence theorem we get that
$$
[(t,x) \mapsto \eta(t,x,u(t,x))] = \lim_{k \to \infty}[(t,x) \mapsto \eta(t,x,u_k(t,x))]
$$
in $(L^\infty((0,T)\times\R^d),\sigma(L^\infty((0,T)\times\R^d),L^1((0,T)\times\R^d))$ (i.e.\ the weak$^*$-topology on $L^\infty$ as a dual to $L^1$).
Since $f=\lim_{k \to \infty}f_k$ in $L^1((0,T)\times\R^d)$, it follows that
\begin{align*}
\delta_{v=u_k(t,x)}f_k \,(\eta) 
&= \int_{(0,T) \times \R^d}\eta(t,x,u_k(t,x))f_k(t,x)d(t,x) \\
&\stackrel{k \to \infty}{\longrightarrow} \int_{(0,T) \times \R^d}\eta(t,x,u(t,x))f(t,x)d(t,x)    \\
&= \delta_{v=u(t,x)}f \,(\eta).
\end{align*}
So we also have $\delta_{v=u(t,x)}f = \lim_{k \to \infty} \delta_{v=u_k(t,x)}f_k$ in $\ms{D}'((0,T) \times \R^d \times I)$.

It remains to treat the second term on the right-hand side of \eqref{eq:prop:mild_is_kinetic;approx_kin_eq}.
By
\eqref{eq:prop:mild_is_kinetic;est_measures} and restriction from $\R$ to $I$, we can view $(n_k)_{k \in \N}$ as a bounded sequence in $\Lambda^\infty(I;\ms{M}((0,T)\times \R^d))$.
In exactly the same way as \eqref{eq:Lambda_Pisier_dual_space}, we have the dual characterization
\begin{equation*}
\Lambda^\infty(I;\ms{M}((0,T)\times \R^d)) =
[L^1(I;C_0((0,T)\times \R^d))]^*. 
\end{equation*}
Since $C_0((0,T)\times \R^d)$ is separable, it follows from the Banach-Alaoglu theorem (see e.g.\ \cite[Theorem~B.1.7]{HNVW16}) that $(n_k)_{k \in \N}$ has a convergent subsequence in $\Lambda^\infty(I;\ms{M}((0,T)\times \R^d))$ with respect to the weak$^*$-topology. 
Taking this subsequence, we may without loss of generality assume that $(n_k)_{k \in \N}$ has  a weak$^*$-limit in $\Lambda^\infty(I;\ms{M}((0,T)\times \R^d))$. 
Denoting this limit by $n$, we have
\begin{align*}
\norm{n}_{\Lambda^\infty(I;\ms{M}((0,T)\times \R^d))} 
&\leq \liminf_{k \to \infty}\norm{n_k}_{\Lambda^\infty(I;\ms{M}((0,T)\times \R^d))} \\
&\stackrel{\eqref{eq:prop:mild_is_kinetic;est_measures}}{\leq} \liminf_{k \to \infty}(\norm{u_{0,k}}_{L^1(\R^d)}+\norm{f_k}_{L^1([0,T] \times \R^d)}) \\
&= \norm{u_0}_{L^1(\R^d)}+\norm{f}_{L^1([0,T] \times \R^d)}.      
\end{align*}
Furthermore, 
$$
n = \lim_{k \to \infty}n_{k} \quad \text{in} \quad \ms{D}'((0,T) \times \R^d \times I)
$$
and $n$ is positive as every $n_k$ is positive.
\end{proof}

\subsection{Space time Sobolev regularity}\label{subsec:regularity}

The following theorem is  a modification of
\cite[Theorem 1.2]{GS23}  adjusted to our setting.

\begin{theorem}\label{thm:GST20_thm1.1}
Let $I$ be an open interval with $0 \in I$ and $\phi:I \to \R$ be a maximal monotone function with $\phi \in C^1(\R) \cap W^{2,1}_{\loc}(I)$, $\phi(0)=0$ and $|\phi''| = \sign_0 \,\cdot\, \phi''$ on $I \setminus \{0\}$ and let $m \in (1,\infty)$ and $c \in (0,\infty)$ be such that $D=\phi'$ satisfies
\begin{equation}\label{eq:thm:GST20_thm1.1;assump_D}
|D(r)| \geq c|r|^{m-1}, \qquad r \in I.
\end{equation}
Assume, in case $d \geq 3$, that for each compact interval $J \subset I$ there exists a finite constant $C_J>0$ such that
\begin{equation*}
|\phi(r)| \leq C_J |r|^{m}, \qquad r \in J.    
\end{equation*}  
Let $u_0 \in L^1(\R^d;\overline{I})$, $f \in L^1([0,T]\times\R^d))$ and $u \in C(0,T];L^1(\R^d;\overline{I}))$ be the unique mild solution of \eqref{eq:GPME_f_lin}.   

Let $p \in (1,m]$ and define
$$
\kappa_t := \frac{m-p}{p}\frac{1}{m-1},\quad \kappa_x := \frac{p-1}{p}\frac{2}{m-1}.
$$
Let $s \in [0,1]$ and $q \in [1,p]$, and assume that
\begin{equation}\label{eq:assump_s,q}
(-\Delta_x)^{s}\phi(u) \in L^q([0,T]\times\R^d).    
\end{equation}
Then, for all $\sigma_t \in [0,\kappa_t) \cup \{0\}$ and $\sigma_x \in [0,\kappa_x)$, 
$$
u \in W^{\sigma_t,p}(0,T;W^{\sigma_x,p}(\R^d))
$$
with corresponding norm estimate
\begin{equation}\label{eq:thm:GST20_thm1.1;i}
    \nrm{u}_{W^{\sigma_t,p}(0,T;W^{\sigma_x,p}(\R^d))} \lesssim \nrm{u_0}_{L^1(\R^d)}+\nrm{f}_{L^1([0,T] \times \R^d)}+\nrm{(-\Delta_x)^{s}\phi(u)}_{L^q_{t,x}}+1.    
    \end{equation}
\end{theorem}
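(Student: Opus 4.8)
The plan is to adapt the velocity-averaging argument of \cite[Theorem~1.2]{GS23} (itself a refinement of \cite{GST20,Ge21}) to the kinetic formulation of Proposition~\ref{prop:mild_is_kinetic}, carrying along the additional source term $\delta_{v=u}f$. By that proposition the kinetic function $\chi=\chi(t,x,v;u)$ satisfies
\[
\partial_t\chi = \phi'(v)\Delta_x\chi + \partial_v n + \delta_{v=u(t,x)}f \qquad\text{in }\ms{D}'((0,T)\times\R^d\times I),
\]
with $n\ge0$ and $\|n\|_{\Lambda^\infty(I;\ms{M}((0,T)\times\R^d))}\le\|u_0\|_{L^1}+\|f\|_{L^1}$, while $\|\chi\|_{L^\infty}\le1$ and $\|\chi(t)\|_{L^1(\R^{d+1})}=\|u(t)\|_{L^1(\R^d)}$ by Lemma~\ref{lem:chi_cont_est}; the averaging identity \eqref{eq:averaging_formula} gives $u=\int_I\chi\,dv$ and $\phi(u)=\int_I\phi'(v)\chi\,dv$. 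After a smooth cut-off in $t$ (the estimate is interior in $t$, with the bound up to the endpoints standard given $u\in C([0,T];L^1)$), I would regard $\chi$ as a function on $\R_t\times\R^d_x$ and take the Fourier transform in $(t,x)$, writing $z=(\tau,\xi)$ for the dual variables.

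The heart of the matter is a micro--macro decomposition $u = T_\delta u + u^\sharp_\delta$ with a parameter $\delta>0$, where $T_\delta u:=\sign_0(u)\min(|u|,\delta)=\int_{|v|\le\delta}\chi\,dv$ is the truncation of $u$ at level $\delta$ and $u^\sharp_\delta:=u-T_\delta u=\int_{|v|>\delta}\chi\,dv$. The truncation part is elementary: $\|T_\delta u\|_{L^\infty}\le\delta$ and $\|T_\delta u\|_{L^1}\le\|u\|_{L^1}$, so by interpolation $\|T_\delta u\|_{L^p((0,T)\times\R^d)}^p\lesssim\delta^{p-1}T\|u\|_{C([0,T];L^1)}$, and the same bound survives any Littlewood--Paley projector. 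The task is therefore to prove that $u^\sharp_\delta$ is regular, with a norm blowing up only polynomially in $\delta^{-1}$.

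For this I would use that on $\{|v|>\delta\}$ the diffusion is genuinely non-degenerate: by \eqref{eq:thm:GST20_thm1.1;assump_D}, $|\phi'(v)|\ge c|v|^{m-1}\ge c\delta^{m-1}$ there, so the parabolic symbol obeys $|i\tau+\phi'(v)|\xi|^2|\gtrsim\max\{|\tau|,\delta^{m-1}|\xi|^2\}$. Solving the kinetic equation as $\widehat\chi(z,v)=(i\tau+\phi'(v)|\xi|^2)^{-1}(\partial_v\widehat n(z,v)+\widehat{\delta_{v=u}f}(z,v))$ on $\{|v|>\delta\}$ and integrating in $v$, the measure term is handled by integration by parts in $v$, using $\partial_v(i\tau+\phi'(v)|\xi|^2)^{-1}=-\phi''(v)|\xi|^2(i\tau+\phi'(v)|\xi|^2)^{-2}$ and the sign hypothesis $|\phi''|=\sign_0\cdot\phi''$, which turns $\phi''\,dv$ into $d(\phi')$ on each half-line (so the substitution $w=\phi'(v)$ applies), together with the bound $|\widehat n(z,v)|\le\|n\|_{\Lambda^\infty}$ valid for a.e.\ $v$; the source term is dominated by noting that $|\widehat{\delta_{v=u}f}(z,\cdot)|$ is controlled, as a measure in $v$, by one of total mass $|\widehat f(z)|\le\|f\|_{L^1}$. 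The diffusion-flux piece appearing in this computation --- the term that for the unweighted porous medium equation is exactly where the a priori energy bound $\phi(u)\in L^2_t\dot{H}^1_x$ is invoked --- is absorbed using the hypothesis \eqref{eq:assump_s,q}, $(-\Delta_x)^s\phi(u)\in L^q$, after splitting $|\xi|^2=|\xi|^{2-2s}|\xi|^{2s}$; the growth bound $|\phi(r)|\le C_J|r|^m$ enters only through the truncation error $\phi(u)-\int_{|v|>\delta}\phi'(v)\chi\,dv=\phi(T_\delta u)$, which is $O(\delta^m)$ in $L^\infty$. The outcome is a bound on $\widehat{u^\sharp_\delta}(z)$ of polynomial type in $\delta^{-1}$ and in $|z|$.

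It then remains to balance the two pieces frequency by frequency. Using a parabolic Littlewood--Paley decomposition ($|\xi|\sim2^j$, $|\tau|\lesssim2^{2j}$, with the complementary purely-temporal regime $|\tau|\gg|\xi|^2$ treated likewise), one chooses $\delta=\delta_j$ on each block to balance the truncation factor $\delta_j^{p-1}$ against the remainder factor $(\delta_j^{m-1}2^{2j})^{-p}$; summing the resulting geometric series converges precisely when $\sigma_x<\kappa_x=\frac{2(p-1)}{p(m-1)}$ (respectively $\sigma_t<\kappa_t=\frac{m-p}{p(m-1)}$, the case $p=m$, $\sigma_t=0$ being trivial), which yields \eqref{eq:thm:GST20_thm1.1;i} and hence $u\in W^{\sigma_t,p}(0,T;W^{\sigma_x,p}(\R^d))$, using $W^{\sigma,p}=B^\sigma_{p,p}$ for non-integer $\sigma$. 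I expect the main obstacle to be this last, anisotropic frequency bookkeeping that extracts \emph{exactly} the exponents $\kappa_t,\kappa_x$ --- the technical core of \cite{GS23}, which I would follow closely --- whereas the two new points, namely making the $v$-integration by parts rigorous near $v=0$ and near $\partial I$ (where $\phi\in C^1(\R)$ is used), and carrying the source $\delta_{v=u}f$ through the estimates (harmless, since $f$ is a fixed $L^1$ datum and the crude symbol bound $\max\{|\tau|,\delta^{m-1}|\xi|^2\}^{-1}$ suffices for it), are comparatively soft.
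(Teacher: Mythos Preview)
Your proposal and the paper's proof both start from Proposition~\ref{prop:mild_is_kinetic}, but from there they diverge. The paper does \emph{not} use the micro--macro velocity truncation $u=T_\delta u+u^\sharp_\delta$. Instead it adapts \cite[Lemmas~4.4 and~4.5]{GS23} to obtain two separate Littlewood--Paley block estimates: an $\ell^\infty_{0,s_x}(\N_0^2;L^m_{t,x})$ bound on $(\overline\chi_{l,j})$ (Lemma~\ref{lem:GS23_Lem4.2}) and an $\ell^\infty_{1,0}(J;L^1_{t,x})$ bound restricted to $J=[\{0\}\times\N_0]\cup[\N\times\N]$ (Lemma~\ref{lem:GS23_Lem4.3}). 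These are then combined in Lemma~\ref{lem:av_lem_space-time} via H\"older's inequality with $\theta=\tfrac{p-1}{p}\tfrac{m}{m-1}$, which is how the exponents $\kappa_t=1-\theta$ and $\kappa_x=s_x\theta\to\tfrac{2}{m}\theta$ arise. The missing strip $\N\times\{0\}$ (high temporal, low spatial frequency) is treated separately: from \eqref{eq:j=0;equation_satisfied} one has $\overline\chi_{l,0}$ expressed through $\int\phi'\chi\,dv=\phi(u)$, and Bernstein's lemma together with the hypothesis $(-\Delta_x)^s\phi(u)\in L^q$ yields $\nrm{\overline\chi_{l,0}}_{L^p}\lesssim 2^{-l/p}$, which is better than $2^{-\kappa_t l}$. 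This localized use of \eqref{eq:assump_s,q} is the paper's main novelty over \cite{GS23}.

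Your approach, by contrast, has a genuine gap at the step where you pass from the pointwise Fourier bound $|\widehat{u^\sharp_\delta}(z)|\lesssim(\delta^{m-1}|\xi|^2)^{-1}\cdot(\text{data})$ to an $L^p_{t,x}$ estimate for $p\neq 2$: this is not automatic, and your claimed ``remainder factor $(\delta_j^{m-1}2^{2j})^{-p}$'' for $\nrm{(u^\sharp_\delta)_{l,j}}_{L^p}^p$ is not justified by the preceding pointwise bound. Moreover, even taking your balancing at face value, equating $\delta^{p-1}$ with $(\delta^{m-1}2^{2j})^{-p}$ gives $\delta^{pm-1}=2^{-2jp}$ and hence a spatial exponent $\tfrac{2(p-1)}{pm-1}$, which is strictly smaller than $\kappa_x=\tfrac{2(p-1)}{p(m-1)}$ for $p>1$. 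This is exactly the suboptimality that the interpolation scheme of \cite{GST20,GS23} was designed to overcome: the micro--macro split in the style of \cite{TT07} does not by itself reach the sharp exponents. Your remark that you would ``follow \cite{GS23} closely'' for the bookkeeping thus points in the right direction, but following \cite{GS23} means replacing the $T_\delta$ split by the two-endpoint block estimate plus H\"older interpolation, not merely refining the frequency accounting within your framework. Finally, your placement of hypothesis \eqref{eq:assump_s,q} (absorbing a ``diffusion-flux piece'') differs from the paper's: there it is used solely to control the $j=0$ blocks, where the spatial symbol is inactive and one must instead invert $\partial_t$ and pay for $\phi(u)$ directly.
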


\begin{remark}\label{rmk:GS23}
We comment on the differences between Theorem \ref{thm:GST20_thm1.1} and  \cite[Theorem 1.2]{GS23}. 
In the latter, porous medium type equations are considered, i.e.\ $\phi$ is a function on $\R$ with degeneracy in $0$ and without singularities. 
Moreover, linear integro-differential operators  are addressed  including as a particular case the fractional Laplacian $(-\Delta)^{\alpha}$. In this case, 
assumption \eqref{eq:assump_s,q} is restricted to the case $s=0$ and $q=1$. 
Finally, for the classical porous medium equation it is shown in \cite{GST20,GS23} that the regularity results are optimal using the Barenblatt solution and scaling arguments. This is out of our scope as source type solutions are unknown for equations of the form 
\eqref{eq:cauchy}.
\end{remark}

\begin{remark}\label{rmk:assump_s;q}
We make some remarks concerning the assumption~\eqref{eq:assump_s,q} which generalizes the hypothesis in \cite{GS23}.
\begin{enumerate}[(i)]
    \item[(i)] For the porous medium equation, \eqref{eq:assump_s,q} is automatically satisfied for $s=0$ and $q=1$, see  \cite[Theorem~1.2]{GS23}.
    \item[(ii)] In our case, we do not know whether \eqref{eq:assump_s,q} is satisfied in the general setting for some $s \in [0,1]$ and $q \in [1,p]$. 
    However, in the special case that $m \geq 2$, we can take $p \geq 2 = q$ and $s=\frac{1}{2}$, in which case \eqref{eq:assump_s,q} is equivalent to
    \begin{equation*}
    \nabla_x \phi(u) \in L^2([0,T] \times \R^d).    
    \end{equation*}
    The latter is for instance satisfied in the setting of Theorem~\ref{thm:energy_est;Lip_source_term_f} and, in particular, in Example \ref{ex:eq:thm:energy_est;Lip_source_term_f;Phi_Lq}. This shows that the hypotheses of Theorem \ref{thm:GST20_thm1.1} are satisfied for Equation \eqref{eq:GPME} in the specific case of the biofilm diffusion coefficient $\phi'(u)=\frac{|u|^b}{(1-|u|)^a}$ in \eqref{eq:bfdiff}. 
\end{enumerate}
\end{remark}

\begin{proof}[Proof of Theorem~\ref{thm:regularity}]
This is an immediate consequence of Theorem \ref{thm:GST20_thm1.1} and Remark \ref{rmk:assump_s;q} (ii).
\end{proof}

The proof of Theorem \ref{thm:GST20_thm1.1} is based on a series of lemmas that are modifications and generalizations of lemmas in \cite{GS23,GST20}. 
The following result is a version of \cite[Lemma~4.4]{GS23} adjusted to our setting.
It provides an estimate for the inhomogeneous Littlewood-Paley blocks $(\overline{\chi}_{l,j})_{(l,j) \in \N_0 \times \N_0}$, which are defined as follows.

Let $(\psi_l)_{l \in \N_0} \in \Phi(\R)$ and $(\varphi_j)_{j \in \N_0} \in \Phi(\R^d)$.
For a tempered distribution $h \in \Schw'(\R \times \R^d)$ we define
$$
h_{l,j} := \psi_{l}\varphi_j * h ,\qquad l,j \in \N_0,
$$
where we use the notation $\psi_{l}\varphi_j = \psi_{l} \otimes \varphi_j = [(t,x) \mapsto \psi_l(t)\varphi_j(x)]$.
In order to avoid confusion, let us remark that we use a different convention than \cite{GS23} regarding the notation of the Littlewood-Paley sequences. Indeed, there is a Fourier transform difference between our sequences $(\psi_l)_{l \in \N_0}$, $(\varphi_j)_{j \in \N_0}$ and the ones in \cite{GS23}.

It will be convenient to use short-hand notation for some of the spaces of functions and measures. We will omit writing the underlying space when it is clear from the context and replace it by a subscript with the corresponding variable name.
For example, we write
$$
\ms{M}(\R \times \R^d \times I) = \ms{M}_{t,x,v}, \qquad \Lambda^\infty(I;\ms{M}(\R \times \R^d)) = \Lambda^\infty_v\ms{M}_{t,x},
$$
and
$$
L^m(\R;W^{\kappa_x,m}(\R^d)) = L^m_t W^{\kappa_x,m}_x.
$$

\begin{lemma}\label{lem:GS23_Lem4.2}
Let $m \in (1,\infty)$ and let $\phi \in C^1(\R) \cap W^{2,1}_{\loc}(I)$ satisfy $D:=\phi' \geq 0$, $|\phi''| = \sign_0 \,\cdot\,\phi''$ on $I \setminus \{0\}$, $D(v) \to \infty$ as $v \to \sup(I)$ and $v \to \inf(I)$  and \eqref{eq:thm:GST20_thm1.1;assump_D} holds true for some $c \in (0,\infty)$.
Let $\chi \in L^\infty([0,T] \times \R^d \times I)$ with $\nrm{\chi}_{L^\infty_{t,x,v}} \leq 1$ be a solution to
\begin{equation}\label{eq:lem:GS23_Lem4.2;kinetic_eq}
\partial_t \chi - D\Delta_x \chi = g + \p_v n
\end{equation}
in the sense of distributions on $\R \times \R^d \times I$, where $g$ and $n$ are Radon measures satisfying
$$
g \in \ms{M}(\R \times \R^d \times I), \qquad n \in \Lambda^\infty(I;\ms{M}(\R \times \R^d)).
$$
Let
\begin{equation*}
    \kappa_x \in (0,\frac{2}{m}).
\end{equation*}
If $\overline{\chi} = \int \chi \d v \in L^\infty_tL^1_x \cap L^1_{t,x}$, then
\begin{equation*}
\nrm{(\overline{\chi}_{l,j})_{(l,j) \in \N_0 \times \N_0}}_{\ell^\infty_{0,\kappa_x}(\N_0 \times \N_0;L^m_{t,x})}^m 
\lesssim \nrm{g}_{\ms{M}_{t,x,v}} + \nrm{n}_{\Lambda^\infty_v\ms{M}_{t,x}} + \nrm{\overline{\chi}}^m_{L^\infty_tL^1_x \cap L^1_{t,x}} + \nrm{\chi}_{L^1_{t,x,v}}.
\end{equation*}
\end{lemma}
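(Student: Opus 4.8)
The plan is to decompose the kinetic equation \eqref{eq:lem:GS23_Lem4.2;kinetic_eq} with a Littlewood--Paley partition in the space variable $x$, and then to estimate each dyadic piece by combining an energy estimate coming from the degenerate diffusion $D\Delta_x$ with duality against the right-hand side measures. Fix $j \in \N_0$ and let $\chi_{\cdot,j} := \varphi_j *_x \chi$, so that $\partial_t \chi_{\cdot,j} - D \Delta_x \chi_{\cdot,j} = g *_x \varphi_j + \partial_v(n *_x \varphi_j)$. Multiplying (formally) by $\chi_{\cdot,j}$ and integrating over $[0,T] \times \R^d \times I$, one obtains
\begin{align*}
\tfrac12 \nrm{\chi_{\cdot,j}(T)}_{L^2_{x,v}}^2 + \int_0^T\!\!\int\!\!\int D(v)|\nabla_x \chi_{\cdot,j}|^2
\lesssim \nrm{\chi_{\cdot,j}}_{L^\infty_{t,x,v}}\big(\nrm{g}_{\ms{M}_{t,x,v}} + \nrm{n}_{\Lambda^\infty_v \ms{M}_{t,x}}\big),
\end{align*}
where the $\partial_v n$ term is handled by integrating by parts in $v$ and using $\partial_v \chi = \delta_{v=0} - \delta_{v=u(t,x)}$ together with the boundedness $\nrm{n}_{\Lambda^\infty_v\ms{M}_{t,x}}$; the boundedness $\nrm{\chi}_{L^\infty_{t,x,v}} \le 1$ is preserved after convolution only up to an $L^1$-normalisation of $\varphi_j$, which is where $\nrm{\chi}_{L^1_{t,x,v}}$ enters (via $\nrm{\overline\chi}_{L^1_{t,x}}$ and interpolation). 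Using the lower bound $D(v) \ge c|v|^{m-1}$ on the support of $\chi_{\cdot,j}(t,x,\cdot)$, which is an interval with endpoint $0$, one converts the weighted gradient term into an estimate for $\nrm{|\overline{\chi}_{\cdot,j}|^{(m+1)/2}}$ or, after a further dyadic decomposition in $v$, for $\nrm{\nabla_x \overline{\chi}_{\cdot,j}}$ in an $L^m_{t,x}$-type norm; this is precisely the mechanism by which the exponent $\kappa_x < 2/m$ appears.

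The second main ingredient is the \emph{velocity averaging / interpolation step}. Having controlled both $\nrm{\overline{\chi}_{\cdot,j}}_{L^1_{t,x} \cap L^\infty_t L^1_x}$ (trivially, from the hypothesis $\overline{\chi} \in L^\infty_t L^1_x \cap L^1_{t,x}$ and the uniform bound on $\varphi_j$) and a gain of one spatial derivative of $\overline{\chi}_{\cdot,j}$ weighted against $D(v)$, one interpolates: writing $\overline{\chi}_{l,j} = \psi_l *_t \overline{\chi}_{\cdot,j}$ and applying Bernstein's inequality in $t$ (the factor $2^{0\cdot l}$ in $\ell^\infty_{0,\kappa_x}$ signals that no regularity in $t$ is claimed at this stage — only boundedness in $l$), one estimates $\nrm{\overline{\chi}_{l,j}}_{L^m_{t,x}}^m$ by a geometric interpolation between the $L^1$ mass and the $H^1_x$-type energy, with interpolation parameter chosen so that the spatial frequency weight is exactly $2^{\kappa_x j m}$ with $\kappa_x < 2/m$. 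Summing (or rather taking the supremum) over $(l,j)$ and absorbing constants yields the claimed estimate
$$
\nrm{(\overline{\chi}_{l,j})}_{\ell^\infty_{0,\kappa_x}(\N_0\times\N_0;L^m_{t,x})}^m \lesssim \nrm{g}_{\ms{M}_{t,x,v}} + \nrm{n}_{\Lambda^\infty_v\ms{M}_{t,x}} + \nrm{\overline{\chi}}^m_{L^\infty_t L^1_x \cap L^1_{t,x}} + \nrm{\chi}_{L^1_{t,x,v}}.
$$

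To make the formal computation rigorous one works with the non-degenerate smooth approximations $\chi(\,\cdot\,;u_k)$ provided by Proposition~\ref{thm:approx} and Lemma~\ref{lemma:GPME;psi_v}, for which all integrations by parts are justified and $n_k \in \Lambda^\infty$ with uniformly bounded norm by \eqref{eq:prop:mild_is_kinetic;est_measures}; the estimate passes to the limit because the left-hand side is a supremum of continuous seminorms (lower semicontinuous under the relevant weak$^*$ convergence) while the right-hand side converges by \ref{it:thm:approx;conv_f_u0_u} and weak$^*$ lower semicontinuity of the $\Lambda^\infty$-norm, as in the proof of Proposition~\ref{prop:mild_is_kinetic}. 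I expect the main obstacle to be the bookkeeping in the \textbf{velocity-truncation step}: the lower bound $D(v) \ge c|v|^{m-1}$ degenerates at $v=0$, so one cannot directly invert it, and one must split the $v$-integral dyadically around $0$, control the contribution of $\{|v| \le 2^{-k}\}$ using only the $L^1$-mass of $\chi$ (this is the source of the otherwise mysterious last term $\nrm{\chi}_{L^1_{t,x,v}}$), and sum the dyadic contributions against the spatial-frequency parameter $j$ — getting the exponents to line up to exactly $\kappa_x < 2/m$ (rather than some worse threshold) is the delicate part, and it is exactly here that the argument of \cite[Lemma~4.4]{GS23} must be followed and adapted to the presence of the singularity of $\phi$ at $\partial I$ (handled harmlessly, since $D \to \infty$ there only helps) and to the fact that $\phi$ is defined on a bounded interval $I$ rather than on all of $\R$.
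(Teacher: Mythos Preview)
Your approach is genuinely different from the paper's, and as written it has real gaps.

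The paper does not argue by energy estimates at all. It treats the lemma as a direct adaptation of \cite[Lemma~4.4]{GS23}, which is a Fourier-side velocity averaging argument: one takes the space--time Fourier transform, uses that the symbol of $\ms{L}_v(D_{t,x}) = \p_t - D(v)\Delta_x$ is $i\tau + D(v)|\xi|^2$ and inverts it exactly (no parametrix or remainder is needed for the local Laplacian, which is the simplification the paper notes relative to \cite{GS23}), then estimates the resulting multipliers acting on $g$ and $\p_v n$ dyadically in $(\tau,\xi)$ and integrates in $v$ using the lower bound $D(v) \geq c|v|^{m-1}$. The only other point the paper addresses is why the weakened assumption $n \in \Lambda^\infty(I;\ms{M}_{t,x})$ (weak$^*$ scalarly measurable rather than strongly measurable) suffices, which is a soft duality remark.

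Your energy scheme, by contrast, relies on two steps that are not justified by the hypotheses of the lemma. First, to handle the $\p_v n$ term you invoke $\p_v\chi = \delta_{v=0} - \delta_{v=u(t,x)}$; this identity is specific to the kinetic function $\chi(\,\cdot\,;u)$ defined in \eqref{eq:def_chi}, but the lemma is stated for an \emph{abstract} $\chi \in L^\infty$ with $\nrm{\chi}_{L^\infty} \leq 1$ solving \eqref{eq:lem:GS23_Lem4.2;kinetic_eq} distributionally --- no such structure on $\p_v\chi$ is assumed. Second, your proposed rigorous justification via the approximations of Proposition~\ref{thm:approx} and Lemma~\ref{lemma:GPME;psi_v} is circular in this context: those results approximate the specific mild solution $u$ of \eqref{eq:GPME_f_lin} and its associated kinetic function, not an arbitrary distributional solution $\chi$ of the abstract equation \eqref{eq:lem:GS23_Lem4.2;kinetic_eq}. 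Even setting these aside, testing the equation against $\chi_{\cdot,j}$ (which is only $L^\infty$ in $(t,v)$) against the Radon measure $g$ is not a well-defined pairing without further regularisation, and the passage from the weighted $(t,x,v)$-energy $\int D(v)|\nabla_x\chi_{\cdot,j}|^2$ to the $L^m_{t,x}$ control on $\overline{\chi}_{l,j}$ with the sharp threshold $\kappa_x < 2/m$ is precisely the averaging lemma content that your sketch does not supply.

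If you want to salvage an energy-type argument you would need to regularise in all variables and prove the averaging step independently; but the Fourier route of \cite{GS23} avoids all of this and is what the paper uses.
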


\begin{proof}[Sketch of the proof]
The statement follows from a slight modification of the proof of \cite[Lemma~4.4]{GS23}.
Step 2 of this proof  can be simplified by taking the inverse of the operator 
$$
\ms{L}_v(D_{t,x}) = \p_t - D(v)\Delta_x
$$
instead of a parametrix.
In the notation of \cite{GS23} this just means that  $\ms{R}_v(D_{t,x})=0$. 
Similarly, we can take $\tilde{\ms{R}}_v(D_{t,x})=0$ and hence, 
some of the estimates can be omitted.
In particular, 
the term $\nrm{Df}_{L^{1}_{t,x,v}}$ can be neglected in
the final estimate \cite[(4.11)]{GS23}.

Let us finally comment on replacing the assumption
\begin{equation}\label{eq:assump_measure_n_L_infty}
n \in L^\infty(I;\ms{M}(\R \times \R^d))    
\end{equation}
in \cite{GS23} by  the weaker assumption
\begin{equation}\label{eq:assump_measure_n_Lambda_infty}
n \in \Lambda^\infty(I;\ms{M}(\R \times \R^d))
\end{equation}
in our setting.
By \cite[Theorem~2.29]{Pi16}, the Riesz representation theorem
$$
\ms{M}(\R \times \R^d) = [C_0(\R \times \R^d)]^*
$$
and the fact that $C_0(\R \times \R^d)$ is a separable Banach space, we have 
$$
\Lambda^\infty(I;\ms{M}(\R \times \R^d)) = 
\underline{\Lambda}^\infty(I;[C_0(\R \times \R^d)]^*).
$$
Here, given a measure space $(S,\ms{A},\mu)$ and a Banach space $X$, $\underline{\Lambda}^\infty(S;X^*)$ denotes the space of equivalence classes of weak$^{*}$ scalarly measurable functions $f:S \to X^*$ for which $s \mapsto \nrm{f(s)}_{X^*}$ belongs to $L^\infty(S)$, equipped with its natural norm 
$$
\nrm{f}_{\underline{\Lambda}^\infty(S;X^*)} := \nrm{s \mapsto \nrm{f(s)}_{X^*}}_{L^\infty(S)}.
$$
So replacing \eqref{eq:assump_measure_n_L_infty} by \eqref{eq:assump_measure_n_Lambda_infty} just means that we weaken strong measurablity to weak$^*$ scalarly measurability. The proof of \cite[Lemma~4.4]{GS23} remains valid in this case.
\end{proof}

The next lemma extends \cite[Lemma~4.5]{GS23} to our setting with one modification.

\begin{lemma}\label{lem:GS23_Lem4.3} 
Let $m \in (1,\infty)$ and let $\phi$, $\chi$, $g$, $n$ and $\overline{\chi}$ be as in Lemma~\ref{lem:GS23_Lem4.2}.
Then
\begin{equation}\label{eq:lem:GS23_Lem4.3}
\nrm{(\overline{\chi}_{l,j})_{(l,j) \in J}}_{\ell^\infty_{1,0}(J;L^1_{t,x})} \lesssim \nrm{g}_{\ms{M}_{t,x,v}} + \nrm{n}_{\Lambda^\infty_v\ms{M}_{t,x}} + \nrm{\chi}_{L^1_{t,x,v}},
\end{equation}
where 
\begin{equation*}
J:= [\{0\} \times \N_0] \cup [\N \times \N].
\end{equation*}
\end{lemma}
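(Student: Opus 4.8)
The plan is to reduce matters to \cite[Lemma~4.5]{GS23}, carrying over the same two simplifications that were used in the proof of Lemma~\ref{lem:GS23_Lem4.2}, and to split the index set $J = [\{0\}\times\N_0]\cup[\N\times\N]$ into its two pieces, which are of a genuinely different nature.

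For the blocks $(l,j)$ with $l=0$ there is nothing to extract from the equation. Since $\sup_{j\in\N_0}\nrm{\psi_0\otimes\varphi_j}_{L^1(\R\times\R^d)} < \infty$ and $|\overline{\chi}(t,x)| = \big|\int_I\chi(t,x,v)\,\d v\big| \leq \int_I|\chi(t,x,v)|\,\d v$, we obtain $\nrm{\overline{\chi}_{0,j}}_{L^1_{t,x}} \lesssim \nrm{\overline{\chi}}_{L^1_{t,x}} \leq \nrm{\chi}_{L^1_{t,x,v}}$ for all $j\in\N_0$. As the weight attached to the index $l=0$ is $2^{1\cdot 0}=1$, this is precisely the asserted bound for these blocks, and it accounts for the term $\nrm{\chi}_{L^1_{t,x,v}}$ on the right-hand side of \eqref{eq:lem:GS23_Lem4.3}.

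For the blocks $(l,j)$ with $l\geq 1$ and $j\geq 1$ I would argue as in \cite[Lemma~4.5]{GS23}. On the time-frequency band $|\tau|\sim 2^l$ the kinetic operator $\ms{L}_v(D_{t,x}) = \p_t - D(v)\Delta_x$ is invertible, and — for spatial frequencies $|\xi|\sim 2^j$ with $j\geq 1$ — its frequency-localized inverse is bounded on $L^1(\R\times\R^d)$ with operator norm $\lesssim 2^{-l}$, uniformly in $v\in I$. Applying this inverse to the kinetic equation \eqref{eq:lem:GS23_Lem4.2;kinetic_eq} represents $\chi_{l,j}$ in terms of the localized data $g$ and $\p_v n$; integrating over $v$ then bounds the $g$-contribution by $2^{-l}\nrm{g}_{\ms{M}_{t,x,v}}$ and, exactly as in \cite{GS23}, controls the $\p_v n$-contribution by $2^{-l}\nrm{n}_{\Lambda^\infty_v\ms{M}_{t,x}}$; multiplying through by the weight $2^l$ gives the bound for $\overline{\chi}_{l,j}$. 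Here the two simplifications of Lemma~\ref{lem:GS23_Lem4.2} are used once more: one works with the genuine inverse $\ms{L}_v(D_{t,x})^{-1}$ instead of a parametrix, so that in the notation of \cite{GS23} one has $\ms{R}_v(D_{t,x}) = \widetilde{\ms{R}}_v(D_{t,x}) = 0$ and the term $\nrm{Df}_{L^1_{t,x,v}}$ appearing there may be dropped; and the hypothesis $n\in L^\infty(I;\ms{M}(\R\times\R^d))$ of \cite{GS23} is relaxed to $n\in\Lambda^\infty(I;\ms{M}(\R\times\R^d))$, which merely replaces strong measurability of $v\mapsto n(v)$ by weak$^*$ scalar measurability and leaves all the estimates of \cite[Lemma~4.5]{GS23} intact (cf.\ the proof of Lemma~\ref{lem:GS23_Lem4.2}).

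The main obstacle is the uniform-in-$v$ $L^1(\R\times\R^d)$-multiplier estimate for the frequency-localized inverse of $\p_t - D(v)\Delta_x$ — an $L^1$ Fourier-multiplier bound, so Mikhlin-type criteria are not available directly — together with the bookkeeping of how the $v$-integration interacts with the $v$-dependence (through $D(v)$) of this operator. This is exactly the point at which the spatial frequency must be bounded away from zero: near $\xi = 0$ the $\xi$-derivatives of the symbol $(i\tau + D(v)|\xi|^2)^{-1}$ are not controlled uniformly in $v$, since $D(v)\to\infty$ as $v\to\p I$, which is why the blocks $(l,0)$ with $l\geq 1$ have to be excluded from $J$; it is also the step at which the $\Lambda^\infty_v\ms{M}_{t,x}$-norm of the kinetic measure $n$ enters the final estimate.
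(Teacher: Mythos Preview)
Your proposal is correct and takes essentially the same approach as the paper: both restrict the proof of \cite[Lemma~4.5]{GS23} to the two pieces $\{0\}\times\N_0$ and $\N\times\N$ (your explicit Young-inequality bound for $l=0$ is what \cite[(4.19)]{GS23} amounts to), dropping the blocks $(l,0)$ with $l\geq 1$. One small correction to your closing commentary: the paper's reason for deferring those blocks (see the discussion immediately following this lemma and equation~\eqref{eq:j=0;equation_satisfied}) is not a failure of uniform-in-$v$ $L^1$-multiplier bounds but rather that these blocks carry the term $\int\phi'(v)\chi\,\d v = \phi(u)$, which in the singular setting need not lie in $L^1_{t,x}$ and therefore calls for the separate hypothesis~\eqref{eq:assump_s,q}.
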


\begin{proof}[Sketch of the proof]
The proof of \cite[Lemma~4.5]{GS23} consists of three parts corresponding to the decomposition
\begin{align}\label{eq:decompositionNN}
\N_0 \times \N_0 = [\{0\} \times \N_0] \cup [\N \times \N] \cup [\N \times \{0\}].
\end{align}
However, we  only take over the estimates \cite[(4.19) and (4.20)]{GS23} that correspond to the cases $\{0\} \times \N_0$ and $\N \times \N$, respectively.
With this modification, \cite[Lemma~4.5]{GS23} extends to our setting which implies the lemma.
\end{proof}

Concerning the last term $\N \times \{0\}$ in the decomposition \eqref{eq:decompositionNN} we will modify the corresponding estimate \cite[(4.21)]{GS23}.
This modification is related to the observation, as made in the proof of \cite[Lemma~4.4]{GS23}, that $\overline{\chi}_{l,0}$ satisfies the equation
\begin{equation}\label{eq:j=0;equation_satisfied}
\overline{\chi}_{l,0} = -\int \phi'(v)\ms{F}_{t,x}^{-1}\frac{|\xi|^2}{\imath \tau}\ms{F}_{t,x}\chi_{l,0}\d v + \ms{F}_{t,x}^{-1}\frac{1}{\imath \tau}\hat{\varphi}_0(\xi)\ms{F}_{t,x}\int g_{l,0}\d v.    
\end{equation}
We will postpone the treatment of these Littlewood-Paley blocks to the proof of Lemma 
\ref{lem:av_lem_space-time} which will provide us more flexibility to estimate the first term in \eqref{eq:j=0;equation_satisfied}.

The next lemma is obtained by combining Lemma~\ref{lem:GS23_Lem4.2} and Lemma~\ref{lem:GS23_Lem4.3}  and is closely related to the first part of \cite[Theorem~1.2]{GST20}.

\begin{lemma}\label{lem:av_lem_space-time}
Let $m \in (1,\infty)$ and let $\phi$, $\chi$, $g$, $n$ and $\overline{\chi}$ be as in Lemma~\ref{lem:GS23_Lem4.2}.
Let $p \in (1,m]$ and define
\begin{equation*}
\kappa_t := \frac{m-p}{p}\frac{1}{m-1},\quad \kappa_x := \frac{p-1}{p}\frac{2}{m-1}.    
\end{equation*}
Let $s \in [0,1]$ and $q \in [1,p]$ and assume that $(-\Delta_x)^{s}\int \phi'\chi\, \d v \in L^q_{t,x}$. 
Then, for all $\sigma_t \in [0,\kappa_t) \cup \{0\}$ and $\sigma_x \in [0,\kappa_x)$, we have $\overline{\chi} \in W^{\sigma_t,p}_t W^{\sigma_x,p}_x$ with the corresponding norm estimate
\begin{align}
\nrm{\overline{\chi}}_{W^{\sigma_t,p}_t W^{\sigma_x,p}_x} 
&\lesssim    \nrm{g}_{\ms{M}_{t,x,v}} + \nrm{n}_{\Lambda^\infty_v\ms{M}_{t,x}} + \nrm{\overline{\chi}}_{L^\infty_tL^1_x \cap L^1_{t,x}} 
+ \nrm{\chi}_{L^1_{t,x,v}}  \nonumber \\
& \:\:+\:\: \nrm{(-\Delta_x)^{s}\int \phi'\chi\, \d v}_{L^q_{t,x}}+1. \nonumber
\end{align}
\end{lemma}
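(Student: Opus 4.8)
The plan is to combine the two preceding lemmas with a standard real-interpolation (Besov embedding) argument, splitting the Littlewood--Paley index set according to the decomposition $\N_0 \times \N_0 = [\{0\}\times\N_0] \cup [\N\times\N] \cup [\N\times\{0\}]$, and treating the last ``degenerate-in-time'' block $\N\times\{0\}$ by hand using equation \eqref{eq:j=0;equation_satisfied}. First, I would record that Lemma~\ref{lem:GS23_Lem4.2} controls $(\overline\chi_{l,j})$ in $\ell^\infty_{0,\kappa_x}(\N_0\times\N_0;L^m_{t,x})$ and Lemma~\ref{lem:GS23_Lem4.3} controls it in $\ell^\infty_{1,0}(J;L^1_{t,x})$ with $J = [\{0\}\times\N_0]\cup[\N\times\N]$. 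On the index set $J$, interpolating these two bounds (with the interpolation parameter $\theta$ chosen so that $\tfrac1p = \tfrac{1-\theta}{1} + \tfrac\theta{m}$, i.e. $\theta = \tfrac{m(p-1)}{p(m-1)}$), one gets $(\overline\chi_{l,j})_{(l,j)\in J} \in \ell^\infty_{\sigma_t',\sigma_x'}(J;L^p_{t,x})$ for $\sigma_t' = 1-\theta = \kappa_t \cdot \tfrac{p}{p} $... more precisely $\sigma_t' = 1\cdot(1-\theta) = \tfrac{m-p}{p}\tfrac{1}{m-1} = \kappa_t$ and $\sigma_x' = \kappa_x\cdot\theta = \tfrac{p-1}{p}\tfrac{2}{m-1} = \kappa_x$, which by the Besov/Littlewood--Paley characterization of $W^{s,p}$-spaces (using the identification $W^{s,p} = B^s_{p,p}$ for $s \notin \N$ from Section~\ref{subsec:function_spaces}, together with the easy $\ell^\infty \hra \ell^p$-type gain once we lower the smoothness strictly below the critical exponent) yields $\sum_{(l,j)\in J}$-summability of $\overline\chi$ in $B^{\sigma_t}_{p,p}(\R;B^{\sigma_x}_{p,p}(\R^d))$ for every $\sigma_t \in [0,\kappa_t)\cup\{0\}$, $\sigma_x\in[0,\kappa_x)$.

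Next I would dispose of the remaining block $\N\times\{0\}$, i.e. the part $j=0$, $l\geq 1$, of the sum. Here $\varphi_0$ is a fixed Schwartz function, so no spatial smoothing is lost and we only need temporal regularity; equation \eqref{eq:j=0;equation_satisfied} expresses $\overline\chi_{l,0}$ as a sum of two terms. The second term, $\ms F_{t,x}^{-1}\tfrac{1}{\imath\tau}\hat\varphi_0(\xi)\ms F_{t,x}\int g_{l,0}\,\d v$, carries a gain of one full derivative in $t$ (the factor $\tfrac{1}{\imath\tau}$ against the Littlewood--Paley localization $\psi_l$ in time), hence is controlled in $L^1_{t,x}$ with a geometric factor $2^{-l}$ by $\nrm{g}_{\ms M_{t,x,v}}$; summing over $l\geq 1$ gives an $\ell^1$ (hence $\ell^\infty_{\sigma_t,0}$ for any $\sigma_t\leq 1$) bound. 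The first term is $-\int \phi'(v)\,\ms F_{t,x}^{-1}\tfrac{|\xi|^2}{\imath\tau}\ms F_{t,x}\chi_{l,0}\,\d v$; here the hypothesis $(-\Delta_x)^s\int\phi'\chi\,\d v \in L^q_{t,x}$ is exactly what is needed: writing $\tfrac{|\xi|^2}{\imath\tau} = \tfrac{|\xi|^{2(1-s)}}{\imath\tau}\cdot|\xi|^{2s}$, the factor $|\xi|^{2s}$ is absorbed into $(-\Delta_x)^s$ applied to the velocity average $\int\phi'\chi\,\d v$ (using that $\phi'$ does not depend on $t,x$ so it commutes with the $t,x$-Fourier multiplier), while $\tfrac{|\xi|^{2(1-s)}}{\imath\tau}$ against $\hat\varphi_0$ (compact in $\xi$) and $\psi_l$ (localized at $|\tau|\sim 2^l$) again produces a geometric factor in $l$, giving summability and the bound by $\nrm{(-\Delta_x)^s\int\phi'\chi\,\d v}_{L^q_{t,x}}$ (with a harmless $+1$ to absorb low-frequency constants and the $q\leq p$ embedding on the compact Fourier support). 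Collecting the $J$-part and the $\N\times\{0\}$-part yields $\overline\chi \in W^{\sigma_t,p}_t W^{\sigma_x,p}_x$ with the claimed estimate.

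The main obstacle I expect is the $j=0$, $l\geq 1$ block: it is genuinely the place where the parabolic structure degenerates (no spatial frequency to trade against the time derivative in the Littlewood--Paley sense), and it is precisely why the theorem's hypothesis had to be generalized from the $s=0$, $q=1$ case of \cite{GS23} to the condition $(-\Delta_x)^s\phi(u)\in L^q$. One must be careful that the Fourier multiplier $\tfrac{|\xi|^{2(1-s)}}{\imath\tau}$ restricted to the frequency region $\{|\xi|\lesssim 1,\ |\tau|\sim 2^l\}$ is bounded (with the right geometric decay in $l$) as an operator on $L^q_{t,x}$ — which holds since $s\leq 1$ makes the numerator harmless on $\{|\xi|\lesssim 1\}$ and the denominator gives the $2^{-l}$ gain — and that the resulting $\ell^1_l$-sum can be dominated by the single quantity $\nrm{(-\Delta_x)^s\int\phi'\chi\,\d v}_{L^q_{t,x}}$; a Mikhlin-type multiplier estimate on the annular pieces does this. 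A secondary technical point is bookkeeping the two slightly different conventions for Littlewood--Paley families (ours vs. \cite{GS23}), but this only affects constants. Everything else is a routine application of the interpolation inequality between Lemmas~\ref{lem:GS23_Lem4.2} and \ref{lem:GS23_Lem4.3} and the standard embedding $\ell^\infty_{s+\varepsilon}\hra \ell^p_s$ of weighted sequence spaces, together with the identification $W^{s,p}=B^s_{p,p}$ recalled in Section~\ref{subsec:function_spaces}.
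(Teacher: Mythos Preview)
Your proposal is correct and follows essentially the same route as the paper: interpolate Lemmas~\ref{lem:GS23_Lem4.2} and~\ref{lem:GS23_Lem4.3} on $J$ via H\"older with $\theta=\tfrac{m(p-1)}{p(m-1)}$, then treat the block $\N\times\{0\}$ separately through \eqref{eq:j=0;equation_satisfied}, and finish with the embedding $\ell^\infty_{\kappa_t,s_x\theta}\hra\ell^p_{\sigma_t,\sigma_x}$ together with $W^{s,p}=B^{s}_{p,p}$.

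One point to tighten: in the $j=0$ analysis you state the bounds in $L^1_{t,x}$ (resp.\ $L^q_{t,x}$) and invoke a ``$q\le p$ embedding on the compact Fourier support'', but only the $\xi$-support is compact; the $\tau$-support sits at scale $2^l$, so converting to $L^p_{t,x}$ costs a Bernstein factor $2^{l(1-1/p)}$ (resp.\ $2^{l(1/q-1/p)}$), and the net decay is $2^{-l/p}$ rather than $2^{-l}$. The paper makes this explicit and then verifies the needed inequality $\kappa_t=\tfrac{m-p}{p(m-1)}<\tfrac{1}{p}$, which is what actually guarantees the $\ell^\infty_{\kappa_t,0}(L^p_{t,x})$ control of $(\overline{\chi}_{l,0})_{l\ge 1}$; your claim ``$\ell^\infty_{\sigma_t,0}$ for any $\sigma_t\le 1$'' is only valid before this conversion. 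Also, a small notational slip: the spatial exponent coming out of the interpolation is $s_x\theta$ with $s_x\in(0,2/m)$ from Lemma~\ref{lem:GS23_Lem4.2}, which can be chosen larger than the given $\sigma_x$ but does not literally equal the $\kappa_x$ of the present lemma.
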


The main difference between Lemma \ref{lem:av_lem_space-time} and the first part of \cite[Theorem~1.2]{GST20} is that in the latter only the case $q=1$ and $s=0$ is considered, see also Remark \ref{rmk:assump_s;q}. 
In the proof of \cite[Theorem~1.2]{GST20} complex interpolation of Banach space-valued Besov spaces is used to obtain an interpolation inequality.
The idea to prove the general case is to perform the interpolation argument more directly  to the Littlewood-Paley decomposition of $\overline{\chi}$ restricted to $I$ by using H\"older's inequality.
In combination with a separate treatment of \eqref{eq:j=0;equation_satisfied} this will yield the stated regularity for $\overline{\chi}$ with a more flexible assumption on $\int \phi'\chi\, \d v$.  

\begin{proof}[Proof of Lemma~\ref{lem:av_lem_space-time}]
Fix $\sigma_t \in [0,\kappa_t) \cup \{0\}$ 
and $\sigma_x \in [0,\kappa_x)$. Put $\theta := \frac{p-1}{p}\frac{m}{m-1}$ and pick $s_x \in (0,2/m)$ such that
$$
s_x\theta = \frac{p-1}{p}\frac{s_xm}{m-1} \in (\sigma_x,\kappa_x). 
$$
By Lemma~\ref{lem:GS23_Lem4.2} we have  
\begin{align*}
\nrm{(\overline{\chi}_{l,j})_{(l,j) \in J}}_{\ell^\infty_{0,s_x}(J;L^m_{t,x})}
 &\lesssim \left(\nrm{g}_{\ms{M}_{t,x,v}} + \nrm{n}_{\Lambda^\infty_v\ms{M}_{t,x}} + \nrm{\overline{\chi}}^m_{L^\infty_tL^1_x \cap L^1_{t,x}} + \nrm{\chi}_{L^1_{t,x,v}} \right)^{1/m} \\
&\lesssim \nrm{g}_{\ms{M}_{t,x,v}} + \nrm{n}_{\Lambda^\infty_v\ms{M}_{t,x}} + \nrm{\overline{\chi}}_{L^\infty_tL^1_x \cap L^1_{t,x}}+\nrm{\chi}_{L^1_{t,x,v}}+1.
\end{align*}
On the other hand, the estimate \eqref{eq:lem:GS23_Lem4.3} in Lemma~\ref{lem:GS23_Lem4.3} 
gives us that
\begin{equation*}
\nrm{(\overline{\chi}_{l,j})_{(l,j) \in J}}_{\ell^\infty_{1,0}(J;L^1_{t,x})} \lesssim \nrm{g}_{\ms{M}} + \nrm{n}_{\Lambda^\infty_v\ms{M}} + \nrm{\chi}_{L^1_{t,x,v}}.
\end{equation*}
Setting $\theta=\frac{p-1}{p}\frac{m}{m-1}$ we have $1-\theta = \frac{m-p}{p(m-1)}$ and
$$
\frac{1-\theta}{1} + \frac{\theta}{m}=\frac{1}{p}, \quad \kappa_t=1(1-\theta)+0\theta, \quad s_x\theta = 0(1-\theta)+s_x\theta. 
$$
Combining the above two estimates, we thus obtain through an application of an interpolation inequality for iterated Lebesgue spaces that
\begin{align}
\nrm{(\overline{\chi}_{l,j})_{(l,j) \in J}}_{\ell^\infty_{\kappa_t,s_x\theta}(J;L^p_{t,x})}  
 &\lesssim \nrm{(\overline{\chi}_{l,j})_{(l,j) \in J}}_{\ell^\infty_{0,s_x}(J;L^m_{t,x})}
^{\theta} 
\:\cdot\:   \nrm{(\overline{\chi}_{l,j})_{(l,j) \in J}}_{\ell^\infty_{1,0}(J;L^1_{t,x})}^{1-\theta}   \nonumber \\
&\lesssim \nrm{(\overline{\chi}_{l,j})_{(l,j) \in J}}_{\ell^\infty_{0,s_x}(J;L^m_{t,x})}
+  \nrm{(\overline{\chi}_{l,j})_{(l,j) \in J}}_{\ell^\infty_{1,0}(J;L^1_{t,x})}   \nonumber \\
 &\lesssim
 \nrm{g}_{\ms{M}_{t,x,v}} + \nrm{n}_{\Lambda^\infty_v\ms{M}_{t,x}} + \nrm{\overline{\chi}}_{L^\infty_tL^1_x \cap L^1_{t,x}} 
+ \nrm{\chi}_{L^1_{t,x,v}} +1.    \label{eq:lem:av_lem_space-time;est_I}
\end{align}

Let us next consider \eqref{eq:j=0;equation_satisfied}.  
Similarly to the argument on (the bottom of) \cite[page~2466]{GST20}, by Bernstein's Lemma (see e.g.\ \cite[Lemma~2.1]{BCD11}) we have
\begin{align*}
&\nrm{\overline{\chi}_{l,0}}_{L^p_{t,x}} \\
\leq&
\nrm{\int \phi'(v)\ms{F}_{t,x}^{-1}\frac{|\xi|^2}{\imath \tau}\ms{F}_{t,x}\chi_{l,0}\d v}_{L^p_{t,x}} + \nrm{\ms{F}_{t,x}^{-1}\frac{1}{\imath \tau}\hat{\varphi}_0(\xi)\ms{F}_{t,x}\int g_{l,0}\d v}_{L^p_{t,x}} \\
\lesssim& 2^{l(\frac{1}{q}-\frac{1}{p})}\nrm{\int \phi'(v)\ms{F}_{t,x}^{-1}\frac{|\xi|^2}{\imath \tau}\ms{F}_{t,x}\chi_{l,0}\d v}_{L^q_{t,x}} + 2^{l(1-\frac{1}{p})}\nrm{\ms{F}_{t,x}^{-1}\frac{1}{\imath \tau}\hat{\varphi}_0(\xi)\ms{F}_{t,x}\int g_{l,0}\d v}_{L^1_{t,x}}.
\end{align*}
Since $|\xi|^{2-s}$ acts as a constant multiplier on the support of $\phi_0$ and $\tau^{-1}$ acts as a constant multiplier of order $2^{-l}$ on the support of $\eta_l$ and $\frac{1}{q} \leq 1$, it follows that
$$
\nrm{\overline{\chi}_{l,0}}_{L^p_{t,x}} \lesssim 2^{-l\frac{1}{p}}(\nrm{(-\Delta_x)^{s}\int \phi'\chi\, \d v}_{L^q_{t,x}} + \nrm{g}_{\ms{M}_{t,x,v}}).
$$
As 
$$
\kappa_t = \frac{m-p}{p}\frac{1}{m-1} < \frac{m-1}{p}\frac{1}{m-1} = \frac{1}{p},
$$
combining this with \eqref{eq:lem:av_lem_space-time;est_I}, we find that
\begin{align*}
&\nrm{(\overline{\chi}_{l,j})_{(l,j) \in \N_0 \times \N_0}}_{\ell^\infty_{\kappa_t,s_x\theta}(\N_0 \times \N_0;L^p_{t,x})}  \\
\lesssim&    \nrm{g}_{\ms{M}_{t,x,v}} + \nrm{n}_{\Lambda^\infty_v\ms{M}_{t,x}} + \nrm{\overline{\chi}}_{L^\infty_tL^1_x \cap L^1_{t,x}} 
+ \nrm{\chi}_{L^1_{t,x,v}} \\
& \:\:+\:\: \nrm{(-\Delta_x)^{s}\int \phi'\chi\, \d v}_{L^q_{t,x}}+1.
\end{align*}
In view of $\kappa_t > \sigma_t$ and $s_x \theta > \sigma_x$, there is the embedding
\begin{align*}
\ell^\infty_{\kappa_t,s_x\theta}(\N_0 \times \N_0;L^p(\R \times \R^d)) 
&\hra 
\ell^p_{\sigma_t,\sigma_x}(\N_0 \times \N_0;L^p(\R \times \R^d)) \\
&=
\ell^{p}_{\sigma_t}(\N_0;L^p(\R;\ell^p_{\sigma_x}(\N_0;L^p(\R^d)))).
\end{align*}
Observing that
\begin{align*}
\nrm{(\overline{\chi}_{l,j})_{(l,j) \in \N_0 \times \N_0}}_{\ell^{p}_{\sigma_t}(\N_0;L^p(\R;\ell^p_{\sigma_x}(\N_0;L^p(\R^d))))}
&= \nrm{\overline{\chi}}_{B^{\sigma_t}_{p,p}(\R;B^{\sigma_x}_{p,p}(\R^d))} 
\end{align*}
and recalling that
$$
B^{\sigma_t}_{p,p}(\R;B^{\sigma_x}_{p,p}(\R^d)) = W^{\sigma_t,p}(\R;W^{\sigma_x,p}(\R^d)),
$$
the desired result follows.
\end{proof}

\begin{lemma}\label{lem:av_lem_space-time:finite_interval}
Let $m \in (1,\infty)$ and let $\phi$ be as in Lemma~\ref{lem:GS23_Lem4.2}.
Let $\chi \in L^\infty([0,T] \times \R^d \times I)$ with $\nrm{\chi}_{L^\infty_{t,x,v}} \leq 1$ be a distributional solution to \eqref{eq:lem:GS23_Lem4.2;kinetic_eq} in the sense of distributions on $(0,T) \times \R^d \times I$, where $g$ and $n$ are Radon measures satisfying
$$
g \in \ms{M}((0,T) \times \R^d \times I), \qquad n \in \Lambda^\infty(I;\ms{M}((0,T) \times \R^d)).
$$
Let $p \in (1,m]$ and define
\begin{equation*}
\kappa_t := \frac{m-p}{p}\frac{1}{m-1},\quad \kappa_x := \frac{p-1}{p}\frac{2}{m-1}.    
\end{equation*}
Let $s \in [0,1]$ and $q \in [1,p]$ and assume that $(-\Delta_x)^{s}\int \phi'\chi\, \d v \in L^q_{t,x}$. 
Suppose that $\overline{\chi} = \int \chi \d v \in L^\infty_tL^1_x \cap L^1_{t,x}$.
Then, for all $\sigma_t \in [0,\kappa_t) \cup \{0\}$ 
and $\sigma_x \in [0,\kappa_x)$, we have $\overline{\chi} \in W^{\sigma_t,p}_t W^{\sigma_x,p}_x$ with corresponding norm estimate
\begin{align}
\nrm{\overline{\chi}}_{W^{\sigma_t,p}_t W^{\sigma_x,p}_x} 
&\lesssim    \nrm{g}_{\ms{M}_{t,x,v}} + \nrm{n}_{\Lambda^\infty_v\ms{M}_{t,x}} + \nrm{\overline{\chi}}_{L^\infty_tL^1_x \cap L^1_{t,x}}  \nonumber \\
& \:\:+\:\: \nrm{\chi}_{C_tL^1_{x,v}} + \nrm{(-\Delta_x)^{s}\int \phi'\chi\, \d v}_{L^q_{t,x}}+1. \nonumber
\end{align}
\end{lemma}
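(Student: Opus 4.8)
The idea is to reduce Lemma~\ref{lem:av_lem_space-time:finite_interval} to the already-established Lemma~\ref{lem:av_lem_space-time}, which is formulated on the whole real line in the time variable, by an extension-in-time argument. First I would fix a compactly supported cut-off $\theta \in C^\infty_c((0,T))$ with $0 \le \theta \le 1$; actually, since we want to keep information near $t=0$ and $t=T$, it is cleaner to extend $\chi$ rather than to localize it. So the plan is: (i) extend $\chi$, $g$ and $n$ from $(0,T)$ to all of $\R$ in the $t$-variable in such a way that the extended objects $\tilde\chi$, $\tilde g$, $\tilde n$ still solve a kinetic equation of the form \eqref{eq:lem:GS23_Lem4.2;kinetic_eq} on $\R \times \R^d \times I$, with $\nrm{\tilde\chi}_{L^\infty} \le 1$ and with the norms of $\tilde g$, $\tilde n$, $\overline{\tilde\chi}$ controlled by the corresponding norms of $g$, $n$, $\overline\chi$ together with $\nrm{\chi}_{C_tL^1_{x,v}}$; (ii) apply Lemma~\ref{lem:av_lem_space-time} to $\tilde\chi$ to obtain $\tilde\chi \in W^{\sigma_t,p}(\R;W^{\sigma_x,p}(\R^d))$ with the quantitative estimate; (iii) restrict back to $(0,T)$, using that restriction $W^{\sigma_t,p}(\R;W^{\sigma_x,p}(\R^d)) \to W^{\sigma_t,p}(0,T;W^{\sigma_x,p}(\R^d))$ is bounded.

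The construction in step~(i) is the heart of the matter. Since $\chi$ is a distributional solution of $\partial_t\chi - D\Delta_x\chi = g + \partial_v n$ on $(0,T)$, and since $\overline\chi = \int\chi\,\d v \in C_tL^1_x$ (this is where the hypothesis $\chi \in C_tL^1_{x,v}$, equivalently $u \in C_tL^1_x$ via Lemma~\ref{lem:chi_cont_est}, enters), the function $\chi$ has well-defined traces $\chi(0,\cdot,\cdot)$ and $\chi(T,\cdot,\cdot)$ in $L^1_{x,v}$, which are themselves kinetic functions $\chi(\cdot\,;u(0))$, $\chi(\cdot\,;u(T))$. The natural extension is by even/odd reflection in time across $t=0$ and $t=T$: set $\tilde\chi(t) = \chi(-t)$ for $t \in (-T,0)$ and $\tilde\chi(t) = \chi(2T-t)$ for $t \in (T,2T)$, and then extend periodically or simply cut off with a smooth spatial-in-time multiplier to land on $\R$. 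Reflection flips the sign of $\partial_t$, so $\tilde\chi$ solves the kinetic equation with $\tilde g$, $\tilde n$ the correspondingly reflected measures plus boundary-contribution terms concentrated on $\{t=0\}$ and $\{t=T\}$; these boundary terms are of the form $\pm 2\,\chi(0)\,\d t\!\mid_{t=0} \otimes(\cdots)$, i.e.\ finite measures in $\ms{M}_{t,x,v}$ whose total variation is controlled by $\nrm{\chi}_{C_tL^1_{x,v}}$, so they can be absorbed into $\tilde g$. One then multiplies by a fixed smooth compactly supported function of $t$ equal to $1$ on $[0,T]$; this introduces a further commutator term $\chi\,\partial_t(\text{cut-off})$, again a finite measure bounded by $\nrm{\chi}_{C_tL^1_{x,v}}$ (plus possibly a $D\Delta_x$-type term, but since the cut-off depends only on $t$ this only hits $\partial_t$), absorbable into $\tilde g$, and it does not affect $\partial_v\tilde n$. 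Throughout, $\overline{\tilde\chi} \in L^\infty_tL^1_x \cap L^1_{t,x}$ with norm $\lesssim \nrm{\overline\chi}_{L^\infty_tL^1_x \cap L^1_{t,x}}$, $\nrm{\tilde\chi}_{L^\infty} \le 1$, and the assumption $(-\Delta_x)^s\int\phi'\tilde\chi\,\d v \in L^q_{t,x}$ follows from the corresponding assumption on $\chi$ by reflection.

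With step~(i) in hand, step~(ii) is a direct invocation of Lemma~\ref{lem:av_lem_space-time}, which gives
\[
\nrm{\overline{\tilde\chi}}_{W^{\sigma_t,p}_tW^{\sigma_x,p}_x} \lesssim \nrm{\tilde g}_{\ms{M}_{t,x,v}} + \nrm{\tilde n}_{\Lambda^\infty_v\ms{M}_{t,x}} + \nrm{\overline{\tilde\chi}}_{L^\infty_tL^1_x \cap L^1_{t,x}} + \nrm{\tilde\chi}_{L^1_{t,x,v}} + \nrm{(-\Delta_x)^s\textstyle\int\phi'\tilde\chi\,\d v}_{L^q_{t,x}} + 1,
\]
and substituting the bounds from step~(i) for each term on the right, together with $\nrm{\tilde\chi}_{L^1_{t,x,v}} \lesssim T\nrm{\chi}_{C_tL^1_{x,v}}$, yields exactly the claimed right-hand side. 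Finally, step~(iii) uses that $\overline{\tilde\chi}|_{(0,T)} = \overline\chi$ and that the restriction map is contractive (or at least bounded) on the relevant Sobolev–Slobodetskii spaces, which is standard. The main obstacle I anticipate is making the reflection-extension argument fully rigorous at the level of distributions on $(0,T) \times \R^d \times I$ — in particular identifying the boundary measures produced when differentiating the reflected/truncated $\tilde\chi$ in $t$, and checking that they are genuinely finite Radon measures with the asserted bound rather than more singular objects; here the regularity $\chi \in C_tL^1_{x,v}$ (coming from the mild-solution theory via Lemma~\ref{lem:chi_cont_est}) is exactly what is needed, and it should be invoked carefully.
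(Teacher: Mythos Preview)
Your overall strategy---extend to $\R$ in time, invoke Lemma~\ref{lem:av_lem_space-time}, restrict back---matches the paper. But the specific extension via reflection contains a genuine gap: the operator $\partial_t - D(v)\Delta_x$ is not invariant under time reflection. If you set $\tilde\chi(t) = \chi(-t)$ on $(-T,0)$, then on that interval
\[
\partial_t\tilde\chi - D\Delta_x\tilde\chi \;=\; -\bigl(\partial_t\chi + D\Delta_x\chi\bigr)(-t) \;=\; -\bigl(g + \partial_v n\bigr)(-t) \;-\; 2D(v)\Delta_x\chi(-t),
\]
and the extra term $-2D\Delta_x\chi$ is a second-order distribution in $x$, not a finite Radon measure; it cannot be absorbed into $\tilde g$. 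Odd reflection produces the same obstruction. So your claim that ``$\tilde\chi$ solves the kinetic equation with $\tilde g,\tilde n$ the correspondingly reflected measures'' is false, and this is where the argument breaks.

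The approach you dismissed at the outset---a cutoff in time---is exactly what the paper uses, and it works because you take a \emph{sequence} of cutoffs. The paper sets $\varphi_k(t) = \psi(kt) - \psi(kt-T)$ for a fixed smooth $\psi$ (so $\varphi_k \in C^\infty_c((0,T))$, $0\le\varphi_k\le1$, $\varphi_k\to\one_{(0,T)}$), defines $\chi_k := \varphi_k\chi$ extended by zero to $\R$, and computes
\[
\partial_t\chi_k - D\Delta_x\chi_k \;=\; \varphi_k g + \partial_v(\varphi_k n) + \varphi_k'\,\chi.
\]
The commutator term $\varphi_k'\chi$ is an honest $L^1$ function whose $L^1_{t,x,v}$-norm converges to $\nrm{\chi(0)}_{L^1_{x,v}}+\nrm{\chi(T)}_{L^1_{x,v}}\le 2\nrm{\chi}_{C_tL^1_{x,v}}$ as $k\to\infty$, precisely by the assumption $\chi\in C_tL^1_{x,v}$. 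One applies Lemma~\ref{lem:av_lem_space-time} to each $\chi_k$ and passes to the limit. No reflection, no boundary Diracs to justify, no loss of information near $t=0$ or $t=T$.
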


\begin{proof}
Set $\varphi_{k}(t) := \psi(kt)-\psi(kt-T)$, where $\psi \in C^\infty(\R)$ with $0 \leq \psi \leq 1$, $\supp(\psi) \subset (0,\infty)$, $\psi(t)=1$ for $t > T$ and $\nrm{\partial_t\psi}_{L^1}=1$.  
Consider $\chi_k := \varphi_k \chi$ and view it as a function of time on $\R$ instead of $[0,T]$ through extension by zero.
Then 
\begin{align*}
\ms{L}(\partial_t,\nabla_x,v)\chi_{k} 
&= \partial_t \chi_k - D(v)\Delta_x \chi_k 
= \partial_t (\varphi_k \chi) - D(v)\Delta_x (\varphi_k \chi) \\
&= \varphi_k(\partial_t \chi - D(v)\Delta_x \chi)+\partial_t\varphi_k \chi \\
&\stackrel{\eqref{eq:lem:GS23_Lem4.2;kinetic_eq}}{=} \varphi_k g + \partial_v(\varphi_k n) + \varphi_k' \chi.
\end{align*}
Thus, $\chi_k$ satisfies the corresponding equation on $\R$ with $g$ replaced by $g^k = \varphi_k g + \varphi_k' \chi$ and $n$ replaced by $n^k=\varphi_k n$.
So we can apply Lemma~\ref{lem:av_lem_space-time} to obtain 
\begin{align}
\nrm{\overline{\chi}_k}_{W^{\sigma_t,p}_t W^{\sigma_x,p}_x} 
&\lesssim    \nrm{g^k}_{\ms{M}} + \nrm{n^k}_{\Lambda^\infty_v\ms{M}} + \nrm{\overline{\chi}_k}_{L^\infty_tL^1_x \cap L^1_{t,x}} 
+ \nrm{\chi_k}_{L^1_{t,x,v}} + \nrm{\varphi_k'\chi}_{\ms{M}}  \nonumber \\
& \:\:+\:\: \nrm{(-\Delta_x)^{s}\int \phi'\chi_k\, \d v}_{L^q_{t,x}}+1, \nonumber
\end{align}
with
\begin{align*}
\nrm{\varphi_k' \chi}_{\ms{M}} &=
\nrm{\partial_t\varphi_k \chi}_{L^1_{t,x,v}} \\
&\leq \nrm{k\psi'(k\,\cdot\,) \chi}_{L^1_{t,x,v}} + \nrm{\psi'(k\,\cdot\,-T) \chi}_{L^1_{t,x,v}} \\
&\leq \nrm{k|\psi'|(k\,\cdot\,)*_t |\chi|}_{L^1_{x,v}} + \nrm{k|\psi'|(k\,\cdot\,-T)*_t |\chi|}_{L^1_{x,v}} \\
& \stackrel{k \to \infty}{\longrightarrow}
\nrm{|\chi|(0,\,\cdot\,,\,\cdot\,)}_{L^1_{x,v}} + \nrm{|\chi|(T,\,\cdot\,,\,\cdot\,))}_{L^1_{x,v}} \\
&\leq 2\norm{\chi}_{C([0,T];L^1_{x,v})}.
\end{align*}   
Taking the limit $k \to \infty$ we thus get the desired result.
\end{proof}

\begin{proof}[Proof of Theorem~\ref{thm:GST20_thm1.1}]

By Proposition~\ref{prop:mild_is_kinetic}, $u$ satisfies the kinetic form \eqref{eq:prop:mild_is_kinetic} of \eqref{eq:GPME} obtained via the kinetic function $\chi$ as defined in \eqref{eq:def_chi}, where $g=\delta_{v=u(t,x)}f$.    
Moreover, by the averaging formula \eqref{eq:averaging_formula}, we have
$$
u(t,x)=\bar{\chi}(t,x)=\int \chi(t,x,v)\d v
$$
and
$$
u(t,x)=\int \phi'(v)\chi(t,x,v)\d v.
$$
Note that $|\chi| \leq 1$, so $\nrm{\chi}_{L^\infty_{t,x,v}} \leq 1$.

Moreover, since 
$$
\nrm{u}_{L^1([0,T]\times\R^d))}
\leq T\nrm{u}_{C([0,T];L^1(\R^d))} \stackrel{\eqref{eq:prop:mild_is_kinetic;u_est_mild_sol}}{\leq}  \norm{u_0}_{L^1(\R^d)}+\norm{f}_{L^1([0,T] \times \R^d)},
$$
we have
$$
\norm{\overline{\chi}}_{L^\infty_tL^1_x \cap L^1_{t,x}} = \norm{u}_{L^\infty_tL^1_x \cap L^1_{t,x}} \lesssim \norm{u_0}_{L^1(\R^d)}+\norm{f}_{L^1([0,T] \times \R^d)}.
$$
From Lemma~\ref{lem:chi_cont_est} it follows that 
$$
\nrm{\chi}_{L^1_{t,x,v}} \leq T\nrm{\chi}_{C_t(L^1_{x,v})}\stackrel{ \eqref{eq:lem:chi_cont_est}}{=}T\nrm{u}_{C_t(L^1_{x})} \stackrel{\eqref{eq:prop:mild_is_kinetic;u_est_mild_sol}}{\leq} T(\nrm{u_0}_{L^1_x}+\nrm{f}_{L^1_{t,x}}).
$$
Furthermore, for the measures $n$ and $g$ we have \eqref{eq:prop:mild_is_kinetic;measure} and
$$
\norm{g}_{\ms{M}_{t,x,v}} \leq \norm{f}_{L^1_{t,x}}.
$$
We can thus apply Lemma~\ref{lem:av_lem_space-time:finite_interval} to obtain the desired result.
\end{proof}

\newcommand{\etalchar}[1]{$^{#1}$}

\end{document}